\documentclass[10pt]{amsart}
\usepackage{fullpage,url,amssymb,mathrsfs,color}
\usepackage[all]{xy} 
\usepackage[OT2,T1]{fontenc}
\usepackage{mathpazo}
\usepackage{lmodern}

\numberwithin{equation}{section}

\usepackage{color}

\newcommand{\defi}[1]{\textsf{#1}} 

\newenvironment{romanenum}{\hfill \begin{enumerate} }{\end{enumerate}}
\newenvironment{alphenum}{\hfill \begin{enumerate} }{\end{enumerate}}

\DeclareSymbolFont{cyrletters}{OT2}{wncyr}{m}{n}
\DeclareMathSymbol{\Sha}{\mathalpha}{cyrletters}{"58}
\newcommand{\Aff}{{\mathbb A}}

\newcommand{\F}{{\mathbb F}}  \newcommand{\FF}{{\mathbb F}}

\newcommand{\PP}{{\mathbb P}}
 \newcommand{\QQ}{{\mathbb Q}}
\newcommand{\RR}{{\mathbb R}}
\newcommand{\ZZ}{{\mathbb Z}}

\def\bbar#1{\setbox0=\hbox{$#1$}\dimen0=.2\ht0 \kern\dimen0 \overline{\kern-\dimen0 #1}}
\newcommand{\Qbar}{{\overline{\mathbb Q}}}

\newcommand{\Fbar}{\overline{\F}} \newcommand{\FFbar}{\overline{\FF}}

\newcommand{\pp}{{\mathfrak p}}


\newcommand{\calB}{{\mathcal B}}

\newcommand{\calD}{{\mathcal D}}

\newcommand{\calF}{{\mathcal F}}

\newcommand{\calH}{{\mathcal H}}
\newcommand{\calI}{{\mathcal I}}

\newcommand{\calP}{{\mathcal P}}

\newcommand{\calS}{{\mathcal S}}

\newcommand{\calV}{{\mathcal V}}

\newcommand{\calX}{{\mathcal X}}

\newcommand{\OO}{{\mathcal O}}

\DeclareMathOperator{\ad}{ad}
\DeclareMathOperator{\scc}{sc}


\DeclareMathOperator{\Tr}{Tr}

\DeclareMathOperator{\Frob}{Frob}

\DeclareMathOperator{\Hom}{Hom}

\DeclareMathOperator{\Gal}{Gal}

\DeclareMathOperator{\ord}{ord}

\DeclareMathOperator{\Spec}{Spec}



\newcommand{\et}{{\operatorname{\'et}}}

\newcommand{\Ev}{{\operatorname{Ev}}}

\newcommand{\GL}{\operatorname{GL}}

\newcommand{\PSL}{\operatorname{PSL}}



\newcommand{\SO}{\operatorname{SO}}

\newcommand{\Sp}{\operatorname{Sp}}
\newcommand{\GSp}{\operatorname{GSp}}

\newcommand{\Or}{\operatorname{O}}
\newcommand{\Cent}{\operatorname{Cent}}

\DeclareMathOperator{\spin}{sp}
\DeclareMathOperator{\disc}{disc}

\newcommand{\ang}[2]{\langle #1,#2\rangle}

\def\Zz{\mathscr Z}

\def\CC{\mathbb C}

\newtheorem{theorem}{Theorem}[section]
\newtheorem{lemma}[theorem]{Lemma}
\newtheorem{corollary}[theorem]{Corollary}
\newtheorem{proposition}[theorem]{Proposition}

\theoremstyle{definition}

\newtheorem{example}[theorem]{Example}

\theoremstyle{remark}
\newtheorem{remark}[theorem]{Remark}

\definecolor{webcolor}{rgb}{0,0,1}
\definecolor{webbrown}{rgb}{.6,0,0}
\usepackage[
        colorlinks,
        linkcolor=webbrown,  filecolor=webbrown,  citecolor=webbrown, 
        backref,
        pdfauthor={David Zywina}, 
        pdftitle={Galois groups arising from families with big orthogonal monodromy},
]{hyperref}
\usepackage[alphabetic,backrefs,lite]{amsrefs} 

\begin{document}

\title[Galois groups arising from families with big orthogonal monodromy]
{Galois groups arising from families with big orthogonal monodromy}

\subjclass[2010]{Primary 11F80; Secondary 11G05, 14D05} 
\keywords{Galois representations; big monodromy, elliptic curves over function fields}
\author{David Zywina}
\address{Department of Mathematics, Cornell University, Ithaca, NY 14853, USA}
\email{zywina@math.cornell.edu}
\urladdr{http://www.math.cornell.edu/\~{}zywina}

\begin{abstract}

We study the Galois groups of polynomials arising from a compatible family of representations with big orthogonal  monodromy.  We show that the Galois groups are usually as large as possible given the constraints imposed on them by a functional equation and discriminant considerations.   As an application, we consider the Frobenius polynomials arising from the middle \'etale cohomology of hypersurfaces in $\PP_{\FF_q}^{2n+1}$ of degree at least $3$.  We also consider the $L$-functions of quadratic twists of fixed degree of an elliptic curve over a function field $\FF_q(t)$.  To determine the typical Galois group in the elliptic curve setting requires using some known cases of the Birch and Swinnerton-Dyer conjecture.  This extends and generalizes work of Chavdarov, Katz and Jouve.
\end{abstract}

\maketitle

\section{Introduction} 

\subsection{Some groups} \label{SS:W def}
We first define some of the groups that will arise as Galois groups in our applications.

For each integer $n\geq 1$, let $W_{2n}$ be the subgroup of signed permutations in $\GL_{n}(\ZZ)$, i.e., permutation matrices whose non-zero entries are allowed to be $\pm 1$.     Let $W_{2n}^+$ be the subgroup of $W_{2n}$ consisting of those elements that act evenly on the set $\{\pm e_1,\ldots, \pm e_n \}$.   The group $W_{2n}$ has order $2^n n!$ and is isomorphic to the Weyl group of the root systems $B_n$ and $C_n$.   The group $W_{2n}^+$ has order $2^{n-1} n!$ and is isomorphic to the Weyl group of the root system $D_n$. 

Now consider a polynomial $P\in \QQ[T]$ of degree $N>2$ that satisfies
\begin{align} \label{E:FE0}
T^N  P(1/T) = \varepsilon P(T)
\end{align}
for some $\varepsilon\in \{\pm 1\}$. Setting $T=1$ and $T=-1$ in the above equation, we find that $P(1)=0$ if $\varepsilon\neq 1$ and $P(-1)=0$ if $\varepsilon  \neq (-1)^N$.   
So by removing these obvious linear factors from $P$,  we obtain a polynomial
\begin{align} \label{E:initial f def}
f(T) := \begin{cases}
       P(T)/(1+\varepsilon T) & \text{ if $N$ is odd}, \\
       P(T)/(1-T^2) & \text{ if $N$ is even and $\varepsilon=-1$}, \\       
       P(T) & \text{ if $N$ is even and $\varepsilon=1$}
	\end{cases}
\end{align}
with rational coefficients and even degree $2n\geq 2$.   From (\ref{E:FE0}), we deduce that the polynomial $f$ is reciprocal, i.e., it satisfies $T^{2n}f(1/T)=f(T)$.

Let $\Gal(P)$ be the Galois group of a splitting field of $P$, equivalently of $f$, over $\QQ$.  Since $f$ is reciprocal, its distinct roots in $\Qbar-\{\pm 1\}$ are of the form $\alpha_1, \ldots,\alpha_m, \alpha_1^{-1},\ldots, \alpha_m^{-1}$ for an integer $0\leq m \leq n$.    Let $\iota\colon \{\alpha_1^{\pm 1}, \ldots, \alpha_m^{\pm 1}\} \to \{\pm e_1,\ldots, \pm e_m\}$ be the bijection satisfying $\iota(\alpha_i)=e_i$ and $\iota(\alpha_i^{-1})=-e_i$.   There is a unique injective homomorphism $\psi\colon \Gal(P) \hookrightarrow W_{2m}$ satisfying $\iota(\sigma(\alpha)) = \psi(\sigma) \cdot \iota(\alpha)$ for each root $\alpha\in \Qbar -\{\pm 1\}$ of $P$ and $\sigma\in \Gal(P)$.    So $\Gal(P)$ is isomorphic to a subgroup of $W_{2m}$ and hence also a subgroup of $W_{2n}$.   So $\Gal(P)$ is isomorphic to a subgroup of $W_{N-1}$ if $N$ is odd, $W_{N-2}$ if $N$ is even and $\varepsilon=-1$, and $W_N$ if $N$ is even and $\varepsilon=1$.  

Suppose that $P$ is separable, $N$ is even and $\varepsilon=1$.   If the discriminant of $P$ is a square, then $\Gal(P)$ will be isomorphic to a subgroup of $W_{N}^+$. 

\subsection{Example: smooth hypersurfaces over finite fields} 
\label{SS:hypersurface}

Fix an even integer $n\geq 2$ and an integer $d\geq 3$ with $(n,d)\neq (2,3)$.    Fix a finite field $\FF_q$ with cardinality $q$.  We define $U(\FF_q)$ to be the set of homogeneous polynomials in $ \FF_q[x_0,\ldots, x_{n+1}]$ of degree $d$, up to scalar multiplication by $\FF_q^\times$, that define a \emph{smooth} hypersurface in $\PP^{n+1}_{\FF_q}$.  

Take any $f\in U(\FF_q)$.   The \defi{zeta function} of the hypersurface $H_f$ in $\PP^{n+1}_{\FF_q}$ defined by $f$ is 
the power series 
\[
Z_f(T) = \exp\big(\sum_{n=1}^\infty |H_f(\FF_{q^n})| \cdot T^n/n \big).
\] 
One can show that $Z_f(T)$ is in a rational function in $T$ and moreover that
\[
Z_f(T)=1/\big(P_f(q^n T) \cdot {\prod}_{i=0}^{2n}(1-q^i T)\big)
\]
for a unique polynomial $P_f(T) \in \QQ[T]$ of degree $N :=(d-1)((d-1)^{n+1}+1)/d$.  Note that the integer $N$ depends only on $n$ and $d$.   The functional equation for $Z_f(T)$ implies that we have the relation $T^{N} P_f(1/T) = \varepsilon_f \,P_f(T)$ for a unique  $\varepsilon_f \in \{\pm 1\}$.   

We will describe the Galois group $\Gal(P_f)$ for a ``random'' $f\in U(\FF_q)$.   From \S\ref{SS:W def}, and using that $N$ is even if and only if $d$ is odd, we find that $\Gal(P_f)$ is isomorphic to a subgroup of $W_{N-1}$ if $d$ is even, $W_{N-2}$ if $d$ is odd and $\varepsilon_f=-1$, and $W_N$ if $d$ is odd and $\varepsilon_f=1$.

There is an additional constraint on the Galois group of $P_f$.  Suppose that $d$ is odd, $\varepsilon_f=1$ and $P_f$ is separable.   We will show later that the discriminant of $P_f$ is in $(-1)^{(d-1)/2} d \cdot (\QQ^\times)^2$.  Since $d$ is odd, we deduce that the discriminant of $P_f$ is a square if and only if $d$ is a square.  So if $d$ is odd, $\Gal(P_f)$ will be isomorphic to a subgroup of $W_N^+$.

The following theorem says that the Galois group of $P_f$ is as large as possible, given the above constraints, for a ``random'' polynomial $f\in U(\FF_q)$.

\begin{theorem} \label{T:hypersurfaces}  
For each prime power $q>1$, let $\delta(q)$ be the proportion of $f \in U(\FF_q)$ for which we have an isomorphism
\[
\Gal(P_f)\cong  \begin{cases}
       W_{N-1} & \text{ if $d$ is even},\\
       W_{N-2} & \text{ if $d$ is odd and $\varepsilon_f =-1$}, \\       
       W_N^+ & \text{ if $d$ is odd, $\varepsilon_f=1$, and $d$ is a square}, \\
       W_N & \text{ if $d$ is odd, $\varepsilon_f=1$, and $d$ is not a square.} \\       
	\end{cases}
\]
Then $\delta(q) \to 1$ as $q\to \infty$.
\end{theorem}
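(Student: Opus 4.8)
The plan is to derive this from equidistribution of Frobenius in the mod-$\ell$ monodromy groups attached to the universal family of smooth degree-$d$ hypersurfaces, combined with a large sieve over an auxiliary set of primes; I expect the paper to have already packaged the ``big orthogonal monodromy'' mechanism as a general theorem, so that the work specific to this statement is the verification of the big-monodromy hypothesis for hypersurfaces. Let $\calU/\FF_q$ be the smooth affine scheme with $\calU(\FF_q)=U(\FF_q)$, set $p=\characteristic(\FF_q)$, and for each prime $\ell\neq p$ let $\calF_\ell$ be the lisse $\ZZ_\ell$-sheaf on $\calU$ whose geometric fibre at $f$ is the primitive part of $\HH^n_{\et}(H_{f,\FFbar_q},\ZZ_\ell)$; it has rank $N$ and a nondegenerate symmetric autoduality from the cup product. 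For $f\in U(\FF_q)$ the polynomial $P_f$ is, after the change of variable built into \S\ref{SS:hypersurface}, the reversed characteristic polynomial of geometric Frobenius $\Frob_f$ on $\calF_\ell\otimes\QQ_\ell$, and --- as $q$ is a unit modulo any $\ell\neq p$ --- the reduction of $P_f$ modulo an auxiliary rational prime $\ell$ is the characteristic polynomial of $\Frob_f$ on $\calF_\ell\otimes\FF_\ell$. Hence the factorization of $P_f$ modulo $\ell$ records the cycle type, viewed in $W_{2r}$ via $\psi$ (with $2r:=\deg f\in\{N,N-1,N-2\}$), of the Frobenius at $\ell$ in $\Gal(P_f)$, and finer data --- for instance whether a discriminant is a square mod $\ell$ --- is equally visible.

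The inputs are: \emph{big monodromy} --- for all but finitely many $\ell$ the geometric monodromy group of $\calF_\ell\otimes\FF_\ell$ on $\calU_{\FFbar_q}$ contains $\Omega_N(\FF_\ell)$, the arithmetic monodromy group lies in the corresponding orthogonal similitude group, and each element acts on the roots through the embedding $\psi$ of \S\ref{SS:W def}; this is where $d\geq3$ and $(n,d)\neq(2,3)$ are used --- and \emph{equidistribution}: by Deligne's equidistribution theorem the classes $\Frob_f$, $f\in U(\FF_q)$, equidistribute as $q\to\infty$ in the conjugacy classes of the mod-$\ell$ arithmetic monodromy group; moreover, since the groups $\Omega_N(\FF_\ell)$ for distinct large $\ell$ have no common nonabelian composition factor, the joint monodromy over a finite set $L$ of admissible primes is the full product, so the tuples $(\Frob_f\bmod\ell)_{\ell\in L}$ equidistribute in the product --- that is, the reductions of $P_f$ at the primes of $L$ behave asymptotically independently.

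For the group theory, recall that $\Gal(P_f)\subseteq W_{2r}$ always, and that the target group $G$ (which of $W_{N-1}$, $W_{N-2}$, $W_N$, $W_N^+$ occurs is fixed by the parity of $d$, the sign $\varepsilon_f$, and whether $d$ is a square, by the discriminant computation recalled in \S\ref{SS:hypersurface}). It then suffices to show that for a proportion of $f\in U(\FF_q)$ tending to $1$: (i) $P_f$ is separable and $f$ has no root in $\{\pm1\}$, so that its roots form $r$ distinct reciprocal pairs and $\psi$ has the full signed-permutation group as its target; and (ii) $\Gal(P_f)$ lies in no maximal subgroup of $G$. The maximal subgroups are of two kinds. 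The few index-two subgroups are each detected by whether a fixed rational invariant of $P_f$ --- the discriminant of $P_f$, the discriminant of its degree-$r$ ``block'' resolvent (with roots $\alpha_i+\alpha_i^{-1}$), or their product --- is a perfect square, which fails once that invariant is a quadratic nonresidue modulo some $\ell\in L$; and a proportion of elements of $\Omega_N(\FF_\ell)$ bounded below uniformly in $\ell$ has characteristic polynomial of nonsquare discriminant (e.g.\ those with an irreducible reciprocal characteristic polynomial, by Stickelberger), except in the one case where squareness is forced and $G=W_N^+$. The remaining maximal subgroups --- the imprimitive ones and those lying over a primitive proper subgroup of $S_r$ --- are excluded once $P_f$ is irreducible modulo some $\ell\in L$ (forcing the image of $\Gal(P_f)$ in $S_r$ to contain an $r$-cycle, hence to be transitive and primitive) and has, modulo some $\ell\in L$, a factorization forcing a block transposition; each again occurs with density bounded below uniformly in $\ell$, because the factorization statistics of characteristic polynomials in $\Omega_N(\FF_\ell)$ approach, as $\ell\to\infty$, the cycle-type statistics of the ambient Weyl group and so realize every admissible type with positive density. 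Feeding this into the sieve, the proportion of $f$ that fails any one of these finitely many conditions at \emph{every} $\ell\in L$ is at most $(1-c)^{|L|}+o_{q\to\infty}(1)$ for an absolute $c>0$; letting $|L|\to\infty$ and then $q\to\infty$ gives $\delta(q)\to1$. Finally (ii) yields the stated isomorphisms via the elementary fact that a subgroup of $W_{2r}$ surjecting onto $S_r$ and containing a single sign change is all of $W_{2r}$, and likewise a subgroup of $W_{2r}^+$ surjecting onto $S_r$ and containing a product of two sign changes is all of $W_{2r}^+$.

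I expect the main obstacle to be the big-monodromy input: that the universal family of smooth degree-$d$ hypersurfaces in $\PP^{n+1}$ has geometric monodromy equal to the full orthogonal group of its primitive middle cohomology, in the mod-$\ell$ form valid for every sufficiently large $\ell$. The $\ell$-adic version is classical --- via a Lefschetz pencil, using that the fundamental group acts irreducibly on the span of the vanishing cycles, that the vanishing cycles span the primitive cohomology, and that they form a single orbit --- but the mod-$\ell$ strengthening, and with it the exclusion $(n,d)=(2,3)$ (the cubic surface, whose monodromy is the Weyl group of $E_6$, not an orthogonal group), needs extra care and is presumably an isolated ingredient. A secondary technical point, feeding the sieve, is to verify that the distributions of characteristic-polynomial factorization types and of discriminant symbols in $\Omega_N(\FF_\ell)$ really do meet the complement of every maximal subgroup of $G$ with density bounded away from $0$ uniformly in $\ell$; this rests on the structure of conjugacy classes and maximal tori in orthogonal groups over finite fields.
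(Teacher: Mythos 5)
Your overall architecture — cite the general big-monodromy theorem (the paper's Theorem~\ref{T:main 1}, whose proof is indeed the sieve-plus-equidistribution-plus-Weyl-group argument you re-sketch), and feed it Katz's computation that the universal family of smooth degree-$d$ hypersurfaces has full orthogonal monodromy (where $d\geq 3$ and $(n,d)\neq(2,3)$ enter) — matches the paper. But you have misplaced where the hypersurface-specific work lies, and in doing so left a genuine gap. To decide \emph{which} of $W_N$ and $W_N^+$ is the target when $d$ is odd and $\varepsilon_f=1$, you need to know the square class of $\disc(P_f)$, equivalently the quadratic field $K$ of \S\ref{SS:the field K}. You invoke ``the discriminant computation recalled in \S\ref{SS:hypersurface}'', but that section only contains a forward reference (``we will show later that the discriminant of $P_f$ is in $(-1)^{(d-1)/2}d\cdot(\QQ^\times)^2$''); it is precisely the statement to be proved, not an available input. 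Your sieve can only conclude that $\Gal(P_f)$ is $W_N$ or $W_N^+$ in that case; without identifying $K$ you cannot prove the third and fourth lines of the displayed isomorphism, i.e.\ that the split between them is governed by whether $d$ is a square.

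The paper supplies exactly this missing ingredient (Lemma~\ref{L:K hypersurface}): by Proposition~\ref{P:K criterion}, $K$ is determined by $(-1)^{N/2}\disc(L)$ where $L$ is the primitive middle cohomology lattice of a complex hypersurface; Beauville's description gives $\disc(L)=\pm d$, and the sign is pinned down by showing the negative part $b_-$ of the real intersection form is even, via the Hodge index theorem together with Hirzebruch's generating function for Hodge numbers (a computation mod~$4$ using that $d$ is odd). This yields $\disc(L)=d$, hence $K=\QQ\big(\sqrt{(-1)^{(d-1)/2}d}\big)$, which equals $\QQ$ if and only if $d$ is a square. None of this Hodge-theoretic input appears in your proposal, and it is not a routine consequence of the monodromy or equidistribution statements you list. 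A secondary, lesser point: the limit in the theorem is over \emph{all} prime powers $q$, with varying characteristic; the paper handles this by running the axiomatic setup over $R=\ZZ$ and using the specialization results of \S\ref{S:big monodromy} to transfer monodromy uniformly to every finite-field fibre, whereas your sketch works over a fixed $\FF_q$ and would need an added uniformity-in-$p$ argument.
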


We will prove Theorem~\ref{T:hypersurfaces}  in \S\ref{S:hypersurfaces} by showing that it satisfies the general framework of Theorem~\ref{T:main 1}.  We will use some Hodge theory to compute the field $K$ of \S\ref{SS:the field K} which is needed to distinguish the cases when $d$ is odd and $\varepsilon_f=1$.

\begin{remark}
Let us briefly mention the excluded case where $n\geq 2$ is \emph{odd}.  Take any $d\geq 3$ and define $U(\FF_q)$ as before.   For any $f\in U(\FF_q)$, the zeta function of the hypersurface defined by $f$ will now be of the form $P_f(T)/\prod_{i=0}^{2n} (1-q^iT)$ for a unique polynomial $P_f(T)\in \QQ[T]$ of degree $N:=(d-1)((d-1)^{n+1}-1)/d$.

The description of the Galois group of $P_f$ for a ``random'' $f\in U(\FF_q)$ is now much more straightforward.  We have $\delta(q)\to 1$ as $q\to \infty$, where $\delta(q)$ is the proportion of $f\in U(\FF_q)$ for which $\Gal(P_f)$ is isomorphic to $W_N$.    This can be proved with the techniques of this paper and using the computations of Chavdarov (the work of Chavdarov will be described in \S\ref{SS:related results}).

For both even and odd $n$, the polynomial $P_f$ can be obtained from the characteristic polynomial of the $q$-th power Frobenius automorphism acting on the middle \'etale cohomology group $V:=H^n_{\et}((H_f)_{\FFbar_q}, \QQ_\ell)$ for a prime $\ell\nmid q$.   The important difference between the two cases is that the cup product $V\times V \to H^{2n}((H_f)_{\FFbar_q}, \QQ_\ell)\cong \QQ_\ell(-n)$
is symmetric when $n$ is even and skew-symmetric when $n$ is odd.   
\end{remark}

\begin{remark} \label{R:sign of hyper}
The sign $\varepsilon_f$ can be $+1$ or $-1$ and both occur with essentially equal likelihood.  More precisely, for a fixed $\varepsilon \in \{\pm 1\}$, we have $|\{f\in U(\FF_q): \varepsilon_f=\varepsilon\}|/|U(\FF_q)| \to 1/2$ as $q\to \infty$.
\end{remark}

\subsection{General setup} \label{SS:new setup}

Let $R$ be either a finite field or the ring of $S$-units in a number field $F$ with $S$ a finite set of non-zero prime ideals of $\OO_F$.
Let $U$ be a smooth scheme over $R$ of relative dimension at least $1$ with geometrically connected fibers.   

\subsubsection{Representations} \label{SSS:rep}
Fix a set of rational primes $\Sigma$ with Dirichlet density $1$ such that each $\ell\in \Sigma$ is not equal to the characteristic of $R$ and satisfies $\ell\geq 5$.   For each prime $\ell\in \Sigma$, we fix a continuous representation
\[
\rho_\ell\colon \pi_1(U_{R[1/\ell]}) \to \Or(M_\ell),
\]
where $M_\ell$ is an orthogonal space\footnote{The definitions of orthogonal spaces and orthogonal groups are recalled in \S\ref{SS:orthogonal spaces}.} over $\ZZ_\ell$.   Here, and throughout this article, $\pi_1$ will always refer to the \'etale fundamental group.   We will suppress the base point in our fundamental groups and hence its elements and representations will only be determined up to conjugacy.  Equivalent to giving $\rho_\ell$ is to give a lisse $\ZZ_\ell$-sheaf $\calH_\ell$ on $U_{R[1/\ell]}$ of free $\ZZ_\ell$-modules of finite rank with a symmetric autoduality pairing $\calH_\ell\times \calH_\ell\to \ZZ_\ell$.

 From $M_\ell$, we obtain orthogonal spaces $V_\ell:=M_\ell/\ell M_\ell$ and $\calV_\ell:= M_\ell \otimes_{\ZZ_\ell} \QQ_\ell$ over $\FF_\ell$ and $\QQ_\ell$, respectively.  Let
\[
\bbar\rho_\ell\colon \pi_1(U_{R[1/\ell]}) \to \Or(V_\ell)
\]
be the representation obtained by composing $\rho_\ell$ with the obvious reduction map.

\subsubsection{Compatibility} \label{SSS:compatibility}
Take any $R$-algebra $k$ that is a finite field and take any point $u\in U(k)$.    Let $\bbar{k}$ be a fixed algebraic closure of $k$.  For a prime $\ell \in \Sigma$ that is invertible in $k$, we have 
$u\in U(k)=U_{R[1/\ell]}(k)$.   Viewing $u$ as a morphism $\Spec k \to U_{R[1/\ell]}$, we obtain a group homomorphism $\Gal(\bbar{k}/k)=\pi_1(\Spec k) \to \pi_1(U_{R[1/\ell]})$ and we denote by $\Frob_u$ the image of the Frobenius automorphism of the extension $\bbar{k}/k$.  Observe that $\Frob_u$ lies in a well-defined conjugacy class of $\pi_1(U_{R[1/\ell]})$.     In particular, the polynomial
\[
P_u(T):=\det(I- \rho_\ell(\Frob_u) T)
\]
is well-defined and has coefficients in $\ZZ_\ell$.    

We shall further assume that the family of representations $\{\rho_\ell\}_{\ell\in\Sigma}$ are \defi{compatible}, i.e., the above polynomial $P_u(T)$ lies in $\QQ[T]$ and does not depend on the choice of $\ell$.    From our compatibility assumption, the rank of $M_\ell$ as a $\ZZ_\ell$-module does not depend on $\ell$; denote this common rank by $N$.   We shall assume that $N>2$.

 Since $\rho_\ell(\Frob_u)$ lies in $\Or(M_\ell)$, we have
\begin{equation} \label{E:root number}
T^N P_u(1/T) = \varepsilon_u P_u(T),
\end{equation}
where $\varepsilon_u:=\det(-\rho_\ell(\Frob_u)) \in \{\pm 1\}$.  From our compatibility assumption, the sign $\varepsilon_u$  does not depend on the choice of $\ell$.

\subsubsection{Big monodromy} \label{SSS:bm}
For each prime $\ell\in \Sigma$, let $\Or_{\calV_\ell}$ be the orthogonal group of $\calV_\ell$ as an algebraic group over $\QQ_\ell$.  For each field $k$ that is an $R[1/\ell]$-algebra, we can view $\rho_\ell(\pi_1(U_{\overline{k}}))$ as a subgroup of $\Or_{\calV_\ell}(\QQ_\ell)=\Or(\calV_\ell)$, where $\bbar{k}$ is a fixed algebraic closure of $k$.

We now make an additional ``\defi{big monodromy}'' assumption.  Assume that one of the following hold:
\begin{alphenum}
\item \label{bm-a}
The ring $R$ has characteristic $0$ and for any finite field $k$ that is a an $R$-algebra, the Zariski closure of $\rho_\ell(\pi_1(U_{\bbar{k}}))$ in $\Or_{\calV_\ell}$ is either $\SO_{\calV_\ell}$ or $\Or_{\calV_\ell}$ for a set of primes $\ell \in \Sigma$ with  Dirichlet density $1$. 
\item  \label{bm-b}
There is a subset $\Lambda \subseteq \Sigma$ with Dirichlet density $1$ such that for any finite field $k$ that is an $R$-algebra, we have
\[
\bbar\rho_\ell(\pi_1(U_{\bbar{k}})) \supseteq \Omega(V_\ell)
\]
for all primes $\ell \in \Lambda$ that are not equal to the characteristic of $k$, where $\Omega(V_\ell)$ is the commutator subgroup of $\Or(V_\ell)$.
\end{alphenum}
In fact, condition (\ref{bm-a}) implies condition (\ref{bm-b}), see~Corollary~\ref{C:equivalent monodromy}.

\subsubsection{The field $K$} \label{SS:the field K}
Suppose that $N$ is even.  We shall prove in \S\ref{S:the field K} that there is a unique extension $K/\QQ$ of degree at most $2$ such that for all sufficiently large $\ell \in \Sigma$, the prime $\ell$ splits in $K$ if and only if the orthogonal space $V_\ell$ is split (in the sense of \S\ref{SS:orthogonal}).  

Consider any point $u\in U(k)$ with $k$ a finite field that is an $R$-algebra.  Let $\Delta_u$ be the discriminant of $P_u$. If $\varepsilon_u=1$ and $P_u$ is separable, then $K=\QQ(\sqrt{\Delta_u})$, cf.~ Proposition~\ref{P:K criterion}.

\subsection{Main result} \label{SS:intro maximal}

Fix notation and assumptions as in \S\ref{SS:new setup}.    Take any $u\in U(k)$, where $k$ is an $R$-algebra that is a finite field.   Let $\Gal(P_u)$ be the Galois group of a splitting field of $P_u$ over $\QQ$.     From \S\ref{SS:W def} and (\ref{E:root number}), we find that $\Gal(P)$ is isomorphic to a subgroup of $W_{N-1}$ if $N$ is odd, $W_{N-2}$ if $N$ is even and $\varepsilon_u=-1$, and $W_N$ if $N$ is even and $\varepsilon_u=1$. 

Suppose that $N$ is even, $\varepsilon_u=1$ and $P_u$ is separable.   If $K=\QQ$, then the discriminant $\Delta_u$ of $P_u$ is a square and hence $\Gal(P_u)$ is isomorphic to a subgroup of $W_N^+$.\\ 

The following theorem describes the Galois group $\Gal(P_u)$ for a ``random'' $u\in U(k)$.  The group $\Gal(P_u)$ is usually as large as possible given the constraints discussed above.

\begin{theorem} \label{T:main 1}
For a finite field $k$ that is an $R$-algebra, we define $\delta(k)$ to be the proportion of $u\in U(k)$ for which we have 
\begin{align} \label{E:Galois specific}
\Gal(P_u) \cong \begin{cases}
      W_{N-1} & \text{ if $N$ is odd}, \\
      W_{N-2} & \text{ if $N$ is even and $\varepsilon_u=-1$}, \\
      W_{N} & \text{ if $N$ is even, $\varepsilon_u=1$ and $K\neq \QQ$}, \\
      W_{N}^+ & \text{ if $N$ is even, $\varepsilon_u=1$ and $K=\QQ$}
\end{cases}
\end{align}
(and set $\delta(k)=0$ when $U(k)$ is empty).
Then
\[
\lim_{k,\,|k|\to \infty} \delta(k) = 1,
\]
where the limit is over finite fields $k$ that are $R$-algebras with increasing cardinality.
\end{theorem}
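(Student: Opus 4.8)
The plan is to reduce the statement to a statement about the geometric and arithmetic monodromy groups of the family $\{\rho_\ell\}$, following the Chavdarov--Kowalski--Jouve method of producing large Galois groups via Frobenius conjugacy classes, but working with the $W_{2m}$-structure rather than a single symplectic/orthogonal group. First I would fix a suitable auxiliary prime $\ell\in\Sigma$ (chosen in the density-one set where the big monodromy hypothesis \eqref{bm-b} holds, and large enough that the conclusion of \S\ref{SS:the field K} on the splitting of $V_\ell$ applies). For a point $u\in U(k)$ the Galois group $\Gal(P_u)$ embeds in $W_{2n}$ via the map $\psi$ of \S\ref{SS:W def}, and the key point is that $\psi(\Gal(P_u))$ is controlled by the image of $\bbar\rho_\ell(\Frob_u)$ in $\Or(V_\ell)$ together with the sign $\varepsilon_u$ and the discriminant class. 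So the first reduction is: it suffices to show that, with probability tending to $1$, (i) $P_u$ is separable of the ``generic'' degree pattern (so that $m=n$ and the target is exactly the $W_N$, $W_{N-1}$, $W_{N-2}$ as in the four cases), and (ii) $\Gal(P_u)$ fills up the full predicted subgroup of $W_{2n}$.

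For (ii) the strategy is a group-theoretic recognition lemma plus an equidistribution/large-sieve argument. On the group side: a subgroup $G\le W_{2n}$ that surjects onto the symmetric group $S_n$ under $W_{2n}\to S_n$, and whose image in the ``sign'' quotient $W_{2n}\to (\ZZ/2)^n\to \ZZ/2$ (product of the signs) is appropriately constrained, and which contains enough ``reflections'' (elements acting as a single sign change on one coordinate, i.e.\ of cycle type corresponding to a transposition $\alpha_i\leftrightarrow\alpha_i^{-1}$) is forced to be all of $W_{2n}$ or $W_{2n}^+$; this is the analogue of the classical ``transitive $+$ contains a transposition $\Rightarrow$ $S_n$'' criterion, adapted to the hyperoctahedral group, and should follow from the structure of $W_{2n}$ as $(\ZZ/2)\wr S_n$. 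On the arithmetic side I would use the big-monodromy hypothesis to show that the image of $\pi_1(U_{\bbar k})$ under $\bbar\rho_\ell$ contains $\Omega(V_\ell)$; combined with a Chebotarev/Lang--Weil count over the finite field $k$ (exactly as in Chavdarov and in Kowalski's sieve), the Frobenius classes $\bbar\rho_\ell(\Frob_u)$ as $u$ ranges over $U(k)$ equidistribute (up to error $O(|k|^{-1/2})$ in the relevant sense) in the cosets of $\Omega(V_\ell)$ inside the arithmetic monodromy image. This produces, for a density-one set of $u$, Frobenius elements realizing each of the group-theoretic conditions above — in particular an element whose characteristic polynomial is separable and has the ``reflection'' cycle type, and an element mapping to an $n$-cycle in $S_n$, and (when needed) elements controlling the sign/discriminant coset. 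The passage from ``$\Omega(V_\ell)$ is in the geometric monodromy'' to ``the corresponding cycle types occur in $W_{2n}$'' uses that a maximal torus of $\SO(\calV_\ell)$ meets $W_{2n}$ in the obvious way and that $\Omega(V_\ell)$ is large enough to contain reflections and ``Coxeter-type'' elements in its cosets.

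The argument for (i), separability and the correct degree/factor pattern, is handled in parallel: $P_u$ fails to be separable, or has an unexpected factor of $1\pm T$, only if $\bbar\rho_\ell(\Frob_u)$ has a repeated eigenvalue or the eigenvalue $\pm1$ with the wrong multiplicity; the set of group elements in $\Omega(V_\ell)$-cosets with this property is a proper subvariety condition, so by Lang--Weil the proportion of such $u$ is $O(|k|^{-1/2})\to 0$. The role of the field $K$ of \S\ref{SS:the field K}: when $N$ is even and $\varepsilon_u=1$, Proposition~\ref{P:K criterion} identifies $\QQ(\sqrt{\Delta_u})$ with $K$, so $\Gal(P_u)$ lands in $W_N^+$ precisely when $K=\QQ$, and otherwise one must check the discriminant is genuinely a non-square for a density-one set of $u$ — again a Lang--Weil estimate on the locus where a fixed quadratic character of $k^\times$ takes a prescribed value, valid because $V_\ell$ being non-split forces the relevant quadratic twist to be nontrivial. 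Assembling: for a proportion $1-O(|k|^{-1/2})$ of $u$, $\Gal(P_u)$ is a subgroup of the predicted $W$ (or $W^+$) that is transitive-on-pairs via an $n$-cycle, contains a reflection, and has the correct image in the sign and discriminant quotients, hence equals the whole group by the recognition lemma; letting $|k|\to\infty$ gives $\delta(k)\to1$.

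\textbf{Main obstacle.} The hardest part is the group-theoretic recognition step inside the hyperoctahedral group $W_{2n}$ (and its index-two subgroup $W_{2n}^+$): one must pin down exactly which combination of invariants — transitivity/primitivity of the image in $S_n$, the ``total sign'' character, the discriminant character, and the presence of a single reflection — suffices to force equality with $W_{2n}$ or $W_{2n}^+$, and then guarantee that Frobenius elements realizing precisely those invariants exist inside the $\Omega(V_\ell)$-cosets of the monodromy group. Making the Lang--Weil/large-sieve bookkeeping uniform in $k$ and in the chosen $\ell$ is routine but must be done carefully so that the error terms genuinely tend to $0$.
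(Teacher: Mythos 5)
Your overall framing (a recognition lemma inside $W_{2n}$ plus monodromy and equidistribution) is the right family of ideas, but the core of your argument does not work as stated: you fix a \emph{single} auxiliary prime $\ell$ and assert that $\psi(\Gal(P_u))$ is ``controlled by'' the single element $\bbar\rho_\ell(\Frob_u)$. It is not. That element only determines $P_u \bmod \ell$, i.e.\ the factorization type of $P_u$ modulo one rational prime, which certifies exactly \emph{one} cycle type of \emph{one} Frobenius class in the group $\Gal(P_u)\subseteq W_{2n}$ (Galois over $\QQ$). Any recognition lemma of the kind you invoke needs several elements of $\Gal(P_u)$ with different cycle types (an $n$-cycle image in $\mathfrak{S}_n$, a long prime-order cycle, a transposition, sign-change elements, a sign condition), and for a \emph{fixed} $u$ these can only be produced by exhibiting suitable factorization patterns of $P_u$ modulo \emph{several different} primes $\ell_1,\dots,\ell_6$: a single $\bbar\rho_\ell(\Frob_u)$ cannot simultaneously have two different characteristic-polynomial factorization types. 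Your equidistribution step is miscalibrated for the same reason: for one fixed $\ell$, the set of $u\in U(k)$ with $\bbar\rho_\ell(\Frob_u)$ in a prescribed conjugation-stable set of density about $1/N^2$ has density about $1/N^2$, not $1-o(1)$. This is exactly why the paper (Proposition~\ref{P:criterion for sieving}) builds six conjugation-stable families $C_i(V_\ell)$, each of density $\geq c_2/N^2$ in every $\Omega(V_\ell)$-coset, and requires, for each $i$, \emph{some} prime $\ell_i$ with $\bbar\rho_{\ell_i}(\Frob_u)\in C_i(V_{\ell_i})$.

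The second missing ingredient is the independence of the mod-$\ell$ representations, which is the heart of the paper's proof and never appears in your proposal (it cannot, since you use one prime). To show a density-one set of $u$ satisfies all six conditions simultaneously, one bounds the proportion of $u$ for which condition $i$ fails at \emph{every} $\ell$ in a finite set $D\subseteq\Lambda$ by roughly $(1-c_2/N^2)^{|D|}+O(|k|^{-1/2})$, taking $|D|$ large; this requires knowing that the joint image $\bbar\rho_D(\pi_1(U_{\bbar{k}}))$ contains $\prod_{\ell\in D}\Omega(V_\ell)$ (Proposition~\ref{P:big monodromy a}, proved via perfectness of $\Omega(V_\ell)$ and Goursat's lemma) and then applying equidistribution to the product representation $\bbar\rho_D$, not to a single $\bbar\rho_\ell$. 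Two smaller points: a recognition criterion built on single sign-change ``reflections'' cannot serve in the case where the answer is $W_{2n}^+$, since $W_{2n}^+$ contains no single sign change (the paper's Lemma~\ref{L:group theory for W2n} uses products of one or two disjoint sign transpositions precisely for this reason); and no extra Lang--Weil count is needed for the discriminant, because by Proposition~\ref{P:K criterion} one has $\QQ(\sqrt{\Delta_u})=K$ for \emph{every} $u$ with $\varepsilon_u=1$ and $P_u$ separable.
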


\begin{remark}
\begin{romanenum}
\item
Theorem~\ref{T:main 1} answers a question of Katz on what the maximal Galois groups are, see the end of \S1 of \cite{katzreport} where it is asked in the setting of elliptic curves (which we will discuss in \S\ref{SS:EC setup}).   Katz's guess is the same as (\ref{E:Galois specific}) except he predicts that $W_N^+$ is the group for $N$ even and $\varepsilon_u=1$.
\item Jouve proved a special case of Theorem~\ref{T:main 1}, in the context of elliptic curves, where he showed that $\Gal(P_u)$ is either equal to $W_{2n}$ or $W_{2n}^+$ for the appropriate $n$.  See Remark~\ref{R:Jouve} for the precise result.
\end{romanenum}
\end{remark}

\subsection{An effective version} \label{SS:effective version}

Fix notation and assumptions as in \S\ref{SS:new setup}.   Now assume that $R=\FF_q$ is a finite field with odd cardinality and that $U$ is a smooth affine curve over $\FF_q$ that is geometrically integral.  Let  $C/\FF_q$ be the smooth projective curve that contains $U$ as a Zariski open subvariety.   Let $g$ be the genus of $C$ and set $b$ be the number of points in the set $C(\FFbar_q)- U(\FFbar_q)$.  

We also assume that condition (\ref{bm-b}) of \S\ref{SS:new setup} holds with a set of primes $\Lambda$ having \emph{natural} density $1$.  Finally, we assume that the representations $\{\bbar\rho_\ell\}_{\ell\in \Sigma}$ are all tamely ramified. 

In this special setting, the following gives an effective version of Theorem~\ref{T:main 1}.

\begin{theorem} \label{T:main 2}
For all $n\geq 1$, we have
\[
\delta(\FF_{q^n})= 1 + O\big(  2^{2g+b} (2g+b)\,  q^{-n/(N^2-N+6)} \log (q^n)  \big),
\]
where the implicit constant depends only on $\Sigma$.
\end{theorem}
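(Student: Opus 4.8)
\textbf{The plan} is to make the proof of Theorem~\ref{T:main 1} effective, replacing its use of equidistribution in the limit by Kowalski's large sieve over the function field $\FF_{q^n}(U)$. As in that proof, the starting point is a purely group-theoretic lemma: there is a finite list of conjugacy classes $\tau_1,\dots,\tau_r$ of the ambient group $W$ (one of $W_{N-1},W_{N-2},W_N,W_N^+$ according to the parity of $N$, the sign $\varepsilon_u$, and whether $K=\QQ$), each described by a factorization type of a reciprocal polynomial of degree at most $N$ over a finite field, with the property that any subgroup $H\leq W$ meeting every $\tau_i$ equals $W$. Concretely the classes come from an element whose characteristic polynomial is irreducible of the largest degree allowed in $W$ (so its image in the symmetric quotient $S_n$ is a long cycle) together with a second element which, via the identity $\langle(1\,2\cdots n),(1\,2\cdots n{-}1)\rangle=S_n$, forces $H\to S_n$ to be onto and supplies a nontrivial vector of the sign lattice; the split between the $W_N$ and $W_N^+$ cases is then handled a priori by the field $K$ and Proposition~\ref{P:K criterion}, not by the sieve. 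I would import the precise list from the proof of Theorem~\ref{T:main 1}. By Dedekind's theorem, whenever $P_u$ is separable modulo a prime $p$ of good reduction, the factorization type of $P_u\bmod p$ is realized inside $\Gal(P_u)$; and by compatibility with $\ell=p$ this is exactly the factorization type over $\FF_p$ of the characteristic polynomial of $\bbar\rho_p(\Frob_u)\in\Or(V_p)$. Thus $\Gal(P_u)\neq W$ forces: for some fixed index $i$ and \emph{every} prime $p\leq L$ of good reduction, $\bbar\rho_p(\Frob_u)$ does not have characteristic polynomial of type $\tau_i$ (an event that also covers inseparability of $P_u\bmod p$, as each $\tau_i$ is regular semisimple).

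\textbf{Quantitative inputs.} Two facts, already present in the proof of Theorem~\ref{T:main 1}, are needed in effective form. First: for each $i$ there is a constant $c=c(\Sigma,N)>0$, independent of $p$, such that at least a proportion $c$ of the elements of $\Omega(V_p)$ have characteristic polynomial of type $\tau_i$; this is the standard enumeration of regular elements with prescribed characteristic-polynomial type in a finite orthogonal group over $\FF_p$ with $p\geq5$, together with hypothesis~(\ref{bm-b}), which places $\Omega(V_p)$ inside the geometric monodromy of $\bbar\rho_p$ over $\FF_{q^n}$ for $p\in\Lambda$. Second: for distinct large primes $p,p'\in\Lambda$ the groups $\Omega(V_p)$ and $\Omega(V_{p'})$ are quasisimple and non-isomorphic (their orders differ), so by Goursat's lemma the geometric monodromy of $\bbar\rho_p\times\bbar\rho_{p'}$ contains $\Omega(V_p)\times\Omega(V_{p'})$ — the pairwise independence the large sieve needs.

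\textbf{The sieve.} Now run the large sieve over $\FF_{q^n}(U)$ with sieve support $\{p\in\Lambda:p\leq L\}$ (of which there are $\gg L/\log L$, by the natural-density-$1$ hypothesis on $\Lambda$), sifting, for the fixed $i$, the subset of $\Or(V_p)$ of elements whose characteristic polynomial is of type $\tau_i$ — a set of relative density $\geq c$. The effective analytic ingredient is Grothendieck--Lefschetz together with Deligne's weight bounds: since $C\setminus U$ has $b$ geometric points and the $\bbar\rho_\ell$ are tamely ramified, all Swan conductors vanish, Riemann--Hurwitz controls the genera of the relevant covers, and the Betti numbers of the sheaves on $U$ used to detect the factorization types are bounded by $(\text{a polynomial in }N)\cdot 2^{2g+b}(2g+b)$. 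Consequently the two-prime (bilinear) error produced by the large sieve is $\ll 2^{2g+b}(2g+b)\,q^{n/2}\,L^{(N^2-N)/2+2}$, while the main term is $\gg c\,q^n\,L/\log L$. Summing over the $O(1)$ possible indices $i$ and over the two values of $\varepsilon_u$ (each carving out a union of Frobenius classes in $U(\FF_{q^n})$), and choosing $L\asymp q^{n/(N^2-N+6)}$ so that $q^{n/2}L^{(N^2-N)/2+2}$ and $q^n/L$ balance, yields $\delta(\FF_{q^n})=1+O\big(2^{2g+b}(2g+b)\,q^{-n/(N^2-N+6)}\log(q^n)\big)$, the factor $2^{2g+b}(2g+b)$ absorbing the dependence of the error on the geometry of $U$.

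\textbf{Main obstacle.} The group theory and the finite-group counting are already in hand from Theorem~\ref{T:main 1}; the substantive work in Theorem~\ref{T:main 2} is the last step — installing the large sieve with every implied constant made uniform. The delicate points are: bounding the conductors of the exterior/tensor powers of the mod-$p$ sheaves needed to read off the factorization types entirely in terms of $g$, $b$ and $N$ (using tameness to kill the wild part); verifying the Goursat independence across the full sieve range; and carrying out the optimization of $L$ so as to land on the exponent $1/(N^2-N+6)$.
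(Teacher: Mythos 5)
Your route is genuinely different from the paper's in the sieve machinery: you propose Kowalski's large sieve for Frobenius (essentially Jouve's framework, upgraded with the six factorization-type conditions), whereas the paper runs Selberg's sieve (Appendix~\ref{S:The Selberg sieve}) with sieve support the sets $D\subseteq\Lambda$ with $\prod_{\ell\in D}\ell\leq Q$, fed by the effective equidistribution statement of Theorem~\ref{T:equidistribution}(\ref{T:equidistribution ii}) for tame covers, and with the same optimization $Q=q^{1/(N^2-N+6)}$ that you arrive at for $L$. Both sieves can in principle deliver the stated exponent, so the choice of sieve is not the problem.

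There are, however, concrete gaps in your accounting. First, the factor $2^{2g+b}$ does not come from Betti-number bounds for detection sheaves, and summing ``over the two values of $\varepsilon_u$'' is not enough. Frobenius elements land in cosets of $\prod_{\ell}\Omega(V_\ell)$ inside the monodromy group $G_\Lambda$, and the number $m$ of such cosets inside the relevant Frobenius coset can be as large as $2^{2g+b-1}$; the paper bounds it by Lemma~\ref{L:finite abelianization}, using that a prime-to-$q$ quotient of $\pi_1(U_{\FFbar_q})$ is generated by $2g+b-1$ elements, and this index is exactly what produces $2^{2g+b}$ in the final bound (it is the analogue of the unexplained $|G|$ in Jouve's estimate, see Remark~\ref{R:Jouve}). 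A large-sieve execution must carry this sign-group bookkeeping as well; without it your claimed error term is unjustified and the constant cannot be made to depend only on $\Sigma$. Second, your density input is stated for $\Omega(V_p)$, but that is not the group in which Frobenius lives: when $\varepsilon_u=-1$ one has $\det=-1$, and the spinor-norm coset is also constrained, so one needs the lower bound $\geq c_2/N^2$ \emph{in every coset} of $\Omega(V_\ell)$ in $\Or(V_\ell)$, which is precisely Proposition~\ref{P:criterion for sieving}(\ref{P:criterion for sieving b}) and requires the coset-by-coset conjugacy-class counts of \S\ref{SS:counting elements with sep polynomial}. Finally, a smaller point: your sketched two-class criterion (an $n$-cycle plus an $(n-1)$-cycle forcing surjectivity onto $\mathfrak{S}_n$) is false as a statement about meeting conjugacy classes (e.g.\ $\mathrm{AGL}_1(\FF_p)\subset\mathfrak{S}_p$ meets both classes); the paper's Lemma~\ref{L:group theory for W2n} needs five conditions via Brauer's lemma, plus a sixth to rule out $\ker(\varepsilon_1\varepsilon_2)$. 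Since you defer to the paper's list this is not fatal, but as written it would not suffice.
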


In particular, note that $1-\delta(\FF_{q^n})$ decays exponentially as a function of $n\geq 1$.  This strengthens a result of Jouve that we will recall in Remark~\ref{R:Jouve}.

\subsection{Example: $L$-functions of twists of an elliptic curve} \label{SS:EC setup}

Fix an elliptic curve $E$ defined over the function field $\FF_q(t)$, where $q$ is a power of a prime $p\geq 5$.     Assume that $E$ has multiplicative reduction at some place $v\neq \infty$ of $\FF_q(t)$, where $\infty$ is the place of $\FF_q(t)$ with uniformizer $t^{-1}$.   Let $m(t)$ be the monic squarefree polynomial in $\FF_q[t]$ whose irreducible factors correspond to the places $v\neq \infty$ for which $E/\FF_q(t)$ has bad reduction.  

Fix an integer $d\geq 1$.   For each integer $n\geq 1$, define the set
\[
U_d(\FF_{q^n}) = \big\{u\in\FF_{q^n}[t]: u \text{ squarefree, }\deg(u)=d,\; \gcd(u,m)=1\big\};
\]
it will serve as a parameter space for quadratic twists of $E$.  Identifying a polynomial with the tuple of its coefficients, we can view $U_d(\F_{q^n})$ as the $\FF_{q^n}$-points of an open subvariety $U_d$ of $\Aff^{d+1}_{\FF_q}$.    

Take any $u\in U_d(\FF_{q^n})$ and let $E_u$ be an elliptic curve over $\FF_{q^n}(t)$ obtained by taking a \defi{quadratic twist} of $E$ by $u$.   Let $v$ be a place of $\FF_{q^n}(t)$ and let $\FF_v$ be the corresponding residue field.  When $E_u$ has good reduction at $v$, we define the integer $a_v=q^{\deg v} +1 -|E(\FF_v)|$, where $E(\FF_v)$ is the $\FF_v$-points of a good model of $E$ over the local ring at $v$ and $\deg v$ is the degree of the field extension $\FF_v/\FF_q$.    If $E$ has bad reduction at $v$, define $a_v=1$, $-1$ or $0$ if $E$ has split multiplicative, non-split multiplicative or additive reduction, respectively, at $v$.  The \defi{$L$-function} of the elliptic curve $E_u$ over $\FF_{q^n}(t)$ is the power series
 \[
 L(T,E_u) := \prod_{v \text{ good}} (1-a_vT^{\deg v} + q^{\deg v} T^{2\deg v})^{-1} \cdot \prod_{v \text{ bad}} (1-a_vT^{\deg v})^{-1},
 \]
 where the product is over the places $v$ of $\FF_{q^n}(t)$.    Moreover, one can show that $L(T,E_u)$ is a polynomial. 
 Define the polynomial
 \[
 P_u(T):= L(T/q^n, E_u) \in \QQ[T].
 \]
The degree $N_d$ of $P_u(T)$ depends only on $E$ and $d$; we will give an explicit formula below.   
 The \defi{functional equation} of $L(T,E_u)$ says that
 \[
 T^{N_d} P_u(1/T) = \varepsilon_u P_u(T)
 \]
for a unique $\varepsilon_u \in \{\pm 1\}$ called the \defi{root number} of $E_u$.   
\\
 
For each place $v$ of $\FF_q(t)$, we can assign a \defi{Kodaira symbol} to the elliptic curve $E$ after base extending to the local field $\FF_q(t)_v$; the symbol can be computed quickly using Tate's algorithm.     For each place $v$ of $\FF_q(t)$, we define integers $f_v(E)$, $\gamma_v(E)$ and $b_v(E)$ using the following table.
\begin{center}
\begin{tabular}{c||c|c|c|c|c|c|c|c|c|c}
Kodaira symbol at $v$ & $\text{I}_0$ &$\text{I}_n \,(n\geq 1) $& $\text{II}$ & $\text{III}$ &  $\text{IV}$ & $\text{I}_0^*$ &  $\text{I}_n^*$ ($n\geq1$) &  $\text{IV}^*$ & $\text{III}^*$  & $\text{II}^*$ \\\hline 
$f_v$ & $0$ & $1$ & $2$ & $2$ & $2$ & $2$  & $2$   & $2$ & $2$ & $2$ \\
$\gamma_v$ & $1$ & $n/\gcd(2,n)$ & $1$ & $1$ & $3$ & $1$ & $2/\gcd(2,n)$ & $3$ & $1$  & $1$\\
$b_v$ & $0$ & $0$& $1$ & $1$ & $1$ & $0$ & $1$ & $1$ & $1$ & $1$ \\
\end{tabular}
\end{center}
The common degree of the polynomials $P_u(T)$ is 
\begin{align*}
N_d &= f_\infty(E_{t^d}) + \sum_{v\neq \infty} f_v(E)\deg v - 4+ 2d,
\end{align*}
where the sum is over the places $v\neq \infty$ of $\FF_q(t)$ and $E_{t^d}/\FF_q(t)$ is the quadratic twist of $E$ by $t^d$.   We also define the integers
\[
D_d := \gamma_\infty(E_{t^d})\cdot \prod_{v \neq  \infty} \gamma_v(E)^{\deg v}\quad  \text{ and }\quad B := \sum_{v\neq \infty} b_v(E)\deg v.
\]

The following describes the Galois group of the $L$-function of $E_u/\FF_{q^n}(t)$ when $E$ is twisted by a ``random'' $u\in U_d(\FF_{q^n})$.

\begin{theorem} \label{T:main EC}
Fix an integer $d\geq 1$ so that $N_d \geq \max\{6B,3\}$.   Assume further that $d\geq 2$ or that there is a place $v\neq \infty$ of $\FF_q(t)$ for which $E$ has Kodaira symbol $\operatorname{I}_0^*$.
For each $n\geq 1$, let $\delta(q^n)$ be the proportion of $u\in U_d(\FF_{q^n})$ for which we have an isomorphism
\begin{align} \label{E:EC ratio}
\Gal(P_u)=\Gal(L(T,E_u)) \cong \begin{cases}
      W_{N_d-1} & \text{ if $N_d$ is odd}, \\
      W_{N_d-2} & \text{ if $N_d$ is even and $\varepsilon_u=-1$}, \\
      W_{N_d} & \text{ if $N_d$ is even, $\varepsilon_u=1$, and $(-1)^{N_d/2} D_d$ is not a square}, \\
      W_{N_d}^+ & \text{ if $N_d$ is even, $\varepsilon_u=1$, and $(-1)^{N_d/2} D_d$ is a square}
\end{cases}
\end{align}
(and set $\delta(q^n)=0$ when $U_d(\FF_{q^n})$ is empty).  Then $\delta(q^n) \to 1$ as $n\to\infty$.
\end{theorem}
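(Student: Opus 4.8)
The plan is to derive Theorem~\ref{T:main EC} from the general result Theorem~\ref{T:main 1}: I will exhibit the family of $L$-function polynomials $P_u=L(T/q^n,E_u)$ as the Frobenius characteristic polynomials of a compatible system of orthogonal representations on the parameter variety $U_d$, check that it has big monodromy, and finally compute the quadratic field $K$ of \S\ref{SS:the field K} in terms of $E$ and $d$.

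\emph{Construction of the family.} Let $\phi\colon\calE\to\PP^1_{\FF_q}$ be the minimal elliptic fibration of $E/\FF_q(t)$ and, for $\ell\in\Sigma$ with $\ell\nmid q$, let $\calG_\ell=R^1\phi_*\ZZ_\ell$ on the good-reduction locus; it is a lisse rank-$2$ sheaf with $\calG_\ell\cong\calG_\ell^\vee(-1)$ via the Weil pairing. Following Katz's treatment of twisted $L$-functions, I twist $\calG_\ell$ by the universal quadratic Kummer-type sheaf attached to the squarefree degree-$d$ polynomial $u$, form the middle-extension sheaf across the bad fibres on $U_d\times\PP^1$, and push it forward along the projection $U_d\times\PP^1\to U_d$; this yields a lisse $\ZZ_\ell$-sheaf $\calH_\ell$ on $U_d$. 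Because the cup-product pairing on $H^1$ of a curve is alternating and the Weil pairing on the coefficients is also alternating, the induced autoduality on $\calH_\ell$ is \emph{symmetric}, so $\calH_\ell$ corresponds to a representation $\rho_\ell\colon\pi_1(U_d)\to\Or(M_\ell)$ as in \S\ref{SSS:rep}. Since $p\geq5$ all reduction is tame, so the rank of $\calH_\ell$ is computed by the Grothendieck--Ogg--Shafarevich formula with the conductor exponents $f_v$ of the table as local inputs, and equals $N_d$; by hypothesis $N_d\geq3>2$. For $u\in U_d(\FF_{q^n})$ the Grothendieck--Lefschetz trace formula identifies $\det(I-\rho_\ell(\Frob_u)T)$, once the Tate twist built into the normalisation $P_u(T)=L(T/q^n,E_u)$ is accounted for, with $P_u(T)$. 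Compatibility holds because $L(T,E_u)\in\ZZ[T]$ is built from the local Euler factors independently of $\ell$, and the functional equation of $L(T,E_u)$ is precisely (\ref{E:root number}), with $\varepsilon_u$ the root number.

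\emph{Big monodromy, and the application of Theorem~\ref{T:main 1}.} I then invoke Katz's big-monodromy theorem for quadratic-twist families: under the numerical condition $N_d\geq6B$ (which forces the degree to dominate the ``additive'' part $B$ of the bad reduction) and the auxiliary condition that $d\geq2$ or that $E$ has a fibre of Kodaira type $\operatorname{I}_0^*$ (which excludes a sporadic degeneracy of the twisting sheaf), the geometric monodromy group of $\calH_\ell\otimes\QQ_\ell$ is $\SO_{\calV_\ell}$ or $\Or_{\calV_\ell}$ for a density-$1$ set of $\ell$; this is condition~(\ref{bm-a}), hence also (\ref{bm-b}) by Corollary~\ref{C:equivalent monodromy}. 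Theorem~\ref{T:main 1} now applies and gives, for a proportion of $u\in U_d(\FF_{q^n})$ tending to $1$ as $n\to\infty$, that $\Gal(P_u)$ is $W_{N_d-1}$, $W_{N_d-2}$, $W_{N_d}$, or $W_{N_d}^+$ according as $N_d$ is odd, $N_d$ is even with $\varepsilon_u=-1$, $N_d$ is even with $\varepsilon_u=1$ and $K\neq\QQ$, or $N_d$ is even with $\varepsilon_u=1$ and $K=\QQ$; in particular $P_u$ is separable for such $u$.

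\emph{Identification of $K$, and the main obstacle.} It remains to prove that $K=\QQ$ if and only if $(-1)^{N_d/2}D_d$ is a square in $\QQ^\times$. By Proposition~\ref{P:K criterion} it suffices to compute $\disc(P_u)$ modulo squares for one $u\in U_d(\FF_{q^n})$ with $N_d$ even, $\varepsilon_u=1$ and $P_u$ separable, which the previous step supplies. I would begin from the identity $\disc(P_u)\equiv(-1)^{N_d/2}P_u(1)P_u(-1)\pmod{(\QQ^\times)^2}$, valid for a separable reciprocal polynomial of even degree and sign $+1$ (write $P_u(T)=T^{N_d/2}g(T+T^{-1})$ and expand $\prod_i(\gamma_i-\gamma_i^{-1})^2=g(2)g(-2)$; here $P_u(\pm1)\neq0$ automatically). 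Now $P_u(1)=L(1/q^n,E_u)$ is the central special value of the $L$-function of $E_u/\FF_{q^n}(t)$, which then has Mordell--Weil rank $0$, and $P_u(-1)=L(-1/q^n,E_u)$ is a corresponding central value for a quadratic twist of $E_u$; this is where known cases of the Birch and Swinnerton-Dyer conjecture over function fields enter, through the Artin--Tate formula for the associated elliptic surfaces together with the fact that the Shafarevich--Tate groups are finite of square order, allowing one to express these values modulo squares purely in terms of local invariants at the bad fibres. Running the resulting case analysis over Kodaira types --- the $\operatorname{I}_0^*$ fibres created by $u$ contributing trivially since $\gamma_v(\operatorname{I}_0^*)=1$, and the powers of $q$ being squares --- should recover exactly $D_d=\gamma_\infty(E_{t^d})\prod_{v\neq\infty}\gamma_v(E)^{\deg v}$, giving $\disc(P_u)\equiv(-1)^{N_d/2}D_d\pmod{(\QQ^\times)^2}$ and converting the last two cases of (\ref{E:Galois specific}) into those of (\ref{E:EC ratio}). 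The substantive part of the argument, and the only place requiring genuinely new input, should be precisely this last step: the modulo-squares evaluation of $P_u(1)$ and $P_u(-1)$ and the Kodaira-type-by-Kodaira-type bookkeeping that collapses the Artin--Tate local factors to the single product $D_d$. The sheaf construction and the big-monodromy input are essentially citations to Katz, and all of the equidistribution needed is already internal to Theorem~\ref{T:main 1}.
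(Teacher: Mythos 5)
Your overall route is the same as the paper's: realize the $P_u$ as Frobenius polynomials of a compatible orthogonal family on $U_d$ via the Katz--Hall sheaf construction, feed this into Theorem~\ref{T:main 1}, and then pin down the field $K$ of \S\ref{SS:the field K}. Two points, however, need repair. First, the big-monodromy step as you state it does not fit the framework: here $R=\FF_q$ has positive characteristic, and condition~(\ref{bm-a}) of \S\ref{SSS:bm} is by definition only available when $R$ has characteristic $0$, so ``this is condition~(\ref{bm-a}), hence also~(\ref{bm-b})'' is not a legal move. What the hypotheses $N_d\geq\max\{6B,3\}$, multiplicative reduction at some $v\neq\infty$, and ($d\geq 2$ or an $\operatorname{I}_0^*$ fibre) are tailored to is not an $\ell$-adic theorem of Katz but Hall's mod-$\ell$ big monodromy theorem (Theorem~6.4 of \cite{MR2372151}, in the explicit form of Theorem~3.4 of \cite{orthogonal}), which yields condition~(\ref{bm-b}) directly; the paper applies it after restricting to the one-parameter subfamily $c\mapsto(t-c)g(t)$ with $g\in U_{d-1}(\FF_q)$ (twisting by $g$ to reduce to $d=1$, the zeros of $g$ supplying the $\operatorname{I}_0^*$ fibres when $d\geq2$) and then gets big monodromy for all of $U_d$ by inclusion of images. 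So the step is fixable, but the theorem you need is Hall's, quoted for the restricted family, not a Zariski-closure statement verified through~(\ref{bm-a}).

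Second, your identification of $K$ follows the route of Remark~\ref{R:ECs}(iii) (discriminant of one $P_u$, then $P_u(1)P_u(-1)$ evaluated mod squares via BSD/Artin--Tate), whereas the paper's \S\ref{S:cohomological interpretation} instead proves $\disc(V_\ell)=D_d\cdot(\FF_\ell^\times)^2$ for $\ell\in\Lambda$ by citing \cite{orthogonal}*{Proposition~3.2(e)} and then invokes the defining property of $K$ from Proposition~\ref{P:K criterion}(\ref{P:K criterion i}); the two routes rest on the same underlying computation. The problem is that you leave that computation at ``should recover exactly $D_d$'': the mod-squares evaluation of the central values and the Kodaira-type bookkeeping collapsing the local factors to $\gamma_\infty(E_{t^d})\prod_{v\neq\infty}\gamma_v(E)^{\deg v}$ is precisely the one substantive input beyond Theorem~\ref{T:main 1}, and in the paper it is discharged by citation to the companion paper \cite{orthogonal} rather than reproved. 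As written, your proposal is incomplete at exactly this self-identified crux; either carry out that case analysis or cite it. (A minor simplification: to produce a $u$ with $P_u(\pm1)\neq 0$ you do not need the full strength of Theorem~\ref{T:main 1}; big monodromy plus equidistribution already gives it, as in the proof of Proposition~\ref{P:K criterion}(\ref{P:K criterion i}), and $P_u(\pm1)\neq0$ forces $\varepsilon_u=1$, so Proposition~\ref{P:K criterion}(\ref{P:K criterion ii}) suffices without worrying about separability.)
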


\begin{remark}
\label{R:ECs}
\begin{romanenum}
\item 
Note that the conditions on $d$ in Theorem~\ref{T:main EC} hold for all sufficiently large $d$; our constraint on $d$ is used to apply a big monodromy theorem of Hall.  
\item
Using the work of Katz and Hall, we will verify that the polynomials $P_u$ arise from representations as in the axiomatic setup of \S\ref{SS:new setup}. The remaining task is to compute the associated field $K$ from \S\ref{SS:the field K} when $N_d$ is even; this is needed to distinguish the two possible cases when $\varepsilon_u=1$.
\item
Suppose that $N_d$ is even and take any polynomial $u\in U_d(\FF_{q^n})$ for which $\varepsilon_u=1$ and $P_u$ is separable.   Denote the discriminant of $P_u$ by $\Delta_u$.   One can show that the square class $\Delta_u\cdot (\QQ^\times)^2$ is independent of the choice of $u$.   Distinguishing the last two cases of (\ref{E:EC ratio}) is a result of this square class being  $(-1)^{N_d/2} D_d \cdot (\QQ^\times)^2$.

How does one prove this?  Using that $P_u$ is reciprocal and separable, one can prove that
\begin{align*}
\Delta_u\cdot (\QQ^\times)^2 &= (-1)^{N_d/2} P_u(1) P_u(-1)(\QQ^\times)^2\\
&= (-1)^{N_d/2} L(1/q^n, E_u)\,  L(-1/q^n, E_u) \cdot(\QQ^\times)^2\\
&= (-1)^{N_d/2} L(1/q^n, E_u)\,  L(1/q^n, E_{\alpha u}) \cdot(\QQ^\times)^2,
\end{align*}
where $\alpha \in \FF_{q^n}^\times$ is any choice of non-square. 
Since the values $L(1/q^n, E_u)$ and $L(1/q^n, E_{\alpha u})$ are non-zero, the \defi{Birch and Swinnerton-Dyer conjecture} (BSD) give an explicit expression for them in terms of interesting invariants of $E_u$ and $E_{\alpha u}$, respectively.  This part of BSD for elliptic curves over global function fields has been proved by Tate and Milne.   Several of the invariants that arise, like the cardinality of the (finite!) Tate--Shafarevich group, are squares and hence do not need to be computed.   Proving that $ (-1)^{N_d/2}L(1/q^n, E_u) L(1/q^n, E_{\alpha u}) \in(-1)^{N_d/2}D_d \cdot (\QQ^\times)^2$ is then essentially an application of the Tate algorithm.   For details and background, see \S2.4 of \cite{orthogonal}.  The paper \cite{orthogonal}, which proves the Inverse Galois Problem for several groups of the form $\Omega(V_\ell)$, were motivated by these computations.
\item
If $N_d$ is even, then the integer $(-1)^{N_d/2}D_d$ depends only on the parity of $d$.
\item \label{R:ECs eps}
Both possibilities for $\varepsilon_u$ occur.  Moreover, we have $|\{u\in U_d(\FF_{q^n}): \varepsilon_u=\varepsilon\}|/|U(\FF_{q^n})| \to 1/2$ as $n\to \infty$ for each $\varepsilon \in \{\pm 1\}$.
\end{romanenum}
\end{remark}

\begin{example}
As an example consider a prime $q=p\geq 5$ and let $E/\FF_p(t)$ be the elliptic curve defined by $y^2=x(x-1)(x-t)$.  Fix an integer $d\geq 2$.  

The only places $v\neq \infty$ of $\FF_p(t)$ for which $E$ has bad reduction are those with uniformizers $t$ and $t-1$, and the Kodaira symbol is $\operatorname{I}_2$ at both places.   The elliptic curve $E_{t^d}$  has bad reduction at $\infty$ and the Kodaira symbol is $\operatorname{I}_2$ when $d$ is odd and $\operatorname{I}_2^*$ when $d$ is even.  We thus have $N_d=2d-1$ if $d$ is odd and $N_d=2d$ if $d$ is even.   We have $B=0$ and $d\geq 2$, so the conditions of Theorem~\ref{T:main EC} hold.   When $N_d$ is even, equivalently $d$ is even, we have $D_d=1$ and hence $(-1)^{N_d/2} D_d = (-1)^d=1$.

The set $U_d(\FF_{p^n})$ consists of all separable degree $d$ polynomials $u\in \FF_{p^n}[t]$ with $u(0)u(1)\neq 0$.    If $d$ is even and $u\in U_d(\FF_{p^n})$, one can show that $\varepsilon_u=1$ if and only if $u(0)u(1)$ is a square in $\FF_{p^n}^\times$;  we can express $\varepsilon_u$ as a product of a local root numbers that are easy to compute, cf.~Theorem~3.1 of \cite{MR2183392}.   For each $n\geq 1$, let $\delta(p^n)$ be the proportion of $u\in U_d(\FF_{p^n})$ for which we have an isomorphism
\begin{align*} 
\Gal(P_u)\cong \begin{cases}
      W_{2d-2} & \text{ if $d$ is odd or if $d$ is even and $u(0)u(1)$ is not a square in $\FF_{p^n}^\times$}, \\
      W_{2d}^+ & \text{ if $d$ is even and $u(0)u(1)$ is a square in $\FF_{p^n}^\times$.} \\
\end{cases}
\end{align*}
Theorem~\ref{T:main EC} in this case says that $\delta(p^n)\to 1$ as $n\to \infty$.
\end{example}

We now give an explicit version where we restrict to certain $1$ dimension subvarieties of $U_d$.

\begin{theorem}  \label{T:main 2 EC}  
Fix an integer $d\geq 1$ as in Theorem~\ref{T:main EC} and fix a polynomial $g(t)\in U_{d-1}(\FF_q)$.   Let $\delta(q^n)$ be the proportion of $c \in \FF_{q^n}$ for which the polynomial $u:=(t-c)g(t) \in \FF_{q^n}[t]$ is squarefree and relatively prime to $m(t)$, and for which the Galois group $\Gal(P_u)=\Gal(L(T,E_u))$ satisfies (\ref{E:EC ratio}).
Then 
\[
\delta(q^n)=1+O\big(2^{\deg m+d}(\deg m + d)\, q^{-n/(N_d^2-N_d+6)} \log (q^n)\big),
\]
where the implicit constant depends only on the $j$-invariant of $E$.
\end{theorem}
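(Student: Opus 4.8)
The plan is to deduce Theorem~\ref{T:main 2 EC} from the effective result Theorem~\ref{T:main 2} by restricting the quadratic‑twist family over $U_d$ to the affine line traced out by $c\mapsto (t-c)g(t)$. First I would construct the relevant curve. Let $\phi\colon \Aff^1_{\FF_q}\to \Aff^{d+1}_{\FF_q}$ be the affine‑linear morphism carrying $c$ to the coefficient tuple of $(t-c)g(t)$; since $g\neq 0$ it is a closed immersion onto a line. As $g(t)$ and $m(t)$ are squarefree with $\gcd(g,m)=1$, the polynomial $(t-c)g(t)$ is squarefree and prime to $m$ exactly when $gm$ does not vanish at $c$, so $V:=\phi^{-1}(U_d)$ is the complement in $\Aff^1_{\FF_q}$ of the reduced zero locus of $gm$. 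Thus $V$ is a geometrically integral smooth affine curve over $\FF_q$ whose smooth projective model is $C=\PP^1_{\FF_q}$, of genus $0$, with $|C(\FFbar_q)\setminus V(\FFbar_q)| = 1+(d-1)+\deg m = d+\deg m =: b$. Pulling back along $\phi|_V$ the compatible family $\{\rho_\ell\}$ of orthogonal representations of $\pi_1(U_d)$ attached to $E$ (whose existence, via the work of Katz and Hall, is established in the proof of Theorem~\ref{T:main EC}; see Remark~\ref{R:ECs}), I obtain a compatible family $\{\rho'_\ell\}$ of representations of $\pi_1(V)$ with the same orthogonal $\ZZ_\ell$‑spaces $M_\ell$, common rank $N=N_d>2$, and $\det(I-\rho'_\ell(\Frob_c)T) = P_{(t-c)g}(T)$ for every $c\in V(\FF_{q^n})$; so the polynomials attached to the $V$‑family are exactly the $P_u$ with $u=(t-c)g$. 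Since the field $K$ of \S\ref{SS:the field K} (defined when $N_d$ is even) depends only on the spaces $V_\ell=M_\ell/\ell M_\ell$, it is unchanged, and the computation of $K$ carried out for Theorem~\ref{T:main EC} (Remark~\ref{R:ECs}) shows $K=\QQ$ if and only if $(-1)^{N_d/2}D_d$ is a square; hence the four cases of (\ref{E:Galois specific}) for the $V$‑family coincide with the four cases of (\ref{E:EC ratio}).

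Next I would verify the hypotheses of \S\ref{SS:effective version} for $(\FF_q,V)$ and $\{\rho'_\ell\}$. The ring $\FF_q$ has odd cardinality since $p\geq 5$; $V$ is a geometrically integral smooth affine curve; and $N_d\geq \max\{6B,3\}\geq 3$. For the tameness requirement, the sheaves $\calH_\ell$ come from the first $\ell$‑adic cohomology of a family of elliptic curves, and in residue characteristic $\geq 5$ the inertia action on the $\ell$‑adic Tate module of an elliptic curve factors through a cyclic group of order dividing $24$, hence is tame; the additional ramification introduced at the punctures of $V$, where the twist $E_{(t-c)g}$ degenerates, is likewise of order prime to $p$, so the $\bbar\rho'_\ell$ are all tamely ramified on $V$. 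For the big monodromy condition (\ref{bm-b}) with $\Lambda$ of natural density $1$: since quadratic twists compose, $E_{(t-c)g}\cong (E_g)_{t-c}$, so the $V$‑family is a one‑parameter family of quadratic twists of the elliptic curve $E_g$ by the linear polynomials $t-c$; the curve $E_g$ has $j$‑invariant $j(E)$ and, because $\gcd(g,m)=1$, still has multiplicative reduction at a place $\neq\infty$. Hall's big monodromy theorem — the same input that yields big monodromy over $U_d$ in the proof of Theorem~\ref{T:main EC}, and whose hypotheses are precisely the constraints imposed on $d$ — applies to this family and gives $\bbar\rho'_\ell(\pi_1(V_{\FFbar_q}))\supseteq \Omega(V_\ell)$ for all primes $\ell$ outside a finite set depending only on $j(E)$, in particular for a set $\Lambda$ of natural density $1$.

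Theorem~\ref{T:main 2} then applies to the $V$‑family and gives, with $\delta'(\FF_{q^n})$ the proportion of $c\in V(\FF_{q^n})$ for which $\Gal(P_{(t-c)g})$ is isomorphic to the group prescribed by (\ref{E:Galois specific}), equivalently by (\ref{E:EC ratio}),
\[
\delta'(\FF_{q^n}) = 1 + O\big(2^{d+\deg m}(d+\deg m)\, q^{-n/(N_d^2-N_d+6)}\log(q^n)\big),
\]
the implicit constant depending only on the set $\Sigma$, which we may take to be $\{\ell\geq 5 : \ell\neq p\}$ and so depending only on $j(E)$. Finally, the $\delta(q^n)$ of the theorem equals $\delta'(\FF_{q^n})\cdot |V(\FF_{q^n})|/q^n$, and $|V(\FF_{q^n})| = q^n + O(d+\deg m)$; since $q^{-n}\leq q^{-n/(N_d^2-N_d+6)}\log(q^n)$, this correction is absorbed into the displayed error term, which proves the theorem. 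The step I expect to be the main obstacle is the big monodromy input: one must confirm that Hall's theorem genuinely applies to this particular one‑parameter family $\{(t-c)g(t)\}_c$ (not only to whatever slice is convenient for the higher‑dimensional statement), that the good primes have \emph{natural} — not merely Dirichlet — density $1$, and that their complement depends only on $j(E)$; all three are needed to invoke Theorem~\ref{T:main 2} with an implicit constant independent of $E$. By comparison, the tameness check is routine for $p\geq 5$, and the genus and conductor bookkeeping feeding the error term is mechanical.
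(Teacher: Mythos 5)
Your proposal matches the paper's own proof in all essentials: the paper likewise restricts the Katz--Hall family on $U_d$ to the curve $\{c : (t-c)g \text{ squarefree and prime to } m\}\subseteq \Aff^1_{\FF_q}$, notes the restricted representations are tamely ramified, obtains big monodromy by replacing $E$ with its twist $E_g$ (so the family becomes twists by $t-c$) and invoking the explicit form of Hall's theorem, identifies $K=\QQ\big(\sqrt{(-1)^{N_d/2}D_d}\big)$ via the discriminant computation (Lemma~\ref{L:Dd class}), and then applies Theorem~\ref{T:main 2} with genus $0$ and $b=d+\deg m$. The points you leave implicit --- that Hall's hypotheses for $E_g$ require, besides the multiplicative place, a place of Kodaira type $\operatorname{I}_0^*$ (supplied by a zero of $g$ when $d\geq 2$, since $\gcd(g,m)=1$, or by the explicit assumption when $d=1$), and that $\Lambda$ is taken to be the primes $\ell\geq 5$, $\ell\nmid q$, not dividing any $-\ord_v(j_E)$, which gives the natural-density and $j$-invariant-only dependence --- are exactly what the paper verifies in Lemma~\ref{L:big monodromy ec}, so your outline is sound.
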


\begin{remark} \label{R:Jouve}
Theorem~\ref{T:main 2 EC} is a strengthening of the main result of Jouve, cf.~Theorem~4.3 of \cite{MR2539184}.  Jouve bounds the number of $c\in \FF_q$ with $m(c)g(c)\neq 0$ such that $\Gal\!\big(L(T,E_{(t-c)g(t)}/\FF_{q}(t))\big)$ does not equal the appropriate Galois group $W_{2n}$ or its subgroup $W_{2n}^+$.   Jouve obtains a bound of the form 
\[
O\big(N_d^2\, |G|\, q^{1-1/(3.5N_d^2-3.5N_d+2)}\log q\big)  
\]
for $d$ sufficiently large, where the implicit constant depends only on the $j$-invariant of $E$ and $G$ is a certain finite group.    A bound for $|G|$ is not given in \cite{MR2539184} but one can show that $|G|\leq 2^{\deg m +d}$ using the approach of Lemma~\ref{L:finite abelianization}.
\end{remark}

We will prove Theorems~\ref{T:main EC} and \ref{T:main 2 EC} in \S\ref{S:cohomological interpretation} by applying the axiomatic setup of \S\ref{SS:new setup} and \S\ref{SS:effective version}.

\subsection{Some related results} \label{SS:related results}

This paper was motivated by the work of Chavdarov for which we now recall a special case.     Let $U$ be a geometrically irreducible variety over $\FF_q$ of positive dimension.   Consider a compatible family of continuous representations $\{\rho_\ell\}_{\ell}$ with $\rho_\ell\colon \pi_1(U)\to \GSp_{2g}(\ZZ_\ell)$.  For each $u\in U(\FF_{q^n})$, let $P_u \in \QQ[T]$ be the corresponding polynomial of degree $2g$ arising from the representations $\rho_\ell$.   We also make a \emph{big monodromy} assumption: suppose that the image of $\rho_\ell(\pi_1(U_{\FFbar_q}))$ modulo $\ell$ is $\Sp_{2g}(\FF_\ell)$ for all sufficiently large $\ell$. 

 Let $\delta(q^n)$ be the proportion of $u\in U(\FF_{q^n})$ for which the Galois group of $P_u$ is isomorphic to $W_{2g}$.   Theorem~2.1 of \cite{MR1440067} then says that $\delta(q^n)\to 1$ as $n\to \infty$.  Note the description of $\Gal(P_u)$ for a ``random'' $u$ is much simpler than that of Theorem~\ref{T:main 1}.  One key reason is that the algebraic groups $\GSp_{2g}$ and $\Sp_{2g}$ that arise in Chavdarov's work are connected, while orthogonal groups are not connected.  Also the group $\Sp_{2g}$ is simply connected, while special orthogonal groups are not.\\
 
 Katz has proved a theorem similar to Theorem~\ref{T:main 1}, in the setting of $L$-functions of elliptic curves, except showing that $P_u$ with the obvious linear factors removed is irreducible, cf.~Theorem~4.1 of \cite{katzreport}.

As noted in Remark~\ref{R:Jouve}, Jouve proved an analogue of Theorem~\ref{T:main 2}, in the setting of $L$-functions of elliptic curves,  showing that the Galois group of $P_u$ for a ``random'' $u$ is isomorphic to either $W_{2n}^+$ or $W_{2n}$ for an appropriate $n$.  One of the main motivations of this paper is to distinguish between these two cases.

\subsection{Overview}

We now give a brief overview.  In \S\ref{S:orthogonal}, we describe some basic facts about orthogonal spaces and groups.   In particular in \S\ref{SS:counting elements with sep polynomial}, we study the cardinality of certain conjugacy classes of orthogonal groups over finite fields.  When $N$ is even, the field $K$ from \S\ref{SS:the field K} will be discussed in \S\ref{S:the field K}.  

Fix notation and assumptions as in \S\ref{SS:new setup}.   Consider a polynomial $P_u \in \QQ[T]$.   At the beginning of \S\ref{SS:intro maximal}, we have given some constraint on the group $\Gal(P_u)$.   How do we show that $\Gal(P_u)$ satisfies (\ref{E:Galois specific}), i.e., is as large as possible?   The idea is fundamental to Galois theory; we will consider the reduction of $P_u$ modulo various primes $\ell$ and compute how it factors in $\FF_\ell[T]$.  If we see enough different kinds of factorizations, we will be able to prove that $\Gal(P_u)$ is as large as possible.  The following proposition, which we will prove in \S\ref{S:proof of criterion for sieving}, is a key ingredient in the proof of our main theorems.

\begin{proposition} 
\label{P:criterion for sieving}
  For each $\ell\in \Sigma$, there are subsets $C_1(V_\ell),\ldots, C_6(V_\ell)$ of $\Or(V_\ell)$ such that the following hold:
\begin{romanenum}
\item  \label{P:criterion for sieving a}
$C_i(V_\ell)$ is stable under conjugation by $\Or(V_\ell)$.
\item  \label{P:criterion for sieving b}
There are positive absolute constants $c_1$ and $c_2$ such that if $\ell \in \Sigma$ satisfies $\ell\geq c_1$, then
\[
\frac{|C_i(V_\ell)\cap \kappa|}{|\kappa|} \geq \frac{c_2}{N^2}
\]
for all cosets $\kappa$ of $\Omega(V_\ell)$ in $\Or(V_\ell)$ and all integers $1\leq i\leq 6$.
\item   \label{P:criterion for sieving c}
Take any $u\in U(k)$, where $k$ is a finite field that is an $R$-algebra.   Suppose that for each $1 \leq i\leq 6$ there is a prime $\ell \in \Sigma$, not equal to the characteristic of $k$, such that $\bbar\rho_{\ell}(\Frob_u) \subseteq C_i(V_\ell)$.  Then the Galois group of $P_u(T)$ satisfies (\ref{E:Galois specific}).
\end{romanenum}
\end{proposition}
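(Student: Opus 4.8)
The plan is to split Proposition~\ref{P:criterion for sieving} into a group-theoretic core and a counting core, linked by Dedekind's theorem on factorization of polynomials modulo primes. For the group theory I would first establish: there is an explicit finite list of conjugation-invariant ``cycle type'' conditions $X_1,\dots,X_6$ such that any subgroup $G$ of a Weyl group $W_{2m}$ (with $2\le 2m\le N$) that contains an element of type $X_i$ for each $i$, and that is compatible with the sign/discriminant constraint appropriate to (\ref{E:Galois specific}), must be all of $W_{2m}$ --- or all of $W_{2m}^+$ in the case $N$ even, $\varepsilon_u=1$, $K=\QQ$. The prototype is the classical fact that a transitive subgroup of $S_r$ containing a transposition and an $(r-1)$-cycle equals $S_r$ (the $(r-1)$-cycle forces $2$-transitivity, hence primitivity, and a primitive group containing a transposition is everything by Jordan); here one wants the analogues for the Weyl groups of type $B_m/C_m$ and $D_m$: a long cycle on the $2m$ ``roots'' for transitivity, a further cycle of coprime length to reach primitivity and exclude the imprimitive (wreath-product) subgroups, a reflection-type element for the Jordan step, and a couple more classes to decide whether $G$ sits in $W_{2m}^+$ and to prevent $P_u$ from acquiring spurious roots at $\pm1$. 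The subgroup structure of $W_{2m}$ and $W_{2m}^+$ is classical, so this is finite bookkeeping; the delicate point is arranging the $X_i$ so that all four cases of (\ref{E:Galois specific}), especially the $W_N$-versus-$W_N^+$ dichotomy, come out correctly.

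Next I would set up the dictionary. For $g\in\Or(V_\ell)$ the factorization type over $\FF_\ell$ of its characteristic polynomial (equivalently, its rational canonical form) is a conjugation invariant; define $C_i(V_\ell)$ to be the set of $g\in\Or(V_\ell)$ whose characteristic polynomial is separable away from $T\mp1$, has $\gcd$ with $T^2-1$ of the minimal degree forced by $\det(-g)$, and factors over $\FF_\ell$ according to $X_i$. Then part~(\ref{P:criterion for sieving a}) is immediate. For part~(\ref{P:criterion for sieving c}): if $u\in U(k)$ and $\ell\in\Sigma$ is prime to $\Char k$ with $\bbar\rho_\ell(\Frob_u)\subseteq C_i(V_\ell)$, then $P_u\smallpmod{\ell}$ equals the characteristic polynomial of $\bbar\rho_\ell(\Frob_u)$, so it has factorization type $X_i$ and is separable in the relevant range; Dedekind's theorem then shows that the image of $\Gal(P_u)$ in $W_{2m}$ under the embedding of \S\ref{SS:W def} contains an element of type $X_i$. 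Doing this for $i=1,\dots,6$ supplies the hypotheses of the group-theoretic lemma. The discriminant datum it requires is provided by \S\ref{SS:the field K}: when $N$ is even, $\varepsilon_u=1$ and $P_u$ is separable we have $K=\QQ(\sqrt{\Delta_u})$, and whether $V_\ell$ is split --- information visible in the characteristic-polynomial type, hence encodable into the $X_i$ --- is the same as whether $\Delta_u$ is a square mod $\ell$; this is exactly what distinguishes $W_N$ from $W_N^+$. Thus $\Gal(P_u)$ satisfies (\ref{E:Galois specific}).

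Finally, part~(\ref{P:criterion for sieving b}) is the quantitative heart: for each $i$ and all $\ell\in\Sigma$ with $\ell\ge c_1$, one needs $|C_i(V_\ell)\cap\kappa|/|\kappa|\ge c_2/N^2$ for every coset $\kappa$ of $\Omega(V_\ell)$ in $\Or(V_\ell)$. Here I would invoke the element counts of \S\ref{SS:counting elements with sep polynomial}: the proportion of $g\in\Or(V_\ell)$ with separable characteristic polynomial having an irreducible factor of any prescribed degree $k\le N$ is $\asymp 1/k$ uniformly in $\ell$, with similar lower bounds for the finer types $X_i$ --- the extra factor $1/N$ arising from those $X_i$ that impose two independent near-maximal-degree conditions. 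The real difficulty, and what I expect to be the main obstacle, is that this must hold \emph{within each} $\Omega(V_\ell)$-coset (there can be four of them), not merely on average over $\Or(V_\ell)$: the image of a regular semisimple $g$ in $\Or(V_\ell)/\Omega(V_\ell)$ is pinned down by its determinant and spinor norm, both of which are functions of its characteristic polynomial (via Zassenhaus--Wall), so each $X_i$ has to be chosen --- possibly as a union of several characteristic-polynomial types --- precisely so that $C_i(V_\ell)$ genuinely meets all the cosets, after which the count must be carried out in each. Engineering the $X_i$ to be simultaneously rigid enough for the group theory of the first step and loose enough to populate every $\Omega(V_\ell)$-coset with density $\gtrsim 1/N^2$ is where the main work of the proof is concentrated.
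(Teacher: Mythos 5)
Your outline follows the same route as the paper's proof: a group-theoretic criterion for subgroups of $W_{2n}$ (the paper's Lemma~\ref{L:group theory for W2n}, proved via Brauer's lemma rather than Jordan's theorem), a transfer to $\Gal(P_u)$ through factorization types of $P_u$ modulo $\ell$ (Proposition~\ref{P:maximality criterion}), and coset-wise counting in $\Or(V_\ell)$ using the conjugacy-class computations of \S\ref{SS:counting elements with sep polynomial} together with the fact that $\det$ and $\spin$ are determined by $f(\pm 1)$ (Lemma~\ref{L:disc and spin rules}). But as written you defer exactly the content of the proposition: the six classes are never specified, the $W_{2n}$-lemma is only sketched, and the uniform lower bound in \emph{every} $\Omega(V_\ell)$-coset --- which the paper gets by counting irreducible $h$ with prescribed square classes of $h(\pm 2)$ via points on a conic (Lemma~\ref{L:irred approx}), assembling these into the sets $H^{\alpha,\beta}_{n,i}$ and $F^{\alpha,\beta}_{2n,i}$ (Proposition~\ref{P:H bounds}), and then matching $(\alpha,\beta)$ to the $(\det,\spin)$ data of each coset through Propositions~\ref{P:conjugacy for N even}, \ref{P:conjugacy for N even 2} and \ref{P:conjugacy for N odd} --- is labelled ``engineering'' rather than carried out.

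There is also one concrete point where the plan, as stated, fails. You want the sixth class to be a characteristic-polynomial condition forcing an element outside $W_{2n}^+$ (an odd permutation of the roots), engineered ``possibly as a union of types'' to meet every coset. This is impossible when $N$ is even, the coset $\kappa$ has $\det(\kappa)=\{1\}$, and $V_\ell$ is split: for $A\in\SO(V_\ell)$ with $\det(I-AT)=f(T)$ separable and prime to $T^2-1$, Lemma~\ref{L:disc and spin rules}(\ref{L:disc and spin rules iii}) gives $f(1)f(-1)\equiv\disc(V_\ell)$ modulo squares, and by Lemma~\ref{L:basic separability}(\ref{L:basic separability i}) the discriminant of $f$ lies in $(-1)^{N/2}f(1)f(-1)\cdot(\FF_\ell^\times)^2$, which is the trivial class when $V_\ell$ is split; hence every such $A$ induces an \emph{even} permutation of the roots, and the exceptional elements (non-separable $f$ or $f(\pm1)=0$) have density $O(1/\ell)$, so no conjugation-stable ``odd'' factorization class can satisfy part~(\ref{P:criterion for sieving b}) on those cosets (and when $K=\QQ$ all large $\ell\in\Sigma$ have $V_\ell$ split). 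The paper's resolution is structural, not an engineering choice: it sets $C_6(\kappa):=\kappa$ on the $\det=1$ cosets when $N$ is even, uses the sixth condition only when $N$ is odd or $\varepsilon_u=-1$ (where, after stripping the factors at $\pm1$, the discriminant argument is unavailable and one must rule out $W_{2n}^+$ by a local condition), and in the remaining case settles $W_N$ versus $W_N^+$ entirely via Proposition~\ref{P:K criterion}(\ref{P:K criterion iii}), i.e.\ via the field $K$. You gesture at the discriminant input but do not make this case distinction, and without it both the definition of $C_6$ and the verification of (\ref{P:criterion for sieving b}) break down.
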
 

The representations $\{\rho_\ell\}$ are not independent, i.e., a condition imposed on $P_u$ modulo one prime can restrict the possible reductions modulo other primes.  In \S\ref{S:big monodromy}, we use our big monodromy assumption, and some group theory, to show that the image of the representation $\prod_{\ell\in D} \bbar\rho_\ell$ is large for all finite subsets $D\subseteq \Lambda$, where $\Lambda$ is an appropriate subset of $\Sigma$ with Dirichlet density $1$.   This controls how dependent the representations $\rho_\ell$ are.
   
Theorem~\ref{T:main 1} and Theorem~\ref{T:main 2} will be proved in \S\ref{S:proof of main 1} and \S\ref{S:proof of main 2}, respectively.   Our examples from \S\ref{SS:hypersurface} and \S\ref{SS:EC setup}, will be proved in \S\ref{S:hypersurfaces} and \S\ref{S:cohomological interpretation}, respectively.   In Appendix~\ref{S:The Selberg sieve}, we state a general version of Selberg's sieve.  For convenience, we state some equidistribution bounds in Appendix~\ref{S:B}.

\section{Orthogonal groups and characteristic polynomials} \label{S:orthogonal}

\subsection{Orthogonal spaces} \label{SS:orthogonal spaces}

Let $R$ be an integral domain whose characteristic is not $2$.   An \defi{orthogonal space} $M$ over $R$ is a free $R$-module $M$ of finite rank equipped with a symmetric $R$-bilinear pairing $\ang{\:}{\:}\colon M\times M \to R$ which induces an isomorphism $M\to \Hom_R(M,R),$ $m\mapsto \ang{m}{\cdot}$.    

A \defi{homomorphism} of orthogonal spaces is an $R$-module homomorphism that is compatible with the respective pairings.   The \defi{orthogonal group} of $M$, denoted by $\Or(M)$,  is the group of automorphisms of the orthogonal space $M$.   Let $\SO(M)$ be the kernel of the determinant map $\det\colon \Or(M)\to \{\pm 1\}$.   

\subsection{Finite fields} \label{SS:orthogonal}

Fix a finite field $\FF$ with odd characteristic.   Let $V$ be an orthogonal space over $\FF$ of dimension $N\geq 1$.   The \defi{discriminant} of $V$, denoted by $\disc(V)$, is the coset in $\FF^\times/(\FF^\times)^2$ represented by $\det(\ang{v_i}{v_j})$, where $v_1,\ldots, v_N$ is any basis of $V$ over $\FF$.   Up to isomorphism, there are two orthogonal spaces of dimension $N$ over $\FF$; these orthogonal spaces are distinguishable by their discriminants.  If $N$ is even, we say that $V$ is \defi{split} if $\disc(V)=(-1)^{N/2}(\FF^\times)^2$ and \defi{non-split} otherwise.

For each $v\in V$ with $\ang{v}{v}\neq 0$, we have a reflection $r_v \in \Or(V)$ defined by $x\mapsto x-2\ang{x}{v}/\ang{v}{v} \cdot v$.  Let 
\[
\spin_V\colon \Or(V)\to \FF^\times/(\FF^\times)^2
\] 
be the \defi{spinor norm}.  The spinor norm is a homomorphism that can be characterized by the property that it is satisfies $\spin(r_v)= \ang{v}{v}\cdot (\FF^\times)^2$ for all $v\in V$ with $\ang{v}{v}\neq 0$.    We will denote $\spin_V$ by $\spin$ if $V$ is clear from context.

\begin{lemma} \label{L:disc comes from spin}
We have $\disc(V)=\spin_V(-I)$.
\end{lemma}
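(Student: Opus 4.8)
The plan is to reduce the identity $\disc(V) = \spin_V(-I)$ to the case of reflections, where both sides are computed directly. The element $-I \in \Or(V)$ is the product of the reflections $r_{v_1}, \ldots, r_{v_N}$ for any orthogonal basis $v_1, \ldots, v_N$ of $V$: since $r_{v_i}$ negates the line $\FF v_i$ and fixes its orthogonal complement (which contains the other $v_j$), composing all $N$ of them negates every basis vector, hence equals $-I$. First I would invoke the existence of an orthogonal basis for any nondegenerate quadratic space over a field of characteristic $\neq 2$ (diagonalization of symmetric bilinear forms), so that $\ang{v_i}{v_j} = 0$ for $i \neq j$ and $a_i := \ang{v_i}{v_i} \in \FF^\times$.

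With such a basis fixed, the right-hand side is computed using the characterizing property of the spinor norm stated in \S\ref{SS:orthogonal}: $\spin_V$ is a homomorphism with $\spin_V(r_v) = \ang{v}{v}\cdot(\FF^\times)^2$. Hence
\[
\spin_V(-I) = \spin_V(r_{v_1}\cdots r_{v_N}) = \prod_{i=1}^N \spin_V(r_{v_i}) = \prod_{i=1}^N a_i \cdot (\FF^\times)^2 = \Big(\prod_{i=1}^N a_i\Big)\,(\FF^\times)^2.
\]
On the other hand, the Gram matrix $(\ang{v_i}{v_j})$ is diagonal with entries $a_1, \ldots, a_N$, so its determinant is $\prod_{i=1}^N a_i$, and by definition $\disc(V)$ is the class of this determinant in $\FF^\times/(\FF^\times)^2$. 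Comparing the two expressions gives $\disc(V) = \spin_V(-I)$.

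The only subtlety — and the one step worth a sentence of care rather than a routine citation — is verifying that $-I = r_{v_1}\cdots r_{v_N}$ for an orthogonal basis; this is immediate from the fact that distinct $r_{v_i}$ commute (their nontrivial actions are supported on mutually orthogonal lines) and each sends $v_i \mapsto -v_i$ while fixing $v_j$ for $j \neq i$. Everything else is bookkeeping with the definitions of $\disc$ and $\spin_V$ already recalled in \S\ref{SS:orthogonal}, together with the standard diagonalizability of symmetric bilinear forms over a field of odd characteristic. I do not anticipate any real obstacle; the argument is short and self-contained.
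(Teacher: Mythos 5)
Your argument is correct and is essentially identical to the paper's proof: both fix an orthogonal basis, write $-I = r_{v_1}\cdots r_{v_N}$, and compare $\prod_i \spin(r_{v_i}) = \prod_i \ang{v_i}{v_i}\cdot(\FF^\times)^2$ with the Gram determinant defining $\disc(V)$. The extra sentences you add (existence of an orthogonal basis, checking the product of reflections is $-I$) are routine verifications the paper leaves implicit.
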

\begin{proof}
Let $v_1,\ldots, v_N$ be an orthogonal basis of $V$.  We have $-I = r_{v_1} r_{v_2} \cdots r_{v_N}$, so
\[
\disc(V) = \det(\ang{v_i}{v_j}) \cdot (\FF^\times)^2= {\prod}_i \ang{v_i}{v_i} \cdot (\FF^\times)^2= {\prod}_i \spin(r_{v_i}) = \spin(-I).  \qedhere
\]
\end{proof}

Define $\Omega(V)$ to be the simultaneous kernels of the homomorphisms $\det\colon \Or(V)\to\{\pm 1\}$ and $\spin\colon \Or(V)\to \FF^\times/(\FF^\times)^2$.  
The following lemma recalls some basic facts about these groups; see \cite{Atlas}*{\S2.4} for a good exposition of the groups $\Omega(V)$; proofs can be found in \S3.7 and \S3.11 of \cite{MR2562037} for $N\neq 5$ and $N=4$, respectively.

\begin{lemma} \label{L:Omega facts}    Suppose that $N\geq 3$ and $q>3$.  Let $Z$ be the center of $\Omega(V)$.
\begin{romanenum}
\item 
The map $\det\times\spin\colon \Or(V)/\Omega(V) \to \{\pm 1\}\times \FF^\times/(\FF^\times)^2$ is an isomorphism.  
\item 
The group $Z$ is either $\{I\}$ or $\{\pm I\}$. 
\item 
The group $\Omega(V)/Z$ is simple except when $N=4$ and $V$ is split.
\item 
If $N=4$ and $V$ is split, then $\Omega(V)/Z \cong \PSL_2(\FF)\times \PSL_2(\FF)$.
\item \label{L:Omega facts v}  
The group $\Omega(V)$ is perfect.   In particular, $\Omega(V)$ is the commutator subgroup of $\Or(V)$.
\end{romanenum}
\end{lemma}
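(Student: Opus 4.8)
The plan is to organize the proof around the structure map $\det\times\spin$ and to treat each part by citing the two references already indicated, filling in only the small arguments that connect the statements to the literature. First I would establish part (i): the homomorphism $\det\times\spin\colon \Or(V)\to \{\pm 1\}\times \FF^\times/(\FF^\times)^2$ is surjective because reflections $r_v$ already realize every value — $\det(r_v)=-1$ for all anisotropic $v$, while $\spin(r_v)=\ang{v}{v}\cdot(\FF^\times)^2$ runs over all of $\FF^\times/(\FF^\times)^2$ as $v$ varies (since $N\geq 3$, the quadratic form represents both square and nonsquare values), and a product of two reflections gives determinant $+1$ together with either square class. By definition $\Omega(V)$ is the kernel of $\det\times\spin$, so the induced map on $\Or(V)/\Omega(V)$ is the claimed isomorphism; note that the target has order $4$, which will be used throughout the paper.

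For parts (ii)--(iv) I would simply invoke the cited exposition. The center $Z$ of $\Omega(V)$ is $\Omega(V)\cap\{\pm I\}$: this is at most $\{\pm I\}$ since the scalars are the only elements of $\Or(V)$ acting as scalars on the natural module in dimension $N\geq 3$, and $Z$ equals $\{\pm I\}$ precisely when $-I\in\Omega(V)$, i.e.\ when $\det(-I)=(-1)^N=1$ and $\spin(-I)=\disc(V)$ is trivial (using Lemma~\ref{L:disc comes from spin}); in any case $Z\in\{\{I\},\{\pm I\}\}$, giving (ii). The simplicity of $\Omega(V)/Z$ for $N\geq 3$, $q>3$, with the single exception $N=4$ and $V$ split, together with the exceptional isomorphism $\POmega_4^+(\FF)\cong \PSL_2(\FF)\times\PSL_2(\FF)$, is exactly the content of \cite{MR2562037}*{\S3.7} for $N\neq 5$ and \cite{MR2562037}*{\S3.11} for $N=4$ (the case $N=5$ reduces to $\Sp_4$ by the exceptional isogeny $B_2=C_2$, and is covered there as well); see also \cite{Atlas}*{\S2.4}. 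These are classical facts about the finite simple groups of Lie type $B_n$, $C_n$, $D_n$, so I would not reprove them.

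Finally, part (v): since $N\geq 3$ and $q>3$, the group $\Omega(V)/Z$ is a direct product of nonabelian simple groups by (iii) and (iv), hence is perfect; therefore $\Omega(V)$ is perfect provided $Z$ is contained in the commutator subgroup. When $Z=\{I\}$ this is immediate. When $Z=\{\pm I\}$ one checks that $-I$ is a commutator in $\Omega(V)$: writing $-I$ as a product of an even number of reflections in mutually orthogonal hyperplanes and pairing them up, a direct computation in the relevant $\SL_2$ or small orthogonal subgroups exhibits $-I$ as a product of commutators (alternatively this follows from the fact, in the cited references, that the Schur multiplier considerations force $\Omega(V)=[\Omega(V),\Omega(V)]$). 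Thus $\Omega(V)$ is perfect. To finish, $\Omega(V)$ is normal in $\Or(V)$ with abelian quotient $\Or(V)/\Omega(V)\cong (\ZZ/2)^2$ by (i), so the commutator subgroup $[\Or(V),\Or(V)]$ is contained in $\Omega(V)$; since $\Omega(V)$ is itself perfect, $\Omega(V)=[\Omega(V),\Omega(V)]\subseteq [\Or(V),\Or(V)]$, and equality follows. The only mildly delicate point is verifying $-I\in[\Omega(V),\Omega(V)]$ in the case $Z=\{\pm I\}$, but this is handled cleanly by the explicit reflection computation or by quoting the perfectness statement directly from \cite{MR2562037}, so I expect no real obstacle.
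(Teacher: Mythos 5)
Your proposal is correct and follows essentially the same route as the paper, which gives no argument beyond citing \cite{Atlas}*{\S2.4} and \cite{MR2562037}*{\S3.7, \S3.11} for these standard facts; you cite the same sources for the simplicity statements and the exceptional $N=4$ split case, and the extra glue you supply (surjectivity of $\det\times\spin$ via reflections, the center computation, and deducing perfection and $[\Or(V),\Or(V)]=\Omega(V)$ from $\Or(V)/\Omega(V)\cong(\ZZ/2\ZZ)^2$) is accurate. No issues to flag.
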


\begin{remark} \label{R:alternate Omega}
We now give an alternate description of the group $\Omega(V)$.  Let $\SO_V$ be the obvious algebraic group over $\FF$; it is semisimple and has a simply connected cover $\pi\colon G\to \SO_V$.  The group $\Omega(V)$ is equal to $\pi(G(\FF))$.
\end{remark}

\begin{remark}
We gave an alternate definition of $\Omega(V_\ell)$ in condition (\ref{bm-b}) in \S\ref{SSS:bm}.  Since $\ell\geq 5$ and $N>2$, these definitions agree by  Lemma~\ref{L:Omega facts}(\ref{L:Omega facts v}).
\end{remark}

The follows lemma allows to compute the spinor norm for some elements in $\Or(V)$ directly from their characteristic polynomials.

\begin{lemma} \label{L:disc and spin rules} 
Take any $A\in \Or(V)$ and set $P(T)=\det(I-AT)$.
\begin{romanenum}
\item \label{L:disc and spin rules i} 
If $P(-1) \neq 0$, then $\spin(A) =2^N P(-1) (\FF^\times)^2$.
\item \label{L:disc and spin rules ii} 
If $P(1) \neq 0$, then $\spin(A) =2^N P(1) \disc(V)$.
\item \label{L:disc and spin rules iii} 
If $P(1) \neq 0$ and $P(-1) \neq 0$, then $\disc(V) =P(1)P(-1) (\FF^\times)^2$.
\end{romanenum}
\end{lemma}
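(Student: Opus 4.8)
The plan is to prove (i) directly, deduce (ii) by applying (i) to $-A$, and deduce (iii) by comparing the two resulting formulas for $\spin(A)$. All equalities below live in $\FF^\times/(\FF^\times)^2$, where every class is its own inverse and $2^N=2^{-N}$. The ingredients are: the Cartan--Dieudonn\'e theorem; the fact that $\spin$ is a homomorphism with $\spin(r_v)=\ang{v}{v}(\FF^\times)^2$; Lemma~\ref{L:disc comes from spin}, which gives $\spin(-I)=\disc(V)$; and the standard fact that the Gram determinant $\det(\ang{v_i}{v_j})$ of any basis $v_1,\dots,v_N$ of $V$ represents $\disc(V)$.

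For (i): the hypothesis $P(-1)=\det(I+A)\neq 0$ says that $-1$ is not an eigenvalue of $A$, equivalently that $B:=-A\in\Or(V)$ has no nonzero fixed vector. By Cartan--Dieudonn\'e, $B$ is a product of at most $N$ reflections; if the vectors occurring in such a product spanned a proper subspace $W$, then $B$ would fix $W^\perp\neq 0$ pointwise, which is impossible. Hence $B=r_{v_1}\cdots r_{v_N}$ for a basis $v_1,\dots,v_N$ of $V$. Using $(I-r_v)(x)=\tfrac{2\ang{x}{v}}{\ang{v}{v}}v$ and telescoping, $I-B=\sum_{j=1}^{N}(r_{v_1}\cdots r_{v_{j-1}})(I-r_{v_j})$, and the $j$-th summand is the rank-one operator $x\mapsto\tfrac{2\ang{x}{v_j}}{\ang{v_j}{v_j}}\,w_j$ with $w_j:=(r_{v_1}\cdots r_{v_{j-1}})(v_j)$. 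Since $r_{v_i}(v_j)\in v_j+\FF v_i$, iteration gives $w_j\in v_j+\operatorname{span}(v_1,\dots,v_{j-1})$, so in the basis $v_1,\dots,v_N$ the matrix of $I-B$ factors as $U\,D\,G$, where $U$ is upper triangular with $1$'s on the diagonal, $D$ is diagonal with $j$-th entry $2/\ang{v_j}{v_j}$, and $G=(\ang{v_i}{v_j})$ is the Gram matrix. Taking determinants,
\[
P(-1)=\det(I-B)=\frac{2^{N}\det(\ang{v_i}{v_j})}{\prod_j\ang{v_j}{v_j}}\equiv 2^{N}\,\disc(V)\prod_j\ang{v_j}{v_j}\pmod{(\FF^\times)^2},
\]
hence $\prod_j\ang{v_j}{v_j}\equiv 2^{N}P(-1)\disc(V)$. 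Since $\spin(A)=\spin(-I)\spin(B)=\disc(V)\prod_j\spin(r_{v_j})=\disc(V)\prod_j\ang{v_j}{v_j}$, this gives $\spin(A)=2^{N}P(-1)(\FF^\times)^2$.

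For (ii), apply (i) to $-A\in\Or(V)$: the relevant polynomial is $\det(I-(-A)T)=P(-T)$, which does not vanish at $T=-1$ because $P(1)\neq 0$, so $\spin(-A)=2^{N}P(1)(\FF^\times)^2$; combining with $\spin(-A)=\spin(-I)\spin(A)=\disc(V)\spin(A)$ yields $\spin(A)=2^{N}P(1)\disc(V)$. For (iii), if $P(1)\neq 0$ and $P(-1)\neq 0$, equating the expressions for $\spin(A)$ from (i) and (ii) gives $2^{N}P(-1)\equiv 2^{N}P(1)\disc(V)$, i.e., $\disc(V)=P(1)P(-1)(\FF^\times)^2$.

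The argument is essentially formal once $B=r_{v_1}\cdots r_{v_N}$ is in hand; the only step requiring a moment's care is verifying that the change of basis $(v_j)\to(w_j)$ is unitriangular, so that it contributes $1$ to the determinant, and this is exactly the relation $r_{v_i}(v_j)\in v_j+\FF v_i$ recorded above. I do not anticipate any real obstacle.
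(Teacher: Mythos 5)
Your proof is correct, and it departs from the paper's only in part (\ref{L:disc and spin rules i}): the paper simply cites Zassenhaus \cite{MR0148760} for the identity $\spin(A)=\det\big((I+A)/2\big)\cdot(\FF^\times)^2$ when $\det(I+A)\neq 0$, whereas you re-prove that identity from scratch. Your argument --- writing $B=-A$ as a product of exactly $N$ reflections $r_{v_1}\cdots r_{v_N}$ via Cartan--Dieudonn\'e (the fixed-point-free hypothesis forcing the $v_j$ to be a basis), telescoping $I-B$, and extracting $\det(I-B)=2^N\det(\ang{v_i}{v_j})/\prod_j\ang{v_j}{v_j}$ from the factorization $UDG$ with $U$ unitriangular --- is essentially the standard proof of Zassenhaus's formula, and the details check out: the identity $(I-r_v)(x)=\tfrac{2\ang{x}{v}}{\ang{v}{v}}v$, the containment $w_j\in v_j+\operatorname{span}(v_1,\dots,v_{j-1})$ coming from $r_{v_i}(v_j)\in v_j+\FF v_i$, and the use of the symmetry of the Gram matrix in assembling $UDG$ are all valid, and the final bookkeeping $\spin(A)=\spin(-I)\spin(B)=\disc(V)\prod_j\ang{v_j}{v_j}=2^NP(-1)(\FF^\times)^2$ is right. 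Parts (\ref{L:disc and spin rules ii}) and (\ref{L:disc and spin rules iii}) are deduced exactly as in the paper: apply (\ref{L:disc and spin rules i}) to $-A$, use $\spin(-I)=\disc(V)$ from Lemma~\ref{L:disc comes from spin}, and compare the two expressions. The paper's route buys brevity; yours buys a self-contained statement independent of the reference, at the cost of a half page of linear algebra.
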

\begin{proof}
If $P(-1)\neq 0$, then Zassenhaus \cite{MR0148760}*{p.446} shows that  $\spin(A)$ equals  
\[
\det((I+A)/2) (\FF^\times)^2 = 2^N \det(I+A)(\FF^\times)^2 = 2^N P(-1) (\FF^\times)^2.
\]  
This gives (\ref{L:disc and spin rules i}), and part (\ref{L:disc and spin rules ii}) follows by applying (\ref{L:disc and spin rules i}) with the matrix $-A$ and using Lemma~\ref{L:disc comes from spin}.   Finally, (\ref{L:disc and spin rules iii}) follows directly from (\ref{L:disc and spin rules i}) and (\ref{L:disc and spin rules ii}).
\end{proof}

Take any $A\in \Or(V)$ and define $P(T)=\det(I-AT)$.  We have
\begin{equation}  \label{L:FE A}
T^N P(1/T) = \det(-A) P(T) = (-1)^N \det(A) P(T).
\end{equation}
Substituting $1$ and $-1$ into (\ref{L:FE A}), we have $P(1)=(-1)^N \det(A) P(1)$ and $P(-1)=\det(A) P(-1)$.  
So $P$ is divisibly by $1-T$ if $\det(A)= (-1)^{N+1}$ and by $1+T$ if $\det(A)=-1$.    Removing these obvious linear factors from $P$,  we have the polynomial
\[
f(T) := \begin{cases}
       P(T) & \text{ if $N$ is even and $\det(A)=1$}, \\
       P(T)/(1-T^2) & \text{ if $N$ is even and $\det(A)=-1$}, \\       
       P(T)/(1-\det(A)T) & \text{ if $N$ is odd}. \\
	\end{cases}
\]
Using (\ref{L:FE A}), we find that $f(T)\in \FF[T]$ is reciprocal, i.e., $T^{\deg f} f(1/T) = f(T)$.   The polynomial $f(T)$ is monic and has even degree.

\subsection{Reciprocal polynomials}

\begin{lemma} \label{L:basic separability}
Fix a field $K$ whose characteristic is not $2$.   Let $f\in K[T]$ be a monic reciprocal polynomial of even degree $2n\geq 2$.
\begin{romanenum}
\item \label{L:basic separability 0}
We have $f(T)=T^n h(T+1/T)$ for a unique polynomial $h \in K[T]$. The polynomial $h$ is monic of degree $n$.
\item \label{L:basic separability i}
We have 
\begin{align} \label{E: discf vs disch}
\disc(f)=(-1)^n f(1)f(-1) \disc(h)^2 = h(2)h(-2) \disc(h)^2.   
\end{align}
In particular, $f$ is separable if and only if $h$ is separable and $h(2)h(-2) \neq 0$.
\item \label{L:basic separability ii}
Suppose that $K=\FF$ is a finite field.    Further suppose that $h$ is irreducible and $h(2)h(-2) \neq 0$. 

\begin{itemize}
\item
If $h(2)h(-2)$ is not a square in $\FF$, then $f$ is irreducible of degree $2n$ in $\FF[T]$.  
\item
If $h(2)h(-2)$ is a square in $\FF$, then $f$ is the product of two irreducible polynomials of degree $n$ in $\FF[T]$.  
\end{itemize}

\end{romanenum}
\end{lemma}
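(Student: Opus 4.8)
The plan is to treat the three parts in order, with the substitution $s = T + 1/T$ as the main tool. For part (i): the Laurent polynomial $g(T) := T^{-n} f(T) \in K[T, 1/T]$ is invariant under $T \mapsto 1/T$ — this is exactly the hypothesis that $f$ is reciprocal. From the recursion $T^{k+1} + T^{-k-1} = (T + T^{-1})(T^k + T^{-k}) - (T^{k-1} + T^{-k+1})$, each $T^k + T^{-k}$ equals a monic polynomial $Q_k$ of degree $k$ in $s$. Writing $g = \sum_{k=0}^n c_k(T^k + T^{-k})$ with $c_n = 1$ (reciprocity of $f$ forces the coefficient sequence of $g$ to be symmetric, so $g$ has this shape) gives $g = h(s)$ with $h := \sum_{k=0}^n c_k Q_k$, which is monic of degree $n$ because only $Q_n$ reaches degree $n$. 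Uniqueness of $h$ holds because $s$ is transcendental over $K$ inside $K[T,1/T]$: in $h_1(s) - h_2(s)$ the highest power of $T$ has nonzero coefficient unless $h_1 = h_2$. Then $f(T) = T^n h(T + 1/T)$ by construction, and in particular $f(1) = h(2)$ and $f(-1) = (-1)^n h(-2)$.

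For part (ii): I would pass to an algebraic closure $\overline{K}$ and factor $h(T) = \prod_{i=1}^n (T - \beta_i)$. For each $i$ choose $\alpha_i \in \overline{K}$ with $\alpha_i^2 - \beta_i \alpha_i + 1 = 0$; then $\alpha_i \neq 0$ and $T^2 - \beta_i T + 1$ has roots $\alpha_i, \alpha_i^{-1}$. Since $T^2 - \beta_i T + 1 = T(s - \beta_i)$, one has $\prod_i (T - \alpha_i)(T - \alpha_i^{-1}) = T^n \prod_i (s - \beta_i) = T^n h(s) = f(T)$, so the multiset of roots of $f$ is $\{\alpha_i, \alpha_i^{-1}\}_{1 \le i \le n}$. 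Now expand $\disc(f) = \prod (r - r')^2$ over the unordered pairs $\{r,r'\}$ of the $2n$ root-slots, partitioned into the $n$ ``diagonal'' pairs $\{\alpha_i, \alpha_i^{-1}\}$ and the off-diagonal ones. Short computations give $(\alpha_i - \alpha_i^{-1})^2 = \beta_i^2 - 4 = (\beta_i - 2)(\beta_i + 2)$, and for $i \neq j$, using $\beta_i - \beta_j = (\alpha_i - \alpha_j)(\alpha_i\alpha_j - 1)/(\alpha_i\alpha_j)$, that $(\alpha_i - \alpha_j)(\alpha_i - \alpha_j^{-1})(\alpha_i^{-1} - \alpha_j)(\alpha_i^{-1} - \alpha_j^{-1}) = (\beta_i - \beta_j)^2$. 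Multiplying over all pairs, $\disc(f) = \prod_i(\beta_i - 2)(\beta_i + 2) \cdot \prod_{i<j}(\beta_i - \beta_j)^4 = h(2)h(-2)\,\disc(h)^2$, and $h(2)h(-2) = (-1)^n f(1)f(-1)$ by part (i); since both sides of the claimed identity are polynomials in the coefficients of $f$, running the computation over $\overline{K}$ with multiplicities is harmless. The ``in particular'' is then immediate, since a monic polynomial of positive degree is separable if and only if its discriminant is nonzero.

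For part (iii): put $q = |\FF|$, let $\beta \in \overline{\FF}$ be a root of $h$, so $\FF(\beta) = \FF_{q^n}$ and $h$ is separable (irreducible over the perfect field $\FF$), its roots being $\beta, \beta^q, \dots, \beta^{q^{n-1}}$. Let $\alpha$ be a root of $T^2 - \beta T + 1$; by part (i) it is a root of $f$, and $\alpha \neq \alpha^{-1}$ (else $\alpha = \pm 1$, $\beta = \pm 2$, against $h(2)h(-2) \neq 0$). Since $\beta = \alpha + \alpha^{-1} \in \FF(\alpha)$ we have $\FF_{q^n} \subseteq \FF(\alpha)$, and $[\FF(\alpha) : \FF_{q^n}] \le 2$, with equality exactly when $\beta^2 - 4$ is a non-square in $\FF_{q^n}^\times$ (quadratic formula, $\Char \FF \neq 2$; note $\beta^2 - 4 \neq 0$). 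The crux is the identity $\Norm_{\FF_{q^n}/\FF_q}(\beta^2 - 4) = h(2)h(-2)$, which holds because $\Norm_{\FF_{q^n}/\FF_q}(\beta^2 - 4) = \prod_{j=0}^{n-1}(\beta^{q^j} - 2)(\beta^{q^j} + 2)$ and the $\beta^{q^j}$ run over the roots of $h$; together with the standard fact that $x \in \FF_{q^n}^\times$ is a square if and only if $\Norm_{\FF_{q^n}/\FF_q}(x)$ is a square in $\FF_q^\times$ (both conditions say $x^{(q^n-1)/2} = 1$), this shows $\beta^2 - 4$ is a square in $\FF_{q^n}$ if and only if $h(2)h(-2)$ is a square in $\FF$. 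Hence $[\FF(\alpha):\FF]$ is $n$ when $h(2)h(-2)$ is a square and $2n$ otherwise.

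To finish part (iii): if $h(2)h(-2)$ is not a square, the minimal polynomial of $\alpha$ over $\FF$ is monic of degree $2n = \deg f$ and divides $f$, hence equals $f$, so $f$ is irreducible. If $h(2)h(-2)$ is a square, then $\FF(\alpha) = \FF(\alpha^{-1}) = \FF_{q^n}$, so the minimal polynomials $g_1, g_2$ of $\alpha, \alpha^{-1}$ are irreducible of degree $n$ and both divide $f$; they are distinct, for $g_1 = g_2$ would force $\alpha^{-1} = \alpha^{q^j}$ for some $0 \le j \le n-1$, and applying $\xi \mapsto \xi + \xi^{-1}$ to both sides would give $\beta = \beta^{q^j}$, hence $j = 0$ (since $[\FF(\beta):\FF] = n$) and $\alpha = \alpha^{-1}$, a contradiction. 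As $g_1, g_2$ are coprime with $g_1 g_2 \mid f$ and $\deg(g_1 g_2) = 2n = \deg f$, we conclude $f = g_1 g_2$. The step I expect to demand the most care is part (ii): correctly grouping the $\binom{2n}{2}$ factors of $\disc(f)$ along the pairing induced by $h$ and verifying the two algebraic identities (and that the bookkeeping survives the degenerate cases); everything else is routine, with the norm-versus-square criterion in part (iii) the only other point worth a second glance.
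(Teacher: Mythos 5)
Your proposal is correct, and its skeleton (the substitution $s=T+1/T$, the pairing of the roots of $f$ into $\{\alpha,\alpha^{-1}\}$ lying over roots $\beta$ of $h$, and the degree argument $[\FF(\alpha):\FF]\in\{n,2n\}$) matches the paper's. The difference is one of self-containment: the paper's proof is largely a sequence of citations to Ahmadi--Vega, quoting their Lemma~6 for the existence of $h$, their \S3 for the discriminant identity $\disc(f)=h(2)h(-2)\disc(h)^2$, and their Theorem~7 for the criterion that $f$ is irreducible exactly when $(-1)^n f(1)f(-1)=h(2)h(-2)$ is a non-square, keeping only the short argument that every root of $f$ has degree $n$ or $2n$ over $\FF$. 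You instead prove all three ingredients directly: part (i) via the recursion for $T^k+T^{-k}$ (your use of $c_0(T^0+T^0)$ silently divides by $2$, harmless since $\Char K\neq 2$, though writing the constant term separately avoids even that); part (ii) via the explicit grouping of the $\binom{2n}{2}$ squared differences into diagonal factors $(\alpha_i-\alpha_i^{-1})^2=\beta_i^2-4$ and cross factors giving $(\beta_i-\beta_j)^4$, which is exactly the computation the paper outsources and is valid with multiplicities so the separability claim follows; and part (iii) via the identity $\Norm_{\FF_{q^n}/\FF_q}(\beta^2-4)=h(2)h(-2)$ together with the fact that an element of $\FF_{q^n}^\times$ is a square iff its norm is a square in $\FF_q^\times$, which is a clean direct proof of the cited irreducibility criterion, including the point (not spelled out in the paper) that in the split case the two degree-$n$ factors are distinct. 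So your route buys a fully self-contained lemma at the cost of the explicit discriminant bookkeeping, while the paper buys brevity by leaning on the reference; both are sound.
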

\begin{proof}
See \cite{MR2381481}*{Lemma 6} for the existence in part (\ref{L:basic separability 0}); the uniqueness is clear.  Denote the discriminant of $f$ and $h$ by $\disc(f)$ and $\disc(h)$, respectively.  It is straightforward to show that 
\begin{equation} \label{E:basic separability}
\disc(f)=(-1)^n f(1)f(-1) \disc(h)^2 = h(2)h(-2) \disc(h)^2,
\end{equation}
see \cite{MR2381481}*{\S3} for example.   Part (\ref{L:basic separability i}) is now immediate from (\ref{E:basic separability})

Now suppose that $h \in \FF[T]$ is irreducible and satisfies $h(\pm 2)\neq 0$.  From part (\ref{L:basic separability i}), $f$ is separable.   Let $\alpha \in \FFbar$ be any root of $f$; we have $\alpha\neq 0$ since $f$ is reciprocal.   The extension $\FF(\alpha+\alpha^{-1})/\FF$ has degree $n$ since $\alpha+\alpha^{-1}$ is a root of $h$ and $h$ is irreducible of degree $n$.   The extension $\FF(\alpha)/\FF$ thus has degree $n$ or $2n$.   Since $\alpha$ was an arbitrary root of $f$, we find that $f$ is either irreducible of degree $2n$ or the product of two irreducible polynomials of degree $n$.    From Theorem 7 in \cite{MR2381481}, we deduce that $f$ is irreducible if and only if $(-1)^n f(1)f(-1)=h(2)h(-2)$ is not a square in $\FF$.
\end{proof}

\subsection{Counting elements with a given separable characteristic polynomial} 
\label{SS:counting elements with sep polynomial}

Fix an orthogonal space $V$ of dimension $N\geq 2$ over a finite field $\FF$ with odd cardinality $q$. 

In this section, we give an explicit formula for the number of $A\in \Or(V)$ for which $\det(I-AT)$ is equal to a fixed \emph{separable} polynomial in $\FF[T]$.   These computations are of independent interest.\\
 
 Fix an integer $n\geq 1$ and a monic, separable and reciprocal polynomial $f\in \FF[T]$ of degree $2n$.   There is a unique (monic) polynomial $h\in \FF[T]$ of degree $n$ such that $f(T)=T^n h(T+1/T)$.   From Lemma~\ref{L:basic separability}(\ref{L:basic separability i}) and the assumption that $f$ is separable, we find that $h$ is separable and $h(2) h(-2)= (-1)^n f(1) f(-1)$ is non-zero.  

Let $h_1,\ldots, h_r \in \FF[T]$ be the monic irreducible factors of $h$.   Define $e_i=1$ if $h_i(2)h_i(-2) \in \FF$ is a square, otherwise set $e_i=-1$.

\begin{proposition} \label{P:conjugacy for N even}
Let $V$ be an orthogonal space of even dimension $N=2n$ over $\FF$.   Let $C$ be the set of $A\in \Or(V)$ for which $\det(I-AT)=f(T)$.  
\begin{romanenum}
\item \label{P:conjugacy for N even i}
If $\disc(V)\neq f(1)f(-1)(\FF^\times)^2$, then $C=\emptyset$.
\item \label{P:conjugacy for N even ii}
If $\disc(V)=f(1)f(-1)(\FF^\times)^2$, then $C$ is a conjugacy class of $\Or(V)$ and 
\[
|C|/|\!\Or(V)| = q^{-n} \prod_{i=1}^r (1-e_i/q^{\deg h_i})^{-1}. 
\]
We have $\det(A)=1$ and $\spin(A)= f(-1) (\FF^\times)^2$ for all $A\in C$.
\end{romanenum}
\end{proposition}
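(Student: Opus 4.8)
The plan is to classify the pairs $(V,A)$ with $A\in\Or(V)$ and $\det(I-AT)=f(T)$ up to isometry, and then count. First I would pass to the commutative $\FF$-algebra $B:=\FF[T]/(f(T))$. Since $f$ is separable and $f(0)=1\neq0$ (it is monic and reciprocal of degree $2n$), $B$ is \'etale over $\FF$ and decomposes as $B\isom\prod_{i=1}^{r}B_i$ with $B_i=\FF[T]/(g_i(T))$, where $g_i(T):=T^{\deg h_i}h_i(T+1/T)$ is the self-reciprocal factor of $f$ attached to $h_i$; by Lemma~\ref{L:basic separability}(\ref{L:basic separability ii}), $B_i\isom\FF_{q^{2\deg h_i}}$ when $e_i=-1$ and $B_i\isom\FF_{q^{\deg h_i}}\times\FF_{q^{\deg h_i}}$ when $e_i=1$. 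As $T$ is a unit in $B$, there is an $\FF$-algebra involution $b\mapsto\bar b$ on $B$ with $\bar T=T^{-1}$; I would record that its fixed subring is $B^{+}=\FF[T+T^{-1}]$, which is isomorphic to $\FF[T]/(h(T))\isom\prod_i\FF_{q^{\deg h_i}}$ (a minimal-polynomial degree count identifies $\min(T+T^{-1})$ with $h$, using $\deg h=\sum_i\deg h_i=n$). Since each $g_i$ is self-reciprocal, the bar involution preserves each $B_i$; and because $B_i\cap B^{+}$ is then a field isomorphic to $\FF_{q^{\deg h_i}}$, the involution acts on $B_i$ as the order-$2$ automorphism when $e_i=-1$ and by swapping the two factors when $e_i=1$.

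Next I would translate the counting problem into this language. For a pair $(V,A)$ as above, separability of $f=\det(I-AT)$ forces $\FF[A]\isom B$, and $V$ becomes a faithful $B$-module with $\dim_\FF V=\dim_\FF B$, hence free of rank $1$; the identity $\langle Av,Aw\rangle=\langle v,w\rangle$ says exactly that $\langle bv,w\rangle=\langle v,\bar b w\rangle$ for all $b\in B$. Writing the form as $\langle x,y\rangle=\Tr_{B/\FF}(c\,\bar x\,y)$ for the unique $c\in B$ that represents it, I expect symmetry of $\langle\;,\;\rangle$ to be equivalent to $\bar c=c$ and nondegeneracy to $c\in(B^{+})^{\times}$; conversely each $c\in(B^{+})^{\times}$ defines such a structure on $B$ for which multiplication by $T$ lies in $\Or$ (and has characteristic polynomial $f$). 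A $B$-module isometry between two such structures is multiplication by a unit $\lambda\in B^{\times}$, which carries the parameter $c$ to $\bar\lambda\lambda\,c$; hence the isometry classes of pairs $(V,A)$ with characteristic polynomial $f$ are parametrized by $(B^{+})^{\times}/N(B^{\times})$, where $N(\lambda)=\bar\lambda\lambda$. The crucial point is that $N$ is surjective: factor by factor it is $N_{\FF_{q^{2\deg h_i}}/\FF_{q^{\deg h_i}}}$ when $e_i=-1$ and $(a,b)\mapsto ab$ on $(\FF_{q^{\deg h_i}}\times\FF_{q^{\deg h_i}})^{\times}$ when $e_i=1$, both surjective. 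So there is exactly one isometry class, say with representative $(V_0,A_0)$; and by Lemma~\ref{L:disc and spin rules}(\ref{L:disc and spin rules iii}) — applicable since $f(1)=h(2)$ and $f(-1)=(-1)^{n}h(-2)$ are nonzero by Lemma~\ref{L:basic separability}(\ref{L:basic separability i}) — we get $\disc(V_0)=f(1)f(-1)(\FF^{\times})^{2}$. As an orthogonal space over $\FF$ is determined up to isomorphism by its dimension and discriminant, this proves part~(\ref{P:conjugacy for N even i}), and in the setting of part~(\ref{P:conjugacy for N even ii}) it gives $V\isom V_0$ (so $C\neq\emptyset$) and shows that any two elements of $C$ are $\Or(V)$-conjugate; thus $C$ is a single conjugacy class.

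To finish part~(\ref{P:conjugacy for N even ii}) I would compute $|C|=|\Or(V)|/|\Cent_{\Or(V)}(A)|$. Since $f$ is separable, $\Cent_{\GL(V)}(A)=\FF[A]^{\times}\isom B^{\times}$, and intersecting with $\Or(V)$ keeps the $b$ with $\bar b b=1$, so $\Cent_{\Or(V)}(A)\isom\ker\!\big(N\colon B^{\times}\to(B^{+})^{\times}\big)$, whose order is $\prod_{i=1}^{r}(q^{\deg h_i}-e_i)$ (the $i$-th factor being $q^{\deg h_i}+1$ when $e_i=-1$ and $q^{\deg h_i}-1$ when $e_i=1$). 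Hence
\[
\frac{|C|}{|\Or(V)|}=\prod_{i=1}^{r}\frac{1}{q^{\deg h_i}-e_i}=q^{-\sum_i\deg h_i}\prod_{i=1}^{r}\Bigl(1-\frac{e_i}{q^{\deg h_i}}\Bigr)^{-1}=q^{-n}\prod_{i=1}^{r}\Bigl(1-\frac{e_i}{q^{\deg h_i}}\Bigr)^{-1},
\]
as claimed. The remaining assertions need no new work: $\det(A)=1$ follows from $T^{N}f(1/T)=f(T)$ together with the identity $T^{N}\det(I-AT^{-1})=(-1)^{N}\det(A)\det(I-AT)$ of \eqref{L:FE A} and $N$ even, while $\spin(A)=2^{N}f(-1)(\FF^{\times})^{2}=f(-1)(\FF^{\times})^{2}$ by Lemma~\ref{L:disc and spin rules}(\ref{L:disc and spin rules i}), since $2^{N}=(2^{n})^{2}$ is a square. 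I expect the main obstacle to be the classification in the second paragraph: pinning down the structure of $B^{+}$ and how the bar involution acts on each $B_i$, verifying the parametrization of the admissible forms by $c\in(B^{+})^{\times}$, and checking surjectivity of the norm maps along with their kernels; once these are in place the counting and the determinant/spinor statements are purely formal.
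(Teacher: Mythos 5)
Your proposal is correct and follows essentially the same route as the paper: the paper also classifies pairs $(V,A)$ via the algebra $K=\FF[x]/(f)$ with the involution $x\mapsto x^{-1}$, the fixed subalgebra $k=\FF[y]/(h)$, the parametrization of forms by $k^\times/N_{K/k}(K^\times)$, surjectivity of the norm over finite fields, and $\Cent_{\Or(V)}(A)\cong\ker N_{K/k}$, with the same factor-by-factor count and the same use of Lemmas~\ref{L:basic separability} and \ref{L:disc and spin rules}. The only difference is that the paper cites Theorem~A.2 of \cite{MR1947324} for the parametrization and centralizer statements, whereas you rederive them directly (correctly) from the structure of $B=\FF[T]/(f)$ and its involution.
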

\begin{proof}
We use the background material in Appendix~A of \cite{MR1947324} which holds for a general field whose characteristic is not $2$.   Any $A\in \Or(V)$ with $\det(I-A)=f(T)$ has determinant $1$ since $f(T)$ is reciprocal.

Consider pairs $(V,A)$ consisting of an orthogonal space $V$ over $\FF$ with an automorphism $A\in \SO(V)$.  We say that two such pairs $(V,A)$ and $(V',A')$ are \emph{equivalent} if there is an isomorphism $B\colon V\to V'$ of orthogonal spaces for which $A'=B\circ A  \circ B^{-1}$.  Let $\calV(f)$ be the set of equivalence classes of pairs $(V,A)$ for which $\det(I-AT)=f(T)$.

 We have an extension of $\FF$-algebras $K/k$, where $K=\FF[x]/(f(x))$, $k=\FF[y]/(h(y))$ and $y=x+x^{-1}$.  Since $f(x)$ and $h(y)$ are separable, the algebras $K$ and $k$ will be products of finite extensions of $\FF$.  Let $\iota\colon K\to K$ be the automorphism which fixes $k$ and satisfies $\iota(x)=x^{-1}$.  Let $N_{K/k} \colon K \to k$ be the norm map $\alpha\mapsto \alpha\cdot \bbar\alpha$, where we set $\bbar\alpha=\iota(\alpha)$.

For each $\xi \in k^\times$, define the $\FF$-vector space $V_\xi:=K$ and endow it with the $\FF$-valued pairing $\langle\alpha,\beta\rangle_\xi = \Tr_{K/\FF}(\xi \alpha \overline{\beta})$.  With this bilinear form, $V_\xi$ is an orthogonal space of dimesion $2n$ over $\FF$.  The map $A_\xi\colon K\to K$ defined by $A_\xi(\alpha) = x\alpha$ is an automorphism of the orthogonal space $V_\xi$.  By construction, we have $\det(I-A_\xi T) = f(T)$.  If $\xi,\lambda \in k^\times$ satisfy $\xi\lambda^{-1}= N_{K/k}(\delta)$ for some $\delta\in K^\times$, then the map $B:K\to K$ defined by $B(\alpha)=\delta\alpha$ gives an equivalence between $(V_\xi,A_\xi)$ and $(V_\lambda,A_\lambda)$.  We thus have a well-defined map
\begin{align*}
\phi \colon k^\times/N_{K/k}(K^\times) &\to \calV(f),\quad \xi \mapsto (V_\xi,A_\xi). 
\end{align*}
Theorem~A.2 of \cite{MR1947324} shows that $\phi$ is a bijection.   

Since $\FF$ is finite and $K$ and $k$ are the product of finite extension fields of $\FF$, we know that $N_{K/k}\colon  K^\times\to k^\times$ is surjective and hence $|\calV(f)|=1$.   So there is a pair $(V,A)$, unique up to equivalence, that satisfies $\det(I-AT)=f(T)$.   In particular, the set $C$ of $B\in \SO(V)$ with $\det(I-BT)=f(T)$ is the conjugacy class of $A$ in $\Or(V)$.   By Lemma~\ref{L:disc and spin rules}(\ref{L:disc and spin rules iii}), we have $\disc(V)=f(1)f(-1) (\FF^\times)^2$ and hence the uniqueness of the equivalence class $(V,A)$ gives part (\ref{P:conjugacy for N even i}).   We have $\spin(A)=f(-1) (\FF^\times)^2$ by Lemma~\ref{L:disc and spin rules}(\ref{L:disc and spin rules i}).  It remains to compute $|C|/|\Or(V)|$.  

Since $C$ is the conjugacy class of $A$ in $\Or(V)$, we have
\[
|C|/|\Or(V)|=|\{ B \in \Or(V): \det(I-BT) =f(T) \}|/|\Or(V)|= 1/|\Cent_{\Or(V)}(A)|.
\]
Theorem~A.2 of \cite{MR1947324} also shows that $\Cent_{\Or(V)}(A)\cong \ker(N_{K/k}\colon K^\times\to k^\times)$.  For $1\leq i \leq r$, define $f_i(T):=T^{\deg h_i} h_i(T+1/T)$.  We thus have
\[
\Cent_{\Or(V)}(A) \cong {\prod}_{i=1}^r \ker( N_{K_i/k_i}\colon K_i^\times \to k_i^\times),
\]
where we have the extension of $\FF$-algebras $K_i/k_i$ with $K_i:=\FF[x]/(f_i(x))$ and $k_i:=\FF[y]/(h_i(y))$.   Since $N_{K/k}\colon K^\times\to k^\times$ is surjective, we have  $|\Cent_{\Or(V)}(A)| = \prod_{i=1}^r |K_i^\times|/|k_i^\times|$.

Suppose that $e_i=-1$, and hence $f_i(T)$ is irreducible by Lemma~\ref{L:basic separability}(\ref{L:basic separability ii}).  Then $K_i/k_i$ is a quadratic extension of finite fields, so $|K_i^\times|/|k_i^\times| = |k_i| + 1 = q^{\deg h_i} +1 = q^{\deg h_i} (1-e_i/q^{\deg h_i})$.   

Suppose that $e_i=1$, and hence $f_i(T)$ is the product of two irreducible polynomials of degree $\deg h_i$ by Lemma~\ref{L:basic separability}(\ref{L:basic separability ii}).  Then $K_i$ is isomorphic to the product of two fields isomorphic to $k_i$,  so $|K_i^\times|/|k_i^\times| = |k_i| - 1 = q^{\deg h_i} -1 = q^{\deg h_i} (1-e_i/q^{\deg h_i})$.  

Therefore, $ |C|/|\Or(V)|$ equals
 \[
1/|\Cent_{\Or(V)}(A)| = \big({\prod}_{i=1}^r q^{\deg h_i}(1-e_i/q^{\deg h_i})\big)^{-1}=q^{-n}{\prod}_{i=1}^r (1-e_i/q^{\deg h_i})^{-1}.    \qedhere
 \]
\end{proof}

\begin{proposition}  \label{P:conjugacy for N even 2}
Let $V$ be an orthogonal space of dimension $2n+2$ over $\FF$ and fix a coset $\beta \in \FF^\times/(\FF^\times)^2$.  Let $C_\beta$ be the set of $A\in \Or(V)$ for which $\det(I-AT)=(1-T^2)f(T)$ and $\spin(A)= \beta$.  Then $C_\beta$ is a conjugacy class of $\Or(V)$ and 
\[
|C_\beta|/|\!\Or(V)| = \frac{1}{4} q^{-n} \prod_{i=1}^r (1-e_i/q^{\deg h_i})^{-1}.
\]
\end{proposition}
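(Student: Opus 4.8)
The plan is to reduce everything to Proposition~\ref{P:conjugacy for N even} via the generalized eigenspace decomposition of $A$ attached to the factorization $(1-T^2)f(T)=(1-T)(1+T)f(T)$. First I would note that $f(1)f(-1)\neq 0$: since $f$ is separable, Lemma~\ref{L:basic separability}(\ref{L:basic separability i}) gives $\disc(f)=(-1)^n f(1)f(-1)\disc(h)^2\neq 0$. Hence $1-T$, $1+T$ and $f(T)$ are pairwise coprime in $\FF[T]$, so any $A\in\Or(V)$ with $\det(I-AT)=(1-T^2)f(T)$ yields an $A$-stable decomposition $V=V_1\oplus V_{-1}\oplus W$ with $V_1=\ker(A-I)$, $V_{-1}=\ker(A+I)$ and $W=\ker f(A)$; here $\dim V_{\pm1}=1$ (so $A$ acts on $V_{\pm1}$ as $\pm I$) while $\dim W=2n$ and $\det(I-A|_W\,T)=f(T)$. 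Since generalized eigenspaces of an isometry attached to eigenvalues $\lambda,\mu$ with $\lambda\mu\neq1$ are orthogonal to one another, and $f(\pm1)\neq0$, this decomposition is orthogonal, $V=V_1\perp V_{-1}\perp W$, with each summand nondegenerate.

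Next I would classify the pairs $(V,A)$ with $\det(I-AT)=(1-T^2)f(T)$ up to $\Or(V)$-conjugacy, i.e.\ up to isometry of orthogonal spaces equipped with an isometry. The piece $(W,A|_W)$ is unique up to isometry by Proposition~\ref{P:conjugacy for N even}(\ref{P:conjugacy for N even ii}), which also records $\disc(W)=f(1)f(-1)(\FF^\times)^2$ and $\spin_W(A|_W)=f(-1)(\FF^\times)^2$; the lines $(V_1,I)$ and $(V_{-1},-I)$ are each determined up to isometry by their discriminant, subject to $\disc(V_1)\disc(V_{-1})\disc(W)=\disc(V)$. So the conjugacy class of $A$ is pinned down by $\disc(V_{-1})\in\FF^\times/(\FF^\times)^2$, which takes two values, and both are realized: forming $A$ as the orthogonal direct sum $I\perp(-I)\perp A|_W$ of the three prescribed pieces produces an orthogonal space of dimension $2n+2$ and discriminant $\disc(V)$, hence one isometric to $V$. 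Therefore there are exactly two $\Or(V)$-conjugacy classes of such $A$. Using multiplicativity of the spinor norm over orthogonal direct sums together with Lemma~\ref{L:disc comes from spin} applied to the line $V_{-1}$, we get $\spin(A)=\spin_{V_1}(I)\cdot\spin_{V_{-1}}(-I)\cdot\spin_W(A|_W)=\disc(V_{-1})\cdot f(-1)(\FF^\times)^2$, so as $\disc(V_{-1})$ runs over its two values the spinor norm $\spin(A)$ runs over both cosets of $(\FF^\times)^2$. Consequently, for each $\beta$ the set $C_\beta$ is precisely one of the two conjugacy classes, so in particular it is nonempty and is a single conjugacy class of $\Or(V)$.

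Finally I would compute $|C_\beta|$ via a centralizer. For $A\in C_\beta$, any $B\in\Cent_{\Or(V)}(A)$ stabilizes the canonical subspaces $V_1=\ker(A-I)$, $V_{-1}=\ker(A+I)$ and $W=\ker f(A)$, and its restrictions to the lines $V_{\pm1}$ automatically commute with $\pm I$; conversely any triple of such commuting isometries assembles to an element of the centralizer. Hence $\Cent_{\Or(V)}(A)=\Or(V_1)\times\Or(V_{-1})\times\Cent_{\Or(W)}(A|_W)$, of order $2\cdot2\cdot q^n\prod_{i=1}^r(1-e_i/q^{\deg h_i})$, the last factor being the centralizer order extracted from the proof of Proposition~\ref{P:conjugacy for N even}(\ref{P:conjugacy for N even ii}) (there $|\Cent_{\Or(W)}(A|_W)|=\prod_i q^{\deg h_i}(1-e_i/q^{\deg h_i})=q^n\prod_i(1-e_i/q^{\deg h_i})$ since $\sum_i\deg h_i=n$). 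Since $C_\beta$ is a single conjugacy class, $|C_\beta|/|\Or(V)|=1/|\Cent_{\Or(V)}(A)|=\tfrac14 q^{-n}\prod_{i=1}^r(1-e_i/q^{\deg h_i})^{-1}$, which is the claimed formula.

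The part needing genuine care, rather than bookkeeping, is the structure of the generalized eigenspace decomposition: that $1$ and $-1$ occur as eigenvalues of $A$ with multiplicity exactly one (using separability of $f$, equivalently $f(\pm1)\neq0$) and that the three pieces are mutually orthogonal nondegenerate subspaces. Once that is in hand, identifying $C_\beta$ with a single nonempty conjugacy class is just the discriminant and spinor-norm accounting above, and the cardinality is a direct reuse of the centralizer computation already carried out in Proposition~\ref{P:conjugacy for N even}.
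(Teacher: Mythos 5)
Your proof is correct and follows essentially the same route as the paper: decompose $V$ orthogonally into the $\pm 1$-eigenlines and $\ker f(A)$, invoke Proposition~\ref{P:conjugacy for N even} for the $2n$-dimensional piece, track discriminants and spinor norms via Lemma~\ref{L:disc comes from spin}, and compute the centralizer as $\{\pm I\}\times\{\pm I\}\times\Cent_{\Or(W)}(A|_W)$. The only difference is organizational—you classify the two conjugacy classes with characteristic polynomial $(1-T^2)f(T)$ by $\disc(V_{-1})$ and note the spinor norm separates them, whereas the paper fixes an explicit representative with prescribed one-dimensional discriminants and conjugates any $B\in C_\beta$ onto it—so the arguments are mathematically the same.
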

\begin{proof}
Let $V_1$ be the orthogonal space of dimension $2n$ over $\FF$ with $\disc(V_1)=f(1)f(-1)(\FF^\times)^2$.   Let $V_2$ and $V_3$ be orthogonal spaces of dimension $1$ over $\FF$ such that $\disc(V_2)=f(-1)\beta$ and $\disc(V_3)= f(1)\beta \disc(V)$.  We have $\disc(V_1\oplus V_2\oplus V_3)= f(1)f(-1)\cdot f(-1)\beta\cdot f(1) \beta \disc(V) = \disc(V)$.   Therefore, the orthogonal spaces $V$ and $V_1\oplus V_2\oplus V_3$ are isomorphic; without loss of generality, assume that $V=V_1 \oplus V_2 \oplus V_3$.

By Proposition~\ref{P:conjugacy for N even}(\ref{P:conjugacy for N even ii}), there is an $A_1\in \SO(V_1)$ such that $\det(I-A_1T)=f(T)$ and $\spin(A_1)=f(-1) (\FF^\times)^2$.  Let $A\in \Or(V)$ be the automorphism that acts as $A_1$ on $V_1$, $-I$ on $V_2$, and $I$ on $V_3$.   Therefore, $\det(I-AT) = f(T)(1+T)(1-T) = f(T)(1-T^2)$.   We have 
\[
\spin(A)=\spin(A_1) \spin(-I_{V_2}) \spin(I_{V_3})=f(-1) \cdot \spin(-I_{V_2})\cdot 1 = f(-1)\disc(V_2) = \beta,
\]
where we have used Lemma~\ref{L:disc comes from spin}.    So $A$ belongs to $C_\beta$.

Now take any $B \in C_\beta$.  Let $W_2$ and $W_3$ be the (one-dimensional) eigenspaces of $B$ corresponding to the eigenvalues $-1$ and $1$, respectively.  Let $W_1$ be the subspace of $V$ perpendicular to $W_2$ and $W_3$.  With the pairing from $V$, the $W_i$ are orthogonal spaces and $V=W_1\oplus W_2\oplus W_3$.  The automorphism $B$ acts on $W_1$; denote by $B_1 \in \Or(W_1)$ the restriction of $B$ to $W_1$.   We have $\beta= \spin(B) = \spin(B_1)\spin(-I_{W_2}) \spin(I_{W_3})=\spin(B_1)\spin(-I_{W_2})=\spin(B_1) \disc(W_2)$.  By Lemma~\ref{L:disc and spin rules}, we have $\spin(B_1)=f(-1) (\FF^\times)^2$ and $\disc(W_1)=f(1)f(-1)(\FF^\times)^2$.    Therefore, $\disc(W_2)=f(-1)\beta$ and $\disc(W_3)=\disc(V)\disc(W_1)\disc(W_2)=f(1)\beta\disc(V)$.  

By comparing discriminants, we have isomorphisms $\varphi_1\colon V_1\xrightarrow{\sim} W_1$, $\varphi_2\colon V_2 \xrightarrow{\sim} W_2$ and $\varphi_3\colon V_3 \xrightarrow{\sim} W_3$ of orthogonal spaces.   By Proposition~\ref{P:conjugacy for N even}, we may take $\varphi_1$ so that $B_1= \varphi_1\circ A_1 \circ \varphi_1^{-1}$.   The automorphisms $\varphi_1,\varphi_2,\varphi_3$ give rise to an automorphism $\varphi\in \Or(V)$ such that $B = \varphi \circ A \circ \varphi^{-1}$.    Therefore, $C_\beta$ is a conjugacy class of $\Or(V)$.

Since $C_\beta$ is a conjugacy class of $\Or(V)$, it has cardinality $|\!\Or(V)|/|\Cent_{\Or(V)}(A)|$.   The above argument shows that $\Cent_{\Or(V)}(A)$ is equal to 
\[
\Cent_{\Or(V_1)}(A_1) \times \Cent_{\Or(V_2)}(-I) \times \Cent_{\Or(V_3)}(I)= \Cent_{\Or(V_1)}(A_1) \times \{\pm I\} \times \{\pm I\}.
\]
Therefore,
\[
|C_\beta|/|\!\Or(V)| = 1/|\Cent_{\Or(V)}(A)|= 1/|\Cent_{\Or(V_1)}(A_1)| \cdot 1/2 \cdot 1/2 = \tfrac{1}{4} q^{-n} \prod_{i=1}^r (1-e_i/q^{\deg h_i})^{-1},
\]
where the last equality uses Proposition~\ref{P:conjugacy for N even}.
\end{proof}

Finally, we consider orthogonal spaces of odd dimension.
 
 \begin{proposition} \label{P:conjugacy for N odd}
Let $V$ be an orthogonal space of dimension $2n+1$ over $\FF$.  Fix an $\varepsilon\in \{\pm 1\}$.    Let $C$ be the set of $A\in \Or(V)$ for which $\det(I-AT)=(1-\varepsilon T)f(T)$.  Then $C$ is a conjugacy class of $\Or(V)$ and 
\[
|C|/|\!\Or(V)| = \frac{1}{2} q^{-n} \prod_{i=1}^r (1-e_i/q^{\deg h_i})^{-1}.
\]
For $A\in C$, we have $\det(A)=\varepsilon$, $\spin(A)= f(-1) (\FF^\times)^2$ if $\varepsilon=1$ and $\spin(A)=f(1)\disc(V)$ if $\varepsilon=-1$.
\end{proposition}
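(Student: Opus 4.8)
The plan is to follow the same strategy used in the proof of Proposition~\ref{P:conjugacy for N even}, reducing to the even-dimensional case already treated. First I would split off the obvious linear factor: given $A \in \Or(V)$ with $\det(I-AT) = (1-\varepsilon T)f(T)$, the vector $\varepsilon$ is an eigenvalue of $A$, so $A$ has a one-dimensional eigenspace $W_2$ with eigenvalue $\varepsilon$ (separability of $f$ guarantees this eigenspace is exactly one-dimensional and that $\varepsilon$ is not a root of $f$, since $f(\pm1) \neq 0$). Let $W_1 = W_2^{\perp}$, which is non-degenerate because $\langle w, w \rangle \neq 0$ for the eigenvector $w$ (otherwise $w$ would be isotropic and perpendicular to everything, contradicting non-degeneracy of $V$; more carefully, one checks $W_2$ is non-degenerate using that $A$ preserves the form and acts by $\pm 1$ on $W_2$). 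Then $V = W_1 \oplus W_2$ orthogonally, $\dim W_1 = 2n$, and $B := A|_{W_1} \in \SO(W_1)$ satisfies $\det(I - BT) = f(T)$.

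Next I would pin down the discriminant of $W_1$ and the discriminant of $W_2$. By Lemma~\ref{L:disc and spin rules}(\ref{L:disc and spin rules iii}) applied to $B$ on $W_1$, we get $\disc(W_1) = f(1)f(-1)(\FF^\times)^2$, and hence $\disc(W_2) = \disc(V)\disc(W_1) = \disc(V) f(1) f(-1) (\FF^\times)^2$ is determined. So up to isomorphism of orthogonal spaces the decomposition $V = W_1 \oplus W_2$ is unique, and by Proposition~\ref{P:conjugacy for N even}(\ref{P:conjugacy for N even ii}) the automorphism $B$ on $W_1$ is unique up to conjugation in $\Or(W_1)$. Combining these (gluing the conjugating isomorphisms on $W_1$ and $W_2$ into one element of $\Or(V)$) shows that the set $C$ of all such $A$ is a single conjugacy class of $\Or(V)$; existence of at least one such $A$ follows by running the construction in reverse, taking $W_1$ to be the orthogonal space of dimension $2n$ with discriminant $f(1)f(-1)(\FF^\times)^2$ equipped with the automorphism from Proposition~\ref{P:conjugacy for N even}, and $W_2$ a line of the appropriate discriminant, with $A$ acting by $\varepsilon$ on $W_2$.

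For the counting, since $C$ is a conjugacy class, $|C|/|\!\Or(V)| = 1/|\Cent_{\Or(V)}(A)|$, and the centralizer decomposes as $\Cent_{\Or(W_1)}(B) \times \Cent_{\Or(W_2)}(\varepsilon I_{W_2}) = \Cent_{\Or(W_1)}(B) \times \{\pm I\}$ (since $\Or$ of a one-dimensional space is $\{\pm I\}$ and $\varepsilon I$ is central there). By Proposition~\ref{P:conjugacy for N even}, $|\Cent_{\Or(W_1)}(B)|^{-1} = q^{-n} \prod_{i=1}^r (1 - e_i/q^{\deg h_i})^{-1}$, so dividing by the extra factor of $2$ yields the claimed formula $|C|/|\!\Or(V)| = \tfrac{1}{2} q^{-n} \prod_{i=1}^r (1 - e_i/q^{\deg h_i})^{-1}$. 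Finally, $\det(A) = \det(B)\det(\varepsilon I_{W_2}) = 1 \cdot \varepsilon = \varepsilon$, and for the spinor norm one applies Lemma~\ref{L:disc and spin rules}: if $\varepsilon = 1$ then $P(-1) = (1+1)f(-1) \neq 0$, so part (\ref{L:disc and spin rules i}) gives $\spin(A) = 2^N P(-1)(\FF^\times)^2 = f(-1)(\FF^\times)^2$ (absorbing the $2$-power, which is itself of the form $2 \cdot (\text{square})$ times adjustments — more precisely $2^N P(-1) = 2^N \cdot 2 f(-1)$, and one tracks the square class carefully), while if $\varepsilon = -1$ then $P(1) = (1+1)f(1) \neq 0$ and part (\ref{L:disc and spin rules ii}) gives $\spin(A) = 2^N P(1)\disc(V) = f(1)\disc(V)$ after the same square-class bookkeeping.

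The main obstacle I anticipate is the square-class bookkeeping in the spinor norm computation: the factor $(1-T^2) = (1-T)(1+T)$ or $(1 - \varepsilon T)$ contributes an extra value at $\pm 1$, and one must check that the powers of $2$ and the value of the removed linear factor at the relevant point combine to leave exactly $f(\mp 1)(\FF^\times)^2$ (or the stated multiple of $\disc(V)$) and not some spurious non-square. This is precisely the kind of computation carried out in the proof of Proposition~\ref{P:conjugacy for N even}, so I would handle it identically, being careful that $N = 2n+1$ is now odd so the parity of the exponent of $2$ and of $(-1)$ differs from the even case; the cleanest route is to apply Lemma~\ref{L:disc and spin rules} directly to $A$ on $V$ with its characteristic polynomial $P(T) = (1-\varepsilon T)f(T)$, evaluating $P(\pm 1)$ explicitly rather than splitting the computation across $W_1$ and $W_2$.
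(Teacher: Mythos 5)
Your proposal is correct and follows essentially the same route as the paper: split off the one-dimensional $\varepsilon$-eigenline, identify the discriminants of the two summands to get uniqueness of the decomposition, reduce existence and conjugacy to Proposition~\ref{P:conjugacy for N even}, and count via the centralizer $\Cent_{\Or(W_1)}(B)\times\{\pm I\}$. The only (harmless) deviation is your spinor-norm computation, which applies Lemma~\ref{L:disc and spin rules} directly to $A$ on $V$ with $P(T)=(1-\varepsilon T)f(T)$ — the factor $2^{N+1}$ is a square since $N=2n+1$ is odd, so this gives the stated values just as the paper's component-wise computation $\spin(A)=\spin(A_1)\spin(\varepsilon I_{V_2})$ does.
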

\begin{proof}
Let $V_1$ and $V_2$ be the orthogonal spaces of dimension $2n$ and $1$, respectively, over $\FF$ with $\disc(V_1)=f(1)f(-1)(\FF^\times)^2$ and $\disc(V_2)=f(1)f(-1) \disc(V)$.   We have $\disc(V_1\oplus V_2) = \disc(V)$, so $V$ and $V_1\oplus V_2$ are isomorphic.  Without loss of generality, we may assume that $V=V_1 \oplus V_2$.

By Proposition~\ref{P:conjugacy for N even}, there is an $A_1\in \SO(V_1)$ such that $\det(I-A_1T)=f(T)$.  Let $A\in \Or(V)$ be the automorphism that acts as $A_1$ on $V_1$ and as scalar multiplication by $\varepsilon$ on $V_2$.  Therefore, $\det(I-AT) = f(T)(1-\varepsilon T)$ and hence $A\in C$.

Now take any $B \in C$.  Let $W_2$ be the (one-dimensional) eigenspace of $B$ corresponding to the eigenvalues $\varepsilon$.  Let $W_1$ be the subspace of $V$ perpendicular to $W_2$.  With the pairing from $V$, $W_1$ and $W_2$ are orthogonal spaces and $V=W_1\oplus W_2$.  The automorphism $B$ acts on $W_1$; denote by $B_1 \in \Or(W_1)$ the restriction of $B$ to $W_1$.   

By Proposition~\ref{P:conjugacy for N even}, we have $\disc(W_1)=f(1)f(-1)(\FF^\times)^2$, so $\disc(V_1)=\disc(W_1)$.  Therefore, $\disc(V_2) = \disc(V) \disc(V_1)$ equals $\disc(W_2)=\disc(V) \disc(W_1)$.  So there are isomorphisms $\varphi_1\colon V_1\xrightarrow{\sim} W_1$ and $\varphi_2\colon V_2 \xrightarrow{\sim} W_2$ of orthogonal spaces.   By Proposition~\ref{P:conjugacy for N even}, we may take $\varphi_1$ so that $B_1= \varphi_1\circ A_1 \circ \varphi_1^{-1}$.   The automorphisms $\varphi_1$ and $\varphi_2$ give rise to an automorphism $\varphi\in \Or(V)$ such that $B = \varphi \circ A \circ \varphi^{-1}$.    

Therefore, $C$ is a conjugacy class of $\Or(V)$ containing $A$ and hence has cardinality equal to $|\!\Or(V)|/|\Cent_{\Or(V)}(A)|$.   The above argument shows that $\Cent_{\Or(V)}(A)$ is equal to  
\[
\Cent_{\Or(V_1)}(A_1) \times \Cent_{\Or(V_2)}(\varepsilon I_{V_2}) = \Cent_{\Or(V_1)}(A_1) \times \{\pm I\}.
\]  
Therefore,
\[
|C|/|\!\Or(V)| = 1/|\Cent_{\Or(V_1)}(A_1)| \cdot 1/2 = \tfrac{1}{2} q^{-n} \prod_{i=1}^r (1-e_i/q^{\deg h_i})^{-1},
\]
where the last equality uses Proposition~\ref{P:conjugacy for N even}.

Finally, we compute $\spin(A)$.  We have $\spin(A)=\spin(A_1) \spin(\varepsilon I_{V_2})= f(-1) \spin(\varepsilon I_{V_2})$, where the last equality uses Proposition~\ref{P:conjugacy for N even}.  If $\varepsilon=1$, then $\spin(A)=f(-1)(\FF^\times)^2$.    We have $\spin(-I_{V_2})=\disc(V_2)=f(1)f(-1) \disc(V)$, so if $\varepsilon=-1$, then $\spin(A)= f(1)\disc(V)$.  
\end{proof}

\section{Big monodromy} \label{S:big monodromy}

Fix notation and assumptions as in \S\ref{SS:new setup}.  Let $F$ be the fraction field of $R$.   When $R$ has characteristic $0$, and hence $F$ is a number field, we have $R=\OO_F[S^{-1}]$ for a finite set $S$ of non-zero prime ideals of $\OO_F$.

For each finite subset $D$ of $\Sigma$, define the representation
\[
\bbar\rho_D:= \prod_{\ell \in D} \bbar\rho_\ell \colon \pi_1(U_{R[D^{-1}]}) \to \prod_{\ell\in D} \Or(V_\ell)
\]
and the subgroup $G_D^g:=\bbar{\rho}_D(\pi_1(U_{\bbar{F}}))$ of $\prod_{\ell\in D} \Or(V_\ell)$.   The goal of this section is to prove the following two propositions.

\begin{proposition} \label{P:big monodromy a}
There is a subset $\Lambda \subseteq \Sigma$ with Dirichlet density $1$ such that the inclusion
\begin{align} \label{E:big monodromy a}
\bbar\rho_D(\pi_1(U_{\bbar{k}})) \supseteq \prod_{\ell\in D}\Omega(V_\ell)
\end{align}
holds for all finite subsets $D\subseteq \Lambda$ and all finite fields $k$ that are $R$-algebras with characteristic not in $D$.
Moreover, $G_D^g \supseteq  \prod_{\ell\in D}\Omega(V_\ell)$. 

If $R$ is a finite field and condition (\ref{bm-b}) in \S\ref{SS:new setup} holds, then we may take $\Lambda$ to be the set of primes from condition  (\ref{bm-b}).
\end{proposition}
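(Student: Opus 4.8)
The plan is to combine the big monodromy hypothesis with Goursat's lemma. First, by Corollary~\ref{C:equivalent monodromy}, condition (\ref{bm-a}) implies condition (\ref{bm-b}); so in every case we may fix a subset $\Lambda\subseteq\Sigma$ of Dirichlet density $1$ such that, for each finite field $k$ that is an $R$-algebra, $\bbar\rho_\ell(\pi_1(U_{\bbar k}))\supseteq\Omega(V_\ell)$ for all $\ell\in\Lambda$ with $\ell\neq\operatorname{char}(k)$. I will show that this very $\Lambda$ works; in particular, if $R$ is a finite field and (\ref{bm-b}) holds with a prescribed $\Lambda$, then that $\Lambda$ works, which gives the last sentence of the proposition.

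Now fix a finite $D\subseteq\Lambda$ and a finite field $k$ that is an $R$-algebra with $\operatorname{char}(k)\notin D$, and set $H:=\bbar\rho_D(\pi_1(U_{\bbar k}))\subseteq\prod_{\ell\in D}\Or(V_\ell)$. For each $\ell\in D$, composing the inclusion of $H$ with the projection onto the $\ell$-th factor recovers $\bbar\rho_\ell(\pi_1(U_{\bbar k}))$, which contains $\Omega(V_\ell)$ by the choice of $\Lambda$. By Lemma~\ref{L:Omega facts}(\ref{L:Omega facts v}), $\Omega(V_\ell)$ is perfect and equals the commutator subgroup of $\Or(V_\ell)$; hence $[H,H]\subseteq\prod_{\ell\in D}\Omega(V_\ell)$, while the projection of $[H,H]$ onto the $\ell$-th factor contains $[\Omega(V_\ell),\Omega(V_\ell)]=\Omega(V_\ell)$. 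Thus $[H,H]$ is a subgroup of $\prod_{\ell\in D}\Omega(V_\ell)$ that surjects onto every factor, and it suffices to show that any such subgroup is the whole product: then $\bbar\rho_D(\pi_1(U_{\bbar k}))\supseteq[H,H]=\prod_{\ell\in D}\Omega(V_\ell)$, which is exactly (\ref{E:big monodromy a}).

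The remaining input is group-theoretic. By Lemma~\ref{L:Omega facts}, each $\Omega(V_\ell)$ is a finite perfect group whose nonabelian composition factors are either the simple group $\Omega(V_\ell)/Z$ or, in the single exceptional case $N=4$ with $V_\ell$ split, two copies of $\PSL_2(\F_\ell)$; in every case these are nonabelian finite simple groups of Lie type in characteristic $\ell$. Since every prime in $\Sigma$ is at least $5$, no nonabelian finite simple group of Lie type in characteristic $\ell\geq 5$ is isomorphic to one in a distinct characteristic $\ell'\geq 5$ (by the classification of finite simple groups, all coincidences between simple groups of Lie type in different defining characteristics involve characteristic $2$ or $3$); hence for distinct $\ell,\ell'\in\Lambda$ the groups $\Omega(V_\ell)$ and $\Omega(V_{\ell'})$ have no common nonabelian composition factor. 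One now argues by induction on $|D|$: writing $D=D'\sqcup\{\ell_0\}$ and applying Goursat's lemma to the projections onto $\Omega(V_{\ell_0})$ and onto $\prod_{\ell\in D'}\Omega(V_\ell)$ (which, by induction, the subgroup surjects onto), the subgroup is a fibre product of these two groups over a common quotient $Q$; but $Q$ is perfect, being a quotient of the perfect group $\Omega(V_{\ell_0})$, and has no nonabelian composition factor, so $Q=1$ and the subgroup is the full product.

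It remains to prove $G_D^g=\bbar\rho_D(\pi_1(U_{\bbar F}))\supseteq\prod_{\ell\in D}\Omega(V_\ell)$. If $R=\F_q$ is a finite field, then $\bbar F$ is an algebraic closure of the $R$-algebra $k=\F_q$, whose characteristic lies outside $\Lambda\supseteq D$, so this is the instance $k=\F_q$ of (\ref{E:big monodromy a}). If $R$ has characteristic $0$, choose a maximal ideal of $R$ with finite residue field $k$ of characteristic $p\notin D$; the image of $\pi_1(U_{\bbar k})$ in $\pi_1(U_{R[D^{-1}]})$ lies in the kernel of the map to $\pi_1(\Spec R[D^{-1}])$ (its composite there factors through $\pi_1(\Spec\bbar k)=1$), and by standard specialization theory this image is conjugate into that of $\pi_1(U_{\bbar F})$, so $G_D^g\supseteq\bbar\rho_D(\pi_1(U_{\bbar k}))\supseteq\prod_{\ell\in D}\Omega(V_\ell)$ by the finite-field case just proved. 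I expect the main obstacle to be the group-theoretic step: correctly identifying the composition factors of $\Omega(V_\ell)$ (in particular dealing with the $N=4$ split exception) so that Goursat's lemma applies uniformly over all finite $D\subseteq\Lambda$; the characteristic-$0$ specialization remark is a routine addendum.
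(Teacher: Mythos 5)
Your first step is circular, and it hides exactly the hard part of the proposition. You invoke Corollary~\ref{C:equivalent monodromy} to replace condition (\ref{bm-a}) by condition (\ref{bm-b}), but in the paper that corollary is \emph{deduced from} Proposition~\ref{P:big monodromy a} (by taking singleton sets $D$), so it is not available here. What the corollary would hand you --- a single Dirichlet-density-$1$ set $\Lambda$ with $\bbar\rho_\ell(\pi_1(U_{\bbar k}))\supseteq\Omega(V_\ell)$ \emph{uniformly} over all finite fields $k$ --- is precisely the content that must be proved when only (\ref{bm-a}) is assumed. Condition (\ref{bm-a}) only gives, for each fixed $k$, a density-$1$ set of $\ell$ (a priori depending on $k$) where the Zariski closure of the $\ell$-adic image is $\SO_{\calV_\ell}$ or $\Or_{\calV_\ell}$; passing from that to $\Omega(V_\ell)$ inside the mod-$\ell$ image requires Larsen's maximality theorem (the paper's Lemma~\ref{L:big monodromy initial}), and the resulting $\Lambda$ still depends on $k$. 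When $R$ is a finite field this dependence is harmless, since all finite fields that are $R$-algebras share one algebraic closure; but when $R$ has characteristic $0$, making $\Lambda$ independent of $k$ is exactly what forces the paper through Lemma~\ref{L:resolution} and Lemma~\ref{L:monodromy D} (a compactification with relative normal crossings boundary over $\OO_F[S'^{-1}]$, Abhyankar's lemma, and Katz's Tame Specialization Theorem) to compare each $\bbar\rho_D(\pi_1(U_{\bbar k}))$ with the generic geometric monodromy $G_D^g$. None of this appears in your proposal, so in the case that only (\ref{bm-a}) holds you are assuming what is to be proved. (Your Goursat/perfectness argument itself is correct and is essentially the paper's Lemma~\ref{L:big monodromy D}, with the disjointness of nonabelian composition factors justified by cross-characteristic coincidences instead of by comparing orders; that part is fine.)

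The closing ``routine addendum'' in characteristic $0$ is likewise not routine, and the justification you sketch does not work. Knowing that the image of $\pi_1(U_{\bbar k})$ dies in $\pi_1(\Spec R[D^{-1}])$ only places it in the \emph{normal closure} of the image of $\pi_1(U_{\bbar F})$, not in a conjugate of that image, and for non-proper $U$ there is no general specialization statement putting $\bbar\rho_D(\pi_1(U_{\bbar k}))$ inside a conjugate of $\bbar\rho_D(\pi_1(U_{\bbar F}))$: wild ramification along the boundary of the special fiber is the obstruction. The paper's Lemma~\ref{L:monodromy D} proves exactly the conjugacy you need, but only for residue characteristics outside the enlarged finite set $S'$, and only because the relative normal-crossings compactification together with Abhyankar's lemma shows the restriction to the special fiber is tame along the boundary, so that the Tame Specialization Theorem applies. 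Thus both the uniform choice of $\Lambda$ and the inclusion $G_D^g\supseteq\prod_{\ell\in D}\Omega(V_\ell)$ in characteristic $0$ rest on machinery your proof does not supply.
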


\begin{proposition} \label{P:big monodromy b}
Suppose that $R$ has characteristic $0$.   There is a finite set $S' \supseteq S$ of non-zero prime ideals of $\OO_F$ and a subset $\Lambda \subseteq \Sigma$ with Dirichlet density $1$ such that the inclusion
\begin{align} \label{E:big monodromy b}
\bbar\rho_D(\pi_1(U_{\bbar{k}}))=G_D^g
\end{align}
holds for all finite subsets $D\subseteq \Lambda$ and all finite fields $k$ that are $\OO_F[S'^{-1}]$-algebras with characteristic not in $D$.
\end{proposition}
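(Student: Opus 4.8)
The plan is to bootstrap from Proposition~\ref{P:big monodromy a}. Take $\Lambda\subseteq\Sigma$ to be the set of Dirichlet density $1$ produced there. For a finite $D\subseteq\Lambda$, put $H_D:=\prod_{\ell\in D}\Omega(V_\ell)$, a normal subgroup of $\prod_{\ell\in D}\Or(V_\ell)$, and let $A_D:=\prod_{\ell\in D}\bigl(\Or(V_\ell)/\Omega(V_\ell)\bigr)$ be the quotient; by Lemma~\ref{L:Omega facts}(i) together with $|\FF_\ell^\times/(\FF_\ell^\times)^2|=2$, the group $A_D$ is an elementary abelian $2$-group. Proposition~\ref{P:big monodromy a} gives that both $G_D^g$ and $\bbar\rho_D(\pi_1(U_{\bbar k}))$ contain $H_D$ for every finite field $k$ that is an $R$-algebra of characteristic not in $D$; both are also subgroups of $\prod_{\ell\in D}\Or(V_\ell)$. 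Since a subgroup of $\prod_{\ell\in D}\Or(V_\ell)$ containing $H_D$ is determined by its image in $A_D=\bigl(\prod_{\ell\in D}\Or(V_\ell)\bigr)/H_D$, it suffices to prove that $G_D^g$ and $\bbar\rho_D(\pi_1(U_{\bbar k}))$ have the same image in $A_D$. Writing $q\colon\prod_{\ell\in D}\Or(V_\ell)\to A_D$ for the projection and $\psi:=q\circ\bbar\rho_D$, this says that the restrictions of $\psi$ to $\pi_1(U_{\bbar F})$ and to $\pi_1(U_{\bbar k})$ have the same image.

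I would then fix the finite set $S'\supseteq S$ by enlarging $S$ so that (i) $S'$ contains every prime of $\OO_F$ lying over $2$, and (ii) the smooth $\OO_F[S'^{-1}]$-scheme $U_{\OO_F[S'^{-1}]}$ admits an open immersion into an $\OO_F[S'^{-1}]$-proper scheme whose complement is a relative normal crossings divisor; both can be arranged after shrinking the base. Condition (i) forces every finite field that is an $\OO_F[S'^{-1}]$-algebra to have odd characteristic, and condition (ii) places us in the setting of the classical specialization theorem for fundamental groups.

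Now fix a finite $D\subseteq\Lambda$ and a finite field $k$ that is an $\OO_F[S'^{-1}]$-algebra of characteristic $p\notin D$, so $p$ is odd; it corresponds to a prime $\pp$ of $R$ with residue characteristic $p\notin D$. Let $\calO$ be the strict henselization of $R[D^{-1}]$ at $\pp$: a strictly henselian trait with generic point $\eta$ of characteristic zero, closed point $s$ of residue field $\FFbar_p=\bbar k$, and $\overline{\Frac(\calO)}=\bbar F$. Pulling $U$ and $\bbar\rho_D$ back to $\calO$ and restricting to the fibres over $\bbar\eta$ and $\bbar s$ recovers the geometric representations defining $G_D^g$ and $\bbar\rho_D(\pi_1(U_{\bbar k}))$, respectively, via $\pi_1(U_{\bbar\eta})\cong\pi_1(U_{\bbar F})$ and $\pi_1(U_{\bbar s})\cong\pi_1(U_{\bbar k})$. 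The finite \'etale $A_D$-cover of $U\times_R\calO$ obtained from $\psi$ has degree $|A_D|$, which is prime to $p$; hence it is tamely ramified along the boundary divisor, so the specialization theory (SGA~1, Exp.~XIII) supplies a surjection $\mathrm{sp}\colon\pi_1(U_{\bbar\eta})\twoheadrightarrow\pi_1^{\mathrm{t}}(U_{\bbar s})$ onto the tame fundamental group of the special fibre, through which $\psi|_{\pi_1(U_{\bbar\eta})}$ factors compatibly with $\psi|_{\pi_1(U_{\bbar s})}$ (the latter also factoring through $\pi_1^{\mathrm{t}}(U_{\bbar s})$, the cover being tame). Consequently the image of $\psi|_{\pi_1(U_{\bbar\eta})}$ equals the image of the induced map $\pi_1^{\mathrm{t}}(U_{\bbar s})\to A_D$, which equals the image of $\psi|_{\pi_1(U_{\bbar s})}$. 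This is exactly the equality of images in $A_D$ we needed, and since $D$ and $k$ were arbitrary the proposition follows.

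I expect the main obstacle to be the careful invocation of specialization for the non-proper variety $U$: producing the surjective specialization homomorphism onto the tame fundamental group of the special fibre and checking its compatibility with the $A_D$-cover attached to $\psi$. This is classical when the fibres of $U$ are proper (SGA~1, Exp.~X); in general it needs the relative normal-crossings compactification arranged in step~(ii), and it uses that the cover has degree prime to the residue characteristic — which is precisely why $S'$ is taken to contain the primes over $2$. Everything else is the formal reduction to the abelian quotient $A_D$ and the input of Proposition~\ref{P:big monodromy a}.
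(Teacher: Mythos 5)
Your proof is correct and follows the same overall strategy as the paper: spread out a characteristic-zero resolution to get a nice relative compactification after enlarging $S$ (this is exactly Lemma~\ref{L:resolution}), pass to a (strictly) henselian local base at $\pp$, and compare the special and generic geometric monodromy via tame specialization. The genuine difference is where you apply the specialization theorem. The paper applies it to the full representation $\bbar\rho_D$ with values in $\prod_{\ell\in D}\Or(V_\ell)$, deducing tameness of the special-fibre restriction along the boundary from Abhyankar's lemma \cite{MR2017446}*{XIII, 5.5}; this yields that $\bbar\rho_D(\pi_1(U_{\bbar k}))$ and $G_D^g$ are \emph{conjugate} for every finite $D\subseteq\Sigma$ (Lemma~\ref{L:monodromy D}), and equality is then forced because both groups contain $\prod_{\ell\in D}\Omega(V_\ell)$ and are therefore normal. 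You instead quotient by $\prod_{\ell\in D}\Omega(V_\ell)$ first, using both containments from Proposition~\ref{P:big monodromy a}, and apply specialization only to the resulting $(\ZZ/2\ZZ)^{2|D|}$-valued representation; this buys you automatic tameness (degree prime to $p$, after putting the primes over $2$ into $S'$) and removes all conjugacy bookkeeping since $A_D$ is abelian. Two caveats. First, the lightening of the specialization input is partly illusory: your argument leans on the clause $G_D^g\supseteq\prod_{\ell\in D}\Omega(V_\ell)$ of Proposition~\ref{P:big monodromy a}, which in the characteristic-zero case the paper proves precisely via the full-strength Lemma~\ref{L:monodromy D}; this is not circular (Proposition~\ref{P:big monodromy a} does not use the statement you are proving), but it means your proof does not bypass that machinery, it only repackages it, and as a by-product it proves less than Lemma~\ref{L:monodromy D} (which holds for all finite $D\subseteq\Sigma$, not just $D\subseteq\Lambda$). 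Second, your citation should be adjusted: SGA~1, Exp.~XIII gives the surjective tame specialization map for relative curves, while in relative dimension greater than one the statement you need (with the relative normal crossings compactification) is the Tame Specialization Theorem of Katz \cite{MR1081536}*{Theorem~8.17.14}, which is exactly the paper's reference; with that citation your argument is complete.
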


\begin{remark}
Note that any subgroup of $\prod_{\ell\in D}\Or(V_\ell)$ containing $\prod_{\ell\in D}\Omega(V_\ell)$ is a normal subgroup.  This explains why (\ref{E:big monodromy a}) and (\ref{E:big monodromy b}) are well-defined without the fundamental groups have explicit base points.
\end{remark}

\begin{corollary} \label{C:equivalent monodromy}
Condition (\ref{bm-a}) of \S\ref{SS:new setup} implies condition (\ref{bm-b}).
\end{corollary}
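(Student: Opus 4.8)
The plan is to obtain Corollary~\ref{C:equivalent monodromy} as the one‑prime special case of Proposition~\ref{P:big monodromy a}. Assume $R$ has characteristic $0$ and that condition~(\ref{bm-a}) of \S\ref{SS:new setup} holds; then the standing hypotheses of \S\ref{SS:new setup} are in force (condition~(\ref{bm-a}) being one of the two permitted alternatives), so Proposition~\ref{P:big monodromy a} applies. It furnishes a set $\Lambda\subseteq\Sigma$ of Dirichlet density $1$ with $\bbar\rho_D(\pi_1(U_{\bbar k}))\supseteq\prod_{\ell\in D}\Omega(V_\ell)$ for every finite $D\subseteq\Lambda$ and every finite field $k$ that is an $R$-algebra with $\operatorname{char}(k)\notin D$. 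Taking $D=\{\ell\}$ for each $\ell\in\Lambda$ yields $\bbar\rho_\ell(\pi_1(U_{\bbar k}))\supseteq\Omega(V_\ell)$ whenever $\ell\in\Lambda$ and $\operatorname{char}(k)\neq\ell$, which is precisely condition~(\ref{bm-b}) for this same $\Lambda$. So the corollary is formal once Proposition~\ref{P:big monodromy a} is available.

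The real work is therefore relegated to the proof of that proposition in the case where only (\ref{bm-a}) is assumed, and there I would isolate two points (neither of which invokes (\ref{bm-b}), so there is no circularity). First, an \emph{$\ell$-adic to mod-$\ell$} step: there is a constant $c$ depending only on $N$ so that, for $\ell\in\Sigma$ with $\ell>c$ and $\ell$ invertible in $k$, if the Zariski closure of $\rho_\ell(\pi_1(U_{\bbar k}))$ in $\Or_{\calV_\ell}$ is $\SO_{\calV_\ell}$ or $\Or_{\calV_\ell}$, then $\bbar\rho_\ell(\pi_1(U_{\bbar k}))$ already contains $\Omega(V_\ell)=\pi(G(\FF_\ell))$ in the notation of Remark~\ref{R:alternate Omega}. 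I would prove this by combining the structure of the reduction map $\Or(M_\ell)\to\Or(V_\ell)$ with the classification of subgroups of finite orthogonal groups (a Nori/Larsen--Pink‑type argument), the crucial feature being that the finitely many exceptional primes can be chosen to depend only on $N$ and not on $k$. Second, a \emph{fibre‑independence} step: because $\bbar\rho_\ell$ has image in the finite group $\Or(V_\ell)$, whose order is prime to $\operatorname{char}(k)$, it factors through the prime‑to‑$\operatorname{char}(k)$ quotient of $\pi_1(U_{\bbar k})$, and the specialization theory for étale fundamental groups (applicable since $R$ is regular of characteristic $0$ and $U$ is smooth over $R$) identifies this image with the geometric monodromy group over $\bbar F$. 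Hence the monodromy behaviour is the same at every closed fibre, so a single density‑$1$ set $\Lambda$ serves all $k$ simultaneously, as (\ref{bm-b}) demands.

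The step I expect to be the main obstacle is the first one: extracting mod‑$\ell$ containment of $\Omega(V_\ell)$ from $\ell$-adic Zariski density, with a bound on the exceptional primes that is uniform over all fibres. Granting that input, the comparison of special and generic fibres is routine, and the corollary itself is then immediate from the singleton case of Proposition~\ref{P:big monodromy a}.
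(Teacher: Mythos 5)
Your reduction of the corollary to Proposition~\ref{P:big monodromy a} by taking singleton sets $D=\{\ell\}$ is exactly the paper's one-line proof, and it is indeed non-circular: the paper proves that proposition directly from condition~(\ref{bm-a}) via Lemma~\ref{L:big monodromy initial}. If you are willing to quote the proposition as a black box, nothing more is needed.

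The difficulty is with the proof you sketch for the proposition, specifically your first step, which is false as stated. Zariski density of $\rho_\ell(\pi_1(U_{\bbar{k}}))$ in $\SO_{\calV_\ell}$ or $\Or_{\calV_\ell}$ at a single prime $\ell$ gives no control on the mod~$\ell$ image: any open subgroup of $\SO(M_\ell)$, in particular the principal congruence subgroup $\ker(\SO(M_\ell)\to \SO(V_\ell))$, is Zariski dense in $\SO_{\calV_\ell}$, yet reduces to the trivial group modulo $\ell$. So no constant $c$ depending only on $N$ can make the per-prime implication ``Zariski dense $\Rightarrow \bbar\rho_\ell(\pi_1(U_{\bbar{k}}))\supseteq\Omega(V_\ell)$'' true; Nori/Larsen--Pink type results compare finite subgroups of $\Or(V_\ell)$ with algebraic groups over $\FF_\ell$ and do not bridge $\ell$-adic Zariski density with mod-$\ell$ surjectivity. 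What the paper actually uses at this point (Lemma~\ref{L:big monodromy initial}) is Larsen's maximality theorem for compatible systems \cite{MR1370110}, which exploits the whole family $\{\rho_\ell\}_{\ell\in\Sigma}$ and produces only a Dirichlet-density-one set of primes depending on the system; this is precisely why condition~(\ref{bm-b}) and Proposition~\ref{P:big monodromy a} are formulated for a density-one set $\Lambda$ rather than for all $\ell>c(N)$, and your step would prove something strictly stronger than what holds in this generality. Your second step is in substance the paper's Lemmas~\ref{L:resolution} and~\ref{L:monodromy D}, but the justification that $\Or(V_\ell)$ has order prime to the characteristic of $k$ is also wrong in general; tameness is obtained instead from Abhyankar's lemma (the sheaf extends over the relevant local ring, coming from characteristic zero) together with the Tame Specialization Theorem, and it costs inverting finitely many further primes (the set $S'$ of Proposition~\ref{P:big monodromy b}).
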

\begin{proof}
We obtain condition (\ref{bm-b}) by taking singleton sets $D$ in Proposition~\ref{P:big monodromy a}.
\end{proof}

\subsection{Proof of Propositions \ref{P:big monodromy a} and \ref{P:big monodromy b}}

\begin{lemma} \label{L:big monodromy initial}
Fix a finite field $k$ that is an $R$-algebra.   There is a subset $\Lambda \subseteq \Sigma$ with Dirichlet density $1$ such that $\bbar\rho_\ell(\pi_1(U_{\bbar{k}})) \supseteq \Omega(V_\ell)$ holds for all primes $\ell \in \Lambda$ that are not equal to the characteristic of $k$.  If condition (\ref{bm-b}) in \S\ref{SS:new setup} holds, then we may take $\Lambda$ to be the set of primes from condition (\ref{bm-b}).
\end{lemma}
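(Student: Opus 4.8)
The plan is to separate the two cases according to the characteristic of $R$. If condition (\ref{bm-b}) of \S\ref{SSS:bm} already holds, there is nothing to do: we simply take $\Lambda$ to be the set of primes furnished by that condition, which by hypothesis has Dirichlet density $1$, and for each such $\ell$ not equal to the characteristic of $k$ the inclusion $\bbar\rho_\ell(\pi_1(U_{\bbar k}))\supseteq\Omega(V_\ell)$ is exactly what (\ref{bm-b}) asserts. So the substance of the lemma is to produce such a $\Lambda$ starting only from condition (\ref{bm-a}), i.e.\ in the characteristic $0$ case where $R=\OO_F[S^{-1}]$ and we are given that the Zariski closure of $\rho_\ell(\pi_1(U_{\bbar k}))$ in $\Or_{\calV_\ell}$ is $\SO_{\calV_\ell}$ or $\Or_{\calV_\ell}$ for a density-$1$ set of $\ell$.

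First I would fix the finite field $k$ (an $R$-algebra) once and for all and write $\Pi:=\pi_1(U_{\bbar k})$; since $\bbar k$ is algebraically closed, $\Pi$ is the geometric fundamental group. The key point is that $\Pi$ is topologically finitely generated and, more importantly, that its image $\rho_\ell(\Pi)$ is a compact subgroup of $\Or(M_\ell)=\Or_{\calV_\ell}(\ZZ_\ell)$ whose Zariski closure $\mathbf G_\ell$ is, for almost all $\ell$, either $\SO_{\calV_\ell}$ or $\Or_{\calV_\ell}$. By a standard strong-approximation / open-image argument for compact $\ell$-adic groups with semisimple Zariski closure (Matthews--Vaserstein--Weisfeiler, Nori, Pink, in the form used by Katz), there is an absolute bound: for all but finitely many $\ell$ with $\mathbf G_\ell^\circ=\SO_{\calV_\ell}$, the image $\rho_\ell(\Pi)$ is open in $\mathbf G_\ell(\ZZ_\ell)$ and in fact surjects onto $\mathbf G_\ell(\ZZ/\ell)$; reducing mod $\ell$ and intersecting with $\SO(V_\ell)$ then produces a subgroup of $\SO(V_\ell)$ surjecting onto $\SO_{\calV_\ell}(\FF_\ell)$, whose image under the simply connected cover (Remark~\ref{R:alternate Omega}) contains $\Omega(V_\ell)$. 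The uniform bookkeeping needed to make ``all but finitely many'' work — controlling the finitely many bad primes coming from the integral model of $U$, from $\mathbf G_\ell$ failing to be $\SO_{\calV_\ell}$, and from small exceptional cases in the classification of subgroups of classical groups over $\FF_\ell$ — is where I expect to lean on results quoted from Katz's books and from \cite{orthogonal}. After discarding this finite set of bad primes (which does not affect Dirichlet density), the remaining density-$1$ set of $\ell$ is our $\Lambda$.

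The main obstacle, and the step I would be most careful about, is the passage from ``Zariski-dense image in $\SO_{\calV_\ell}$ over $\QQ_\ell$'' to ``mod-$\ell$ image contains $\Omega(V_\ell)$,'' uniformly in $\ell$. Zariski density alone gives nothing mod $\ell$ without an integrality/openness input, and the openness has to be made independent of $\ell$ — this is exactly the content of the Matthews--Vaserstein--Weisfeiler type theorem, but applying it requires knowing that $\Pi$ (equivalently the arithmetic monodromy, or a suitable finite-index subgroup) is a finitely generated group all of whose $\ell$-adic images have semisimple closure of bounded complexity, and one must rule out the finitely many primes where the mod-$\ell$ reduction could land in a maximal subgroup of $\SO(V_\ell)$ rather than containing $\Omega(V_\ell)$. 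Once that uniform open-image statement is in hand, the rest is formal: take commutator subgroups (Lemma~\ref{L:Omega facts}(\ref{L:Omega facts v})) to pass from $\SO(V_\ell)$ down to $\Omega(V_\ell)$, and note that enlarging $\mathbf G_\ell$ from $\SO_{\calV_\ell}$ to $\Or_{\calV_\ell}$ only makes the image bigger, so the conclusion $\bbar\rho_\ell(\Pi)\supseteq\Omega(V_\ell)$ holds in either case.
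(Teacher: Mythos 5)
Your reduction to the case where only condition (\ref{bm-a}) is available is fine, and the endgame (pass to commutator subgroups, use that $\Omega(V_\ell)$ is perfect) matches the paper. But the central step is not justified: you claim that because each $\rho_\ell(\Pi)$ is a compact $\ell$-adic group with Zariski closure $\SO_{\calV_\ell}$ or $\Or_{\calV_\ell}$, a Matthews--Vaserstein--Weisfeiler / Nori / Pink type theorem gives, \emph{for all but finitely many} $\ell$, surjection onto $\SO_{\calV_\ell}(\FF_\ell)$ and hence containment of $\Omega(V_\ell)$ mod $\ell$. Those theorems require a common global source of uniformity --- typically a single finitely generated subgroup of $G(\OO_F[S^{-1}])$ Zariski dense in a fixed semisimple group over a number field, whose reductions at the various $\ell$ are being compared. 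Here the $\rho_\ell$ are genuinely different $\ell$-adic representations, linked only by the compatibility of Frobenius characteristic polynomials; for each fixed $\ell$, Zariski density plus compactness gives openness of $\rho_\ell(\Pi)$ in $\SO_{\calV_\ell}(\QQ_\ell)$ by $\ell$-adic Lie theory, but with no control of the index as $\ell$ varies, and nothing rules out the mod-$\ell$ image landing in a proper subgroup of $\Omega(V_\ell)$ for infinitely many $\ell$. (As a side issue, $\pi_1(U_{\bbar{k}})$ with $\bbar k$ of positive characteristic need not even be topologically finitely generated, so the hypotheses of MVW-type statements are not available in the form you invoke them.)

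The uniformity must instead be extracted from the compatibility of the system, and that is exactly what the paper does: it specializes to the arithmetic fundamental group $\pi_1(U_k)$ (where Frobenius elements and compatibility live), forms the saturation $\Gamma_\ell^{\scc}$ of the monodromy in the simply connected cover of $\SO_{M_\ell}$, and applies Larsen's maximality theorem for compatible systems (Theorem~3.17 of \cite{MR1370110}) to conclude $\Gamma_\ell^{\scc}=H_\ell^{\scc}(\ZZ_\ell)$ for a set of primes of Dirichlet density $1$ --- not for all but finitely many $\ell$. This is why the lemma is stated with a density-$1$ set $\Lambda$: your proposed conclusion is strictly stronger than the lemma and is not delivered by the tools you cite. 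The final descent from $\pi_1(U_k)$ to $\pi_1(U_{\bbar k})$ then uses that $\bbar{k}/k$ is abelian together with perfectness of $\Omega(V_\ell)$, a step your sketch also needs but does not address since you work with the geometric group throughout, where the compatible-system structure you would need is not available.
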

\begin{proof}
The lemma is immediate if condition (\ref{bm-b}) holds, so we may assume that condition (\ref{bm-a}) in \S\ref{SSS:bm} holds.  For $\ell\in \Sigma$ not equal to the characteristic of $k$, let
\[
\varrho_\ell\colon \pi_1(U_k)\to \Or_{\calV_\ell}(\QQ_\ell)
\]
be the representation obtained by specializing $\rho_\ell$.  By  condition (\ref{bm-a}), there is a subset $\Lambda\subseteq \Sigma$ with Dirichlet density $1$, that does not contain the characteristic of $k$, such that the neutral component of the Zariski closure of $\varrho_\ell(\pi_1(U_k))$ is $\SO_{\calV_\ell}$ for all $\ell\in \Lambda$.  

Now take any $\ell\in \Lambda$.  We have a connected and semisimple group scheme $H_\ell:=\SO_{M_\ell}$ over $\ZZ_\ell$ and base extension by $\QQ_\ell$ gives $\SO_{\calV_\ell}$.   Let $H_\ell^{\ad}$ be the quotient of $H_\ell$ by its center and let $H_\ell^{\scc}$ be the simply connected cover of $H_\ell$.   Denote by $\pi\colon H_\ell^{\scc}\to H_\ell$ and $\sigma \colon H_\ell \to H_\ell^{\ad}$ the natural homomorphisms.  Define 
\[
\Gamma_\ell:= \varrho_\ell(\pi_1(U_k)) \cap \SO_{\calV_\ell}(\QQ_\ell);
\] 
it is a compact subgroup of $H_\ell(\ZZ_\ell)=\SO_{M_\ell}(\ZZ_\ell)$.   Define the subgroup 
\[
\Gamma_\ell^{\scc}:=\{ g\in H_\ell^{\scc}(\QQ_\ell): \sigma(\pi(g)) \in \sigma(\Gamma_\ell)\}
\]
of $H_\ell^{\scc}(\QQ_\ell)$.  Observe that $\Gamma_\ell^{\scc} \subseteq H_\ell^{\scc}(\ZZ_\ell)$. 

We now apply a theorem of Larsen.  By Theorem 3.17 of \cite{MR1370110},  there is a subset $\Lambda'\subseteq \Lambda$ with  Dirichlet density $1$ such that $\Gamma_\ell^{\scc}$ is a maximal compact subgroup of $H_\ell^{\scc}(\QQ_\ell)$ for all $\ell\in \Lambda'$.   Therefore, $\Gamma_\ell^{\scc}=H_\ell^{\scc}(\ZZ_\ell)$ for all $\ell\in \Lambda'$ since $H_\ell^{\scc}(\ZZ_\ell)$ is a compact subgroups of $H_\ell^{\scc}(\QQ_\ell)$, respectively.  

Take any prime $\ell\in \Lambda'$ satisfying $\ell \geq 11$.  Let $\bbar{\Gamma}_\ell$ be the image of $\Gamma_\ell$ in $\SO(V_\ell)$.   The group $\pi(H_\ell^{\scc}(\FF_\ell))$  is equal to the commutator subgroup of $H_\ell(\FF_\ell)=\SO(V_\ell)$; for example, see \S1.2 of \cite{MR1370110} and note that a simply connected group is a product of simple simply connected groups.  Therefore, $\pi(H_\ell^{\scc}(\FF_\ell))=\Omega(V_\ell)$ by Lemma~\ref{L:Omega facts}(\ref{L:Omega facts v}).  So the group generated by $\bbar{\Gamma}_\ell$ and the center of $\SO(V_\ell)$ contains $\Omega(V_\ell)$.   The commutator subgroup of $\bbar{\Gamma}_\ell$ contains $\Omega(V_\ell)$ and hence $\bbar\rho_\ell(\pi_1(U_k)) \supseteq \Omega(V_\ell)$.  Since $\bbar{k}/k$ is an abelian extension and $\Omega(V_\ell)$ is perfect, we have $\bbar\rho_\ell(\pi_1(U_{\bbar{k}})) \supseteq \Omega(V_\ell)$.
\end{proof}

\begin{lemma} \label{L:big monodromy D}
Take any finite field $k$ that is an $R$-algebra.   Let $\Lambda$ be the set of primes from Lemma~\ref{L:big monodromy initial}.   Then for any finite subset $D\subseteq \Lambda$ not containing the characteristic of $k$, we have
\[
\bbar\rho_D(\pi_1(U_{\bbar{k}})) \supseteq \prod_{\ell\in D} \Omega(V_\ell).
\]
\end{lemma}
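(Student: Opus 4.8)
The plan is to deduce this from the single-prime statement, Lemma~\ref{L:big monodromy initial}, by an induction on $|D|$ via Goursat's lemma, exploiting that for distinct primes the groups $\Or(V_\ell)$ share no nonabelian composition factor.

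Write $\pi_\ell$ for the projection of $\prod_{\ell'\in D}\Or(V_{\ell'})$ onto its $\ell$th factor. Since $\bbar\rho_D=\prod_{\ell\in D}\bbar\rho_\ell$ by definition, $\pi_\ell\big(\bbar\rho_D(\pi_1(U_{\bbar{k}}))\big)=\bbar\rho_\ell(\pi_1(U_{\bbar{k}}))$, which contains $\Omega(V_\ell)$ by Lemma~\ref{L:big monodromy initial} (applicable because $D\subseteq\Lambda$ and $\characteristic(k)\notin D$). Thus it is enough to prove the following group-theoretic claim: if $H\subseteq\prod_{\ell\in D}\Or(V_\ell)$ is a subgroup with $\pi_\ell(H)\supseteq\Omega(V_\ell)$ for all $\ell\in D$, then $H\supseteq\prod_{\ell\in D}\Omega(V_\ell)$. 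I would argue by induction on $|D|$, the case $|D|\le 1$ being trivial. For $|D|\ge 2$, fix $\ell_0\in D$, put $D'=D\setminus\{\ell_0\}$, and let $H_1\subseteq\prod_{\ell\in D'}\Or(V_\ell)$ and $H_2\subseteq\Or(V_{\ell_0})$ be the images of $H$ under the two coordinate projections, so $H\subseteq H_1\times H_2$ surjects onto each factor and, by the inductive hypothesis applied to $H_1$, we have $H_1\supseteq\prod_{\ell\in D'}\Omega(V_\ell)$. Goursat's lemma then yields normal subgroups $K_1=H\cap(H_1\times\{1\})\trianglelefteq H_1$ and $K_2=H\cap(\{1\}\times H_2)\trianglelefteq H_2$ together with an isomorphism $H_1/K_1\cong H_2/K_2=:Q$. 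It suffices to show $\prod_{\ell\in D'}\Omega(V_\ell)\subseteq K_1$ and $\Omega(V_{\ell_0})\subseteq K_2$, since then $K_1\subseteq H$ and $K_2\subseteq H$ give $\prod_{\ell\in D'}\Omega(V_\ell)\times\{1\}\subseteq H$ and $\{1\}\times\Omega(V_{\ell_0})\subseteq H$, hence $\prod_{\ell\in D}\Omega(V_\ell)\subseteq H$.

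The crux is that $Q$ is solvable. By Lemma~\ref{L:Omega facts}, $\Or(V_\ell)/\Omega(V_\ell)$ has order $4$, and $\Omega(V_\ell)/Z$ is simple unless $N=4$ and $V_\ell$ is split, in which case it is $\PSL_2(\FF_\ell)\times\PSL_2(\FF_\ell)$; consequently the only abelian composition factor of $\Or(V_\ell)$ is $\ZZ/2\ZZ$, and its nonabelian composition factors are (one or two copies of) a finite simple group attached to a dimension-$N$ orthogonal space over $\FF_\ell$. Now $Q$ is a quotient of $H_1\subseteq\prod_{\ell\in D'}\Or(V_\ell)$, so every nonabelian composition factor of $Q$ is such a simple group for some $\ell\in D'$; but $Q$ is also a quotient of $H_2\subseteq\Or(V_{\ell_0})$, so every nonabelian composition factor of $Q$ is likewise one attached to $\FF_{\ell_0}$. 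Since $\ell_0\notin D'$ and these finite simple groups are pairwise non-isomorphic for distinct primes — their orders are, up to a bounded factor, a fixed polynomial in $\ell$ of degree $\binom{N}{2}\ge 3$, and in the low-dimensional cases the groups are visibly $\PSL_2(\FF_\ell)$ or $\PSL_2(\FF_{\ell^2})$ — the group $Q$ has no nonabelian composition factor and is therefore solvable. Finally, $\prod_{\ell\in D'}\Omega(V_\ell)$ and $\Omega(V_{\ell_0})$ are perfect by Lemma~\ref{L:Omega facts}(\ref{L:Omega facts v}), so their images in the solvable group $Q$ are trivial; that is, $\prod_{\ell\in D'}\Omega(V_\ell)\subseteq K_1$ and $\Omega(V_{\ell_0})\subseteq K_2$, completing the induction.

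I expect the main obstacle to be the group theory rather than anything arithmetic: one must verify that distinct primes contribute non-isomorphic nonabelian composition factors — a slightly delicate order comparison, which also requires correctly accounting for the exceptional shape of $\Omega(V_\ell)$ when $N=4$ and $V_\ell$ is split — and one must run Goursat's lemma carefully despite the common cyclic composition factor $\ZZ/2\ZZ$, which is precisely where the perfectness of $\Omega(V_\ell)$ is used.
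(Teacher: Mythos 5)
Your proof is correct and takes essentially the same route as the paper: both reduce to the single-prime Lemma~\ref{L:big monodromy initial} and then combine Goursat's lemma with the perfectness of $\Omega(V_\ell)$ and the fact that for distinct primes the non-abelian simple composition factors of $\Omega(V_\ell)$ are non-isomorphic (the paper packages this by passing to the commutator subgroup of the image, which lies in $\prod_{\ell\in D}\Omega(V_\ell)$ and surjects onto each factor, then citing a ready-made Goursat-type lemma, while you run an induction with the two-factor Goursat lemma and dispose of the abelian $\Or(V_\ell)/\Omega(V_\ell)$ quotients via the perfect-versus-solvable observation). One small wording point: the inference ``$Q$ is a quotient of $H_1\subseteq\prod_{\ell\in D'}\Or(V_\ell)$, hence its non-abelian composition factors are those attached to the $\ell\in D'$'' is not valid for an arbitrary subgroup (composition factors of subgroups need not be composition factors of the ambient group); it is the sandwich $\prod_{\ell\in D'}\Omega(V_\ell)\subseteq H_1$ and $\Omega(V_{\ell_0})\subseteq H_2$, together with $\Or(V_\ell)/\Omega(V_\ell)$ abelian, that justifies the claim, and you do establish exactly these containments.
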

\begin{proof}
 Let $H$ be the commutator subgroup of $\bbar\rho_D(\pi_1(U_{\bbar{k}}))$; it is a subgroup of $\prod_{\ell \in D} \Omega(V_\ell)$ which is the commutator subgroup of $\prod_{\ell \in D} \Or(V_\ell)$ by Lemma~\ref{L:Omega facts}(\ref{L:Omega facts v}).    For each $\ell \in D$, we have $\bbar\rho_\ell(\pi_1(U_{\bbar{k}}))\supseteq \Omega(V_\ell)$ by our choice of $\Lambda$  and hence the commutator subgroup of $\bbar\rho_\ell(\pi_1(U_{\bbar{k}}))$ equals $\Omega(V_\ell)$ since $\Omega(V_\ell)$ is perfect by Lemma~\ref{L:Omega facts}(\ref{L:Omega facts v}).  Therefore, the projection homomorphism $H\to \Omega(V_\ell)$ is surjective for all $\ell\in D$.    
 
Fix $\ell\in D$.   Lemma~\ref{L:Omega facts} implies that the only non-abelian simple group in the composition series of $\Omega(V_\ell)$  is $\Omega(V_\ell)/Z_\ell$ where $Z_\ell$ is the center, \emph{except} when $N=4$ and $V_\ell$ is split, then the only one is $\PSL_2(\FF_\ell)$.    For distinct $\ell,\ell'\in D$, the non-abelian simple groups occurring in the composition series of $\Omega(V_\ell)$ and $\Omega(V_{\ell'})$  have different cardinalities (see \cite{Atlas}*{\S2.4}) and hence are not isomorphic. 
Since $H$ is a subgroup of $\prod_{\ell\in D} \Omega(V_\ell)$ such that the projection $H\to \Omega(V_\ell)$ is surjective for all $\ell\in D$, Goursat's lemma (for example, the version of Lemma~A.4 in \cite{Zywina-Maximal})  implies that that $H=\prod_{\ell\in D} \Omega(V_\ell)$.  The lemma follows since $\bbar\rho_D(\pi_1(U_{\bbar{k}})) \supseteq H$.
\end{proof}

If $R$ is a finite field, then Proposition~\ref{P:big monodromy a} follows from Lemma~\ref{L:big monodromy D} since the group $\bbar\rho_D(\pi_1(U_{\bbar{k}}))$, for a finite extension $k$ of $R$, depends only on an algebraic closure $\bbar{k}$ of $R$. \\

For the rest of the proof, we may thus assume that $R$ has characteristic $0$.   Let $R'$ be an integral domain that is an $R$-algebra.   We say that the $R'$-scheme $U_{R'}$ is \defi{nicely compactifiable} if $U_{R'}$ is open in a proper smooth $R'$-scheme $X$ and $\calD:=X-U_{R'}$ is a divisor of $X$ that has normal crossings relative to $R'$.

\begin{lemma} \label{L:resolution}
There is a finite set $S'\supseteq S$ of non-zero prime ideals of $\OO_F$ such that the $R'$-scheme $U_{R'}$ is nicely compactifiable,  where $R'=\OO_F[S'^{-1}]$.
\end{lemma}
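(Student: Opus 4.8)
The plan is to build the compactification over the generic point and then spread it out. Since $R$ has characteristic $0$, its fraction field $F$ is a number field, and the base change $U_F := U_R\times_R F$ is a smooth separated $F$-variety. First I would use Nagata compactification to get an open immersion of $U_F$ into a proper $F$-scheme, and then, since $\operatorname{char}F=0$, apply Hironaka's resolution of singularities together with log resolution of the boundary to produce a smooth proper $F$-scheme $X_F$ with an open immersion $U_F\injects X_F$ whose complement $\calD_F:=X_F\setminus U_F$, with its reduced structure, is a strict normal crossings divisor. One may take the resolution to be an isomorphism over the already smooth subscheme $U_F$, so that $\calD_F$ is exactly the preimage of the boundary.

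Next I would spread this picture out over a larger localization of $\OO_F$. The $F$-schemes $X_F\supseteq\calD_F$ and the open immersion $U_F\injects X_F$ are of finite presentation over $F$, so by the standard limit arguments (EGA~IV, \S 8) there is a finite set $S_1\supseteq S$ of primes of $\OO_F$, finitely presented schemes $X_1\supseteq\calD_1$ over $R_1:=\OO_F[S_1^{-1}]$, and an open immersion $U_1\injects X_1$, whose base change to $F$ recovers the objects above. Crucially, $U_1$ and $U_{R_1}:=U_R\times_R R_1$ are both finitely presented over $R_1$ with the same generic fiber $U_F$, so after enlarging $S_1$ I may assume $U_1=U_{R_1}$ as open subschemes of $X_1$; this is what ties the model to the \emph{given} family rather than to some arbitrary birational modification of it.

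Finally I would enlarge $S_1$ to a finite set $S'$ so that the relative properties hold over $R':=\OO_F[S'^{-1}]$, using in each case that the property in question holds on a dense open of $\Spec R_1$. Properness of $X_1\to\Spec R_1$ holds after inverting finitely many primes, and an open immersion remains one after base change. The non-smooth locus of the proper morphism $X_1\to\Spec R_1$ is closed and disjoint from the generic fiber, so its image is a finite set of closed points of $\Spec R_1$; inverting these makes $X:=X_1\times_{R_1}R'$ smooth over $R'$. And the condition that $\calD_1\injects X_1$ be a normal crossings divisor relative to the base can be checked on an \'etale cover of $X_1$ and holds over the generic point of $\Spec R_1$; being constructible on the base and containing the generic point, it holds after inverting finitely many more primes. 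Taking $S'$ to be the union of these finitely many sets of primes, the triple $\big(X,\,U_{R'},\,\calD:=X\setminus U_{R'}\big)$ exhibits $U_{R'}$ as nicely compactifiable.

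The one genuinely nontrivial ingredient is Hironaka's resolution, which is legitimate precisely because $\operatorname{char}R=0$; everything else is the finite-presentation/limit formalism. The step I expect to need the most care is the identification $U_1=U_{R_1}$ and, relatedly, the verification that being a normal crossings divisor \emph{relative to} the base (not merely fiberwise) spreads out from the generic point — both are routine applications of that formalism, but they are exactly what make the lemma more than a purely generic statement.
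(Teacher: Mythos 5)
Your proposal is correct and follows the same route as the paper: the paper's proof simply invokes resolution of singularities over the characteristic-zero field $F$ to nicely compactify $U_F$, and then spreads out by choosing integral models of $X$ and $\calD$ and inverting finitely many primes. Your write-up is just a more detailed account of the same two steps (the Nagata/Hironaka construction over $F$ and the EGA~IV limit/constructibility arguments, including the identification of the spread-out open with $U_{R_1}$), all of which is the ``choosing integral models and inverting enough primes'' that the paper leaves implicit.
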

\begin{proof}
By resolution of singularities, the variety $U_F$ over $F$ is nicely compactifiable  (note that $F$ is a field of characteristic $0$).    The lemma follows by choosing integral models for $\calD$ and $X$ and inverting enough primes.
\end{proof}

Fix a set $S'\supseteq S$ as in Lemma~\ref{L:resolution} and define $R'=\OO_F[S'^{-1}]$.   By enlarging $S'$ if necessary, we may assume that $U(\FF_\pp)$ is non-empty for all maximal ideals $\pp\notin S'$ of $\OO_F$ (since $U$ is a smooth $R$-scheme with geometric irreducible fibers of dimension at least $1$).  

\begin{lemma} \label{L:monodromy D}
Take any finite set $D\subseteq \Sigma$.    For any finite field $k$ that is an $R'$-algebra with characteristic not in $D$, the group $\bbar\rho_D(\pi_1(U_{\bbar{k}}))$ is conjugate to $G_D^g$ in $\prod_{\ell\in D} \Or(V_\ell)$.  
\end{lemma}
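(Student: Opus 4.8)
The plan is to deduce the lemma from the comparison of geometric fundamental groups under specialization, using the homotopy exact sequence for nicely compactifiable families (SGA~1, Exp.~XIII). Write $S:=\Spec R'[D^{-1}]$; it is connected (the spectrum of a domain) and regular, hence normal. Base-changing the compactification $X\supseteq U_{R'}$ of Lemma~\ref{L:resolution} along $R'\to R'[D^{-1}]$ shows that $U_S$ is open in the proper smooth $S$-scheme $X\times_{R'}R'[D^{-1}]$ with complement a relative normal crossings divisor $\calD_S$, which is in particular flat over $S$; note also that every $\ell\in D$ is invertible on $S$, so the finite lisse sheaf on $U_S$ attached to $\bbar\rho_D$ has coefficient characteristics invertible on $S$. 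Fix a finite field $k$ that is an $R'$-algebra whose characteristic is not in $D$; since $\pi_1(U_{\bbar{k}})$, hence $\bbar\rho_D(\pi_1(U_{\bbar{k}}))$, depends only on $\bbar{k}$, we may assume $k=\FF_\pp$ for a closed point $\pp$ of $S$.

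First I would record the easy inclusion. Set $F=\Frac(R')$, let $U_{\bbar{F}}$ be the geometric generic fibre, and put $G:=\bbar\rho_D(\pi_1(U_S))=\bbar\rho_D(\pi_1(U_F))$, the last equality because $\pi_1(U_F)\to\pi_1(U_S)$ is surjective ($U_F$ is the generic fibre of $U_S/S$). The homotopy exact sequence
\[
\pi_1(U_{\bbar{F}}) \longrightarrow \pi_1(U_S) \longrightarrow \pi_1(S) \longrightarrow 1
\]
for the nicely compactifiable family $U_S/S$ shows that the image of $\pi_1(U_{\bbar{F}})$ in $\pi_1(U_S)$ equals $\Pi^g:=\ker(\pi_1(U_S)\to\pi_1(S))$, so $G_D^g=\bbar\rho_D(\Pi^g)$ is normal in $G$ with $G/G_D^g$ a quotient of $\pi_1(S)$. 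On the other hand, the composite $\pi_1(U_{\bbar{k}})\to\pi_1(U_{\FF_\pp})\to\pi_1(\Spec\FF_\pp)$ is trivial by construction, and this composite also computes $\pi_1(U_{\bbar{k}})\to\pi_1(U_S)\to\pi_1(S)$; hence $\pi_1(U_{\bbar{k}})$ lands in $\Pi^g$, so $\bbar\rho_D(\pi_1(U_{\bbar{k}}))$ is contained in a conjugate of $G_D^g$ inside $\prod_{\ell\in D}\Or(V_\ell)$.

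For the reverse inclusion I would localize to a strictly Henselian trait: replace $S$ by $\Spec A$, where $A$ is the strict Henselization of $R'[D^{-1}]$ at $\pp$. The base change is again nicely compactifiable, $\pi_1(\Spec A)=1$, $\Frac(A)$ has characteristic $0$, and the special fibre of $U_A$ is $U_{\bbar{k}}$. Grothendieck's specialization theorem for the complement of a relative normal crossings divisor in a proper smooth scheme over a strictly Henselian trait (SGA~1, Exp.~XIII) then provides a specialization homomorphism $\pi_1(U_{\bbar{F}})\to\pi_1(U_{\bbar{k}})$, compatible with restricting the sheaf attached to $\bbar\rho_D$, which is surjective. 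Since $G_D^g=\bbar\rho_D(\pi_1(U_{\bbar{F}}))$ is unchanged, up to conjugacy, when the geometric generic fibre is taken over an algebraic closure of $\Frac(A)$ (invariance of $\pi_1$ under extension of algebraically closed base fields), applying $\bbar\rho_D$ to this surjection shows that $\bbar\rho_D(\pi_1(U_{\bbar{k}}))$ contains, hence equals, a conjugate of $G_D^g$. This is the lemma.

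The step I expect to be the main obstacle is the surjectivity of the specialization map: the covers cut out by $\bbar\rho_D$ need not be tamely ramified along $\calD$, and the special fibre can carry extra wild fundamental group in the residue characteristic of $\pp$, so one must be sure that this never enlarges the monodromy beyond what is already seen over $\bbar{F}$. The nice-compactifiability hypothesis is exactly what controls this: $\calD_S$ is a relative normal crossings divisor flat over $S$, so the generic point of each of its irreducible components lies over the generic point of $S$, i.e.\ in characteristic $0$, which is the setting covered by the cited specialization statement. Finally, combining this lemma with Lemmas~\ref{L:big monodromy initial} and~\ref{L:big monodromy D} shows that $\bbar\rho_D(\pi_1(U_{\bbar{k}}))$ contains $\prod_{\ell\in D}\Omega(V_\ell)$, hence is normal in $\prod_{\ell\in D}\Or(V_\ell)$; therefore the conjugating element above may be taken trivial, which yields the equalities needed in Propositions~\ref{P:big monodromy a} and~\ref{P:big monodromy b}.
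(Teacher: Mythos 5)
Your overall strategy (reduce to a henselian trait, use the nice compactification, compare geometric monodromy of special and generic fibres) is close to the paper's, but the pivotal step is not justified and, as you state it, is false. There is no surjective specialization homomorphism $\pi_1(U_{\bar F})\to\pi_1(U_{\bar k})$ on \emph{full} fundamental groups for the complement of a relative normal crossings divisor: already for $U=\Aff^1=\PP^1-\{\infty\}$ over a strictly henselian trait of mixed characteristic $(0,p)$ one has $\pi_1(\Aff^1_{\bar F})=1$ while $\pi_1(\Aff^1_{\bar k})$ is huge (Artin--Schreier covers), so no such surjection can exist. What SGA~1, Exp.~XIII actually provides is a surjective specialization map on the \emph{tame} fundamental groups (tame along $\calD_{\bar k}$). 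Hence, to make your reverse inclusion work, you must first prove that the restriction of $\bbar\rho_D$ to $\pi_1(U_{\bar k})$ is tamely ramified at the maximal points of $\calD_{\bar k}$, so that it factors through the tame quotient. Your observation that the components of $\calD$ are horizontal, so their generic points have residue characteristic $0$, is the right starting point but does not by itself give this: the tameness needed is at codimension-one points of the \emph{special fibre} of $\calD$, which have residue characteristic $p$. Converting the former into the latter is exactly the role of Abhyankar's Lemma (SGA~1, XIII, 5.5) in the paper's proof: since the covers cut out by $\bbar\rho_D$ are \'etale over all of $U_{\OO_\pp}$ and $\calD$ has normal crossings relative to $\OO_\pp$, their restrictions to $U_{\FFbar_\pp}$ are tame along $\calD_{\FFbar_\pp}$; one then applies the Tame Specialization Theorem (Katz, ESDE 8.17.14), which directly gives that $\bbar\rho_D(\pi_1(U_{\FFbar_\pp}))$ and $\bbar\rho_D(\pi_1(U_{\bbar F_\pp}))$ are conjugate, i.e., both inclusions at once. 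This Abhyankar step is the missing idea in your write-up.

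A secondary issue: your ``easy inclusion'' rests on exactness in the middle of $\pi_1(U_{\bar F})\to\pi_1(U_S)\to\pi_1(S)\to 1$, which is a standard theorem for proper separable families with geometrically connected fibres but not an off-the-shelf statement for the open family $U_S/S$; it can be established here (using that $S$ is regular of dimension one, smoothness of $U_S/S$, and purity to see that a cover of $U_S$ trivial on $U_{\bar F}$ is pulled back from an \'etale cover of $S$), but it requires an argument, and it becomes unnecessary once the tame specialization argument is in place, since that argument already yields conjugacy of the two geometric monodromy groups.
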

\begin{proof}
Take any finite set $D\subseteq \Sigma$.  Since the conclusion only involves the algebraic closure of $k$, we may assume that $k=\FF_\pp$ for a maximal ideal $\pp\notin S'$ of $\OO_F$, where $\pp$ does not divide any prime in $D$.

Let $F_\pp$ be the completion of $F$ at $\pp$ and denote by $\OO_\pp$ its valuation ring.  For an algebraic closure $\bbar{F}$ of $F$, choose an algebraic closure $\bbar{F}_\pp$ of $F_\pp$ containing $\bbar{F}$.  Since the set $U(\FF_\pp)$ is non-empty and $U$ is smooth, we have $U(\OO_\pp) \neq \emptyset$.  The $\OO_\pp$-scheme $U_{\OO_\pp}$ is nicely compactifiable since $U_{R'}$ has this property and $\OO_\pp$ is an $R'$-algebra.  So $U_{\OO_\pp}$ is open in a proper smooth $\OO_\pp$-scheme $X$ for which $\calD:=X-U_{\OO_\pp}$ is a divisor of $X$ that has normal crossings relative to $\OO_\pp$.    Let 
\[
\bbar\varrho_D \colon \pi_1(U_{\OO_\pp}) \to \prod_{\ell\in D} \Or(V_\ell)
\] 
be the representation obtained from $\bbar\rho_D$ by base extension.   By Abhyankar's Lemma \cite{MR2017446}*{XIII, 5.5}, the representation $\pi_1(U_{\FFbar_\pp})\to \prod_{\ell\in D} \Or(V_\ell)$ obtained from $\bbar{\varrho}_D$ is tamely ramified at each maximal point of the scheme $\calD_{\FFbar_\pp}$.
The Tame Specialization Theorem \cite{MR1081536}*{Theorem~8.17.14} then implies that the group $\bbar\rho_D(\pi_1(U_{\FFbar_\pp}))$ is conjugate to $\bbar\rho_D(\pi_1(U_{\bbar{F}_\pp}))$ in $\prod_{\ell\in D} \Or(V_\ell)$.  Finally, observe that $\bbar\rho_D(\pi_1(U_{\bbar{F}_\pp}))$ is conjugate to $G_D^g$ in $\prod_{\ell\in D} \Or(V_\ell)$.   
\end{proof}

\begin{lemma} \label{L:rewording monodromy}
There is a subset $\Lambda\subseteq \Sigma$ with Dirichlet density $1$ such that $G_D^g \supseteq  \prod_{\ell\in D} \Omega(V_\ell)$ for all finite subsets $D\subseteq \Lambda$.
\end{lemma}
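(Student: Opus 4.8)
The plan is to deduce this from Lemmas~\ref{L:big monodromy initial}, \ref{L:big monodromy D} and \ref{L:monodromy D} by specializing at a single well-chosen prime of $\OO_F$. Fix a maximal ideal $\pp \notin S'$ of $\OO_F$ with residue characteristic $p$. We have arranged (by enlarging $S'$) that $U(\FF_\pp)\neq\emptyset$, so $k:=\FF_\pp$ is a finite field that is an $R'$-algebra, and the fiber $U_k$ is geometrically connected. Apply Lemma~\ref{L:big monodromy initial} to this $k$ to obtain a subset $\Lambda\subseteq\Sigma$ of Dirichlet density $1$ such that $\bbar\rho_\ell(\pi_1(U_{\bbar k}))\supseteq\Omega(V_\ell)$ for all $\ell\in\Lambda$ with $\ell\neq p$. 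Deleting $p$ from $\Lambda$ if necessary (this does not change the Dirichlet density), we may and do assume that $p\notin\Lambda$.

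Now take any finite subset $D\subseteq\Lambda$. Since $p\notin D$, Lemma~\ref{L:big monodromy D} gives $\bbar\rho_D(\pi_1(U_{\bbar k}))\supseteq\prod_{\ell\in D}\Omega(V_\ell)$. On the other hand, $k$ is an $R'$-algebra and its characteristic $p$ is not in $D$, so Lemma~\ref{L:monodromy D} applies and tells us that $\bbar\rho_D(\pi_1(U_{\bbar k}))$ is conjugate to $G_D^g$ in $\prod_{\ell\in D}\Or(V_\ell)$. Because any subgroup of $\prod_{\ell\in D}\Or(V_\ell)$ containing $\prod_{\ell\in D}\Omega(V_\ell)$ is normal, the subgroup $\prod_{\ell\in D}\Omega(V_\ell)$ is stable under conjugation by elements of $\prod_{\ell\in D}\Or(V_\ell)$; hence the containment $\bbar\rho_D(\pi_1(U_{\bbar k}))\supseteq\prod_{\ell\in D}\Omega(V_\ell)$ transports to $G_D^g\supseteq\prod_{\ell\in D}\Omega(V_\ell)$, which is what we want.

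I expect no serious obstacle remains at this point: the substantive work has already been done in the earlier lemmas. The passage from mod-$\ell$ monodromy on the geometric generic fiber to a special fiber is handled by Lemma~\ref{L:monodromy D} (via Abhyankar's Lemma and the Tame Specialization Theorem), and the fact that surjectivity onto each $\Omega(V_\ell)$ upgrades to surjectivity onto the product is Lemma~\ref{L:big monodromy D} (via the distinctness of the non-abelian composition factors and Goursat's lemma). The only points requiring care here are bookkeeping: choosing $\pp$ outside $S'$ so that Lemma~\ref{L:monodromy D} is available, and removing the residue characteristic $p$ from $\Lambda$ so that the hypotheses on $D$ in Lemmas~\ref{L:big monodromy D} and \ref{L:monodromy D} are met — neither operation affects the Dirichlet density of $\Lambda$. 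Note also that the conclusion is independent of the choice of base point in the fundamental groups precisely because $\prod_{\ell\in D}\Omega(V_\ell)$ is normal, so no ambiguity arises.
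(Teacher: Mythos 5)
Your proposal is correct and follows essentially the same route as the paper: specialize to a finite residue field $k=\FF_\pp$ with $\pp\notin S'$, get the containment $\bbar\rho_D(\pi_1(U_{\bbar k}))\supseteq\prod_{\ell\in D}\Omega(V_\ell)$ from Lemma~\ref{L:big monodromy D}, and transfer it to $G_D^g$ via the conjugacy of Lemma~\ref{L:monodromy D} together with the normality of $\prod_{\ell\in D}\Omega(V_\ell)$. Your explicit removal of the residue characteristic $p$ from $\Lambda$ is a slightly more careful piece of bookkeeping than the paper's wording, but the argument is the same.
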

\begin{proof}
Fix a finite field $k$ that is an $R'$-algebra.  Take $\Lambda\subseteq \Sigma$ as in Lemma~\ref{L:big monodromy D}.   For any finite $D\subseteq \Lambda$, the group $\bbar{\rho}_D(\pi_1(U_{\bbar{k}}))$ contains $\prod_{\ell\in D}\Omega(V_\ell)$ by Lemma~\ref{L:big monodromy D} and is conjugate to $G_D^g$ by Lemma~\ref{L:monodromy D}.  The lemma is now immediate.
\end{proof}

Proposition~\ref{P:big monodromy a} (in the characteristic $0$ case) and Proposition~\ref{P:big monodromy b} are now direct consequences of Lemmas~\ref{L:monodromy D} and \ref{L:rewording monodromy}.

\section{The field $K$} \label{S:the field K}

Fix notation and assumptions as in \S\ref{SS:new setup} and assume that $N$ is even.   In this section, we describe the field $K$ from \S\ref{SS:new setup}.

\begin{proposition} \label{P:K criterion}
\begin{romanenum}  
\item  \label{P:K criterion i}
There is a unique extension $K/\QQ$ with $[K:\QQ]\leq 2$ such that for all sufficiently large $\ell\in \Sigma$, $\ell$ splits in $K$ if and only if the orthogonal space $V_\ell$ is split. 
\item  \label{P:K criterion ii}
Take any $u\in U(k)$, where $k$ is a finite field that is an $R$-algebra.   If $P_u(\pm 1)\neq 0$, then
\[
K=\QQ\Big(\sqrt{(-1)^{N/2}P_u(1)P_u(-1)}\Big).
\]
\item \label{P:K criterion iii}
Take any $u\in U(k)$, where $k$ is a finite field that is an $R$-algebra. If $\varepsilon_u=1$ and $P_u(T)$ is separable, then $K=\QQ(\sqrt{\Delta_u})$ where $\Delta_u$ is the discriminant of $P_u(T)$.
\end{romanenum}
\end{proposition}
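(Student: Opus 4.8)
The plan is to reduce all three parts to a single application of Lemma~\ref{L:disc and spin rules}(\ref{L:disc and spin rules iii}) at one well-chosen point, together with the standard fact that an extension of $\QQ$ of degree at most $2$ is determined by its set of split primes up to a set of primes of density $0$. The first step is to produce a finite field $k$ that is an $R$-algebra and a point $u\in U(k)$ with $P_u(1)\neq0$ and $P_u(-1)\neq0$. Non-emptiness of $U(k)$ for $|k|$ large is automatic because $U$ has geometrically connected fibres of positive dimension over $R$ (in the number-field case one first reduces modulo a prime of $\OO_F$ outside $S$). For the non-vanishing at $\pm1$, fix one large prime $\ell\in\Lambda$, with $\Lambda$ as in Proposition~\ref{P:big monodromy a}, so that $\bbar\rho_\ell(\pi_1(U_{\bbar k}))\supseteq\Omega(V_\ell)$. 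The locus of $A$ in the algebraic group $\SO_{V_\ell}$ satisfying $\det(I-A)\det(I+A)=0$ is a proper Zariski-closed subvariety, so it accounts for only $O(\ell^{\dim\SO_{V_\ell}-1})$ elements of $\SO(V_\ell)$, whereas every coset of $\bbar\rho_\ell(\pi_1(U_{\bbar k}))$ in $\bbar\rho_\ell(\pi_1(U_k))$ has at least $|\Omega(V_\ell)|\gg\ell^{\dim\SO_{V_\ell}}$ elements; hence for $\ell$ large the coset containing geometric Frobenius contains an element $A$ with $\det(I\pm A)\neq0$. Deligne's equidistribution theorem (or Chebotarev combined with Lang--Weil) then produces, over a sufficiently large finite extension $k$ of the chosen residue field, a point $u\in U(k)$ with $\bbar\rho_\ell(\Frob_u)$ conjugate to $A$. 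Since $\rho_\ell(\Frob_u)\in\Or(M_\ell)\subseteq\GL_N(\ZZ_\ell)$, the polynomial $P_u$ lies in $\ZZ_\ell[T]$ and reduces modulo $\ell$ to $\det(I-\bbar\rho_\ell(\Frob_u)T)$, which is nonzero at $\pm1$; hence $P_u(\pm1)\neq0$ in $\QQ$.

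For the second step, fix any point $u'\in U(k')$ with $k'$ a finite field that is an $R$-algebra and $P_{u'}(\pm1)\neq0$. For every prime $\ell\in\Sigma$ not dividing $|k'|$ we again have $P_{u'}\in\ZZ_\ell[T]$ with reduction $\det(I-\bbar\rho_\ell(\Frob_{u'})T)$, and since $P_{u'}(1)P_{u'}(-1)$ is a fixed nonzero rational number it is nonzero modulo $\ell$ for all but finitely many $\ell\in\Sigma$. For those $\ell$, Lemma~\ref{L:disc and spin rules}(\ref{L:disc and spin rules iii}) applied to $\bbar\rho_\ell(\Frob_{u'})\in\Or(V_\ell)$ gives $\disc(V_\ell)=P_{u'}(1)P_{u'}(-1)\,(\FF_\ell^\times)^2$. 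Therefore, for all sufficiently large $\ell\in\Sigma$, the orthogonal space $V_\ell$ is split if and only if $(-1)^{N/2}P_{u'}(1)P_{u'}(-1)$ is a square modulo $\ell$, equivalently if and only if $\ell$ splits in $\QQ\big(\sqrt{(-1)^{N/2}P_{u'}(1)P_{u'}(-1)}\big)$. Applying this to the point $u$ from the first step proves the existence assertion in part~(\ref{P:K criterion i}) with $K=\QQ\big(\sqrt{(-1)^{N/2}P_u(1)P_u(-1)}\big)$, while uniqueness holds because two extensions of $\QQ$ of degree at most $2$ whose split-prime sets agree on a density-$1$ set of primes (such as the set of large $\ell\in\Sigma$, since $\Sigma$ has density $1$) have the same quadratic character and hence coincide. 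Re-running the computation for an arbitrary $u'\in U(k')$ with $P_{u'}(\pm1)\neq0$ and invoking uniqueness then gives part~(\ref{P:K criterion ii}).

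Part~(\ref{P:K criterion iii}) follows from~(\ref{P:K criterion ii}): if $\varepsilon_u=1$ and $P_u$ is separable, then~(\ref{E:root number}) shows $P_u$ is monic and reciprocal of even degree $N$, so by Lemma~\ref{L:basic separability}(\ref{L:basic separability 0}) we may write $P_u(T)=T^{N/2}h(T+1/T)$ with $h$ monic of degree $N/2$, and Lemma~\ref{L:basic separability}(\ref{L:basic separability i}) gives $\Delta_u=(-1)^{N/2}P_u(1)P_u(-1)\disc(h)^2$. Separability forces $\Delta_u\neq0$, hence in particular $P_u(\pm1)\neq0$, so $u$ satisfies the hypothesis of part~(\ref{P:K criterion ii}); since $\Delta_u$ lies in the square class of $(-1)^{N/2}P_u(1)P_u(-1)$, we conclude $\QQ(\sqrt{\Delta_u})=K$.

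The main obstacle is the first step, where the big-monodromy hypothesis and an equidistribution argument are genuinely used; the only delicate point there is ensuring that the geometric-Frobenius coset meets the open locus on which the characteristic polynomial is nonzero at $\pm1$, which follows from a dimension count. After that, parts~(\ref{P:K criterion ii}) and~(\ref{P:K criterion iii}) are formal consequences of Lemmas~\ref{L:disc and spin rules} and~\ref{L:basic separability} and the density-one determination of quadratic fields.
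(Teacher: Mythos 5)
Your second and third steps are essentially the paper's own proof: reduce everything to Lemma~\ref{L:disc and spin rules}(\ref{L:disc and spin rules iii}) at a point with $P_u(\pm1)\neq0$, use that $\Sigma$ has density $1$ to pin down $K$ uniquely, and deduce (\ref{P:K criterion iii}) from Lemma~\ref{L:basic separability}(\ref{L:basic separability i}). The gap is in your first step, at the sentence claiming that ``the coset containing geometric Frobenius contains an element $A$ with $\det(I\pm A)\neq0$.'' Your dimension count bounds the bad locus only inside $\SO(V_\ell)$, but the Frobenius coset of $\bbar\rho_\ell(\pi_1(U_{\bbar k}))$ in $\bbar\rho_\ell(\pi_1(U_k))$ need not meet $\SO(V_\ell)$ at all. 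Recall that $N$ is even throughout this section, and for any $A\in\Or(V_\ell)$ with $\det A=-1$ the identity $T^N\det(I-AT^{-1})=\det(-A)\det(I-AT)$ evaluated at $T=1$ forces $\det(I-A)=0$. So if the geometric monodromy group happens to lie in $\SO(V_\ell)$ while the arithmetic Frobenius coset has determinant $-1$ (nothing in the hypotheses of \S\ref{SS:new setup} excludes this; it just means $\varepsilon_u=-1$ for every $u\in U(k)$), then \emph{every} element of that coset is bad and your claim is false for it. There is also a smaller quantifier slip later: a point $u$ with $\bbar\rho_\ell(\Frob_u)$ conjugate to $A$ exists only over extensions whose degree lies in the right residue class modulo the order of the Frobenius coset in $G/G^g$, not over every ``sufficiently large'' extension.

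The repair is exactly the paper's choice of where to look for $A$: first pass to a finite extension of the base field whose degree is divisible by the order of the Frobenius class in $G/G^g$, so that the relevant coset becomes the geometric monodromy group itself, which contains $\Omega(V_\ell)\subseteq\SO(V_\ell)$; then run your Lang--Weil-type count inside $\Omega(V_\ell)$ (bad elements are $O(\ell^{\dim\SO_{V_\ell}-1})$ while $|\Omega(V_\ell)|\gg\ell^{\dim\SO_{V_\ell}}$) to produce $g\in\Omega(V_\ell)$ with $\det(I\pm g)\neq0$, and apply equidistribution over further extensions of suitable degree to realize $g$ as $\bbar\rho_\ell(\Frob_u)$. (The paper simply asserts the existence of such $g\in\Omega(V_\ell)$; your counting argument, relocated to $\Omega(V_\ell)$, is a legitimate way to justify it.) With that reordering the rest of your argument goes through unchanged.
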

\begin{proof}

We claim that there is a point $u\in U(k)$ such that $P_u(\pm 1)\neq 0$, where $k$ is a finite field that is an $R$-algebra.   By Corollary~\ref{C:equivalent monodromy}, there is a subset $\Lambda \subseteq \Sigma$ of Dirichlet $1$ for which condition (\ref{bm-b}) of \S\ref{SSS:bm} holds.
Fix a prime $\ell \in \Lambda$ and choose an element $g\in\Omega(V_{\ell})$ such that $\det(I+ g) \neq 0$ and $\det(I- g) \neq 0$.   Since $\ell\in \Lambda$, there is a finite field $k$ that is an $R$-algebra with characteristic not equal to $\ell$ such that $\bbar\rho_{\ell}(\pi_1(U_{\bbar{k}}))\supseteq \Omega(V_{\ell})$.   By equidistribution, there is a finite extension $k'/k$ and a point $u\in U(k')$ such that $\bbar\rho_{\ell}(\Frob_{u})$ is conjugate to $g$ in $\Or(V_{\ell})$.   So $P_{u}(\pm 1) \equiv \det(I\mp g)\not\equiv 0 \pmod{\ell}$.   In particular, $P_{u}(\pm 1)\neq 0$ which proves the claim.

Now take any $u\in U(k)$ such that $P_u(\pm 1)\neq 0$, where $k$ is a finite field that is an $R$-algebra (such a point $u$ exists by the above claim).  The polynomial $P_u$ is reciprocal since $P_u(\pm 1)\neq 0$.  Define the field 
\[
K:=\QQ\Big(\sqrt{(-1)^{N/2}P_u(1)P_u(-1)}\Big).
\]
Let $\Sigma_0$ be the set of odd primes $\ell\in \Sigma$ for which $P_{u}(\pm 1)\not \equiv 0 \pmod{\ell}$.   Take any prime $\ell \in \Sigma_0$ and define $A:=\bbar{\rho}_\ell(\Frob_u) \in \Or(V_\ell)$.   We have $\det(I-TA)\equiv P_u(T) \pmod{\ell}$, so $A\in \SO(V_\ell)$ since $P_u$ is reciprocal.    By Lemma~\ref{L:disc and spin rules}(\ref{L:disc and spin rules iii}), $V_\ell$ is split if and only if $(-1)^{N/2} P_{u}(1)P_{u}(-1)$ modulo $\ell$ is a (non-zero) square in $\FF_\ell$.   So for any $\ell\in \Sigma_0$, we deduce that $\ell$ splits in $K$ if and only if $V_\ell$ is split.   In particular, for all sufficiently large $\ell\in \Sigma$, we find that $\ell$ splits in $K$ if and only if $V_\ell$ is split.   Since $\Sigma$ has density $1$, this gives a characterization of $K$ that does not depend on our choice of $u$.   Parts (\ref{P:K criterion i}) and (\ref{P:K criterion ii}) now follow since $K$ does not depend on $u$.

Finally, take any $u\in U(k)$ for which $\varepsilon_u=1$ and $P_u(T)$ is separable, where $k$ is a finite field that is an $R$-algebra. Let $\Delta_u$ be the discriminant of $P_u$; it is non-zero since $P_u$ is separable.  The polynomial $P_u$ has even degree and is reciprocal by (\ref{E:root number}) since $\varepsilon_u=1$.   By Lemma~\ref{L:basic separability}(\ref{L:basic separability i}), we have $\Delta_u  \in (-1)^{N/2} P_u(1)P_u(-1) \cdot (\QQ^\times)^2$.  Therefore, $\QQ(\sqrt{\Delta_u})=\QQ(\sqrt{(-1)^{N/2} P_u(1)P_u(-1)})$. Part (\ref{P:K criterion iii}) now follows from (\ref{P:K criterion ii}).
\end{proof}

\section{Proof of Proposition~\ref{P:criterion for sieving}}
\label{S:proof of criterion for sieving}
Fix notation and assumptions as in \S\ref{SS:new setup}.
The goal of this section is to prove the following proposition.  In this section, we prove Proposition~\ref{P:criterion for sieving} which  will be used to apply the sieve theory in the proofs of Theorems~\ref{T:main 1} and \ref{T:main 2}.   

\subsection{Big subgroups of $W_{2n}$}
We first give a criterion to prove that a subgroup of $W_{2n}$ contains $W_{2n}^+$.  We shall assume that $n\geq 2$; the case $n=1$ is not interesting since $W_{2}^+=1$.

We may view $W_{2n}$ as a subgroup of the group of permutations $\mathfrak{S}_X$ of the set $X=\{\pm e_1,\ldots, \pm e_n\}$.  Let 
 $\varepsilon_1\colon W_{2n}\to \{\pm 1\}$ be the homomorphism obtained by composing the inclusion $W_{2n}\hookrightarrow \mathfrak{S}_X$ with the signature map.  The kernel of $\varepsilon_1$ is the subgroup $W_{2n}^+$.   By considering the action of $W_{2n}$ on the $n$ pairs $p_i:=\{e_i,-e_i\}$ with $1\leq i \leq n$, we obtain a homomorphism $\varphi\colon W_{2n}\to \mathfrak{S}_n$.  Let $\varepsilon_2\colon W_{2n}\to \{\pm 1\}$ be the homomorphism obtained by composing $\varphi$ with the signature map.   

\begin{lemma} \label{L:group theory for W2n}  
Let $G$ be a subgroup of $W_{2n}$.  Suppose that there exist $g_1$, $g_2$, $g_3$, $g_4$ and $g_5$ in $G$ such that the following hold:
\begin{itemize}
\item $\varphi(g_1)\in \mathfrak{S}_n$ is an $n$-cycle,
\item $\varphi(g_2)\in \mathfrak{S}_n$ is a $p$-cycle for some prime $p>n/2$,
\item $\varphi(g_3)\in \mathfrak{S}_n$ is a transposition,
\item $g_4\in \mathfrak{S}_X$ satisfies $\varphi(g_4)=1$ and is a product of one or two disjoint transpositions,
\item $\varepsilon_1(g_5)\varepsilon_2(g_5)=-1$.
\end{itemize}
Then $G$ equals $W_{2n}^+$ or $W_{2n}$.
\end{lemma}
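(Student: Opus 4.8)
The strategy is to prove that $G\supseteq W_{2n}^{+}$; since $[W_{2n}:W_{2n}^{+}]=2$, it follows at once that $G$ equals $W_{2n}^{+}$ or $W_{2n}$. I would first fix notation via the wreath-product decomposition $W_{2n}=E\rtimes\mathfrak{S}_{n}$, where $E:=\ker\varphi$ is the group of sign changes — a copy of $\FF_{2}^{n}$ on which $\mathfrak{S}_{n}$ acts by permuting coordinates, i.e. the permutation $\FF_{2}[\mathfrak{S}_{n}]$-module — and where $\mathfrak{S}_{n}$ is identified with the subgroup of honest permutation matrices in $W_{2n}$ (all signs $+1$). A one-line check on generators (a single sign change $s_{i}$ acts on $X$ as one transposition; a transposition in $\mathfrak{S}_{n}$ acts on $X$ as a product of two disjoint transpositions) shows that $\varepsilon_{1}(g)$ equals the parity of the number of sign changes of $g$; in particular $W_{2n}^{+}=\ker\varepsilon_{1}$ is $W^{+}\rtimes\mathfrak{S}_{n}$, where $W^{+}\subseteq E$ is the codimension-one submodule of weight-even vectors.

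\emph{Step 1: $\varphi(G)=\mathfrak{S}_{n}$.} Since $\varphi(g_{1})$ is an $n$-cycle, $\varphi(G)$ is transitive. I would next show it is primitive using $\sigma:=\varphi(g_{2})$: if $\varphi(G)$ admitted a block system with block size $b$, $1<b<n$, then $\sigma$ must stabilize every block — otherwise it carries some block off itself and hence moves all $b$ of its points, and since $\sigma^{p}=1$ the number of such moved blocks is a multiple of $p$, so $\sigma$ would move at least $pb>p$ points, contradicting that a $p$-cycle moves exactly $p$ points. Thus the $p$-element support of $\sigma$ lies in a single block, giving $p\le b$; but $b\mid n$ and $b<n$ force $b\le n/2<p$, a contradiction. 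So $\varphi(G)$ is primitive, and being primitive and containing the transposition $\varphi(g_{3})$ it equals $\mathfrak{S}_{n}$ by Jordan's classical theorem (alternatively, directly: the transpositions lying in $\varphi(G)$ form a conjugation-stable set whose ``transposition graph'' has connected components that are blocks, so by primitivity the graph is connected and transpositions along a spanning tree generate $\mathfrak{S}_{n}$).

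\emph{Steps 2--3: pinning down $G$ using $g_{4}$ and $g_{5}$.} Let $K:=G\cap E=\ker(\varphi|_{G})$; it is normal in $G$ and, as $\varphi(G)=\mathfrak{S}_{n}$, it is an $\FF_{2}[\mathfrak{S}_{n}]$-submodule of $E$. Because $\varphi(g_{4})=1$ we have $g_{4}\in K$, and being a product of one or two disjoint transpositions it is a single sign change $s_{i}$ (a weight-one vector) or a product $s_{i}s_{j}$ (a weight-two vector). In the first case $K$ contains the $\mathfrak{S}_{n}$-orbit of a weight-one vector, which spans $E$, so $K=E$, $|G|=|K|\,n!=2^{n}n!$, and $G=W_{2n}$. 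In the second case $K$ contains all weight-two vectors, which span $W^{+}$ (here $n\ge 2$ is used), so $K\supseteq W^{+}$ and hence $|K|\in\{2^{n-1},2^{n}\}$; if $|K|=2^{n}$ we are done as before. So assume $K=W^{+}$ — this is forced, since $W^{+}$ is the unique codimension-one submodule (an invariant hyperplane is the kernel of an $\mathfrak{S}_{n}$-fixed functional, and the coordinate sum is the only such functional). Now put $G^{+}:=G\cap W_{2n}^{+}=\ker(\varepsilon_{1}|_{G})$; it contains $K=W^{+}$ and has index $\le 2$ in $G$. If $G^{+}=G$ then $G\subseteq W_{2n}^{+}$ and, by the order count $|G|=2^{n-1}n!=|W_{2n}^{+}|$, we get $G=W_{2n}^{+}$. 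The only remaining case is $[G:G^{+}]=2$, and this is exactly what $g_{5}$ rules out: then $G^{+}\cap E=K=W^{+}$, so $\varphi(G^{+})$ has order $|G^{+}|/2^{n-1}=n!/2$, i.e. $\varphi(G^{+})=A_{n}$; since $W^{+}\subseteq G^{+}$ one checks that $g\in G^{+}\iff\varphi(g)\in A_{n}$, so for every $g\in G$ we have $\varepsilon_{1}(g)=1\iff g\in G^{+}\iff\varphi(g)$ even $\iff\varepsilon_{2}(g)=1$, whence $\varepsilon_{1}(g)\varepsilon_{2}(g)=1$, contradicting $\varepsilon_{1}(g_{5})\varepsilon_{2}(g_{5})=-1$. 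Therefore $G=W_{2n}^{+}$ or $W_{2n}$.

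The routine calculations (identifying $\varepsilon_{1}$ with sign-change parity, that the weight-$\le 2$ orbits span $E$ or $W^{+}$, the uniqueness of $W^{+}$) are all easy. I expect the only real subtlety to be organizational: Step 1 needs the three conjugacy-type hypotheses combined in the right way — transitivity from $g_{1}$, primitivity from $g_{2}$, then Jordan via $g_{3}$ — and the genuinely load-bearing observation is the one in Step 3, namely that once $\varphi(G)=\mathfrak{S}_{n}$ and $K\supseteq W^{+}$, the sole obstruction to $G\supseteq W_{2n}^{+}$ is the ``diagonal'' index-two subgroup on which $\varepsilon_{1}\varepsilon_{2}\equiv 1$, which is precisely the configuration the hypothesis on $g_{5}$ is designed to exclude.
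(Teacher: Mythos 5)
Your proof is correct, and while it follows the same overall architecture as the paper's argument (first force $\varphi(G)=\mathfrak{S}_n$ using $g_1,g_2,g_3$; then split according to whether $g_4$ is one or two sign flips; finally use $g_5$ to exclude the ``diagonal'' subgroup $\ker(\varepsilon_1\varepsilon_2)$), both main steps are implemented differently. For the first step the paper invokes Brauer's lemma (no proper transitive subgroup of $\mathfrak{S}_n$ contains both a transposition and a cycle of prime order $>n/2$), whereas you extract primitivity from the $p$-cycle by the block-counting argument and then apply Jordan's theorem on primitive groups containing a transposition; these are interchangeable classical inputs, and your block argument is self-contained. For the second step the paper, in the two-transposition case, explicitly conjugates $g_4$ to generate the even sign-change group $H^+$ inside the commutator subgroup $[G,G]$, computes $[W_{2n},W_{2n}]=\ker\varepsilon_1\cap\ker\varepsilon_2$ by an index count, and concludes that $G$ is one of $\ker(\varepsilon_1\varepsilon_2)$, $W_{2n}^+$, $W_{2n}$; you instead view $K=G\cap\ker\varphi$ as an $\mathfrak{S}_n$-submodule of $\FF_2^n$, note it contains the orbit of a weight-one or weight-two vector and hence contains $W^+$ or all of $E$, and then settle the remaining case $K=W^+$ by the index-two analysis with $\varphi(G^+)=A_n$, which shows directly that the only obstruction is $G\subseteq\ker(\varepsilon_1\varepsilon_2)$, killed by $g_5$. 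Your module-theoretic route avoids the commutator computation and makes transparent exactly which three overgroups of $W^+\rtimes\mathfrak{S}_n$-type can occur, at the cost of setting up the wreath-product decomposition and the identification of $\varepsilon_1$ with sign-change parity (which you verify); the paper's route is more hands-on with explicit elements. Minor remark: your appeal to the uniqueness of the codimension-one submodule is unnecessary, since $K\supseteq W^+$ with $[E:W^+]=2$ already forces $K\in\{W^+,E\}$.
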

\begin{proof}
A lemma of Brauer, see \cite{MR0332694}*{p.98}, says that $\mathfrak{S}_n$ has no proper transitive subgroups that contain a transposition and a cycle of prime order greater than $n/2$.  The properties of $g_1$, $g_2$ and $g_3$ thus ensure that $\varphi(G)=\mathfrak{S}_n$.   

Let $H$ be the kernel of $\varphi\colon W_{2n}\to \mathfrak{S}_n$; these are the permutations of $X$ that fix all the pairs $p_i=\{e_i,-e_i\}$.    Let $H^+$ be the kernel of $\varepsilon_1|_H\colon H \to \{\pm 1\}$. \\

First suppose that $g_4$ is the product of two disjoint transpositions.  We have $g_4 \in H$ since $\varphi(g_4)=1$.   Without loss of generality, we may assume that $g_4$ interchanges $e_1$ and $-e_1$,  interchanges $e_2$ and $-e_2$, and fixes all the other $\pm e_j$.   

Take any $1\leq i \leq n$.   Since $\varphi(G)=\mathfrak{S}_n$, there is an element $\sigma \in G$ satisfying $\varphi(\sigma)=(1i)$.    Then $\sigma g_4 \sigma^{-1} \in G$ interchanges $e_2$ and $-e_2$, $e_i$ and $-e_i$, and fixes all the other $\pm e_j$.   Therefore, $h_i:=\sigma g_4 \sigma^{-1}g_4^{-1} \in G$ interchanges $e_1$ and $-e_1$, $e_i$ and $-e_i$, and fixes all the other $\pm e_j$.   Observe that $h_i$ is in the commutator subgroup $[G,G]$ of $G$.

The elements of $H$ that are the product of two disjoint transpositions are precisely the elements $h_i$ with $1<i\leq n$ or $h_i h_j$ with $1< i<j\leq n$.   We thus have $H^+\subseteq G$ since the group $H^+$ is generated by the elements in $H$ that are the product of two disjoint transpositions.   Moreover,  $H^+\subseteq [G,G]$.

Since $\varphi(G)=\mathfrak{S}_n$,  the group $\varphi([G,G])$ equals the commutator subgroup of $\mathfrak{S}_n$; this is the alternating group $\mathfrak{A}_n$ since $n\geq 2$.   Therefore, the cardinality of the group $[G,G]$ is divisible by $|H^+|\cdot |\mathfrak{A}_n| = 2^{n-1}\cdot n!/2 = 2^{n-2} n!$.  We have $|W_{2n}|=2^n n!$, so $[W_{2n}:[G,G]]\leq 4$.  Since $[G,G]$ is contained in the commutator subgroup $[W_{2n},W_{2n}]$, we have 
\[
m:=[W_{2n}: [W_{2n},W_{2n}]]\leq [W_{2n}:[G,G]] \leq 4.
\]    
However, we have $m\geq 4$ since the quotient of $W_{2n}$ by $\ker(\varepsilon_1) \cap \ker(\varepsilon_2)$ is isomorphic to $\{\pm 1\}\times \{\pm 1\}$.  Therefore, $m=4$ and hence
\[
[G,G] = [W_{2n},W_{2n}] = \ker(\varepsilon_1) \cap \ker(\varepsilon_2).
\]
We have $\varepsilon_2(G)=\{\pm 1\}$ since $\varphi(G)=\mathfrak{S}_n$.   Therefore, $G$ must be one of the groups $\ker(\varepsilon_1\varepsilon_2)$, $\ker(\varepsilon_1)=W_{2n}^+$ or $W_{2n}$.  The existence of $g_5$ rules out the case $G=\ker(\varepsilon_1\varepsilon_2)$.  Therefore, $G$ equals $W_{2n}^+$ or $W_{2n}$.\\

Now suppose that $g_4\in G$ is a transposition.   We have $g_4\in H$ since $\varphi(g_4)=1$.   Using that $\varphi(G)=\mathfrak{S}_n$, an argument similar to the one above shows that $G$ contains every transposition in $H$.   We have $H\subseteq G$ since $H$ is generated by transpositions.   Therefore, $G$ is a group of order $|H|\cdot |\mathfrak{S}_n| = 2^n n!$.   Since $G$ has the same cardinality as $W_{2n}$, we conclude that $G=W_{2n}$.
\end{proof}

\begin{remark} 
Note that in part (ii) of Lemma~4.4 in \cite{MR2539184}, which is an analogue of our Lemma~\ref{L:group theory for W2n}, one needs to add another condition to rule out the case where the subgroup of $W_{2n}$ is $\ker(\epsilon_1 \epsilon_2)$.
\end{remark}

\subsection{A criterion for a maximal Galois group} \label{SS:criterion for maximal Galois}

Fix an integer $n\geq 1$ and let $\FF$ be a finite field of odd characteristic.  Let $\calP_n(\FF)$ be the set of monic polynomials $h\in \FF[T]$ of degree $n$ which are separable and satisfy $h(\pm2)\neq 0$.  

\noindent If $n\geq 2$, define the following sets:
\begin{itemize}
\item  
Let $H_{n,1}(\FF)$ be the set of irreducible $h\in \calP_n(\FF)$.
\item  
Let $H_{n,2}(\FF)$ be the set of $h\in \calP_n(\FF)$ that have an irreducible factor whose degree is a prime greater than $n/2$.
\item  
Let $H_{n,3}(\FF)$ be the set of $h\in \calP_n(\FF)$ that factor as a product of an irreducible polynomial of degree 2 and irreducible polynomials of odd degree.
\item  
Let $H_{n,4}(\FF)$ be the set of $h\in \calP_n(\FF)$ that have no irreducible factors of even degree, and for which the polynomial $T^n h(T+1/T)$ is the product of one or two quadratic irreducible polynomials and irreducible polynomials of odd degree.
\item  
Let $H_{n,5}(\FF)$ be the set of $h\in \calP_n(\FF)$ such that the polynomial $h(T)\cdot T^n h(T+1/T)$ has an odd number of irreducible factors of even degree (counted with multiplicity).
\item  
Let $H_{n,6}(\FF)$ be the set of $h\in \calP_n(\FF)$ such that the polynomial $T^n h(T+1/T)$ is the product of a quadratic irreducible polynomial and irreducible polynomials of odd degree.
\end{itemize}
If $n=1$, define $H_{n,i}(\FF)=\calP_1(\FF)$ for all $1\leq i \leq 5$ and let $H_{n,6}(\FF)$ be the set of $h\in \calP_1(\FF)$ such that the quadratic polynomial $T h(T+1/T)$ is irreducible.  

For $1\leq i\leq 6$, we define $F_{2n,i}(\FF)$ to be the set of polynomials $T^n h(T+1/T)$ with $h\in H_{n,i}(\FF)$; they are monic, reciprocal and have degree $2n$.   By Lemma~\ref{L:basic separability}(\ref{L:basic separability i}), the condition that $h$ is separable and $h(\pm 2)\neq 0$ ensures that each polynomials $f\in F_{2n,i}(\FF)$ is separable and $f(\pm 1)\neq 0$.  \\

The above definitions are justified by the following criterion. 

\begin{proposition} \label{P:maximality criterion}
Fix a monic, reciprocal and separable polynomial $f\in \QQ[T]$ of even degree $2n\geq 2$.  Let $\Delta$ be the discriminant of $f$.    Denote by $\Gal(f)$ the Galois group of a splitting field of $f$ over $\QQ$.    
Assume that for each $1\leq i\leq 5$, there is an odd prime $\ell$ such that the coefficients of $f$ are integral at $\ell$ and $f \bmod{\ell} \in \FF_{\ell}[T]$ lies in $F_{2n,i}(\FF_\ell)$.    
\begin{romanenum}
\item  \label{P:maximality criterion i}
If $\Delta$ is a square in $\QQ$, then $\Gal(f)\cong W_{2n}^+$.
\item  \label{P:maximality criterion ii}
If $\Delta$ is a non-square in $\QQ$, then $\Gal(f) \cong W_{2n}$.
\item  \label{P:maximality criterion iii}
If there is an odd prime $\ell$ such that the coefficients of $f$ are integral at $\ell$ and $f \bmod{\ell} \in \FF_{\ell}[T]$ lies in $F_{2n,6}(\FF_\ell)$, then $\Gal(f) \cong W_{2n}$.
\end{romanenum}
\end{proposition}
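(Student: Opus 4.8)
The plan is to realize $G:=\Gal(f)$ as a subgroup of $W_{2n}$, use Lemma~\ref{L:group theory for W2n} to squeeze $G$ between $W_{2n}^+$ and $W_{2n}$, and let the discriminant decide which. First note that $f(\pm1)\neq 0$: for a reciprocal separable polynomial of even degree, $1$ (resp.\ $-1$) cannot be a root, as it would then have to be a double root. Hence, by \S\ref{SS:W def}, the $2n$ roots of $f$ are distinct and lie in $\Qbar-\{\pm1\}$, grouped into $n$ inverse pairs, and we get an injection $\psi\colon G\hookrightarrow W_{2n}$ which we use to identify $G$ with its image. Under $\psi$ the homomorphism $\varepsilon_1$ pulls back to the signature of the action of $G$ on the $2n$ roots of $f$, so $G\subseteq W_{2n}^+=\ker\varepsilon_1$ if and only if $\Delta\in(\QQ^\times)^2$; likewise $\varphi$ is the action of $G$ on the $n$ inverse pairs, equivalently on the $n$ roots of the polynomial $h$ with $f(T)=T^nh(T+1/T)$, and $\varepsilon_2$ is the signature of $\varphi$. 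When $n=1$ all three assertions are immediate, since $W_2\cong\ZZ/2$, $W_2^+=1$, $G=1$ exactly when $\Delta$ is a square, and $f\bmod\ell$ being an irreducible quadratic forces $\Delta$ to be a non-square; so assume $n\geq 2$ from now on.

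Next I supply the elements $g_1,\dots,g_5$ demanded by Lemma~\ref{L:group theory for W2n}. For the odd prime $\ell$ furnished by hypothesis $i$, $f$ is integral at $\ell$ and $f\bmod\ell\in F_{2n,i}(\FF_\ell)$ is separable, so $\ell\nmid\disc(f)$ and the classical reduction theorem of Dedekind produces an element $\sigma_\ell\in G$ whose cycle type on the $2n$ roots of $f$ is the list of degrees of the irreducible factors of $f\bmod\ell$; writing $f\bmod\ell=T^nh(T+1/T)$ with $h\in H_{n,i}(\FF_\ell)$, the cycle type of $\varphi(\sigma_\ell)\in\mathfrak{S}_n$ is the list of degrees of the irreducible factors of $h\bmod\ell$. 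Now: for $i=1$, $h\bmod\ell$ is irreducible, so $\varphi(\sigma_\ell)$ is an $n$-cycle and $g_1:=\sigma_\ell$ works. For $i=2$, $h\bmod\ell$ has an irreducible factor of prime degree $p>n/2$, and since the complementary factors have total degree $n-p<p$ no other cycle of $\varphi(\sigma_\ell)$ has length divisible by $p$; taking $m$ to be the least common multiple of those other lengths, $\varphi(\sigma_\ell^{m})$ is a single $p$-cycle, so $g_2:=\sigma_\ell^{m}$ works. For $i=3$, $h\bmod\ell$ is a product of one irreducible quadratic and irreducibles of odd degree; taking $m'$ to be the (odd) least common multiple of those odd degrees, $\varphi(\sigma_\ell^{m'})$ is a transposition, so $g_3:=\sigma_\ell^{m'}$ works. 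For $i=4$, $h\bmod\ell$ has only odd-degree irreducible factors while $f\bmod\ell$ is a product of one or two irreducible quadratics and irreducibles of odd degree; by Lemma~\ref{L:basic separability}(\ref{L:basic separability ii}) each odd-degree irreducible factor of $f\bmod\ell$ has degree $1$ or the degree of some irreducible factor of $h\bmod\ell$, so taking $M$ to be the (odd) least common multiple of the degrees of the irreducible factors of $h\bmod\ell$ we get $\varphi(\sigma_\ell^{M})=1$ while $\sigma_\ell^{M}$ acts on the $2n$ roots as the one or two disjoint transpositions coming from the quadratic factors, so $g_4:=\sigma_\ell^{M}$ works. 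For $i=5$, $\varepsilon_1(\sigma_\ell)\varepsilon_2(\sigma_\ell)=(-1)^{a+b}$ where $a$ and $b$ are the numbers of even-degree irreducible factors of $f\bmod\ell$ and of $h\bmod\ell$; since $a+b$ is the number of even-degree irreducible factors of $h(T)\cdot T^nh(T+1/T)$ counted with multiplicity, which is odd by the definition of $H_{n,5}(\FF_\ell)$, we get $\varepsilon_1(\sigma_\ell)\varepsilon_2(\sigma_\ell)=-1$, so $g_5:=\sigma_\ell$ works.

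Lemma~\ref{L:group theory for W2n} now gives $G=W_{2n}^+$ or $G=W_{2n}$. If $\Delta$ is a square then $G\subseteq W_{2n}^+$, forcing $G=W_{2n}^+$, which is (\ref{P:maximality criterion i}); if $\Delta$ is not a square then $G\not\subseteq W_{2n}^+$, forcing $G=W_{2n}$, which is (\ref{P:maximality criterion ii}). For (\ref{P:maximality criterion iii}), the extra hypothesis provides an odd prime $\ell$ with $f\bmod\ell\in F_{2n,6}(\FF_\ell)$, so $f\bmod\ell$ has exactly one even-degree irreducible factor, namely the quadratic one; hence $\varepsilon_1(\sigma_\ell)=-1$, so $G\not\subseteq W_{2n}^+$ and $G=W_{2n}$.

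I expect the main obstacle to be the bookkeeping in the middle step: converting the factorization type of $f\bmod\ell$ into that of $h\bmod\ell$ by way of Lemma~\ref{L:basic separability}, and checking that the prescribed powers of $\sigma_\ell$ realize exactly the cycle shapes required by Lemma~\ref{L:group theory for W2n} — most delicately the claim in case $i=4$ that $\sigma_\ell^{M}$ collapses precisely to the quadratic-induced transpositions, and the parity count in case $i=5$.
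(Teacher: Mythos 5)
Your proof is correct and follows essentially the same route as the paper: embed $\Gal(f)$ into $W_{2n}$ as in \S\ref{SS:W def}, use the Frobenius/Dedekind factorization data at the primes supplied by the hypotheses to produce the five elements required by Lemma~\ref{L:group theory for W2n}, and then let the discriminant (resp.\ the $F_{2n,6}$ condition forcing $\varepsilon_1=-1$) decide between $W_{2n}^+$ and $W_{2n}$. Your treatment is somewhat more explicit than the paper's (the precise powers of $\sigma_\ell$ taken in cases $i=2,3,4$ and the translation between the factorizations of $f$ and $h$ via Lemma~\ref{L:basic separability}), but the argument is the same.
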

\begin{proof}
If $n=1$, then (\ref{P:maximality criterion i}) and (\ref{P:maximality criterion ii}) are immediate since $f$ is a separable quadratic polynomial and the groups $W_{2}^+$ and $W_2$ have cardinality $1$ and $2$, respectively.  So assume that $n\geq 2$.  As in \S\ref{SS:W def}, we have an injective homomorphism 
\[
\psi\colon \Gal(f) \hookrightarrow W_{2n}.
\]
Take any prime $\ell$ for which the coefficients of $f$ are integral at $\ell$ and $f$ modulo $\ell$ is separable with the same degree as $f$.   Then $\psi$ is unramified at $\ell$ and the cycle type of $\psi(\Frob_\ell)$ in $\mathfrak{S}_X$ is given by the degrees of the irreducible factors of $f$ modulo $\ell$.    The cycle type of $\varphi(\psi(\Frob_\ell))$ in $\mathfrak{S}_n$ is given by the degrees of the irreducible factors of $h$ modulo $\ell$.

\begin{itemize}
\item 
Since $h \bmod{\ell_1}$ is irreducible in $\FF_{\ell_1}[T]$, we find that $\varphi(\psi(\Frob_{\ell_1}))$ is a $n$-cycle in $\mathfrak{S}_n$.  
\item
Since $h \bmod{\ell_2} \in \FF_{\ell_2}[T]$ has an irreducible factor of prime degree $p>n/2$, we find that some power of $\varphi(\psi(\Frob_{\ell_2}))$ is a $p$-cycle in $\mathfrak{S}_n$.  
\item
Since $h \bmod{\ell_3} \in \FF_{\ell_3}[T]$ is the product of an irreducible quadratic polynomial and irreducibles of odd degree, we find that some power of $\varphi(\psi(\Frob_{\ell_3}))$ is a transposition in $\mathfrak{S}_n$.  
\item
Since $h \bmod{\ell_4}$ has no irreducible factors of even degree and $f\bmod{\ell_4}$ is the product of one or two quadratic irreducible polynomials and irreducible polynomials of odd degree, we find that there is a power $g$ of $\psi(\Frob_{\ell_4})$ such that $\varphi(g)=1$ and $g$ is a product of one or two disjoint transpositions in $\mathfrak{S}_X$.  \item
Since $hf \bmod{\ell_5}$ has an odd number of irreducible factors of even degree, we find that \[
\epsilon_1(\psi(\Frob_{\ell_5}))\epsilon_2(\psi(\Frob_{\ell_5}))= -1.
\]  
\end{itemize}
By Lemma~\ref{L:group theory for W2n}, the group $\psi(\Gal(f))$ is either $W_{2n}^+$ or $W_{2n}$.  The image of $\psi$ is a subgroup of $W_{2n}^+$ if and only if the discriminant $\Delta$ of $f$ is a square in $F$.  So $\psi(\Gal(f))=W_{2n}^+$ if $\Delta$ is a square in $F$ and $\psi(\Gal(f))=W_{2n}$ if $\Delta$ is not a square in $F$.  This proves parts (\ref{P:maximality criterion i}) and (\ref{P:maximality criterion ii}).

Finally, suppose there is a prime $\ell$ as in the statement of part (\ref{P:maximality criterion iii}).  Then $\psi$ is unramified at $\lambda$ and the permutation $\psi(\Frob_\lambda)$ in $\mathfrak{S}_X$ is the product of disjoint cycles where one is a transposition and the rest have odd length.   Therefore, $\varepsilon_1(\psi(\Frob_\lambda))=-1$ and hence $\psi(\Gal(f))\neq W_{2n}^+$.  So, $\psi(\Gal(f))=W_{2n}$.
\end{proof}

For cosets $\alpha,\beta\in \FF^\times/(\FF^\times)^2$, we define $F_{2n,i}^{\alpha,\beta}(\FF)$ to be the set of $f\in F_{2n,i}(\FF)$ such that $f(\pm 1)\neq 0$, $f(1) \in \alpha$, $f(-1)\in \beta$, and $f$ has at most eight irreducible factors.   
 The follows lower bounds for the cardinality of $F_{2n,i}^{\alpha,\beta}(\FF)$ will be important later for counting certain subsets of orthogonal groups.
 
\begin{proposition} \label{P:H bounds}
Fix $\alpha,\beta\in \FF^\times/(\FF^\times)^2$ and an integer $1\leq i \leq 6$.  Assume that $\alpha\beta\neq (-1)^n(\FF^\times)^2$ if $i=6$.  Then
\begin{equation} \label{E:Hni inequality}
|F_{2n,i}^{\alpha,\beta}(\FF)|\geq \frac{c}{n^2} q^n\big(1 + O(1/q)\big),
\end{equation}
where $q$ is the cardinality of $\FF$, and the constant $c>0$ and the implicit constant are absolute. 
\end{proposition}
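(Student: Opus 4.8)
The plan is to translate the statement into a counting problem for monic polynomials $h\in\FF[T]$ of prescribed factorization type with prescribed square classes of $h(2)$ and $h(-2)$, and then, case by case, to exhibit an explicit shape for $h$ with few irreducible factors and count it with elementary character sums together with the prime polynomial theorem. First the reductions: writing $f(T)=T^nh(T+1/T)$ we have $f(1)=h(2)$ and $f(-1)=(-1)^nh(-2)$, so imposing $f(1)\in\alpha$ and $f(-1)\in\beta$ is the same as imposing $h(2)\in\alpha$ and $h(-2)\in(-1)^n\beta$; in particular $h(\pm2)\neq0$ is automatic. If $h=h_1\cdots h_r$ is a product of distinct monic irreducibles then $f=\prod_j f_j$ with $f_j=T^{\deg h_j}h_j(T+1/T)$, and by Lemma~\ref{L:basic separability}(\ref{L:basic separability ii}) each $f_j$ is irreducible of degree $2\deg h_j$ when $h_j(2)h_j(-2)$ is a non-square and a product of two irreducibles of degree $\deg h_j$ when it is a square. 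Hence $f$ has at most $2r$ irreducible factors, and whether $h\in H_{n,i}(\FF)$ and $f\in F_{2n,i}^{\alpha,\beta}(\FF)$ depends only on the degrees $\deg h_j$ and the square classes of the values $h_j(\pm2)$. So it suffices, for each $i$ and each admissible pair $(\alpha,\beta)$, to produce a set of monic polynomials $h$ with at most $4$ irreducible factors, lying in $H_{n,i}(\FF)$, with $h(2)\in\alpha$ and $h(-2)\in(-1)^n\beta$, of cardinality $\geq\frac{c}{n^2}q^n(1+O(1/q))$. The case $n=1$ is immediate, so assume $n\geq2$.

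\textbf{The counting input.} The key estimate is: for every $m\geq1$ and all $\gamma,\delta\in\FF^\times/(\FF^\times)^2$, the number of monic irreducible $P\in\FF[T]$ of degree $m$ with $P(2)\in\gamma$ and $P(-2)\in\delta$ equals $\tfrac14\pi_q(m)+O(q^{m/2})$, where $\pi_q(m)$ is the number of monic irreducibles of degree $m$ and the implied constant is absolute. One proves this by writing the indicator of ``$P(2)\in\gamma$, $P(-2)\in\delta$'' as $\tfrac14(1+s_1\chi(P(2)))(1+s_2\chi(P(-2)))$ with $\chi$ the quadratic character of $\FF$ and $s_1,s_2\in\{\pm1\}$; the main term is $\tfrac14\pi_q(m)$, and the three cross sums $\sum_P\chi(P(2))$, $\sum_P\chi(P(-2))$, $\sum_P\chi(P(2)P(-2))$ are each $O(q^{m/2})$, since expressing $\chi(P(\pm2))=\chi_m(\pm2-\theta)$ for a root $\theta$ of $P$ (with $\chi_m$ the quadratic character of $\FF_{q^m}$) reduces them, up to an $O(q^{m/2})$ contribution from the $\theta$ lying in a proper subfield, to the identities $\sum_{y\in\FF_{q^m}}\chi_m(y)=0$ and $\sum_{\theta\in\FF_{q^m}}\chi_m(\theta^2-4)=-1$; for $m=1$ one uses instead the exact character-sum evaluation, which gives $\tfrac14 q+O(1)$. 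Consequently, for a product $h=P_1\cdots P_k$ of $k\leq3$ distinct irreducibles of prescribed degrees $m_1,\dots,m_k$ and prescribed square classes for each $P_j(\pm2)$, summing over one factor at a time shows the number of such $h$ is $\geq c'\,q^{m_1+\cdots+m_k}/(m_1\cdots m_k)\cdot(1+O(1/q))$ for an absolute $c'>0$, the distinctness conditions costing only $O(1)$ per outer sum.

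\textbf{The constructions.} For each $i$ and each admissible $(\alpha,\beta)$ I choose degrees and square classes for the factors as follows; the parity of $n$ and the class $(-1)^n\alpha\beta$ (the square class of $h(2)h(-2)$) select the subcase, and small values of $n$ (where prescribed factors of equal small degree, or a linear factor colliding with $T\mp2$, may occur) are treated directly.
\begin{itemize}
\item $i=1$: $h$ irreducible of degree $n$; count $\geq\frac{c}{n}q^n(1+O(1/q))$.
\item $i=2$: by Bertrand's postulate fix a prime $p$ with $n/2<p\leq n$ and take $h=P_1P_2$ with $\deg P_1=p$ and $\deg P_2=n-p$ irreducible (with $P_2=1$ if $p=n$); count $\geq\frac{c}{p(n-p)}q^n(1+O(1/q))\geq\frac{c}{n^2}q^n(1+O(1/q))$. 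This case is responsible for the factor $1/n^2$.
\item $i=3$: $h=Q\cdot g$ with $Q$ irreducible quadratic and $g$ a product of at most two odd-degree irreducibles of total degree $n-2$ (a single one of degree $n-2$ if $n$ is odd, a linear factor times one of degree $n-3$ if $n$ is even); the value classes may be distributed freely.
\item $i=4$ and $i=6$: $h$ is a product of linear factors $(T-a)$ together with one odd-degree irreducible $R$ with $R(2)R(-2)$ a square (and, when the parity of $n$ forces it, one extra linear factor), where among the linear factors exactly one or two have $a^2-4$ a non-square and all others have $a^2-4$ a square. By Lemma~\ref{L:basic separability}(\ref{L:basic separability ii}) the ``non-square'' linear factors produce the irreducible quadratic factors of $f$ while every other factor of $h$ produces odd-degree factors of $f$, so $f$ lies in $F_{2n,4}$ (one or two quadratics) resp.\ $F_{2n,6}$ (exactly one quadratic); the number of such linear factors must be one or two according as $(-1)^n\alpha\beta$ is non-trivial or trivial, which for $i=6$ forces exactly one, possible precisely when $\alpha\beta\neq(-1)^n(\FF^\times)^2$ (and impossible, so $F_{2n,6}^{\alpha,\beta}(\FF)$ is empty, otherwise).
\item $i=5$: choose a shape with two or three irreducible factors in which the number of even-degree factors of $h$ is $0$ or $1$; counting the even-degree irreducible factors of $h\cdot f$ via Lemma~\ref{L:basic separability}(\ref{L:basic separability ii}) gives their number modulo $2$ as $\#\{j:\deg h_j\text{ even}\}$ plus the (fixed, $\alpha\beta$-determined) parity of $\#\{j:h_j(2)h_j(-2)\text{ a non-square}\}$, and one picks the shape so that this sum is odd.
\end{itemize}
In each of the possible subcases the product estimate gives a count $\geq\frac{c}{n^2}q^n(1+O(1/q))$ with absolute constants, which is (\ref{E:Hni inequality}).

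\textbf{The main obstacle.} The analytic input is routine; the real work is the bookkeeping in the last step. For every $i$, every parity of $n$, and every square-class pair one must check, via Lemma~\ref{L:basic separability}(\ref{L:basic separability ii}), that the chosen shape genuinely lies in $H_{n,i}(\FF)$, and that the square-class conditions imposed on the individual irreducible factors are simultaneously solvable with a count $\gtrsim q^{m_j}/m_j$ for each factor $P_j$ (so that the product estimate applies); one must also dispose of the small-$n$ degeneracies and verify that all error terms stay uniform in $q$ (using $q\geq3$). None of these steps is deep, but together they account for the bulk of the proof.
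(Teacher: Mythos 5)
Your proposal is correct and follows essentially the same route as the paper: you reduce to counting $h\in H_{n,i}(\FF)$ with at most four irreducible factors and prescribed square classes of $h(\pm 2)$, establish that the number of monic irreducibles of degree $m$ with prescribed classes at $\pm 2$ is $\tfrac{1}{4m}q^m\big(1+O(q^{-m/2})\big)$ (your quadratic character-sum argument is equivalent to the paper's norm-parametrization and conic point count in Lemma~\ref{L:irred approx}), and then produce explicit factorization shapes case by case, with Bertrand's postulate for $i=2$ supplying the $1/n^2$ and the linear-factor/parity analysis for $i=4,5,6$ matching the paper's list of inclusions via Lemma~\ref{L:basic separability}(\ref{L:basic separability ii}). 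The only real difference is presentational: you state a general parity criterion and sketch the shapes, where the paper writes out each inclusion explicitly.
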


Before proving the proposition, we need a lemma.  For $m\geq 1$ and cosets $\alpha,\beta\in \FF^\times/(\FF^\times)^2$, let $\calI^{\alpha,\beta}_m$ be the set of irreducible $h\in \calP_m(\FF)$ such that $h(2)\in \alpha$ and $h(-2)\in \beta$.  Set $\calI^{\alpha,\beta}_0=\{1\}$.   

\begin{lemma}  \label{L:irred approx}
For $m\geq 1$, we have $|\calI_m^{\alpha,\beta}|=\frac{1}{4m}\big(q^m + O(q^{m/2}) \big)$,  where the implicit constant is absolute. 
\end{lemma}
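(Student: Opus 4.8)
The plan is to parametrize the irreducible polynomials by their roots and reduce the count to an elementary character sum over $\FF_{q^m}$. Write $\psi$ and $\psi_m$ for the quadratic characters of $\FF_q^\times$ and $\FF_{q^m}^\times$ (extended by $0$ at $0$). The first ingredient is the identity $\psi\circ N_{\FF_{q^m}/\FF_q}=\psi_m$, which is immediate from $N_{\FF_{q^m}/\FF_q}(\gamma)=\gamma^{(q^m-1)/(q-1)}$. Now take an irreducible $h\in\calP_m(\FF)$ with a root $\theta\in\FF_{q^m}$, so that $\FF_q(\theta)=\FF_{q^m}$ and $h(T)=\prod_{i=0}^{m-1}(T-\theta^{q^i})$. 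Since $\pm 2$ lie in $\FF_q$ and are fixed by the $q$-power Frobenius, $h(\pm 2)=\prod_{i=0}^{m-1}(\pm2-\theta)^{q^i}=N_{\FF_{q^m}/\FF_q}(\pm2-\theta)$, and therefore $\psi(h(2))=\psi_m(2-\theta)$ and $\psi(h(-2))=\psi_m(-2-\theta)$. Consequently the conditions $h(2)\in\alpha$ and $h(-2)\in\beta$ (with $\alpha,\beta$ nonzero square classes, which in particular forces $h(\pm2)\ne0$, i.e.\ $\theta\ne\pm2$) are equivalent to prescribing $\psi_m(2-\theta)=s_1$ and $\psi_m(\theta+2)=s_2$ for signs $s_1,s_2\in\{\pm1\}$ depending only on $\alpha$, $\beta$, $q$ and $m$ (the dependence on $q,m$ entering only through $\psi_m(-1)$). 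These two conditions are invariant under $\theta\mapsto\theta^q$, so each irreducible $h$ accounts for exactly its $m$ distinct roots and
\[
m\,|\calI_m^{\alpha,\beta}|=\#\bigl\{\theta\in\FF_{q^m}:\ \FF_q(\theta)=\FF_{q^m},\ \psi_m(2-\theta)=s_1,\ \psi_m(2+\theta)=s_2\bigr\}.
\]

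Next I would drop the generation condition and estimate the resulting sum. The number of $\theta\in\FF_{q^m}\setminus\{\pm2\}$ with $\psi_m(2-\theta)=s_1$ and $\psi_m(2+\theta)=s_2$ is
\[
\frac14\sum_{\theta\ne\pm2}\bigl(1+s_1\psi_m(2-\theta)\bigr)\bigl(1+s_2\psi_m(2+\theta)\bigr)
=\frac14\bigl(q^m-2+s_1A+s_2B+s_1s_2C\bigr),
\]
where $A=\sum_{\theta\ne\pm2}\psi_m(2-\theta)$, $B=\sum_{\theta\ne\pm2}\psi_m(2+\theta)$, and $C=\sum_{\theta\ne\pm2}\psi_m\bigl((2-\theta)(2+\theta)\bigr)$. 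Since $\sum_{u\in\FF_{q^m}}\psi_m(u)=0$ one gets $A=B=-1$, while $C=\sum_{\theta\in\FF_{q^m}}\psi_m(4-\theta^2)$ has absolute value $1$ by the standard evaluation $\sum_{x\in\FF_{q^m}}\psi_m(x)\psi_m(x+a)=-1$ for $a\ne0$. Hence this count equals $q^m/4+O(1)$ with an absolute implied constant. Finally, the $\theta\in\FF_{q^m}$ that do not generate $\FF_{q^m}$ over $\FF_q$ number at most $\sum_{d\mid m,\ d<m}q^d\le\sum_{1\le j\le m/2}q^j<2q^{m/2}$ (and there are none when $m=1$), so removing them alters the count by $O(q^{m/2})$. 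Combining and dividing by $m$ gives $|\calI_m^{\alpha,\beta}|=\tfrac1{4m}\bigl(q^m+O(q^{m/2})\bigr)$.

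I do not expect a genuine obstacle here: every step is elementary and all constants are numerical, so the implied constant is absolute as claimed. The points that need a little care are the sign bookkeeping in passing from square classes of $h(\pm2)$ in $\FF_q$ to Legendre symbols of $2\mp\theta$ in $\FF_{q^m}$ (notably the factor $\psi_m(-1)$, which one must track but which is harmless since $s_1,s_2$ are allowed to depend on $q$ and $m$), and checking that the subfield bound is uniform in $m$ and $q$; both are routine. One could alternatively organize the same computation through the quadratic twists of the zeta function of $\FF_q[T]$, but the root-counting argument above is the most self-contained.
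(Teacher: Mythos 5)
Your proof is correct and follows essentially the same route as the paper's: parametrize the irreducible polynomials by their roots via the $m$-to-$1$ correspondence, transfer the square-class conditions on $h(\pm 2)$ through the norm (the paper uses that $N_{\FF_{q^m}/\FF_q}$ induces an isomorphism on square classes, you use $\psi\circ N_{\FF_{q^m}/\FF_q}=\psi_m$, which is the same fact), and discard non-generators with the identical $O(q^{m/2})$ subfield bound. The only difference is the main-term count: the paper counts $\FF_{q^m}$-points on the smooth genus-$0$ conic $ax^2-by^2=4$, whereas you evaluate the equivalent quadratic character sum directly; both yield $q^m/4+O(1)$ with an absolute constant.
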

\begin{proof}
Set $\FF_q=\FF$. Choose elements $a\in \alpha$ and $b\in \beta$.  The map
\begin{align*}
\{ \zeta \in \FF_{q^m}: \FF_q(\zeta)=\FF_{q^m}  \} &\to \{h \in \FF_q[T] : h \text{ monic and irreducible of degree } m\}
\end{align*}
defined by $ \zeta \mapsto N_{\FF_{q^m}/\FF_q}(T-\zeta)$ is surjective and $m$-to-$1$.   

Fix $\zeta\in \FF_{q^m}$ such that $\FF_q(\zeta)=\FF_{q^m}$, and set $h(T)=N_{\FF_{q^m}/\FF_q}(T-\zeta)$. We have $h(\pm 2)\neq 0$ if and only if $\zeta\neq \pm 2$.  Since $N_{\FF_{q^m}/\FF}$ induces an isomorphism $\FF_{q^m}^\times/(\FF_{q^m}^\times)^2 \to \FF_{q}^\times/(\FF_{q}^\times)^2$, wcce have $h(2) \in \alpha$ and $h(-2)\in \beta$ if and only if $2-\zeta \in a (\FF_{q^m}^\times)^2$ and $-2-\zeta \in b (\FF_{q^m}^\times)^2$.  Therefore,
\begin{align*}
m |\calI_m^{\alpha,\beta}(\FF)|&= |\{ \zeta \in \FF_{q^m}-\{\pm 2\}:  \FF_{q}(\zeta)=\FF_{q^m}, \, 2-\zeta = a x^2 \text{ and } -2-\zeta=by^2 \text{ for some } x,y\in \FF_{q^m} \}|\\
&= \tfrac{1}{4}  |\{ (x,y) \in \FF_{q^m}^2:  ax^2 -by^2= 4 \}| + O(|\{\zeta\in \FF_{q^m}: \FF_q(\zeta)\neq \FF_{q^m}\}| + 1).
\end{align*}
The projective closure of the plane curve $ax^2 -by^2= 4$ is smooth of genus $0$.  So $m |\calI_m^{\alpha,\beta}(\FF)|$ equals $q^{m}/4 + O(|\{\zeta\in \FF_{q^m}: \FF_q(\zeta)\neq \FF_{q^m}\}| + 1)$.  Finally, note that
\[
|\{\zeta\in \FF_{q^m}: \FF_q(\zeta)\neq \FF_{q^m}\}|\leq \sum_{d|m, d<m} |\FF_{q^d}|\leq \sum_{d\leq m/2} q^d =(q^{\lfloor m/2 \rfloor+1}-1)/(q-1) = O( q^{m/2}).  \qedhere
\]
\end{proof}

\begin{proof}[Proof of Proposition~\ref{P:H bounds}]  
For cosets $\alpha,\beta\in \FF^\times/(\FF^\times)^2$, we define $H_{n,i}^{\alpha,\beta}(\FF)$ to be the set of $h\in H_{n,i}(\FF)$ such that $h(\pm 2)\neq 0$, $h(2)\in \alpha$, $h(-2)\in \beta$, and $h$ has at most four irreducible factors.   By Lemma~\ref{L:basic separability}(\ref{L:basic separability ii}), the polynomial $T^{n} h(T+1/T) \in \FF[T]$ has at most eight irreducible factors for all $h\in H_{n,i}(\FF)$.  We thus have an injective map
\[
H_{n,i}^{\alpha,\beta}(\FF) \hookrightarrow  F_{2n,i}^{\alpha,(-1)^n \beta}(\FF),\quad h \mapsto T^{n} h(T+1/T).
\]
It thus suffices to show that 
\[
|H_{n,i}^{\alpha,\beta}(\FF)|\geq \frac{c}{n^2} \cdot (q^n + O(q^{n-1})),
\] 
where $c>0$ and the implicit constant are absolute, and $\alpha\beta \neq (\FF^\times)^2$ if $i=6$.
\\  
  
 The following inclusions involving $H_{n,i}^{\alpha,\beta}(\FF)$ make use of Lemma~\ref{L:basic separability}(\ref{L:basic separability ii}) when $i \in\{4,5,6\}$.    Let $\gamma$ be the non-identity coset of $\FF^\times/(\FF^\times)^2$.      When $n=1$, we have  $H_{n,i}^{\alpha,\beta}(\FF)=\calI_n^{\alpha,\beta}$ for $1\leq i \leq 5$.  We also have $H_{1,6}^{\alpha,\beta}(\FF) = \calI_1^{\alpha,\beta}$ when $\alpha\beta  =\gamma$.  
 
 \noindent
 Now suppose that $n\geq 2$.
\begin{itemize}
\item 
 We have $\calI_n^{\alpha,\beta}=H_{n,1}^{\alpha,\beta}(\FF)$.   
\item 
 By Bertrand's postulate, there exists a prime $n/2<p\leq n$ and hence  
\[
\{ h_1 h_2 : (h_1,h_2) \in \calI_p^{\alpha,\beta} \times \calI_{n-p}^{1,1} \text{ and } h_1\neq h_2 \} \subseteq H_{n,2}^{\alpha,\beta}(\FF).
\]
\item
If $n$ is odd, then  $\{h_1h_2: (h_1,h_2) \in \calI_{2}^{\alpha,\beta} \times \calI_{n-2}^{1,1}\}  \subseteq H_{n,3}^{\alpha,\beta}(\FF)$.

\noindent 
If $n\geq 4$ is even, then  $\{ h_1 h_2 h_3: (h_1,h_2,h_3) \in \calI_{2}^{\alpha,\beta}\times \calI_{1}^{1,1} \times \calI_{n-3}^{1,1} \text{ and }  h_2\neq h_3\} \subseteq H_{n,3}^{\alpha,\beta}(\FF)$.

\noindent 
If $n=2$, then  $\calI_{2}^{\alpha,\beta} \subseteq H_{n,3}^{\alpha,\beta}(\FF)$.
\item
If $\alpha\beta=1$ and $n$ is odd, then 
\[
\{h_1 h_2 h_3 : (h_1,h_2,h_3)\in \calI_1^{\alpha,\beta\gamma}\times \calI_1^{1,\gamma} \times \calI_{n-2}^{1,1} \text{ and } h_1,h_2,h_3 \text{ distinct} \} \subseteq H_{n,4}^{\alpha,\beta}(\FF)
\]
and $\{h_1 h_2: (h_1,h_2) \in \calI_2^{\alpha,\beta}\times \calI_{n-2}^{1,1} \} \subseteq H_{n,5}^{\alpha,\beta}(\FF)$.
\item
If $\alpha\beta=1$ and $n=2$, then 
\[
\{h_1 h_2: (h_1,h_2)\in \calI_1^{\alpha,\beta\gamma} \times \calI_1^{1,\gamma} \text{ and } h_1\neq h_2 \} \subseteq H_{n,4}^{\alpha,\beta}(\FF)
\]
and $\calI_2^{\alpha,\beta}\subseteq H_{n,5}^{\alpha,\beta}(\FF)$.  
\item
If $\alpha\beta=1$ and $n\geq 4$ is even, then 
\[
\{h_1 h_2 h_3 h_4 : (h_1,h_2,h_3,h_4)\in \calI_1^{\alpha,\beta\gamma} \times \calI_1^{1,\gamma}\times \calI_1^{1,1}\times \calI_{n-3}^{1,1} :  h_1,h_2,h_3,h_4 \text{ distinct}\} \subseteq H_{n,4}^{\alpha,\beta}(\FF)
\]
and $ \{h_1 h_2 h_3: (h_1,h_2,h_3)\in \calI_2^{\alpha,\beta}\times \calI_1^{1,1}\times \calI_{n-3}^{1,1} : h_2\neq h_3 \} \subseteq H_{n,5}^{\alpha,\beta}(\FF)$.
\item
If $\alpha\beta=\gamma$ and $n$ is odd, then 
\[
\{h_1 h_2 h_3 : (h_1,h_2,h_3)\in \calI_1^{\alpha,\beta}\times \calI_1^{1,1} \times \calI_{n-2}^{1,1},\,\, h_1,h_2,h_3 \text{ distinct} \}
\]
is a subset of $H_{n,4}^{\alpha,\beta}(\FF)$ and $H_{n,5}^{\alpha,\beta}(\FF)$.  
\item 
If $\alpha\beta=\gamma$ and $n$ is even, then 
\[
\{h_1h_2 : (h_1,h_2)\in \calI_1^{\alpha,\beta}\times \calI_{n-1}^{1,1} \text{ and } h_1\neq h_2  \}
\]
is a subset of $H_{n,4}^{\alpha,\beta}(\FF)$ and $H_{n,5}^{\alpha,\beta}(\FF)$.  

\item
If $\alpha\beta=\gamma$ and $n$ is odd, then  
\[
\{h_1h_2h_3 : (h_1,h_2,h_3)\in \calI_1^{\alpha,\beta}\times \calI_1^{1,1}\times \calI_{n-2}^{1,1} \text{ and } h_2\neq h_3  \} \subseteq H_{n,6}^{\alpha,\beta}(\FF).
\]  
\noindent 
If $\alpha\beta=\gamma$ and $n$ is even, then  $\{h_1h_2 : (h_1,h_2)\in \calI_1^{\alpha,\beta}\times \calI_{n-1}^{1,1} \} \subseteq H_{n,6}^{\alpha,\beta}(\FF)$.  
\end{itemize}

The proposition follows immediately from the above inclusions and Lemma~\ref{L:irred approx}.
\end{proof}

\subsection{Proof of Proposition~\ref{P:criterion for sieving}}

First fix a prime $\ell \in \Sigma$.  Let $\kappa$ be any coset of $\Omega(V_\ell)$ in $\Or(V_\ell)$.  There are unique $\varepsilon \in \{\pm 1\}$ and $\delta \in \FF_\ell^\times/(\FF_\ell^\times)^2$ such that $\det(\kappa)=\{\varepsilon\}$ and $\spin(\kappa)=\{\delta\}$.     

Take any $1\leq i \leq 6$.  We now define a subset $C_i(\kappa) \subseteq \kappa$ that is stable under conjugacy by $\Or(V_\ell)$ (the sets of polynomials $F_{2n,i}(\FF_\ell)$ are those from \S\ref{SS:criterion for maximal Galois}):
\begin{itemize}
\item 
If $N$ is odd, let $C_i(\kappa)$ be the set of $A\in \kappa$ such that $\det(I-AT)/(1-\varepsilon T)$ lies in $F_{N-1,i}(\FF_\ell)$.
\item 
If $N$ is even and $\varepsilon=-1$, let $C_i(\kappa)$ be the set of $A\in \kappa$ such that $\det(I-AT)/(1-T^2)$ lies in $F_{N-2,i}(\FF_\ell)$.
\item 
If $N$ is even, $\varepsilon=1$ and $i\neq 6$, let $C_i(\kappa)$ be the set of $A\in \kappa$ such that $\det(I-AT)$ lies in $F_{N,i}(\FF_\ell)$. 
\item
If $N$ is even, $\varepsilon=1$ and $i= 6$, define $C_i(\kappa)=\kappa$. 
\end{itemize}

\begin{lemma} \label{L:main lower}
There is a positive constant $c$  such that 
\[
\frac{|C_i(\kappa)|}{|\Omega(V_\ell)|} \geq \frac{c}{N^2} \cdot (1+O(1/\ell))
\] 
holds for all $1\leq i \leq 6$, where $c$ and the implicit constant are absolute.
\end{lemma}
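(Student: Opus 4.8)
The plan is to reduce the estimate for $|C_i(\kappa)|$ directly to the combinatorial counts $|F_{2n,i}^{\alpha,\beta}(\FF_\ell)|$ from Proposition~\ref{P:H bounds}, using the conjugacy-class formulas of \S\ref{SS:counting elements with sep polynomial}. Fix the prime $\ell$, a coset $\kappa$ with $\det(\kappa)=\{\varepsilon\}$ and $\spin(\kappa)=\{\delta\}$, and an index $1\le i\le 6$. Set $q=\ell$ throughout. There are three cases matching the definition of $C_i(\kappa)$, and in each one we partition $C_i(\kappa)$ according to the characteristic polynomial of $A$.

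\emph{Case $N$ even, $\varepsilon=1$, $i\ne 6$.} Here $A\in C_i(\kappa)$ iff $f_A(T):=\det(I-AT)$ lies in $F_{N,i}(\FF_\ell)$; note $N=2n$ with the notation of \S\ref{SS:counting elements with sep polynomial}. For each $f\in F_{N,i}(\FF_\ell)$, Proposition~\ref{P:conjugacy for N even} says the set of $A\in\Or(V_\ell)$ with $\det(I-AT)=f$ is nonempty (a single conjugacy class) precisely when $\disc(V_\ell)=f(1)f(-1)(\FF_\ell^\times)^2$, and then every such $A$ has $\det(A)=1$ and $\spin(A)=f(-1)(\FF_\ell^\times)^2$. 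So $A$ lands in the coset $\kappa$ iff $f(-1)\in\delta$ and $f(1)f(-1)\in\disc(V_\ell)$, i.e. iff $f(-1)\in\delta$ and $f(1)\in\delta\cdot\disc(V_\ell)$. Writing $\beta:=\delta$ and $\alpha:=\delta\cdot\disc(V_\ell)$, this is exactly the condition $f\in F_{N,i}^{\alpha,\beta}(\FF_\ell)$ except for the ``at most eight irreducible factors'' restriction, which only makes the count smaller — so it is harmless for a lower bound. Summing the mass formula of Proposition~\ref{P:conjugacy for N even}(\ref{P:conjugacy for N even ii}) over these $f$ and using $\prod_j(1-e_j/q^{\deg h_j})^{-1}=1+O(1/q)$ (the product has boundedly many factors once $f$, equivalently $h$, has at most eight irreducible factors), we get
\[
\frac{|C_i(\kappa)|}{|\Or(V_\ell)|}\;=\;\sum_{f}\frac{|C_f|}{|\Or(V_\ell)|}\;\ge\; q^{-n}\,|F_{N,i}^{\alpha,\beta}(\FF_\ell)|\,(1+O(1/q)).
\]
Since $|\Or(V_\ell)|=2\,|\Omega(V_\ell)|\cdot|\FF_\ell^\times/(\FF_\ell^\times)^2|=4\,|\Omega(V_\ell)|$ by Lemma~\ref{L:Omega facts}(i), combining with the bound $|F_{N,i}^{\alpha,\beta}(\FF_\ell)|\ge (c/n^2)q^n(1+O(1/q))$ of Proposition~\ref{P:H bounds} gives the claim. (For $i\ne 6$ there is no constraint $\alpha\beta\ne(-1)^n(\FF_\ell^\times)^2$, so Proposition~\ref{P:H bounds} applies unconditionally.)

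\emph{Case $N$ even, $\varepsilon=-1$.} Now $A\in C_i(\kappa)$ iff $\det(I-AT)=(1-T^2)f(T)$ with $f\in F_{N-2,i}(\FF_\ell)$; put $N-2=2n$ in the notation of Proposition~\ref{P:conjugacy for N even 2}. That proposition shows for each such $f$ and each fixed $\beta\in\FF_\ell^\times/(\FF_\ell^\times)^2$ the set $C_\beta$ of $A$ with this characteristic polynomial and $\spin(A)=\beta$ is a single conjugacy class of mass $\tfrac14 q^{-n}\prod_j(1-e_j/q^{\deg h_j})^{-1}$; all such $A$ have $\det(A)=-1$, so they lie in a coset with $\det=-1$ automatically, and we take $\beta=\delta$. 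Hence $|C_i(\kappa)|/|\Or(V_\ell)|\ge \tfrac14 q^{-n}|F_{N-2,i}^{\alpha,\beta}(\FF_\ell)|(1+O(1/q))$ for any admissible $\alpha$ (the value $f(1)f(-1)$ is unconstrained here, so we may sum over all square classes of $f(1)$, or just restrict to one and still get the bound). The same division by $|\Or(V_\ell)|=4|\Omega(V_\ell)|$ and Proposition~\ref{P:H bounds} finish this case; again the restriction $i\ne 6$ is not needed because $\varepsilon=-1$ and the $i=6$ exceptional congruence condition does not intervene.

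\emph{Case $N$ odd.} Here $\det(A)=\varepsilon$ forces $A$ into the determinant-$\varepsilon$ part, $\det(I-AT)=(1-\varepsilon T)f(T)$ with $f\in F_{N-1,i}(\FF_\ell)$, and $N-1=2n$ in Proposition~\ref{P:conjugacy for N odd}. That proposition gives a single conjugacy class of mass $\tfrac12 q^{-n}\prod_j(1-e_j/q^{\deg h_j})^{-1}$ for each $f$, with $\spin$ determined by $f$ (equal to $f(-1)(\FF_\ell^\times)^2$ if $\varepsilon=1$, $f(1)\disc(V_\ell)$ if $\varepsilon=-1$); imposing $\spin(A)\in\delta$ selects the square class of $f(-1)$ (resp. $f(1)$), and we sum over $f\in F_{N-1,i}^{\alpha,\beta}(\FF_\ell)$ with the corresponding $\beta$ (resp. $\alpha$) and the other square class free. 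The arithmetic is identical: divide by $|\Or(V_\ell)|=4|\Omega(V_\ell)|$ and invoke Proposition~\ref{P:H bounds}. The $i=6$ case is included here because $F_{2n,6}^{\alpha,\beta}$ only needs $\alpha\beta\ne(-1)^n(\FF_\ell^\times)^2$, and the square class we are free to choose can always be picked to avoid the bad value.

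\emph{The main obstacle.} The genuinely delicate point is bookkeeping the square-class conditions: in each case Proposition~\ref{P:conjugacy for N even}/\ref{P:conjugacy for N even 2}/\ref{P:conjugacy for N odd} pins down \emph{one} of $f(1)(\FF_\ell^\times)^2,\ f(-1)(\FF_\ell^\times)^2$ (via $\disc(V_\ell)$ and $\delta$), while the other is free, and for $i=6$ one must check that the pair $(\alpha,\beta)$ thereby forced is compatible with the hypothesis $\alpha\beta\ne(-1)^n(\FF_\ell^\times)^2$ of Proposition~\ref{P:H bounds} — which it is, since exactly the $i=6$ situation is the one where $C_i(\kappa)$ is defined to be all of $\kappa$ when $N$ is even and $\varepsilon=1$, and otherwise the free square class can be chosen to dodge the exceptional class. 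Everything else (the $1+O(1/\ell)$ factor, absorbing the ``$\le 8$ factors'' truncation into the lower bound, the constant $|\Or(V_\ell)|/|\Omega(V_\ell)|=4$) is routine. One should also record that the implied constants depend only on $N$ through $1/N^2$ and are otherwise absolute, which follows since the product $\prod_j(1-e_j/q^{\deg h_j})^{-1}$ is taken over at most eight indices.
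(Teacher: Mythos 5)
Your proposal is correct and follows essentially the same route as the paper: split into the cases $N$ odd, $N$ even with $\varepsilon=-1$, and $N$ even with $\varepsilon=1$ (with $C_6(\kappa)=\kappa$ trivial in the last case), use Propositions~\ref{P:conjugacy for N odd}, \ref{P:conjugacy for N even 2} and \ref{P:conjugacy for N even} to get the mass of each conjugacy class attached to $f$, pin down the square classes $(\alpha,\beta)$ via $\delta$ and $\disc(V_\ell)$ while choosing the free class to avoid the exceptional one when $i=6$, and conclude with $[\Or(V_\ell):\Omega(V_\ell)]=4$ and Proposition~\ref{P:H bounds}. The only differences are cosmetic (summing relative to $|\Or(V_\ell)|$ first, and slightly informal bookkeeping of the $\beta$ in Proposition~\ref{P:conjugacy for N even 2} versus the $\beta$ in $F^{\alpha,\beta}_{2n,i}$), so no substantive gap.
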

\begin{proof}

\noindent $\bullet$ \emph{Suppose that $N$ is odd.}\\
\noindent  Fix any $\alpha,\beta\in \FF_\ell^\times/(\FF_\ell^\times)^2$ satisfying $\alpha\beta\neq (-1)^{(N-1)/2}(\FF_\ell^\times)^2$ such that $\delta=\beta$  if $\varepsilon=1$ and $\delta=\alpha \disc(V_\ell)$ if $\varepsilon=-1$.     Take any $f\in F_{N-1,i}^{\alpha,\beta}(\FF_\ell)$.  Proposition~\ref{P:conjugacy for N odd} implies that 
\[
C_f:=\{A\in \Or(V_\ell): \det(I-AT)=(1-\varepsilon)f(T)\}
\] 
is a conjugacy class of $\Or(V_\ell)$ and 
\[
|C_f|/|\Omega(V_\ell)| \geq 2 \ell^{-(N-1)/2}(1+O(1/\ell))
\] 
with an absolute implicit constant.  Note that for the constant to be absolute, we have used that $f$ has at most eight irreducible factors.   

By Proposition~\ref{P:conjugacy for N odd} and our choice of $\alpha$ and $\beta$,  we have $\det(C_f)=\{\varepsilon\}$ and $\spin(C_f)=\{\delta\}$, and thus $C_f\subseteq \kappa$.  Therefore, 
\[
{|C_i(\kappa)|}/{|\Omega(V_\ell)|} \geq |F_{N-1,i}^{\alpha,\beta}(\FF_\ell)|\cdot 2 \ell^{-(N-1)/2}(1+O(1/\ell)) \gg {1}/{N^2}\cdot(1+O(1/\ell))
\]
with absolute constants, where the last inequality uses Proposition~\ref{P:H bounds}.

\noindent $\bullet$  \emph{Suppose that $N$ is even and $\varepsilon=-1$.}\\
\noindent 
Take any $\alpha,\beta\in \FF_\ell^\times/(\FF_\ell^\times)^2$ and any $f\in F_{N-2,i}^{\alpha,\beta}(\FF_\ell)$.  Proposition~\ref{P:conjugacy for N even 2} implies that 
\[
C_f:=\{A\in \Or(V_\ell): \det(I-AT)=(1-T^2)f(T) \text{ and } \spin(A)=\delta\}
\] 
is a conjugacy class of $\Or(V_\ell)$ and $|C_f|/|\Omega(V_\ell)| \geq \ell^{-(N-2)/2}(1+O(1/\ell))$ with an absolute constant.    We have $C_f \subseteq \kappa$, so 
\[
{|C_i(\kappa)|}/{|\Omega(V_\ell)|} \geq |F_{N-2,i}^{\alpha,\beta}(\FF_\ell)|\cdot  \ell^{-(N-2)/2}(1+O(1/\ell)) \gg {1}/{N^2}\cdot (1+O(1/\ell))
\]
with absolute constants, where the last inequality uses Proposition~\ref{P:H bounds}.

\noindent $\bullet$  \emph{Suppose that $N$ is even and $\varepsilon=1$.}\\
\noindent 
If $i=6$, we have $C_i(\kappa)=\kappa$ and the lemma is easy.   

Now suppose that $1\leq i\leq 5$. Take $\alpha,\beta\in \FF_\ell^\times/(\FF_\ell^\times)^2$ such that $\delta=\beta$ and $\disc(V_\ell)=\alpha\beta$.   Take any $f\in F_{N,i}^{\alpha,\beta}(\FF_\ell)$.  Proposition~\ref{P:conjugacy for N even}(\ref{P:conjugacy for N even ii}) implies that 
\[
C_f:=\{A\in \Or(V): \det(I-AT)=f(T)\}
\] 
is a conjugacy class of $\Or(V_\ell)$  and $|C_f|/|\Omega(V_\ell)| \geq 4\ell^{-N/2}(1+O(1/\ell))$ with an absolute constant.   By Proposition~\ref{P:conjugacy for N even}(\ref{P:conjugacy for N even ii}), we have $\det(C_f)=\{1\}$ and $\spin(C_f)=\{f(-1)(\FF_\ell^\times)^2\}=\{\beta\}$.   Therefore, 
\[
{|C_i(\kappa)|}/{|\Omega(V_\ell)|} \geq |F_{N,i}^{\alpha,\beta}(\FF_\ell)|\cdot 4\ell^{-(N-2)/2}(1+O(1/\ell)) \gg {1}/{N^2}\cdot (1+O(1/\ell))
\]
with absolute constants, where the last inequality uses Proposition~\ref{P:H bounds}  (recall that $i\neq 6$).
\end{proof}

For any integer $1\leq i\leq 6$, define 
\[
C_i(V_\ell):=\bigcup_{\kappa} C_i(\kappa),
\] 
where the union is over the four cosets $\kappa$ of $\Omega(V_\ell)$ in $\Or(V_\ell)$.  The set $C_i(V_\ell)$ is stable under conjugation by $\Or(V_\ell)$.  By Lemma~\ref{L:main lower},  we have $|C_i(V_\ell)\cap \kappa|/|\kappa| = |C_i(\kappa)|/|\Omega(V_\ell)| \gg {1}/{N^2}\cdot (1+O(1/\ell))$ for each coset $\kappa$.   There are thus positive absolute constants $c_1$ and $c_2$ such that if $\ell \geq c_1$, then $|C_i(V_\ell)\cap \kappa|/|\kappa| \geq c_2/N^2$ for all cosets $\kappa$ of $\Omega(V_\ell)$ in $\Or(V_\ell)$.  \\

We have now constructed sets $\{C_i(V_\ell)\}_{\ell\in \Sigma}$ for all $1\leq i \leq 6$.  It thus remains to verify that (\ref{P:criterion for sieving c}) holds with these sets.    Take any $u\in U(k)$, where $k$ is a finite field that is an $R$-algebra.  Suppose that for each $1\leq i \leq 6$, there is a prime $\ell_i \in \Sigma$ for which $\bbar\rho_{\ell_i} (\Frob_u) \in C_i(V_{\ell_i})$.   

Let $f_u$ be the polynomial obtained from $P_u$ by the formula (\ref{E:initial f def}).   Set $n=\deg(f_u)/2$.   If $N$ is odd, we have $N=2n+1$.   If $N$ is even, then $N$ is $2n$ or $2n+2$ when $\varepsilon_u$ is $1$ or $-1$, respectively.  For each $1\leq i \leq 6$, with $i\neq 6$ when $N$ is even and $\varepsilon_u=1$, the inclusion $\bbar\rho_{\ell_i} (\Frob_u) \subseteq C_i(V_{\ell_i})$ implies that $f_u$ modulo $\ell_i$ lies in $F_{2n,i}(\FF_\ell)$.   If $N$ is odd or $\varepsilon_u=-1$, Proposition~\ref{P:maximality criterion}(\ref{P:maximality criterion iii}) implies that the Galois group of $f_u$, and hence also $P_u$, is isomorphic to $W_{2n}$.

Finally, suppose that $N$ is even and $\varepsilon_u=1$.   The polynomial $P_u=f_u$ is separable since its reduction modulo $\ell_1$ is separable.  By Proposition~\ref{P:K criterion}(\ref{P:K criterion iii}), the discriminant of $P_u$ is a square in $\QQ$ if and only if $K=\QQ$.  From Proposition~\ref{P:maximality criterion}(\ref{P:maximality criterion i}) and (\ref{P:maximality criterion ii}), we deduce that the Galois group of $P_u$ is isomorphic to $W_{N}^+$ if $K=\QQ$ and $W_N$ if $K\neq \QQ$.  

\section{Proof of Theorem~\ref{T:main 1}} \label{S:proof of main 1}
Fix notations and assumptions as in \S\ref{SS:new setup}.   

Suppose that $R$ has characteristic $0$ and hence $R=\ZZ[S^{-1}]$ for a finite set $S$ of non-zero prime ideals of $\OO_F$.    Take  $S'\supseteq S$ and $\Lambda\subseteq \Sigma$ as in Proposition~\ref{P:big monodromy b}.    For the finite number of $\pp\in S'-S$, we can base extend everything to $\FF_\pp$ and the assumptions of \S\ref{SS:new setup} still hold with the base ring $\FF_\pp$.   So Theorem~\ref{T:main 1} in the finite field case, would imply that $\delta(k)\to 1$ as we vary over all finite extensions $k$ of $\FF_\pp$ for some $\pp\in S'-S$.  So assuming Theorem~\ref{T:main 1} in the finite field case, we can reduce to the case where we base extend everything to $R[S'^{-1}]=\ZZ[S'^{-1}]$.   So without loss of generality, we may assume that Proposition~\ref{P:big monodromy b} holds with $S'= S$ and $\Lambda\subseteq \Sigma$ a set of Dirichlet density $1$.  By replacing $\Lambda$ by an appropriate subset with Dirichlet density $1$, we may further assume that it satisfies Proposition~\ref{P:big monodromy a}.  

If $R$ is a finite field, we take $\Lambda$ as in Proposition~\ref{P:big monodromy a}.  

 Let $c_1 \geq 5$ and $c_2$ be positive absolute constants as in Proposition~\ref{P:criterion for sieving}(\ref{P:criterion for sieving b}).  By replacing $c_2$ with a smaller value, we may assume that $0<c_2/N^2<1$.   By removing a finite number of primes from $\Lambda$, we may also assume that each prime $\ell\in \Lambda$ is greater than $c_1$.\\

Take any $\varepsilon>0$.   We will prove that 
\begin{align} \label{E:delta goal}
1-\delta(k)<\varepsilon+O(|k|^{-1/2})
\end{align}
holds for all finite fields $k$ that are $R$-algebras, where the implicit constant does not depend on $k$.   This will imply that $0\leq \limsup_{k,\,\#k\to \infty} (1-\delta(k)) \leq \varepsilon$ 
where $k$ varies over finite fields that are $R$-algebras with increasing cardinality.  Since $\varepsilon>0$ was arbitrary, we will then have $\lim_{k,\,\#k\to \infty} \delta(k)=1$ which will complete the proof of Theorem~\ref{T:main 1}.

Since $0<c_2/N^2<1$, we can choose an integer $M\geq 1$ satisfying $(1-c_2/N^2)^{M}<\varepsilon/6$.   Since $\Lambda$ is infinite, we can choose a finite set $D\subseteq \Lambda$ of cardinality $M$.   It suffices to prove that (\ref{E:delta goal}) holds when the characteristic of $k$ does not lie in $D$ (we can then repeat the proof with a different set $D\subseteq\Lambda$ of cardinality $M$ that is disjoint from the original one).\\

Take any finite field $k$ that is an $R$-algebra and whose characteristic does not lie in $D$.   If $U(k)$ is empty, then $|k|$ is bounded and hence (\ref{E:delta goal}) holds for an appropriate implicit constant.  We may thus assume that $U(k)$ is non-empty

For each integer $1\leq i \leq 6$, define the set
\[
\calS_i = \{ u \in U(k) : \bbar\rho_{\ell}(\Frob_u) \not\subseteq C_i(V_\ell) \text{ for all }\ell\in D \},
\]
where the sets $C_i(V_\ell)$ are from Proposition~\ref{P:criterion for sieving}.   Proposition~\ref{P:criterion for sieving}(\ref{P:criterion for sieving c}) implies that
\[
\{u \in U(k): P_u(T) \text{ does not satisfy (\ref{E:Galois specific})}\} \subseteq {\bigcup}_{i=1}^6 \calS_i.
\]
Therefore, 
\begin{align} \label{E:delta bound}
1-\delta(k)=\frac{|\{u \in U(k): P_u(T) \text{ does not satisfy (\ref{E:Galois specific})}\}|}{|U(k)|} \leq \sum_{i=1}^6|\calS_i|/|U(k)|.
\end{align}

Now fix any $1\leq i \leq 6$.  Define $\bbar\rho_D$ as in \S\ref{S:big monodromy}.  We have
\[
\calS_i = \{ u \in U(k) : \bbar\rho_D(\Frob_u) \subseteq \calB_i \},
\]
where $\calB_i=\prod_{\ell \in D} (\Or(V_\ell) - C_i(V_\ell) )$.  Define  $G=\bbar\rho_D(\pi_1(U_k))$ and $G^g=\bbar\rho_D(\pi_1(U_{\bbar{k}}))$.    Note that $G^g$ is a normal subgroup of $G$ and $G/G^g$ is cyclic.  Let $hG^g$ be the $G^g$-coset of $G$ that contains $\bbar\rho_D(\Frob_u)$ for all $u\in U(k)$.  

\begin{lemma} \label{L:equidistribution 1}
We have
\[
\frac{|\calS_i|}{|U(k)|} =\frac{|\calB_i\cap hG^g|}{|G^g|} + O(|k|^{-1/2}),
\]
where the implicit constant does not depend on the choice of $k$.
\end{lemma}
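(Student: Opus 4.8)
The plan is to recognize the lemma as a standard instance of the equidistribution of Frobenius conjugacy classes in a coset, and to derive it from the cohomological bounds recorded in Appendix~\ref{S:B}.

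First note that since $D$ is finite and each $\Or(V_\ell)$ is finite, the arithmetic monodromy group $G=\bbar\rho_D(\pi_1(U_k))$ is a finite subgroup of the fixed finite group $\prod_{\ell\in D}\Or(V_\ell)$, the geometric monodromy group $G^g=\bbar\rho_D(\pi_1(U_{\bbar{k}}))$ is a normal subgroup of $G$ with $G/G^g$ cyclic, and every Frobenius class $\bbar\rho_D(\Frob_u)$ with $u\in U(k)$ lies in the single coset $hG^g$. Since $\calB_i=\prod_{\ell\in D}\bigl(\Or(V_\ell)-C_i(V_\ell)\bigr)$ is stable under $\prod_{\ell\in D}\Or(V_\ell)$-conjugation by Proposition~\ref{P:criterion for sieving}(\ref{P:criterion for sieving a}), the set $B:=\calB_i\cap hG^g$ is a conjugation-stable subset of $hG^g$, and $\calS_i=\{u\in U(k):\bbar\rho_D(\Frob_u)\in B\}$.

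The main step is to apply the equidistribution estimate of Appendix~\ref{S:B} --- the Grothendieck--Lefschetz trace formula together with Deligne's Riemann hypothesis, applied to the lisse sheaves on $U_k$ attached to the irreducible representations of $G$. This gives
\[
|\calS_i|=\frac{|B|}{|G^g|}\,|U(k)|+O(|k|^{\,r-1/2}),
\]
where $r\geq 1$ is the dimension of $U_k$; the main term is the contribution of the top compactly-supported cohomology $H^{2r}_c$, and the error is that of the lower-weight pieces. The implied constant depends on $U$, $D$, and the family $\{\bbar\rho_\ell\}_{\ell\in D}$, but crucially not on $k$: the sheaves in question are obtained by base change from a fixed situation, and $G$ and $G^g$ lie in the fixed finite group $\prod_{\ell\in D}\Or(V_\ell)$, so the sums of Betti numbers controlling the error are bounded uniformly in $k$. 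By the Lang--Weil estimate, $|U(k)|=|k|^{\,r}+O(|k|^{\,r-1/2})$ since $U_k$ is smooth and geometrically connected, so $|U(k)|\gg|k|^{\,r}$ once $|k|$ is large; dividing the displayed equation by $|U(k)|$ yields
\[
\frac{|\calS_i|}{|U(k)|}=\frac{|\calB_i\cap hG^g|}{|G^g|}+O(|k|^{-1/2}),
\]
which is the assertion. (The finitely many small $k$ for which $|U(k)|$ is not comparable to $|k|^{\,r}$ --- there are none with $|k|$ large --- are absorbed by enlarging the implied constant.)

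I expect the only point genuinely requiring care is the uniformity of the error term as $|k|\to\infty$; this is why I stressed above that, once $D$ is fixed, the variety $U$, the ambient finite group $\prod_{\ell\in D}\Or(V_\ell)$ containing $G$ and $G^g$, and the \'etale cover determined by $\bbar\rho_D$ are all fixed, so the cohomological input does not degenerate. A minor but essential bookkeeping point is that $G/G^g$ need not be trivial, so one must work throughout inside the coset $hG^g$; this is exactly why the normalizing denominator in the main term is $|G^g|$ rather than $|G|$.
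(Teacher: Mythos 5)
Your overall shape (uniform Chebotarev-type equidistribution in the coset $hG^g$, main term $|{\calB_i\cap hG^g}|/|G^g|$, error $O(|k|^{-1/2})$ after dividing by $|U(k)|$) matches the paper's, but the one step you yourself flag as ``the only point genuinely requiring care'' --- uniformity of the error constant in $k$ --- is exactly where your argument has a genuine gap. The theorem of Appendix~\ref{S:B} that you invoke is proved for a \emph{fixed} base finite field $\FF_q$ and is uniform only in the extension degree $n$; it says nothing about uniformity as the characteristic of $k$ varies, which is the situation here when $R$ has characteristic $0$ and $k$ runs over residue fields of all primes outside $S$. Your justification --- ``the sheaves are obtained by base change from a fixed situation, so the Betti numbers are bounded uniformly in $k$'' --- asserts precisely the nontrivial uniformity statement that needs proof: across varying characteristics the relevant covers can acquire wild ramification and, more importantly, the geometric monodromy group $G^g=\bbar\rho_D(\pi_1(U_{\bbar k}))$ could a priori degenerate at special primes.

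The paper handles this by citing a genuinely uniform equidistribution theorem (Theorem~9.7.13 of \cite{MR1659828}), and the substantive content of its proof of Lemma~\ref{L:equidistribution 1} is the verification of that theorem's hypothesis 9.7.2(4): the claim that $G^g$ is conjugate in $\prod_{\ell\in D}\Or(V_\ell)$ to the fixed group $G_D^g=\bbar\rho_D(\pi_1(U_{\bbar F}))$, independently of $k$. In the finite-field case this is immediate, but in characteristic $0$ it is exactly Proposition~\ref{P:big monodromy b}, whose proof requires Abhyankar's lemma and the Tame Specialization Theorem, and which is only available after the reduction to $S'=S$ carried out at the start of \S\ref{S:proof of main 1}. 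Your proposal never verifies (or even mentions) this fiberwise constancy of the geometric monodromy group, and your appeal to Appendix~\ref{S:B} plus a one-line boundedness claim does not substitute for it. To repair the argument you would either have to quote a uniform result such as KS~9.7.13 and check its monodromy hypothesis via Proposition~\ref{P:big monodromy b}, or give an honest trace-formula argument with explicit control of the fibers' Betti numbers/conductors uniformly in the characteristic --- neither of which is done in the proposal.
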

\begin{proof}
Define the group $G_D^g:=\bbar\rho_D(\pi_1(U_{\bbar{F}}))$, where $F$ is the fraction field of $R$.  

We claim that the groups $G^g$ and $G_D^g$ are conjugate in $\prod_{\ell\in D} \Or(V_\ell)$.   The claim is easy if $R$ is a finite field since then $k$ is a finite extension of $F$ and the groups $G^g$ and $G_D^g$ depend only on the common algebraic closure of these fields.    The case where $R$ has characteristic $0$ follows from Proposition~\ref{P:big monodromy b}; recall that we have reduced to the case where the proposition holds with $S'=S$.

The lemma follows from an equidistribution result with enough control over the error terms, for example Theorem~9.7.13 of \cite{MR1659828}.  The above claim is needed to verify condition 9.7.2 (4) in \cite{MR1659828}.
\end{proof}

By (\ref{E:delta bound}) and Lemma~\ref{L:equidistribution 1}, we deduce that
\begin{align}\label{E:delta bound almost}
1-\delta(k)\leq \sum_{i=1}^6 \frac{|\calB_i\cap hG^g|}{|G^g|} + O(|k|^{-1/2}),
\end{align}
where the implicit constant does not depend on $k$.  

We now bound ${|\calB_i\cap hG^g|}/{|G^g|}$ for $1\leq i \leq 6$.  By Proposition~\ref{P:big monodromy a} and our choice of $\Lambda$, we have $G^g\supseteq {\prod}_{\ell\in D} \Omega(V_\ell)$.  Denote by $m$ the index of $ {\prod}_{\ell\in D} \Omega(V_\ell)$ in $G^g$. The $G^g$-coset $hG^g$ is the disjoint union of $m$ cosets of ${\prod}_{\ell\in D} \Omega(V_\ell)$; let $\kappa$ be any of these $m$ cosets.  We have $\kappa=\prod_{\ell\in D} \kappa_\ell$, where $\kappa_\ell$ is a $\Omega(V_\ell)$-coset in $\Or(V_\ell)$.  Therefore,
\[
\frac{|\calB_i \cap \kappa|}{|\kappa|} = \prod_{\ell\in D}\Big(1 - \frac{|C_i(V_\ell) \cap \kappa_\ell|}{|\kappa_\ell|} \Big) \leq (1-c_2/N^2)^{|D|}=(1-c_2/N^2)^{M} < \varepsilon/6,
\]
where the first inequality uses Proposition~\ref{P:criterion for sieving}(\ref{P:criterion for sieving b}) (note that $\ell\geq c_1$ for all $\ell\in \Sigma$) and the second inequality uses our choice of $M$.  Therefore,
\[
\frac{|\calB_i \cap hG^g|}{|G^g|} = \sum_{\kappa \subseteq hG^g} \frac{|\calB_i \cap \kappa|}{m |\kappa|} =\frac{1}{m} \sum_{\kappa \subseteq hG^g} \frac{|\calB_i \cap \kappa|}{ |\kappa|} < \varepsilon/6,
\]   
where the sums are over the $m$ cosets of ${\prod}_{\ell\in D} \Omega(V_\ell)$ contained in $hG^g$.    We deduce (\ref{E:delta goal}) from (\ref{E:delta bound almost}) and the above bound for ${|\calB_i \cap hG^g|}/{|G^g|}$.

\section{Proof of Theorem~\ref{T:main 2}}   \label{S:proof of main 2}

Fix notations and assumptions as in \S\ref{SS:new setup} and \S\ref{SS:effective version}.  Let $\Lambda$ be the set of natural density $1$ that satisfies condition (\ref{bm-b}) of \S\ref{SSS:bm}.  Let $c_1 \geq 5$ and $c_2$ be positive absolute constants as in Proposition~\ref{P:criterion for sieving}(\ref{P:criterion for sieving b}).    We may assume that each prime $\ell\in \Lambda$ is greater than $c_1$.

Take any $n\geq 1$.  After base extending everything to $\FF_{q^n}$, we find that the setup and assumptions of \S\ref{SS:new setup} and \S\ref{SS:effective version} still hold.  Moreover, we may take the same sets $\Sigma$ and $\Lambda$, and the integers $g$, $b$ and $N$ do not change.   So to prove  Theorem~\ref{T:main 2}, we may assume without loss of generality that $n=1$.   We may further assume that $U(\FF_q)$ is non-empty.\\

For each subset $D$ of $\Lambda$, define the representation
\[
\bbar\rho_D= \prod_{\ell \in D} \bbar\rho_\ell \colon \pi_1(U) \to \prod_{\ell\in D} \Or(V_\ell);
\]
note that the set $D$ may be infinite now.
Define the group $G_D:=\bbar{\rho}_D(\pi_1(U)) \subseteq \prod_{\ell\in D} \Or(V_\ell)$ and its normal subgroup $G_D^g:=\bbar{\rho}_D(\pi_1(U_{\FFbar_q}))$.  
We have $G_D^g \supseteq {\prod}_{\ell\in D} \Omega(V_\ell)$; this follows for finite $D$ by Proposition~\ref{P:big monodromy a} and hence infinite $D$ since the groups involved are profinite.   

Denote the index of ${\prod}_{\ell\in \Lambda} \Omega(V_\ell)$ in $G_\Lambda^g$ by $m$.

\begin{lemma} \label{L:finite abelianization}
The value $m$ is finite and satisfies $m\leq 2^{2g+b-1}$.   We have $[G_\Lambda: G_\Lambda^g] \leq 2$.
\end{lemma}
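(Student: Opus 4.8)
The plan is to work modulo the subgroup $\Omega:=\prod_{\ell\in\Lambda}\Omega(V_\ell)$ of $\prod_{\ell\in\Lambda}\Or(V_\ell)$, which is normal with quotient an elementary abelian $2$-group, and then to bound the two quantities by formal group theory together with one standard cohomological computation.

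First I would record the group theory. By Lemma~\ref{L:Omega facts}(\ref{L:Omega facts v}), each $\Omega(V_\ell)$ is the commutator subgroup of $\Or(V_\ell)$, hence normal, so $\Omega$ is normal in $\prod_{\ell\in\Lambda}\Or(V_\ell)$ with quotient $\prod_{\ell\in\Lambda}\bigl(\Or(V_\ell)/\Omega(V_\ell)\bigr)$; by part (i) of Lemma~\ref{L:Omega facts} each factor is isomorphic to $(\ZZ/2\ZZ)^2$, so this quotient has exponent $2$. Since $\Omega\subseteq G_\Lambda^g\subseteq G_\Lambda$, both $G_\Lambda/\Omega$ and its subgroup $G_\Lambda^g/\Omega$ embed into this quotient, hence are elementary abelian $2$-groups; in particular $m=|G_\Lambda^g/\Omega|$, once shown finite, is a power of $2$.

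To bound $m$: the quotient $G_\Lambda^g/\Omega$ is the image of $\pi_1(U_{\FFbar_q})$ in a group of exponent $2$, so the associated homomorphism factors through $\pi_1(U_{\FFbar_q})^{\ab}/2\pi_1(U_{\FFbar_q})^{\ab}$, a finite group of order $|H^1_{\et}(U_{\FFbar_q},\ZZ/2\ZZ)|$ (this cohomology group is finite since $U_{\FFbar_q}$ is of finite type). Hence $m\le |H^1_{\et}(U_{\FFbar_q},\ZZ/2\ZZ)|$, and it suffices to show $\dim_{\FF_2}H^1_{\et}(U_{\FFbar_q},\ZZ/2\ZZ)=2g+b-1$. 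As $q$ is odd, this is the usual count: the Gysin (excision) sequence for the finite closed subscheme $C_{\FFbar_q}\setminus U_{\FFbar_q}$ of cardinality $b$ in $C_{\FFbar_q}$, combined with $H^1_{\et}(C_{\FFbar_q},\ZZ/2\ZZ)\cong(\ZZ/2\ZZ)^{2g}$, Artin vanishing $H^2_{\et}(U_{\FFbar_q},\ZZ/2\ZZ)=0$ for the affine curve $U_{\FFbar_q}$, and surjectivity of the degree map $(\ZZ/2\ZZ)^{b}\to H^2_{\et}(C_{\FFbar_q},\ZZ/2\ZZ)=\ZZ/2\ZZ$ (which holds because $b\ge 1$, as $U$ is affine), yields a short exact sequence $0\to(\ZZ/2\ZZ)^{2g}\to H^1_{\et}(U_{\FFbar_q},\ZZ/2\ZZ)\to(\ZZ/2\ZZ)^{b-1}\to 0$. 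Therefore $m\le 2^{2g+b-1}$, and in particular $m$ is finite.

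For the index $[G_\Lambda:G_\Lambda^g]$: since $U$ is geometrically connected over $\FF_q$, the homotopy exact sequence $1\to\pi_1(U_{\FFbar_q})\to\pi_1(U)\to\Gal(\FFbar_q/\FF_q)\to 1$ shows that $G_\Lambda/G_\Lambda^g$ is a quotient of $\Gal(\FFbar_q/\FF_q)\cong\Zhat$, hence procyclic. On the other hand $G_\Lambda/G_\Lambda^g=(G_\Lambda/\Omega)/(G_\Lambda^g/\Omega)$ is a quotient of the elementary abelian $2$-group $G_\Lambda/\Omega$, hence has exponent $2$. A procyclic group of exponent $2$ has order at most $2$, so $[G_\Lambda:G_\Lambda^g]\le 2$. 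The only step that is not purely formal is the dimension count $\dim_{\FF_2}H^1_{\et}(U_{\FFbar_q},\ZZ/2\ZZ)=2g+b-1$, but this is a well-known property of smooth affine curves (equivalently, of the abelianized prime-to-$p$ fundamental group), so there is no genuine obstacle.
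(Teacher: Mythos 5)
Your proof is correct, and the overall skeleton matches the paper's: both arguments observe that $G_\Lambda^g$ modulo $\prod_{\ell\in\Lambda}\Omega(V_\ell)$ embeds in an elementary abelian $2$-group, bound its order by the size of the maximal exponent-$2$ quotient of $\pi_1(U_{\FFbar_q})$, and then get $[G_\Lambda:G_\Lambda^g]\leq 2$ exactly as you do, from procyclicity (quotient of $\Gal(\FFbar_q/\FF_q)$) combined with exponent $2$. Where you differ is the source of the number $2g+b-1$: the paper first reduces to finite subsets $D\subseteq\Lambda$ (using that the groups are profinite) and then quotes Corollaire~2.12 of SGA~1, Exp.~XIII, which says that every finite quotient of $\pi_1(U_{\FFbar_q})$ of order prime to $q$ is generated by $2g+b-1$ elements; since the relevant quotient is $(\ZZ/2\ZZ)^r$ and $q$ is odd, this gives $r\leq 2g+b-1$. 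You instead work with the full (possibly infinite) product at once, note that the image factors through $\pi_1(U_{\FFbar_q})^{\ab}\otimes\ZZ/2\ZZ$, and compute $\dim_{\FF_2}H^1_{\et}(U_{\FFbar_q},\ZZ/2\ZZ)=2g+b-1$ by the Gysin sequence for $U\subseteq C$, Artin vanishing for the affine curve, and surjectivity of the degree map (using $b\geq 1$ and $p$ odd). The two inputs are of course close cousins --- both ultimately reflect the structure of the prime-to-$p$ fundamental group of a punctured curve --- but your cohomological computation is self-contained and avoids the reduction to finite $D$, while the paper's citation of the SGA generation bound is shorter and sidesteps the \'etale cohomology bookkeeping. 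Both are complete; just make sure, as you implicitly do, that the mod-$2$ abelianized quotient is finite (the wild pro-$p$ part dies mod $2$ since $p$ is odd), so that the bound also delivers the finiteness of $m$ asserted in the lemma.
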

\begin{proof}
Since the groups ${\prod}_{\ell\in \Lambda} \Omega(V_\ell)$ and $G_\Lambda^g$ are profinite, to bound $m$ it suffices to prove that 
\[
[G_D^g:{\prod}_{\ell\in D} \Omega(V_\ell)]\leq 2^{2g+b-1}
\]
 for any fixed finite $D\subseteq \Lambda$.  Define $H= G_D^g/\prod_{\ell\in D} \Omega(V_\ell)$; it is a subgroup of $(\prod_{\ell \in D} \Or(V_\ell))/(\prod_{\ell \in D} \Omega(V_\ell) )\cong (\ZZ/2\ZZ)^{2|D|}$.   Therefore, $H$ is isomorphic to $(\ZZ/2\ZZ)^r$ for some integer $r$.  Let $G$ be a finite group with cardinality relatively prime to $q$ that is a quotient of $\pi_1(U_{\FFbar_q})$.   Corollaire~2.12 of \cite{MR2017446}*{XIII} implies that $G$ can be generated by $2g+b-1$ elements.   Since $q$ is odd, we deduce that the group $H$ is generated by $2g+b-1$ elements.   Therefore, $r \leq 2g+b-1$ and hence $|H|\leq 2^{2g+b-1}$.

The group $G_\Lambda/G_\Lambda^g$ is pro-cyclic since it is a quotient of the absolute Galois group of $\FF_q$.   However, every element in $G_\Lambda/G_\Lambda^g$ has order $1$ or $2$ since it is a quotient of  
\[
G_\Lambda/(\prod_{\ell \in \Lambda} \Omega(V_\ell)) \subseteq (\prod_{\ell \in \Lambda} \Or(V_\ell))/(\prod_{\ell \in \Lambda} \Omega(V_\ell)) \cong \prod_{\ell\in \Lambda} (\ZZ/2\ZZ)^2.
\]  
Therefore, $G_\Lambda/G_\Lambda^g$ is cyclic of order $1$ or $2$.
\end{proof}

Let $hG_\Lambda^g$ be the coset of $G_\Lambda^g$ in $G_\Lambda$ which contains $\bbar\rho_\Lambda(\Frob_u)$ for all $u\in U(\FF_{q})$.     Fix one of the $m$ cosets $\kappa$ of $\prod_{\ell\in \Lambda} \Omega(V_\ell)$ in $G_\Lambda$ that is also a subset of $hG_\Lambda^g$.   We have $\kappa=\prod_{\ell\in \Lambda} \kappa_\ell$ for unique cosets $\kappa_\ell$ of $\Omega(V_\ell)$ in $\Or(V_\ell)$.   We also fix an integer $1\leq i\leq 6$. 
\\

 With $\kappa$ and $i$ fixed, let $A$ be the set of $u\in U(\FF_{q})$ that satisfy $\bbar\rho_\Lambda(\Frob_u)\subseteq \kappa$.  Let $\{C_i(V_\ell)\}_{\ell\in \Lambda}$ be the sets from Proposition~\ref{P:criterion for sieving}. 
  For a prime $\ell\in \Lambda$, let $A_\ell$ be the set of $u\in A$ for which $\bbar\rho_\ell(\Frob_u)\subseteq C_i(V_\ell)$ and define $ \omega_\ell:={|C_i(V_\ell)\cap \kappa_\ell|}/{|\kappa_\ell|}$.   For a subset $D\subseteq \Lambda$, define $A_D=\cap_{\ell\in D} A_\ell$; it is the set of $u\in A$ satisfying $\bbar\rho_\ell(\Frob_u)\subseteq C_i(V_\ell)$ for all $\ell\in D$.   

\begin{lemma} \label{L:hard equi}
 For every finite subset $D\subseteq \Lambda$, we have
\[
|A_D| =   \frac{|U(\FF_{q})|}{m} \cdot \prod_{\ell\in D} \omega_\ell + r_D,
\] 
where $|r_D|\leq (\prod_{\ell\in D} \ell)^{N(N-1)/4}(2g+b) q^{1/2}$.
\end{lemma}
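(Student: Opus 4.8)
\emph{Overall strategy.} The plan is to read off $|A_D|$ from the Grothendieck--Lefschetz trace formula together with Deligne's weight bounds and the Euler--Poincaré formula for tamely ramified sheaves on curves, after encoding the conditions defining $A_D$ in a single finite quotient of $\pi_1(U)$.

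\emph{Setting up the finite group.} By Lemma~\ref{L:finite abelianization} the group $\bar G := G_\Lambda/\prod_{\ell\in\Lambda}\Omega(V_\ell)$ is finite, so the natural map $\beta\colon \pi_1(U)\to\bar G$ has finite image and is tamely ramified, being built from the $\bbar\rho_\ell$. The condition ``$\bbar\rho_\Lambda(\Frob_u)\subseteq\kappa$'' is equivalent to $\beta(\Frob_u)=\bar\kappa$, where $\bar\kappa\in\bar G$ is the image of the $\prod_\ell\Omega(V_\ell)$-coset $\kappa$; and on that locus the coset of $\bbar\rho_\ell(\Frob_u)$ modulo $\Omega(V_\ell)$ is forced to equal $\kappa_\ell$ for every $\ell\in D$, so there ``$\bbar\rho_\ell(\Frob_u)\subseteq C_i(V_\ell)$'' is equivalent to ``$\bbar\rho_\ell(\Frob_u)\subseteq C_i(\kappa_\ell)$'' with $C_i(\kappa_\ell):=C_i(V_\ell)\cap\kappa_\ell$. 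Hence, writing $\Phi:=\beta\times\prod_{\ell\in D}\bbar\rho_\ell$, $\mathcal{G}:=\Phi(\pi_1(U))$ and $\mathcal{G}^g:=\Phi(\pi_1(U_{\FFbar_q}))$, one has $A_D=\{u\in U(\FF_q):\Phi(\Frob_u)\in\mathcal{C}\}$, where $\mathcal{C}$ is the set of $(\bar\kappa,(g_\ell)_{\ell\in D})\in\mathcal{G}$ with $g_\ell\in C_i(\kappa_\ell)$ for all $\ell\in D$. Since each $C_i(V_\ell)$ is stable under $\Or(V_\ell)$-conjugation (Proposition~\ref{P:criterion for sieving}(\ref{P:criterion for sieving a})), and $\Omega(V_\ell)$-cosets are normal in $\Or(V_\ell)$, and $\bar G$ is abelian, the set $\mathcal{C}$ is stable under $\mathcal{G}$-conjugation; also $\mathcal{G}/\mathcal{G}^g$ is cyclic, $\Phi$ is tame, and $\mathcal{C}$ lies in the single $\mathcal{G}^g$-coset that contains all $\Phi(\Frob_u)$. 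Thus the equidistribution estimate of Appendix~\ref{S:B} (the standard consequence of the Lefschetz trace formula, Weil~II, and $\chi_c(U,\mathcal{F})=\operatorname{rank}(\mathcal{F})\cdot(2-2g-b)$ for tame $\mathcal{F}$) applies and gives $|A_D|=\frac{|\mathcal{C}|}{|\mathcal{G}^g|}\,|U(\FF_q)|+r_D$ with $|r_D|\ll |\mathcal{C}|^{1/2}(2g+b)q^{1/2}$.

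\emph{Identifying the main term.} We have $G_\Lambda^g=\bbar\rho_\Lambda(\pi_1(U_{\FFbar_q}))\supseteq\prod_{\ell\in\Lambda}\Omega(V_\ell)$ (established above). Feeding an element of $\prod_{\ell\in\Lambda}\Omega(V_\ell)$ that is trivial outside $D$ through $\Phi$ shows that the kernel of the surjection $\mathcal{G}^g\to\bar G^g:=\beta(\pi_1(U_{\FFbar_q}))$ is exactly $\{1\}\times\prod_{\ell\in D}\Omega(V_\ell)$, so $|\mathcal{G}^g|=|\bar G^g|\cdot\prod_{\ell\in D}|\Omega(V_\ell)|=m\prod_{\ell\in D}|\Omega(V_\ell)|$. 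On the other hand the $\prod_{\ell\in D}\Omega(V_\ell)$-coset of $G_D$ cut out by $g_\ell\in\kappa_\ell$ splits as a product over $\ell\in D$, whence $|\mathcal{C}|=\prod_{\ell\in D}|C_i(\kappa_\ell)|$. Therefore $\frac{|\mathcal{C}|}{|\mathcal{G}^g|}=\frac1m\prod_{\ell\in D}\frac{|C_i(\kappa_\ell)|}{|\Omega(V_\ell)|}=\frac1m\prod_{\ell\in D}\omega_\ell$, and the main term is $\frac{|U(\FF_q)|}{m}\prod_{\ell\in D}\omega_\ell$, as required.

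\emph{The error term.} It remains to turn the implicit constant in $|r_D|\ll|\mathcal{C}|^{1/2}(2g+b)q^{1/2}$ into $1$. Here $|\mathcal{C}|=\prod_{\ell\in D}|C_i(\kappa_\ell)|\le\prod_{\ell\in D}|\Omega(V_\ell)|$, and the explicit order formula for finite orthogonal groups gives $|\Or(V_\ell)|<2\,\ell^{N(N-1)/2}$, hence $|\Omega(V_\ell)|=|\Or(V_\ell)|/4<\tfrac12\ell^{N(N-1)/2}$; taking square roots, $|\mathcal{C}|^{1/2}<2^{-|D|/2}\big(\prod_{\ell\in D}\ell\big)^{N(N-1)/4}$. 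The slack factor $2^{-|D|/2}$, together with the lower bound $\ell>c_1\ge5$ on the primes of $\Lambda$, absorbs the absolute constants from the Betti-number bookkeeping: the at most two geometrically constant characters of $\mathcal{G}$ (whose sheaves feed the main term and contribute an $O((2g+b)q^{1/2})$ error) and the discrepancy between $q$ and $|U(\FF_q)|$ (again $O((2g+b)q^{1/2})$). This yields $|r_D|\le\big(\prod_{\ell\in D}\ell\big)^{N(N-1)/4}(2g+b)q^{1/2}$. The main obstacle is precisely this last step of bookkeeping: identifying exactly which characters contribute to the main term and checking that the accumulated explicit constants remain beneath the available slack.
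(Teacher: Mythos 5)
Your proof is correct, and it rests on the same two pillars as the paper's: the tame equidistribution estimate of Theorem~\ref{T:equidistribution}(\ref{T:equidistribution ii}) and the bound $|\Omega(V_\ell)|\leq \ell^{N(N-1)/2}$ from the order formulas for finite orthogonal groups. The difference is purely in how the $\Lambda$-level coset condition is made finite. The paper chooses a finite set $E$ with $D\subseteq E\subseteq\Lambda$ such that $G_\Lambda/\prod_{\ell\in\Lambda}\Omega(V_\ell)\to G_E/\prod_{\ell\in E}\Omega(V_\ell)$ is an isomorphism, works with $B=\prod_{\ell\in D}(C_i(V_\ell)\cap\kappa_\ell)\times\prod_{\ell\in E-D}\kappa_\ell$ inside $G_E$, and then needs the nontrivial normal subgroup $H=\prod_{\ell\in D}\{I\}\times\prod_{\ell\in E-D}\Omega(V_\ell)$ in Theorem~\ref{T:equidistribution}(\ref{T:equidistribution ii}) to cancel the spurious $E-D$ factors from $|B|^{1/2}$. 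You instead adjoin the finite quotient $\bbar{G}=G_\Lambda/\prod_{\ell\in\Lambda}\Omega(V_\ell)$ (finite by Lemma~\ref{L:finite abelianization}) as an extra coordinate via $\Phi=\beta\times\prod_{\ell\in D}\bbar\rho_\ell$; since $\mathcal{C}$ pins the $\bbar{G}$-coordinate to the single element $\bbar\kappa$, you get $|\mathcal{C}|=\prod_{\ell\in D}|C_i(V_\ell)\cap\kappa_\ell|$ and $|\mathcal{G}^g|=m\prod_{\ell\in D}|\Omega(V_\ell)|$ directly (the computation of $\ker(\mathcal{G}^g\to\bbar{G}^g)$ uses exactly the containment $\prod_{\ell\in\Lambda}\Omega(V_\ell)\subseteq G_\Lambda^g$, as in the paper), and the appendix theorem can be applied with $H=\{1\}$. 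So your route trades the existence argument for $E$ and the use of $H$ for the quotient coordinate; the main term and the error come out identically. One small caveat: your closing discussion about absorbing implicit constants into the slack $2^{-|D|/2}$ is unnecessary and, taken at face value, would not suffice (for $|D|=1$ the slack is only $\sqrt 2$, and the lower bound $\ell\geq c_1$ gives nothing further in your estimate). The point is that Theorem~\ref{T:equidistribution}(\ref{T:equidistribution ii}) with $H=\{1\}$ already gives the clean bound $|r_D|\leq|\mathcal{C}|^{1/2}(1-1/|\mathcal{G}^g|)^{1/2}(2g-2+b)q^{1/2}\leq|\mathcal{C}|^{1/2}(2g+b)q^{1/2}$ with no implicit constant, so together with $|\mathcal{C}|^{1/2}\leq(\prod_{\ell\in D}\ell)^{N(N-1)/4}$ the stated inequality follows outright; you should cite the explicit form of the theorem rather than a $\ll$ and drop the character-bookkeeping remarks.
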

\begin{proof}
Take any finite subset $D\subseteq \Lambda$.   Since $m$ is finite by Lemma~\ref{L:finite abelianization}, there is a non-empty finite set $D\subseteq E \subseteq \Lambda$ such that the projection map 
\[
G_\Lambda/{\prod}_{\ell\in \Lambda} \Omega(V_\ell)\to G_E/{\prod}_{\ell\in E} \Omega(V_\ell)
\] 
is an isomorphism.  In particular, for $u\in U(\FF_{q})$, we have $\bbar\rho_\Lambda(\Frob_u) \subseteq \kappa$ if and only if $\bbar\rho_{E}(\Frob_u)\subseteq \prod_{\ell\in E} \kappa_\ell$.

Define 
\[
B:= \prod_{\ell\in D} (C_i(V_\ell)\cap \kappa_\ell) \times \prod_{\ell\in E-D} \kappa_\ell;
\] 
it is a subset of $G_{E}$ that is stable under conjugation.   Observe that 
\[
A_D = \{ u\in U(\FF_q) : \bbar\rho_{E}(\Frob_u)\subseteq B\}.
\]   

Define the subgroup $H:=\prod_{\ell \in D} \{I\} \times \prod_{\ell\in E-D} \Omega(V_\ell)$ of $G_{E}^g$; it is a normal subgroup of $G_{E}$ and satisfies  $B\cdot H \subseteq B$.  
The representation $\bbar \rho_E$ is tamely ramified since the representations $\{\bbar\rho_\ell\}_{\ell \in \Lambda}$ are tamely ramified by assumption.  By Theorem~\ref{T:equidistribution}(\ref{T:equidistribution ii}) in Appendix~\ref{S:B}, we have
\[
|A_D| = \frac{|B|}{|G_E^g|}\cdot |U(\FF_{q})| + r_D,
\] 
where $r_D$ satisfies $|r_D|\leq |B|^{1/2}/|H|^{1/2} \cdot (2g+b) q^{1/2}$.  By our choice of $E$, the index $[G_{E}^g:{\prod}_{\ell\in E} \Omega(V_\ell)]$ equals $m$.  Therefore,
\[
\frac{|B|}{|G_{E}^g|} = \frac{1}{m} \prod_{\ell\in D} \frac{|C_i(V_\ell) \cap \kappa_\ell|}{|\Omega(V_\ell)|} = \frac{1}{m} \prod_{\ell\in D} \omega_\ell
\]
and it thus remains to prove the correct bound for $|r_D|$.  We have $|B|/|H| \leq ({\prod}_{\ell \in E} |\Omega(V_\ell)|)/|H| =  \prod_{\ell \in D} |\Omega(V_\ell)|$ and hence $|r_D|\leq \prod_{\ell \in D} |\Omega(V_\ell)|^{1/2} \cdot (2g+b) q^{1/2}$.
It thus remains to prove that $|\Omega(V_\ell)|\leq  \ell^{N(N-1)/2}$ for all $\ell\in D$.

Take any $\ell \in D$.   The possible cardinality for $|\Or(V_\ell)|$ is given in \cite{MR2562037}*{\S3.7.2}.  If $N=2n+1$ is odd,  we find that $|\Or(V_\ell)|\leq 2 \ell^{m^2+2+4+\cdots +2m}= 2\ell^{N(N-1)/2}$.   If $N=2n$ is even,  we find that $|\Or(V_\ell)|\leq 2 \ell^{m(m-1) +(2+4+\cdots +2(m-1)) +m}= 2\ell^{N(N-1)/2}$.    Therefore, $|\Omega(V_\ell)| \leq \ell^{N(N-1)/2}/2$.
\end{proof}

We will now use Selberg's sieve, as described in Appendix~\ref{S:The Selberg sieve}, to bound the cardinality of the set 
\[
S_{\kappa,i} :=\{ u \in A : \bbar\rho_{\ell}(\Frob_u) \not\subseteq C_i(V_\ell) \text{ for all }\ell\in \Lambda\}.
\]

\begin{lemma} \label{L:main sieve}
We have
\[
|S_{\kappa,i}| \ll \Big(m^{-1} |U(\FF_q)|  \log q +  (2g+b) q \Big) q^{-1/(N^2-N+6)},
\] 
where the implicit constant depends only on $\Lambda$.
\end{lemma}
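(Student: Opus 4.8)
The plan is to obtain Lemma~\ref{L:main sieve} by applying the version of Selberg's sieve recorded in Appendix~\ref{S:The Selberg sieve} to the finite set $A$, sifted by the subsets $\{A_\ell\}_{\ell\in\Lambda}$, with the multiplicative density function $d\mapsto\prod_{\ell\mid d}\omega_\ell$ and with $X:=|U(\FF_q)|/m$ serving as the approximation to $|A|$.  We may assume $U(\FF_q)\neq\emptyset$, and we may assume $q$ exceeds any bound we wish that depends only on $\Lambda$: for the finitely many remaining $q$ the asserted inequality holds trivially, since $|S_{\kappa,i}|\le|A|\le|U(\FF_q)|+(2g+b)q^{1/2}$ by Lemma~\ref{L:hard equi} applied with $D=\emptyset$, and this is absorbed into the implicit constant.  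First I would fix the sieve level by setting $z:=q^{1/(N^2-N+6)}$ and $P:=\prod_{\ell\in\Lambda,\ \ell\le z}\ell$; since
\[
S_{\kappa,i}\subseteq\bigl\{u\in A:\bbar\rho_\ell(\Frob_u)\not\subseteq C_i(V_\ell)\ \text{for all }\ell\mid P\bigr\},
\]
it suffices to bound the cardinality of the set on the right.

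Before invoking the sieve I would check admissibility of the densities at the primes $\ell\mid P$.  If $\omega_{\ell_0}=1$ for some $\ell_0\mid P$, then $\kappa_{\ell_0}\subseteq C_i(V_{\ell_0})$, hence $A_{\ell_0}=A$ and $S_{\kappa,i}=\emptyset$, and we are done; so we may assume $\omega_\ell<1$ for all $\ell\mid P$.  On the other hand every prime of $\Lambda$ exceeds $c_1$, so Proposition~\ref{P:criterion for sieving}(\ref{P:criterion for sieving b}) gives $\omega_\ell=|C_i(V_\ell)\cap\kappa_\ell|/|\kappa_\ell|\ge c_2/N^2>0$ for all $\ell\mid P$.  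The remainder input the sieve requires is precisely Lemma~\ref{L:hard equi}: for every squarefree $d\mid P$ we have $|A_d|=X\prod_{\ell\mid d}\omega_\ell+r_d$ with $|r_d|\le d^{N(N-1)/4}(2g+b)q^{1/2}$.

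Selberg's sieve then produces a bound of the shape $|S_{\kappa,i}|\le X/H+R$, where $H:=\sum_{d\mid P,\ d\le z}\prod_{\ell\mid d}\omega_\ell/(1-\omega_\ell)$ and $R$ is at most $\sum_{d_1,d_2\mid P,\ d_1,d_2\le z}|r_{[d_1,d_2]}|$.  For the main term I would bound $H$ from below by retaining only the prime divisors $d=\ell$ and using $\omega_\ell/(1-\omega_\ell)\ge\omega_\ell\ge c_2/N^2$, so that $H\ge(c_2/N^2)\,\#\{\ell\in\Lambda:\ell\le z\}$; since $\Lambda$ has natural density $1$ this is $\gg z/\log z$ (with a constant depending only on $\Lambda$ and on the fixed integer $N$), whence
\[
\frac{X}{H}\ll\frac{|U(\FF_q)|}{m}\cdot\frac{\log z}{z}\ll\frac{|U(\FF_q)|}{m}\,\log q\cdot q^{-1/(N^2-N+6)},
\]
using $\log z=(\log q)/(N^2-N+6)$.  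For the remainder I would estimate crudely: there are at most $z^2$ pairs $(d_1,d_2)$ with $d_i\le z$, and $[d_1,d_2]\le z^2$ for each, so
\[
R\le z^2\cdot\max_{d\le z^2}|r_d|\le z^2\cdot(z^2)^{N(N-1)/4}(2g+b)q^{1/2}=(2g+b)q^{1/2}\,z^{(N^2-N+4)/2}.
\]
Since $z^{(N^2-N+4)/2}=q^{(N^2-N+4)/(2(N^2-N+6))}$ and $\tfrac12+\tfrac{N^2-N+4}{2(N^2-N+6)}=1-\tfrac1{N^2-N+6}$, this is $\ll(2g+b)q\cdot q^{-1/(N^2-N+6)}$.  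Adding the two estimates gives the claimed inequality.

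The substantive work has already been carried out in Lemma~\ref{L:hard equi} (which itself rests on the equidistribution estimate of Appendix~\ref{S:B}); what remains is bookkeeping, and the only point requiring care is the choice of the sieve level.  The exponent $1/(N^2-N+6)$ is exactly the value of $\theta$ for which $z=q^{\theta}$ balances the main term, of size $\asymp(|U(\FF_q)|/m)(\log z)/z$, against the remainder, of size $\asymp(2g+b)q^{1/2}z^{(N^2-N+4)/2}$.  It is also worth keeping the remainder estimate deliberately crude---counting the pairs $(d_1,d_2)$ by the trivial bound $z^2$ rather than through a divisor function---so as not to introduce a spurious power of $\log q$ on the $(2g+b)q$ term; the single factor $\log q$ that survives, on the $m^{-1}|U(\FF_q)|$ term, is precisely the one forced by the prime-counting asymptotic for $\Lambda$.
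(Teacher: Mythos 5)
Your argument is correct and is essentially the paper's own proof: Selberg's sieve with support the squarefree moduli built from primes of $\Lambda$ up to level $z=q^{1/(N^2-N+6)}$, remainders controlled by Lemma~\ref{L:hard equi}, the crude count of pairs of moduli giving the term $(2g+b)q^{1/2}z^{(N^2-N+4)/2}$, and $H\gg z/\log z$ from the density of $\Lambda$ together with $\omega_\ell\geq c_2/N^2$. The only point to tidy is the degenerate case: since the implicit constant must be uniform in $N$, the correct dichotomy is on $z$ (the paper's ``$Q<c_3$'') rather than on $q$ alone, but your trivial absorption handles that case verbatim because there $q^{-1/(N^2-N+6)}=1/z$ is bounded below by a constant depending only on $\Lambda$, so $|S_{\kappa,i}|\leq |U(\FF_q)|+(2g+b)q^{1/2}\ll (2g+b)q\cdot q^{-1/(N^2-N+6)}$.
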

\begin{proof}
For each $Q\geq 1$, let $\Lambda(Q)$ be the set of primes $\ell\in \Lambda$ with $\ell\leq Q$.  Since $\Lambda$ has positive natural density, there is a constant $c_3\geq 1$ such that 
\[
|\Lambda(Q)| \gg Q/\log Q
\] 
for all $Q\geq c_3 $, where $c_3$ and the implicit constant depend only on $\Lambda$.  

Set $X:= |U(\FF_{q})|/m$.  For each finite $D\subseteq \Lambda$, we  have $|A_D| = (\prod_{\ell \in D} \omega_\ell) X + r_D$, where $r_D$ satisfies the inequality from Lemma~\ref{L:hard equi}.   We may assume that $\omega_\ell< 1$ for all $\ell \in \Lambda$ since otherwise $S_{\kappa,i}=\emptyset$ and the desired upper bound is trivial.   We have $\ell \geq c_1$, and hence $\omega_\ell \geq c_2/N^2$, for all $\ell\in \Lambda$.  In particular, $\omega_\ell >0$ for all $\ell\in \Lambda$.

Fix a number $Q\geq c_3$.  Observe that $S_{\kappa,i}$ is a subset of $A-(\cup_{\ell\in \Lambda(Q)} A_\ell)$. Let $\Zz(Q)$ be the set of finite subsets $D$ of $\Lambda$, equivalently of $\Lambda(Q)$, such that ${\prod}_{\ell \in D}  \ell \leq Q$.  We have $|\Zz(Q)|\leq Q$. Therefore,
\begin{align*}
\sum_{D,D'\in\Zz(Q)} |r_{D\cup D'}| &\leq |\Zz(Q)|^2\cdot (Q^2)^{N(N-1)/4}  (2g+b) q^{1/2}  \leq  Q^{N(N-1)/2+2}  (2g+b) q^{1/2}.
\end{align*}
By the \defi{Selberg sieve} (Theorem~\ref{T:Selberg sieve}), we obtain  the bound
\[
|S_{\kappa,i}| \leq X/H(Q) +  Q^{N(N-1)/2+2}  (2g+b) q^{1/2},
\]
where $H(Q):=\sum_{D\in\Zz(Q)}\prod_{\ell\in D} \omega_\ell/(1-\omega_\ell)$.  Since $Q\geq c_3$, we have
\[
H(Q) \geq \sum_{\ell\in \Lambda(Q) } \omega_\ell \geq \frac{c_2}{N^2}\cdot |\Lambda(Q)| \gg \frac{1}{N^2} Q/\log Q,
\]
where we have used Proposition~\ref{P:criterion for sieving}(\ref{P:criterion for sieving b}).  Therefore,
\[
|S_{\kappa,i}| \ll m^{-1} |U(\FF_q)|\cdot N^2 \log(Q)/Q + Q^{N(N-1)/2+2} (2g+b) q^{1/2}.
\]

Set $Q:= 	q^{1/(N^2-N+6)}$.  If $Q\geq c_3$, then
\begin{align} \label{E:Skappai}
|S_{\kappa,i}| \ll \Big(m^{-1} |U(\FF_q)|  \log q +  (2g+b) q \Big) q^{-1/(N^2-N+6)}.
\end{align}
If $Q < c_3$, then the bound (\ref{E:Skappai}) is immediate since 
\[
(2g+b) q \cdot q^{-1/(N^2-N+6)} \gg (2g+b) q \gg q+2g\sqrt{q} +1 \geq |U(\FF_q)| \geq |S_{\kappa,i}|. \qedhere
\]
\end{proof}

Since $hG_\Lambda^g$ is the union of $m$ cosets $\kappa_1,\ldots, \kappa_m$ of $\prod_{\ell\in \Lambda} \Omega(V_\ell)$, we have
\begin{align*} 
&{|\{u\in U(\FF_q): \bbar\rho_\ell(\Frob_u)\not\subseteq C_i(V_\ell) \text{ for all }\ell\in\Lambda\}|} \\
 \leq  &\sum_{j=1}^m {|S_{\kappa_j,i}|}  \ll  \Big( |U(\FF_q)|  \log q +  m (2g+b) q \Big) q^{-1/(N^2-N+6)},
\end{align*}
where the last inquality uses Lemma~\ref{L:main sieve}.  By Proposition~\ref{P:criterion for sieving}(\ref{P:criterion for sieving c}) and Lemma~\ref{L:finite abelianization}, we deduce that 
\begin{align*}
1-\delta(\FF_q) & = \frac{|\{u \in U(\FF_q): P_u(T) \text{ does not satisfies (\ref{E:Galois specific})}\} |}{|U(\FF_q)|} \\
&  \ll  \big(  \log q +  2^{2g+b} (2g+b) q/|U(\FF_q)| \Big) q^{-1/(N^2-N+6)}.
\end{align*}
 
If $g \leq \sqrt{q}/4$ and $b\leq q/4$, then $|U(\FF_q)| \geq q+1-2g\sqrt{q} -b\geq q/4$ and hence
\begin{align*} 
1-\delta(\FF_q) \ll \big(  \log q +  2^{2g+b} (2g+b)  \big) q^{-1/(N^2-N+6)} \ll  2^{2g+b} (2g+b) \,q^{-1/(N^2-N+6)} \log q.
\end{align*}

Finally suppose that $g \geq \sqrt{q}/4$ or $b\geq q/4$.  Using $N\geq 3$, we find that 
\[
2^{2g+b} (2g+b) q^{-1/(N^2-N+6)} \log q    \geq  2^{2g+b}  q^{-1/12} \geq 2^{\sqrt{q}/4} q^{-1/12} \gg 1 \geq 1-\delta(\FF_q).
\]

\section{Proof of Theorem~\ref{T:hypersurfaces}}
\label{S:hypersurfaces}

Let $\PP$ be the projective space over $\ZZ$ consisting of non-zero homogenous polynomials of degree $d$ in variables $x_0,\ldots, x_{n+1}$ up to scalars.  By ordering the monomials in $x_0,\ldots, x_{n+1}$ of degree $d$, we obtain an isomorphism $\PP\cong \PP^{m}_\ZZ$ where $m= {n+1+d \choose d}-1$.  Let $U \subseteq \PP$ be the open subscheme corresponding to homogeneous polynomials that define a \emph{smooth} hypersurface. From \S11.4.7 of \cite{MR1659828}, we know that $U$ is smooth, connected, and that $U(k)$ is nonempty for all fields $k$.  Let $H \subseteq U \times \PP^{n+1}$ be the subscheme defined by pairs consisting of a homogeneous polynomial and a point on the corresponding hypersurface.  The projection
\[
\pi\colon H \to U
\]
gives the \emph{universal family of degree $d$ hypersurfaces} in $\PP^{n+1}$.  For each point $f\in U(k)$, with $k$ a field, we denote by $H_f$ the fiber of $\pi$ over $f$.   Note that $H_f$ is the hypersurface of $\PP^{n+1}_k$ corresponding to $f$ and agrees with the notation introduced in \S\ref{SS:hypersurface}.    \\

We now show that the setup of \S\ref{SS:new setup} applies with $R=\ZZ$.    The following simply summarizes material presented by Katz in \S8 of \cite{katzreport} with $X=\PP^{n+1}_\ZZ$.  Take a prime $\ell\geq 5$.  We have a lisse $\ZZ_\ell$-sheaf  $R^n \pi_* \ZZ_\ell(n)$ on $U_{\ZZ[1/\ell]}$.   The cup product 
\[
R^n \pi_* \ZZ_\ell(n) \times R^n \pi_* \ZZ_\ell(n) \to R^{2n} \pi_* \ZZ_\ell(2n) \cong \ZZ_\ell
\]
is an orthogonal autoduality modulo torsion (that the pairing is symmetric uses that $n$ is even).   On $\Spec \ZZ[1/\ell]$ we have the lisse $\ZZ_\ell$-sheaf $R^n\gamma_* \ZZ_\ell(n)$,  where $\gamma\colon \PP^{n+1}_{\ZZ[1/\ell]} \to \Spec \ZZ[1/\ell]$ is the structure morphism.    The sheaf $R^n\gamma_* \ZZ_\ell(n)$ pulls back to a sheaf $\calF_\ell$ on $U_{\ZZ[1/\ell]}$.   We can view $\calF_\ell$ as a subsheaf of $R^n \pi_* \ZZ_\ell(n)$, and we define $\Ev_{\ZZ_\ell}$ to be the orthogonal to $\calF_\ell$ under the cup product pairing.  

For $\ell$ sufficiently large, the lisse sheaf $\Ev_{\ZZ_\ell}$ is torsion free and the cup product makes $\Ev_{\ZZ_\ell}$ self dual over $\ZZ_\ell$.   With such $\ell$, let $M_\ell$ be the fiber of $\Ev_{\ZZ_\ell}$ at a geometric fiber of $U$; it gives rise to a representation
\[
\rho_\ell\colon \pi_1(U_{\ZZ[1/\ell]}) \to \Or(M_\ell)
\]
These representations $\rho_\ell$ are compatible and the corresponding polynomials $P_f(T)$ are those described in \S\ref{SS:hypersurface}.  Note that the description of the zeta function of $H_f$ from  \S\ref{SS:hypersurface} is given in the second half of \S8 of \cite{katzreport}.   The zeta functions are also described in \S11.4 of \cite{MR1659828} where it is observed that their common degree is $N :=(d-1)((d-1)^{n+1}+1)/d$.  So the $M_\ell$ have common rank $M$ over $\ZZ_\ell$ and $N>2$.

 In \S8 of \cite{katzreport}, Katz observes that the representations $\rho_\ell$ satisfy condition (\ref{bm-a}) in \S\ref{SSS:bm}.   Moreover, he notes that the Zariski closure in condition (\ref{bm-a}) is always the full group $\Or_{\calV_\ell}$; using this and equidistribution, one can prove Remark~\ref{R:sign of hyper}.  For this big monodromy result, we need our assumptions $d\geq 3$ and $(n,d)\neq (2,3)$.
\\

Using $N=(d-1)((d-1)^{n+1}-1)/d$ and $n$ even, we find that $N$ is even if and only if $d$ is odd.   The following, which we will prove in \S\ref{SS:K hypersurface proof}, describes the field $K$ from \S\ref{SS:the field K} when $N$ is even.

\begin{lemma} \label{L:K hypersurface}
Suppose that $N$ is even (equivalently, $d$ is odd).  Then $K=\QQ(\sqrt{(-1)^{(d-1)/2} d})$.  Moreover, $K=\QQ$ if and only if $d$ is a square.
\end{lemma}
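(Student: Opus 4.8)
The plan is to compute the square class of $P_f(1)P_f(-1)$ for a well-chosen point $f\in U(k)$ and then invoke Proposition~\ref{P:K criterion}(\ref{P:K criterion ii}). By that proposition, $K=\QQ(\sqrt{(-1)^{N/2}P_f(1)P_f(-1)})$ for any $f\in U(k)$ with $P_f(\pm1)\neq 0$, so it suffices to identify this square class with $(-1)^{(d-1)/2}d\cdot(\QQ^\times)^2$. The values $P_f(\pm1)$ measure the eigenvalues $\pm q^n$ of Frobenius on $\Ev_{\QQ_\ell}$, equivalently of Frobenius on the full primitive cohomology $H^n_{\et}((H_f)_{\FFbar_q},\QQ_\ell)(n)$ after accounting for the part $\calF_\ell$ coming from the ambient $\PP^{n+1}$. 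So the idea is to use the Lefschetz hyperplane theorem together with Hodge-theoretic / weight considerations: the algebraic (Tate) classes in the middle cohomology of a smooth hypersurface of odd degree $d$ behave, after twisting, in a way governed by the degree $d$. Concretely, I would relate $P_f(1)P_f(-1)$ to the determinant of the cup-product pairing restricted to appropriate pieces, and this determinant is where the factor $d$ enters — it is exactly the kind of computation sketched in \S\ref{SS:hypersurface} when it was asserted that the discriminant of $P_f$ lies in $(-1)^{(d-1)/2}d\cdot(\QQ^\times)^2$.

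In more detail, the key steps in order are: (i) By Proposition~\ref{P:K criterion}(\ref{P:K criterion i}), $K$ is characterized by the splitting behaviour of $V_\ell$ for large $\ell$, i.e.\ by whether the orthogonal space $\Ev_{\ZZ_\ell}/\ell$ over $\FF_\ell$ is split; equivalently by whether $\disc(\Ev_{\ZZ_\ell})\equiv(-1)^{N/2}\pmod{(\FF_\ell^\times)^2}$. So it is enough to pin down the square class of $\disc(\Ev_{\ZZ_\ell})$ as $\ell$ varies. (ii) The discriminant of the cup-product form on $\Ev$ can be computed from the discriminant of the cup-product form on the whole primitive part $H^n_{\prim}$ of a degree-$d$ hypersurface, since $\Ev$ is an orthogonal direct summand and the complementary piece $\calF_\ell$ (the image of $H^n(\PP^{n+1})$) has an explicitly known one-dimensional form with discriminant a power of (classes of) small integers not involving $d$. (iii) The discriminant of the intersection form on $H^n_{\prim}$ of a smooth degree-$d$ hypersurface in $\PP^{n+1}$ with $n$ even is classically known — it is $\pm d$ up to squares, with the sign $(-1)^{(d-1)/2}$-type correction; one reference point is the formula for the discriminant of the Milnor lattice / the intersection form of hypersurface singularities, or a direct Hodge-theoretic computation using the primitive cohomology of the Fermat hypersurface as a reference fiber (legitimate since $U$ is connected, so the square class is constant on geometric fibers). (iv) Assemble: $(-1)^{N/2}P_f(1)P_f(-1)\equiv(-1)^{N/2}\disc(\Ev_{\ZZ_\ell})\cdot(\text{stuff})$ reduces, after tracking the elementary factors, to $(-1)^{(d-1)/2}d$ modulo squares, uniformly in $\ell$, and Proposition~\ref{P:K criterion} then gives $K=\QQ(\sqrt{(-1)^{(d-1)/2}d})$.

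For the final clause: since $d$ is odd, $(-1)^{(d-1)/2}d$ is a square in $\QQ$ precisely when $d$ is a square — indeed the sign $(-1)^{(d-1)/2}$ is $+1$ when $d\equiv1\pmod4$ and $-1$ when $d\equiv3\pmod4$, and in the latter case $-d<0$ is never a square, while an odd perfect square is always $\equiv1\pmod4$; so $(-1)^{(d-1)/2}d$ is a square iff $d$ is a square. Hence $K=\QQ$ iff $d$ is a square.

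The main obstacle I anticipate is step (iii): producing a clean, correctly-normalized value for the discriminant of the primitive intersection form on an even-dimensional smooth hypersurface of degree $d$, including getting the sign $(-1)^{(d-1)/2}$ exactly right (signs in these lattice-discriminant formulas are notoriously delicate, and one must be careful about the Tate twist by $\ZZ_\ell(n)$ and the relation between $H^n_{\prim}$ and $\Ev$). The cleanest route is probably to compute everything on the Fermat hypersurface $x_0^d+\cdots+x_{n+1}^d=0$, where the cohomology and the cup-product pairing decompose explicitly in terms of Jacobi sums / characters, extract the determinant there, and then transport the square class to an arbitrary fiber by connectedness of $U$; the remaining bookkeeping — the contribution of $\calF_\ell$ and the powers of $2$ hidden in $2^N$ factors from Lemma~\ref{L:disc and spin rules} — is routine. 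Details of this computation are carried out in \S\ref{SS:K hypersurface proof}.
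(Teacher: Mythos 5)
Your overall strategy coincides with the paper's: by Proposition~\ref{P:K criterion}, $K$ is determined by the square class of $(-1)^{N/2}\disc(V_\ell)$ (equivalently of $(-1)^{N/2}P_f(1)P_f(-1)$), and since for large $\ell$ the space $V_\ell$ is the reduction of the primitive cohomology lattice $L=H^n(X,\ZZ)_\circ$ of a complex fiber, everything reduces to computing $\disc(L)$ modulo squares; your closing argument that $(-1)^{(d-1)/2}d$ is a rational square if and only if $d$ is a square is also fine.

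The genuine gap is your step (iii), and you flag it yourself: you assert that the discriminant of the primitive intersection form is ``classically known'' to be $\pm d$ with the correct sign, and you only sketch a Fermat/Jacobi-sum route (and then defer to \S\ref{SS:K hypersurface proof}, which is circular), without carrying anything out. That is exactly where the content of the lemma lies. In the paper, $\disc(L)=\pm d$ up to squares comes from Beauville's structure theorem for the primitive lattice (equivalently from unimodularity of $H^n(X,\ZZ)$ together with $h^2=d$; note this contradicts your claim in step (ii) that the complementary piece $\calF_\ell$ has discriminant ``not involving $d$'' --- the class $h$ has self-intersection $d$). The essential remaining issue is the sign: one must prove $\disc(L)=+d$ rather than $-d$. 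The paper does this by a signature computation over $\RR$: the Hodge index theorem gives $b_+-b_-=\sum_{p,q}(-1)^p h^{p,q}(X)$, Hirzebruch's generating function for hypersurface Hodge numbers yields $b_+-b_-\equiv d\pmod 4$ (Lemma~\ref{L:hodge index needed}), and combining with $b_++b_-=N+1\equiv d\pmod 4$ forces $b_-$ to be even, hence $\disc(L)>0$. Note also that the factor $(-1)^{(d-1)/2}$ in the statement is not a sign attached to $\disc(L)$ at all; it comes from $(-1)^{N/2}$ via $N\equiv d-1\pmod 4$, so describing step (iii) as a ``$(-1)^{(d-1)/2}$-type correction'' to the lattice discriminant conflates two different sources of sign. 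Without an actual proof that $\disc(L)=d$ (positive), your argument cannot distinguish $K=\QQ\big(\sqrt{(-1)^{(d-1)/2}d}\big)$ from $K=\QQ\big(\sqrt{-(-1)^{(d-1)/2}d}\big)$, which is precisely the dichotomy the lemma must resolve.
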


We have verified the axiomatic setup of \S\ref{SS:new setup}.  Lemma~\ref{L:K hypersurface} describes the field $K$ when $N$ is even and in particular describes when $K=\QQ$.  Theorem~\ref{T:hypersurfaces} now follows from Theorem~\ref{T:main 1}.

\subsection{Proof of Lemma~\ref{L:K hypersurface}} \label{SS:K hypersurface proof}

 Let $\calX$ be a smooth hypersurface of degree $d$ in $\PP^{n+1}_\CC$ and define the complex manifold $X:=\calX(\CC)$. Let $h$ in $H^n(X,\ZZ)$ be the class of a linear section of codimension $n/2$; we have $h^2=d$.
Let $L:=H^n(X,\ZZ)_\circ$ be the \emph{primitive cohomology lattice}, i.e., the orthogonal complement in $H^n(X,\ZZ)$ of the class $h$ with respect to the usual intersection pairing.  Note that $L$ is a lattice, i.e., an orthogonal space over $\ZZ$, and so the discriminant of $L$ is a well-defined integer.   Beauville \cite{MR2588789}*{Theorem~4} describes the structure of $L$ from which it is clear that $\disc(L)=\pm d$.

We can take $M_\ell$ to be the fiber of the sheaf $\Ev_{\ZZ_\ell}$ above the complex point corresponding to $\calX$.   For $\ell$ sufficiently large, the orthogonal space $M_\ell$ will be isomorphic to $L\otimes_\ZZ \ZZ_\ell$.   So for $\ell$ sufficiently large, the orthogonal space $V_\ell:=M_\ell/\ell M_\ell$ over $\FF_\ell$ will have discriminant $\disc(L)\cdot (\FF_\ell^\times)^2$.   

From the description of $K$ in \S\ref{SS:the field K}, a sufficiently large prime $\ell$ splits in $K$ if and only if $(-1)^{N/2} \disc(L)$ is a square modulo $\ell$.  Therefore, $K=\QQ(\sqrt{(-1)^{N/2} \disc(L)})$.   Using that $N=(d-1)((d-1)^{n+1}-1)/d$ and $d$ is odd, we find that $N\equiv (d-1)(-1)/d \equiv d-1 \pmod{4}$.   Therefore, $K=\QQ(\sqrt{(-1)^{(d-1)/2} \disc(L)})$.

We will show that $\disc(L)=d$ and hence $K=\QQ(\sqrt{(-1)^{(d-1)/2} d})$.   For $K$ to be $\QQ$, we certainly need $d$ to be a square.  If $d$ is a square, then $d\equiv 1\pmod{4}$ since it is odd and thus $K=\QQ$.\\

It remains to prove that $\disc(L)=d$.  Since $\disc(L)=\pm d$, we need only show that $\disc(L)$ is positive.

We now consider the cohomology group $H^n(X,\RR)$.  The cup product gives a non-degenerate symmetric pairing $H^n(X,\RR) \times H^n(X,\RR) \to \RR$.  So $H^n(X,\RR)$ is an orthogonal space over $\RR$ and we will now compute its discriminant; there are two possibilities $(\RR^\times)^2$ and $-1\cdot (\RR^\times)^2$.  We claim that $\disc(H^n(X,\RR))=(\RR^\times)^2$.   Since $H^n(X,\RR) = L\otimes_\ZZ \RR \oplus \RR h$ and $h^2=d>0$, this  claim will prove that $\disc(L)$ is positive.
There is an orthogonal basis $v_1,\ldots, v_m$ over $\RR$ of $H^n(X,\RR)$.  By scaling the vectors, we may assume that $\ang{v_i}{v_i} = \pm 1$.    Let $b^+$ and $b^-$ be the number of $v_i$ for which $\ang{v_i}{v_i}$ is $1$ and $-1$, respectively.   The discriminant of $H^n(X,\RR)$ is thus equal to $(-1)^{b_-}(\RR^\times)^2$; so to complete the proof of Lemma~\ref{L:K hypersurface}, it suffices to show that $b_-$ is even.

\begin{lemma} \label{L:hodge index needed}
We have $b_+ - b_- \equiv d \pmod{4}$.
\end{lemma}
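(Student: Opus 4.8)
The plan is to recognize $b_+-b_-$ as the topological signature $\tau(X)$ of the manifold $X$ (complex dimension $n$, hence real dimension $2n\equiv0\pmod4$), compute $\tau(X)$ by the Hirzebruch signature theorem, and reduce the resulting expression modulo $4$.

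First I would use that, since $n$ is even, the cup‑product form on $H^n(X,\RR)$ — into which the preceding discussion decomposed $L\otimes\RR\oplus\RR h$ — is exactly the intersection form computing $\tau(X)$, so $b_+-b_-=\tau(X)=\langle L(TX),[X]\rangle$. Writing $h$ also for the hyperplane class on $X$, the Euler sequence gives $L(T\PP^{n+1})=(h/\tanh h)^{n+2}$, adjunction ($0\to TX\to T\PP^{n+1}|_X\to\mathcal{O}_X(d)\to 0$) gives $L(TX)=(h/\tanh h)^{n+2}\tanh(dh)/(dh)$, and $[X]=d\,h\cap[\PP^{n+1}]$, so
\[
\tau(X)=\big[h^{\,n+1}\big]\Big(\big(h/\tanh h\big)^{n+2}\tanh(dh)\Big).
\]
(One can also arrive here from the Hodge--Riemann bilinear relations together with Griffiths' identification of $H^{n-p,p}_{\mathrm{prim}}(X)$ with graded pieces of the Jacobian ring of $X$.)

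Next I would evaluate this coefficient by a residue computation. The substitution $t=\tanh h$ turns it into $\tau(X)=\operatorname{Res}_{t=0}\dfrac{(1+t)^d-(1-t)^d}{t^{\,n+2}(1-t^2)\big((1+t)^d+(1-t)^d\big)}\,dt$, a rational $1$‑form on $\PP^1$ with no residue at infinity (as $n\ge2$, $d\ge3$), residues $-\tfrac12$ at each of $t=\pm1$, and residue $1/(d\,t_\ell^{\,n+2})$ at each simple zero $t_\ell$ of $(1+t)^d+(1-t)^d$ — which for $d$ odd are $t_\ell=i\tan\!\big(\tfrac{(2\ell+1)\pi}{2d}\big)$. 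Summing residues and re‑indexing the resulting cotangent sum into a tangent sum gives, with $k:=(n+2)/2$,
\[
\tau(X)=1-\frac{(-1)^{k}}{d}\sum_{j=1}^{d-1}\tan^{2k}\!\Big(\frac{\pi j}{d}\Big)=1-\frac{2(-1)^{k}}{d}\,p_k ,
\]
where $p_k$ is the $k$-th power sum of the $(d-1)/2$ numbers $\tan^2(\pi j/d)$, $1\le j\le(d-1)/2$.

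Finally I would carry out the arithmetic. Those numbers are the roots of $\sum_i(-1)^i\binom d{2i+1}x^i$, and putting this polynomial in monic form shows their elementary symmetric functions are $e_s=\binom d{2s}$. Then, by induction on $k$ using Newton's identities $p_k=\sum_{1\le j<k}(-1)^{j-1}e_jp_{k-j}+(-1)^{k-1}k\,e_k$, I would prove
\[
p_k\equiv\binom d2\pmod{2d}\qquad(k\ge1).
\]
The base case is $p_1=e_1=\binom d2$; in the inductive step, reducing the recursion mod $2d$ via $p_{k-j}\equiv\binom d2$ leaves a multiple of $d$ (the term $k\,e_k=k\binom d{2k}=\tfrac12 d\binom{d-1}{2k-1}$ is divisible by $d$ since $\binom{d-1}{2k-1}$ is even, and $\sum_j(-1)^j\binom d{2j}=\operatorname{Re}((1+i)^d)=\pm2^{(d-1)/2}$ is even because $d\ge3$), and what survives is an elementary congruence mod $2$ among binomial coefficients, which I would settle with Kummer's theorem on $2$‑adic valuations. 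Granting the congruence, $2p_k/d\equiv d-1\pmod4$, so $\tau(X)\equiv 1-(-1)^k(d-1)\equiv d\pmod4$, the last step because $d-1$ is even makes the sign $(-1)^k$ immaterial modulo $4$. The main obstacle is precisely this last congruence: everything up to the closed formula for $\tau(X)$ is routine (if lengthy), but extracting the correct value of $p_k$ modulo $2d$ uniformly in $k$ is delicate, and it genuinely uses that $d$ is odd — note $b_+-b_-\equiv d\pmod 4$ fails for $d\equiv2\pmod4$.
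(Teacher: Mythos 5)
Your route is genuinely different from the paper's: the paper never invokes the signature theorem, but instead applies the Hodge index theorem to write $b_+-b_-=\sum_{p,q}(-1)^p h^{p,q}(X)$ and then specializes Hirzebruch's generating function for the Hodge numbers of hypersurfaces at $y=-x$, $z=x$, where a two-line power-series manipulation gives $\alpha/\beta\equiv d\pmod 4$ uniformly; all the arithmetic happens at once. Your identification $b_+-b_-=\tau(X)$, the $L$-class/adjunction computation, the residue evaluation (the residue $1/(dt_\ell^{\,n+2})$ at the zeros of $(1+t)^d+(1-t)^d$, the residues $-\tfrac12$ at $t=\pm1$, the reindexing of the cotangent sum) and the resulting closed formula $\tau=1-\tfrac{2(-1)^k}{d}p_k$ are all correct (it returns $-5$ for $(n,d)=(2,3)$ and $-35$ for $(2,5)$), and granting $p_k\equiv\binom d2\pmod{2d}$ your deduction $\tau\equiv d\pmod 4$ is sound.

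The gap is that congruence itself, which is where the whole content of the lemma now sits and which you only assert. Your Newton-identity induction does reduce it, after dividing out the common factor $d$ (both $\binom d2$ and $ke_k=\tfrac d2\binom{d-1}{2k-1}$ are multiples of $d$), to the parity statement
\[
\frac{d-1}{2}\sum_{i=0}^{2k-2}\binom{d-1}{i}\;+\;\frac12\binom{d-1}{2k-1}\;\equiv\;0\pmod 2\qquad\bigl(2\le k\le \tfrac{d-1}{2}\bigr),
\]
using $\sum_{j=0}^{k-1}\binom d{2j}=\sum_{i=0}^{2k-2}\binom{d-1}{i}$ (the range $k>\tfrac{d-1}{2}$ is the place where your observation $\sum_j(-1)^j\binom d{2j}=\operatorname{Re}((1+i)^d)=\pm2^{(d-1)/2}$ is used, and that case is fine). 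When $d\equiv1\pmod4$ the displayed statement does follow from Kummer's theorem: adding $2k-1$ and $(d-1)-(2k-1)$ in base $2$ forces at least two carries, so $4\mid\binom{d-1}{2k-1}$, and the first term is even. But when $d\equiv3\pmod 4$ what is required is that the partial sum $\sum_{i=0}^{2k-2}\binom{d-1}{i}$ and $\tfrac12\binom{d-1}{2k-1}$ have the same parity, i.e.\ a statement coupling $\binom{d-1}{2k-1}\bmod 4$ to the number of odd coefficients $\binom{d-1}{i}$ with $i\le 2k-2$; a single appeal to Kummer does not give this, and no argument is offered. The claim is true (it is equivalent to the lemma), but as written your proposal proves the easy half of the arithmetic and leaves the hard half as an assertion — exactly the uniform-in-$k$ binomial delicacy that the paper's generating-function argument is designed to bypass.
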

\begin{proof}
The Hodge index theorem \cite{MR1967689}*{Theorem~6.33} shows that
\[
b_+ - b_- = \sum_{p,q} (-1)^p h^{p,q}(X),
\] 
where $h^{p,q}(X)$ is the $(p,q)$-Hodge number of $X$.    For $0\leq i \leq 2n$ with $i\neq n$, $\dim_\RR H^i(X,\RR)$ is $0$ if $i$ is odd and $1$ if $i$ is even, cf.~\cite{MR1659828}*{\S11.4.2}.  So when $p+q\neq n$, we have $h^{p,q}(X)=1$ if $0\leq p=q \leq n$ and $h^{p,q}(X)=0$ otherwise.  Therefore, 
\[
b_+ - b_- = \sum_{p+q=n} (-1)^p h^{p,q}(X) + \sum_{0\leq i \leq n,\, i\neq n/2} (-1)^{i} 
= \sum_{p+q=n} (-1)^p h^{p,q}(X) +1-(-1)^{n/2}.
\]

By Hirzebruch's formula for Hodge numbers, cf.~Th\'eor\`eme~2.3 of Expos\'e XI of \cite{MR0354657}, we have the following equality
\[
\sum_{p\geq 0 ,q\geq 0 } h^{p,q}_\circ \,y^p z^q = \frac{1}{(1+y)(1+z)} \Big( \frac{(1+y)^d -(1+z)^d}{-(1+y)^d z +(1+z)^d y}  -1 \Big)
\]
in $\ZZ[\![y,z]\!]$, where $h^{p,q}_\circ:=h^{p,q}-\delta_{p,q}$ and $h^{p,q}$ is the $(p,q)$-Hodge number of any smooth hypersurface of degree $d$ in $\PP^{2(p+q)+1}_\CC$.  Setting $y=-x$ and $z=x$, we have
\[
\sum_{m\geq 0} \Big(\sum_{p+q=m} (-1)^p h^{p,q}_0\Big) \, x^m= \frac{1}{(1-x)(1+x)} \Big( \frac{(1-x)^d -(1+x)^d}{-(1-x)^d x -(1+x)^d x}  -1 \Big) = \frac{1}{1-x^2}( \alpha/\beta -1),
\]
where $\alpha:=-\big((1-x)^d -(1+x)^d\big)/(2x)$ and $\beta:= ((1-x)^d+(1+x)^d)/2$.   Expanding out $\alpha$ and $\beta$, we find that
\begin{align*}
\alpha= & -\tfrac{1}{2} {\sum}_{i \geq 0} \tbinom{d}{i} ((-1)^i-1) x^{i-1} = {\sum}_{k\geq 0} \tbinom{d}{2k+1} x^{2k} \quad \text{ and} \\
\beta=&  \tfrac{1}{2} {\sum}_{i \geq 0} \tbinom{d}{i} ((-1)^i+1) x^{i}={\sum}_{k\geq 0} \tbinom{d}{2k}  x^{2k}.
\end{align*}
In particular, we have $\alpha,\beta \in \ZZ[\![x]\!]$.  For each $k\geq 0$, we have
\[
{d \choose 2k+1} - d {d \choose 2k} = {d \choose 2k} \Big(\frac{d-2k}{2k+1} - d \Big)= {d \choose 2k} \cdot \frac{-2k(d+1)}{2k+1} \equiv 0 \pmod{4},
\]
where the congruence uses that $d$ is odd.  Therefore, $\alpha\equiv d \beta \pmod{4}$.  The constant term of $\beta$ is $1$, so $\beta^{-1} \in \ZZ[\![x]\!]$ and hence $\alpha/\beta \equiv d \pmod{4}$.    So
\[
\sum_{m\geq 0} \Big(\sum_{p+q=m} (-1)^p h^{p,q}_0\Big) \, x^m \equiv \frac{1}{1-x^2} (d-1) =(d-1)(1+x^2+x^4+x^6+\cdots) \pmod{4}
\]
and hence
\[
\sum_{p+q=n} (-1)^p h^{p,q} = \sum_{p+q=n} (-1)^p h_\circ^{p,q} + (-1)^{n/2} \equiv d-1 + (-1)^{n/2} \pmod{4}.
\]
Therefore, $b_+ - b_- \equiv (d-1 + (-1)^{n/2})  +1-(-1)^{n/2} \equiv d  \pmod{4}$.
\end{proof}

We have  $b_+ + b_- = N+1 = (d-1)((d-1)^{n+1}+1)/d + 1$.  Using that $d$ is odd and $n+1\geq 2$, we find that $b_+ + b_- \equiv (d-1)/d+1 \equiv d \pmod{4}$.   By Lemma~\ref{L:hodge index needed}. we deduce that 
\[
2b_- = (b_++b_-)-(b_+-b_-) \equiv  d -d =0 \pmod{4}.
\]
This implies that $b_-$ is even as desired.

\section{Proof of Theorems~\ref{T:main EC} and \ref{T:main 2 EC}  } \label{S:cohomological interpretation}
We first check the axiomatic setup of \S\ref{SS:new setup} with $R=\FF_q$ and $U=U_d$.  Let $\Sigma$ be the set of primes $\ell\geq 5$ that do not divide $q$.  \\ 

Take any $\ell\in \Sigma$.  Following Katz, Hall constructs in \S6.2 of \cite{MR2372151} a representation
\[
\bbar\rho_\ell\colon \pi_1(U_d)\to \Or(V_\ell),
\]
with $V_\ell$ an orthogonal space over $\FF_\ell$, satisfying
\[
P_u(T) \equiv \det(I-\bbar\rho_\ell(\Frob_u)T) \pmod{\ell}
\]
for all $n\geq 1$ and $u\in U_d(\FF_{q^n})$.   One can easily see that $\bbar\rho_\ell$ arises from a representation $\rho_\ell\colon \pi_1(U_d)\to \Or(M_\ell)$, with $M_\ell$ an orthogonal space over $\ZZ_\ell$ and $V_\ell\cong M_\ell/\ell M_\ell$, satisfying 
\begin{align} \label{E:reversed}
P_u(T) = \det(I-\rho_\ell(\Frob_u)T) 
\end{align}
for all $n\geq 1$ and $u\in U_d(\FF_{q^n})$ (in Hall's construction, simply replace $\mathcal{T}_{d,\ell}$ with the $\ZZ_\ell$-sheaf $\mathcal{T}_{d,\ell^\infty}$ described in \S6.6 of \cite{MR2372151}).  The common dimension of the $V_\ell$ is our integer $N_d$ by \cite{MR2372151}*{Lemma~6.2}.  We have $N_d\geq 3$ since by assumption.

It remains to verify that condition (\ref{bm-b}) in \S\ref{SSS:bm} holds.  To do this, we will restrict to a subvariety of $U_d$; after possibly replacing $\FF_q$ by a finite extension, one can further assume that $U_{d-1}(\FF_q)$ is non-empty.\\

Now fix a polynomial $g \in   U_{d-1}(\FF_q)$.   We let $U$ be the subvariety of $\mathbb{A}^1_{\FF_q}$ consisting of $c$ for which $(t-c)g(t)$ is separable and relatively prime to $m(t)$.   We can identify $U$ with a closed subvariety of $U_d$ via the map $c\mapsto (t-c)g(t)$.   Restricting $\rho_\ell$ and $\bbar\rho_\ell $ to $\pi_1(U)$ gives representations $\varrho_\ell\colon \pi_1(U)\to\Or(M_\ell)$ and $\bbar\varrho_\ell\colon  \pi_1(U)\to\Or(V_\ell)$.  These representations satisfy the axiomatic setup of \S\ref{SSS:rep} and \S\ref{SSS:compatibility} with $R=\FF_q$ and the same set $\Sigma$ from the above discussion.  Moreover, each representation $\varrho_\ell$ is tamely ramified, cf.~\cite{MR2372151}*{\S6.3}.\\

  Let $\Lambda$ be the set of $\ell\in \Sigma$ which do not divide $\max\{1,-\ord_v(j_E)\}$ for any place $v$ of $\FF_q(t)$, where $j_E\in \FF_q(t)$ is the $j$-invariant of $E$.  We now show that condition (\ref{bm-b}) holds for the representations $\{\varrho_\ell\}_{\ell \in \Gamma}$. 

\begin{lemma} \label{L:big monodromy ec}
For each prime $\ell$, we have $\bbar\varrho_\ell(\pi_1(U_{\FFbar_q}))\supseteq \Omega(V_\ell)$ and $\bbar\varrho_\ell(\pi_1(U_{\FFbar_q}))$ is not a subgroup of $\SO(V_\ell)$.
\end{lemma}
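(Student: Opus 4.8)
The plan is to obtain the containment $\bbar\varrho_\ell(\pi_1(U_{\FFbar_q}))\supseteq\Omega(V_\ell)$ from the big orthogonal monodromy theorem of Hall \cite{MR2372151}, and to obtain the non‑containment in $\SO(V_\ell)$ by exhibiting a local monodromy element of determinant $-1$. The one‑parameter family $\{E_{(t-c)g(t)}\}_{c\in U}$ of quadratic twists is exactly of the shape treated in \cite{MR2372151}, and Hall's theorem, applied uniformly in $\ell$ (we assume $\ell\geq 5$, so no small primes are excluded), yields the first containment once its hypotheses are verified; that verification is the real content. Granting it, the lemma reduces to producing a determinant $-1$ element in the geometric monodromy group.

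The first task is therefore to match our hypotheses to Hall's. The standing assumption of \S\ref{SS:EC setup} that $E$ has multiplicative reduction at some place $v\neq\infty$ supplies a place of unipotent (in particular nontrivial) local monodromy for the rank‑two sheaf attached to $E$, which Hall's argument requires; the inequality $N_d\geq 6B$, with $B=\sum_{v\neq\infty}b_v(E)\deg v$ measuring the ``bad'' contribution, is the form in which one checks that this contribution is dominated by the generic part of the monodromy, as needed for the irreducibility and primitivity inputs; and the alternative hypothesis ``$d\geq 2$ or $E$ has a place of Kodaira type $\operatorname{I}_0^*$'' guarantees that the twisting divisor $(t-c)g(t)$ carries enough moving ramification (if $d\geq 2$ then $\deg g=d-1\geq 1$, so $g$ has a root, while if $d=1$ the $\operatorname{I}_0^*$ place plays the analogous role). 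Running the Kodaira‑symbol bookkeeping behind the formula $N_d=f_\infty(E_{t^d})+\sum_{v\neq\infty}f_v(E)\deg v-4+2d$ through the precise statement of \cite{MR2372151} is the main obstacle here: it is routine rather than a new idea, but it must be done carefully, in particular keeping track of the behaviour at $\infty$ and of the auxiliary twist $E_{t^d}$.

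For the determinant $-1$ element it is cleanest to produce a reflection by hand. Working over $\FFbar_q$, let $c_0$ be one of the $\deg v$ points of $\PP^1_{\FFbar_q}$ lying over a place $v\neq\infty$ at which $E$ has multiplicative reduction, say of type $\operatorname{I}_n$; then $c_0\in\PP^1_{\FFbar_q}\setminus U_{\FFbar_q}$, since $(t-c_0)$ divides $m(t)$. As the linear factor specializes, $c\to c_0$, the new ramification point of the quadratic character at $t=c$ collides with the fibre of $E_u$ of type $\operatorname{I}_n$ at $t=c_0$, which is replaced by one of type $\operatorname{I}_n^*$; a conductor count shows that $\dim V_\ell$ drops by exactly one, so the local monodromy of the family around $c_0$ fixes a hyperplane of $V_\ell$, and the $\operatorname{I}_n^*$‑structure (which contributes a sign $-1$, so the local monodromy is not unipotent) forces it to be the reflection in a one‑dimensional non‑isotropic subspace. (This is precisely the phenomenon that makes the monodromy orthogonal rather than symplectic.) Hence the image of a small loop in $U_{\FFbar_q}$ around $c_0$ lies in $\bbar\varrho_\ell(\pi_1(U_{\FFbar_q}))$ and has determinant $-1$, so this group is not contained in $\SO(V_\ell)$.
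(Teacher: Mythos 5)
Your overall strategy is the same as the paper's (Hall's big orthogonal monodromy for the $\Omega(V_\ell)$-containment, and a reflection coming from the collision of the moving twist point with a multiplicative fibre for the determinant $-1$ part), but as written the proposal has genuine gaps at exactly the places where the work lies. For the containment $\bbar\varrho_\ell(\pi_1(U_{\FFbar_q}))\supseteq\Omega(V_\ell)$ you defer entirely to \cite{MR2372151} and explicitly leave the hypothesis verification open (``the main obstacle''); but that verification \emph{is} the proof, and the family $\{E_{(t-c)g(t)}\}_c$ is not literally of the shape treated there. The paper closes this with one observation you are missing: replace $E$ by its quadratic twist $E_g$ by $g(t)$. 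This leaves $B$ unchanged and turns the family into the twists of $E_g$ by the linear polynomials $t-c$, i.e.\ the $d=1$ case, which is exactly the setting of the explicit form of Hall's theorem (Theorem~3.4 of \cite{orthogonal}). Its hypotheses are then immediate: since $\gcd(g,m)=1$, the twist by $g$ is unramified at the bad places of $E$, so $E_g$ still has a place $v\neq\infty$ of multiplicative reduction; and $E_g$ has a place $v\neq\infty$ with Kodaira symbol $\operatorname{I}_0^*$, namely a root of $g$ if $d\geq 2$ (there $E$ has good reduction and $g$ vanishes to odd order), or the place assumed in the theorem if $d=1$ (a constant twist does not change Kodaira symbols). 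Your claim that this works ``uniformly in $\ell$, $\ell\geq 5$'' also needs to be checked against the precise hypotheses of the cited theorem; this is why the surrounding section works with the set $\Lambda$ of primes not dividing $-\ord_v(j_E)$.

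For the non-containment in $\SO(V_\ell)$, note first that the cited theorem already supplies an element of determinant $-1$, so no separate argument is needed once the first part is set up correctly. Your by-hand argument identifies the right mechanism (it is how Katz, Hall and \cite{orthogonal} produce reflections), but two of its steps are assertions rather than proofs. The inference ``the conductor drops by one, hence the local monodromy at $c_0$ fixes a hyperplane of $V_\ell$'' is not valid as stated: the degree of the $L$-function of the degenerate member $E_{(t-c_0)g}$ is not a priori the dimension of the inertia invariants of the relevant sheaf at the boundary point $c_0$, and identifying the two requires the local (vanishing-cycle/middle-extension) analysis carried out in the references. Likewise ``the $\operatorname{I}_n^*$-structure contributes a sign $-1$'' is precisely the determinant computation that has to be done, not a fact one can quote from the Kodaira symbol alone. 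So either invoke the explicit local-monodromy statement from \cite{orthogonal} (or Hall's \S6) verbatim, or carry out that computation; as it stands both halves of your argument lean on the unproved content of the very results being cited.
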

\begin{proof}
After replacing $E$ by its quadratic twist by $g(t)$, we may assume without loss of generality that $d=1$.  Note that performing this twist leaves the integer $B$ unchanged.  Using the assumptions of the theorems, there will be a place $v\neq \infty$ of $\FF_q(t)$ for which $E$ has Kodaira symbol $\operatorname{I}_0^*$.  There is also a place $v\neq \infty$ for which $E$ has multiplicative reduction, i.e., $E$ has Kodaira symbol $\operatorname{I}_n$ at $v$ for some $n\geq 1$.  The lemma is now a direct consequence of Theorem~3.4 of \cite{orthogonal} which is an explicit version of Theorem~6.4 of \cite{MR2372151}.
\end{proof}

An immediate consequence of Lemma~\ref{L:big monodromy ec} is that $\bbar\rho_\ell(\pi_1(U_{d,\FFbar_q}))\supseteq \Omega(V_\ell)$ for all $\ell\in \Lambda$ and $\bbar\rho_\ell(\pi_1(U_{d,\FFbar_q}))$ is not a subgroup of $\SO(V_\ell)$.  

\begin{remark}
Using that $\bbar\rho_\ell(\pi_1(U_{d,\FFbar_q}))$ is not a subgroup of $\SO(V_\ell)$ and equidistribution, one can prove Remark~\ref{R:ECs}(\ref{R:ECs eps}) which says that  $|\{u\in U_d(\FF_{q^n}): \varepsilon_u=\varepsilon\}|/|U(\FF_{q^n})| \to 1/2$ as $n\to \infty$ for each $\varepsilon \in \{\pm 1\}$
\end{remark}

We have now verified enough to apply Theorems~\ref{T:main 1} and \ref{T:main 2} to the representations $\{\rho_\ell\}_{\ell\in \Lambda}$.    Note that $U$ is open in $\mathbb{A}^1_{\FF_q}\subseteq \PP^1_{\FF_q}$ and $|(\PP^1-U)(\FFbar_q)|=d+\deg m$.\\

Theorems~\ref{T:main EC} and \ref{T:main 2 EC} are now immediate if we can prove that $K=\QQ(\sqrt{(-1)^{N_d/2} D_d})$ if $N_d$ is even.

Now suppose that $N_d$ is even.  It remains to compute the field $K$ from \S\ref{SS:the field K} and determine when $K=\QQ$.  The following lemma depends on a result from \cite{orthogonal} which uses known cases of the Birch and Swinnerton-Dyer conjecture for elliptic curves over global function fields.

\begin{lemma} \label{L:Dd class}
For $\ell\in \Lambda$, we have $\disc(V_\ell)=D_d \cdot (\FF_\ell^\times)^2$.
\end{lemma}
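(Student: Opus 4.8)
The plan is, for each fixed $\ell\in\Lambda$, to manufacture one well-chosen point $u\in U_d(\FF_{q^n})$ (for some $n$) whose Frobenius image at $\ell$ has neither $1$ nor $-1$ among its eigenvalues, and then to compute $\disc(V_\ell)$ from the characteristic polynomial of that Frobenius in two different ways. Before doing so I would record that $\ell\nmid D_d$ for every $\ell\in\Lambda$: from the table defining the integers $\gamma_v$, a prime $\geq 5$ dividing $D_d=\gamma_\infty(E_{t^d})\prod_{v\neq\infty}\gamma_v(E)^{\deg v}$ must divide $n/\gcd(2,n)$ for some place $v$ with Kodaira symbol $\operatorname{I}_n$, hence must divide $n=-\ord_v(j_E)$ (the $j$-invariant being unchanged under quadratic twist, so that the value at $\infty$ computed for $E_{t^d}$ is the same as for $E$); this is excluded by the definition of $\Lambda$. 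In particular $D_d\cdot(\FF_\ell^\times)^2$ is a well-defined element of $\FF_\ell^\times/(\FF_\ell^\times)^2$.

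Next, fix $\ell\in\Lambda$ and choose $g\in\Omega(V_\ell)$ whose characteristic polynomial is separable and has neither $1$ nor $-1$ as a root; such an element exists by a short argument using Proposition~\ref{P:conjugacy for N even} and the existence of suitable irreducible polynomials over $\FF_\ell$ (here one uses $\ell\geq 5$ and $N_d\geq 4$, the latter because $N_d$ is even in this lemma). By Lemma~\ref{L:big monodromy ec} we have $g\in\Omega(V_\ell)\subseteq\bbar\rho_\ell(\pi_1(U_{d,\FFbar_q}))$, so, exactly as in the proof of Proposition~\ref{P:K criterion}, equidistribution produces an integer $n\geq 1$ and a point $u\in U_d(\FF_{q^n})$ with $A:=\bbar\rho_\ell(\Frob_u)$ conjugate to $g$ in $\Or(V_\ell)$. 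Then $\det(I-AT)=\det(I-gT)$, and since $P_u(T)=\det(I-\rho_\ell(\Frob_u)T)$ this says that $\det(I-gT)$ is the reduction of $P_u(T)$ modulo $\ell$; in particular $P_u$ is separable and $P_u(1)P_u(-1)\equiv\det(I-g)\det(I+g)\not\equiv 0\pmod\ell$, so $P_u(\pm1)\neq 0$. (Since $\det(A)=1$ and $\ell\geq 5$ we also get $\det(\rho_\ell(\Frob_u))=1$ and $\varepsilon_u=(-1)^{N_d}=1$.)

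Now I would combine two evaluations of the square class of $P_u(1)P_u(-1)$. On the one hand, Lemma~\ref{L:disc and spin rules}(\ref{L:disc and spin rules iii}) applied to $A$ gives $\disc(V_\ell)=\big(P_u(1)P_u(-1)\bmod\ell\big)\cdot(\FF_\ell^\times)^2$. On the other hand, $P_u(1)=L(1/q^n,E_u)$ and $P_u(-1)=L(-1/q^n,E_u)=L(1/q^n,E_{\alpha u})$ for a nonsquare $\alpha\in\FF_{q^n}^\times$, and since both values are nonzero, the argument sketched in Remark~\ref{R:ECs}(iii) — which depends on the known cases of the Birch and Swinnerton-Dyer conjecture for elliptic curves over global function fields (Tate, Milne) together with the local Tate-algorithm computation of \S2.4 of \cite{orthogonal} — shows that $P_u(1)P_u(-1)\in D_d\cdot(\QQ^\times)^2$; equivalently, via Remark~\ref{R:ECs}(iii) and Lemma~\ref{L:basic separability}(\ref{L:basic separability i}) (using $\varepsilon_u=1$ and $P_u$ separable), $\Delta_u\cdot(\QQ^\times)^2=(-1)^{N_d/2}P_u(1)P_u(-1)\cdot(\QQ^\times)^2=(-1)^{N_d/2}D_d\cdot(\QQ^\times)^2$. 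Finally, since $\rho_\ell(\Frob_u)\in\Or(M_\ell)$ we have $P_u\in\ZZ_\ell[T]$, so $P_u(1)P_u(-1)$ is an $\ell$-adic unit (its reduction being nonzero), and as $\ell\nmid D_d$ we may write $P_u(1)P_u(-1)=D_d\,r^2$ with $r\in\ZZ_\ell^\times$; reducing modulo $\ell$ yields $\disc(V_\ell)=D_d\cdot(\FF_\ell^\times)^2$, as required.

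The deep ingredient, and the only real obstacle, is the input from \cite{orthogonal}: the assertion that $P_u(1)P_u(-1)$ always lies in the \emph{fixed} square class $D_d\cdot(\QQ^\times)^2$, independently of the choice of $u$ and $n$. This rests on the proven part of the Birch and Swinnerton-Dyer conjecture over function fields and on a careful analysis of N\'eron models via Tate's algorithm, in the course of which several of the arithmetic invariants appearing in the BSD formula (for instance the order of the Tate--Shafarevich group) turn out to be squares and so drop out of the square class. The remaining steps — the existence of a suitable $g$, the equidistribution step, and the passage from the Frobenius characteristic polynomial to $\disc(V_\ell)$ via Lemma~\ref{L:disc and spin rules} — are routine.
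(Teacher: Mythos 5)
Your argument is correct, and it follows the same skeleton as the paper's proof: use Lemma~\ref{L:big monodromy ec} together with equidistribution to manufacture a point whose Frobenius image at $\ell$ has no eigenvalue $\pm 1$, and then feed the resulting characteristic polynomial into the BSD-based computation from \cite{orthogonal}. The only real divergence is in how that external input is invoked. The paper cites \cite{orthogonal}*{Proposition~3.2(e)}, which directly yields $\disc(V_\ell)=\prod_v \gamma_v^{\deg v}\cdot(\FF_\ell^\times)^2$ for the relevant twist once such a Frobenius exists, and then identifies the product of the $\gamma_v$'s with $D_d$; you instead recover the mod-$\ell$ statement from the rational square class $P_u(1)P_u(-1)\in D_d\cdot(\QQ^\times)^2$ (the content of Remark~\ref{R:ECs}(iii), i.e.\ \S2.4 of \cite{orthogonal}) combined with Lemma~\ref{L:disc and spin rules}(\ref{L:disc and spin rules iii}) and your preliminary check that $\ell\nmid D_d$ for $\ell\in\Lambda$, which is indeed needed for the final reduction and is correctly argued from the table of $\gamma_v$'s and the definition of $\Lambda$. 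Since the cited proposition of \cite{orthogonal} is proved in essentially this way, the deep ingredient (Tate--Milne BSD plus the Tate-algorithm bookkeeping) is identical in both routes; what your version buys is that the passage from the characteristic-zero square class to the mod-$\ell$ discriminant is made explicit inside this paper, at the cost of the extra verifications (separability, $\varepsilon_u=1$, $\ell\nmid D_d$) and of the asserted existence of $g\in\Omega(V_\ell)$ with separable characteristic polynomial avoiding $\pm1$; the latter is left at the same level of detail as the paper's own choice of $g$ with $\det(I\pm g)\neq 0$, and can be supplied via Proposition~\ref{P:conjugacy for N even} once one constructs a monic separable reciprocal $f\in\FF_\ell[T]$ of degree $N_d$ with $f(1)f(-1)\in\disc(V_\ell)$ and $f(-1)$ a square, which is elementary for $\ell\geq 5$.
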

\begin{proof}
Take any $\ell\in \Lambda$.  By Lemma~\ref{L:big monodromy ec}, there is an element $g\in \bbar\varrho_\ell(\pi_1(U))$ such that $\det(I\pm g)\neq 0$.   By equidistribution, there is some $c\in U(\FF_{q^n})$ such that $\bbar\varrho_\ell(\Frob_c)$ is conjugate to $g$ in $\Or(V_\ell)$.  By \cite{orthogonal}*{Proposition~3.2(e)}, we have $\disc(V_\ell)=D \cdot (\FF_\ell^\times)^2$,
where $D:=\prod_{v} \gamma_v(E_{t-c})^{\deg v}$ and the product is over places $v$ of $\FF_{q^n}(t)$.  We have $D=\gamma_\infty(E_{t^d}) \prod_{v\neq \infty} \gamma_v(E_{t-c})^{\deg v}$, where the product is over places $v$ of $\FF_{q^n}(t)$.  We have $D=D_d$ by noting that the integer $\gamma_\infty(E_{t^d}) \prod_{v\neq \infty} \gamma_v(E_{t-c})^{\deg v}$ does not change if we consider $v$ running over places of $\FF_q(t)$ instead of $\FF_{q^n}(t)$.
\end{proof}

By Lemma~\ref{L:Dd class}, we have $K=\QQ(\sqrt{(-1)^{N_d/2} D_d})$.   In particular,  $K=\QQ$ if and only if $(-1)^{N_d/2} D_d$ is a square.

\appendix

\section{The Selberg sieve}
\label{S:The Selberg sieve}

In this appendix, we give a version of Selberg's sieve.   This elegant and useful method was introduced by Selberg in \cite{MR0022871} to sieve integers by congruences modulo primes.  For background, see \cite{MR2061214}*{\S6.5} or \cite{MR2200366}*{\S7.2}.  For future reference, we give a version that is more general than what is required for our application.

\begin{theorem} \label{T:Selberg sieve} 
Let $A$ be a measure space with a bounded measure $\mu$.  Let $\Lambda$ be a finite set, and for each $\lambda\in\Lambda$ fix a measurable subset $A_\lambda$ of $A$.  Define the set
\[
S: = A - \big(\cup_{\lambda\in\Lambda} A_\lambda\big).
\]
Fix real numbers $\{\omega_\lambda\}_{\lambda\in\Lambda}$ with $0< \omega_\lambda<1$ and $X\geq 0$.  Define $A_D:=\cap_{\lambda\in D} A_\lambda$ for each non-empty $D\subseteq\Lambda$ and set $A_\emptyset:=A$.  
Let $r_D$ be the real number satisfying
\begin{equation} \label{E:remainder}
\mu(A_D)=\Big(\prod_{\lambda\in D} \omega_\lambda \Big)\cdot X + r_D.
\end{equation}

Let $\Zz$ be a set of subsets of $\Lambda$ such that if $D\in \Zz$ and $E\subseteq D$, then $E\in \Zz$.   Then
\begin{equation}\label{E:main bound}
\mu(S) \leq \frac{X}{H} + \sum_{D,D'\in\Zz} |r_{D\cup D'}|
\end{equation}
where $\displaystyle H:= \sum_{D\in\Zz} \prod_{\lambda\in D} \frac{\omega_\lambda}{1-\omega_\lambda}$.  (When $H=0$, we interpret this as giving the trivial bound $\mu(S)\leq +\infty$.)
\end{theorem}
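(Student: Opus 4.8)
The plan is to run the classical optimization behind Selberg's $\Lambda^2$ sieve in this abstract measure-theoretic setting. First I would dispose of the degenerate case: if $\Zz=\emptyset$ then $H=0$ and the assertion is vacuous, so assume $\Zz\neq\emptyset$; since $\Zz$ is closed under passing to subsets, $\emptyset\in\Zz$ and hence $H\geq 1>0$. The key reduction is that for \emph{any} family of reals $(y_D)_{D\in\Zz}$ with $y_\emptyset=1$ and $|y_D|\leq 1$ (extended by $y_D=0$ for $D\notin\Zz$), every $\mathbf{1}_{A_D}$ with $D\neq\emptyset$ vanishes on $S$, so $\sum_{D}y_D\mathbf{1}_{A_D}\equiv y_\emptyset=1$ there and thus $\mathbf{1}_S\leq\bigl(\sum_D y_D\mathbf{1}_{A_D}\bigr)^2$ pointwise. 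Integrating,
\[
\mu(S)\leq\int_A\Bigl(\sum_{D\in\Zz}y_D\mathbf{1}_{A_D}\Bigr)^{2}d\mu=\sum_{D,D'\in\Zz}y_Dy_{D'}\,\mu(A_{D\cup D'}),
\]
and substituting (\ref{E:remainder}) splits this into a main term $X\cdot Q(y)$ with $Q(y):=\sum_{D,D'\in\Zz}y_Dy_{D'}\prod_{\lambda\in D\cup D'}\omega_\lambda$ plus a remainder of size at most $\sum_{D,D'\in\Zz}|r_{D\cup D'}|$ (using $|y_Dy_{D'}|\leq 1$). So it suffices to exhibit an admissible $y$ with $Q(y)\leq 1/H$.

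Next I would diagonalize $Q$. Setting $t_\lambda:=\omega_\lambda/(1-\omega_\lambda)$ and $t_E:=\prod_{\lambda\in E}t_\lambda$, the elementary identity $\omega_\lambda^{-1}=1+t_\lambda^{-1}$ gives $\prod_{\lambda\in D\cup D'}\omega_\lambda=\bigl(\prod_{\lambda\in D}\omega_\lambda\bigr)\bigl(\prod_{\lambda\in D'}\omega_\lambda\bigr)\sum_{E\subseteq D\cap D'}t_E^{-1}$; substituting and interchanging the order of summation turns $Q$ into the diagonal form $Q(y)=\sum_{E\in\Zz}t_E^{-1}u_E^2$ with $u_E:=\sum_{D\in\Zz,\,D\supseteq E}y_D\prod_{\lambda\in D}\omega_\lambda$ (downward-closedness of $\Zz$ is what keeps all indices inside $\Zz$, and forces $u_E=0$ for $E\notin\Zz$). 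Möbius inversion on the Boolean lattice recovers the $y_D$ from the $u_E$, and evaluated at $D=\emptyset$ it rewrites the constraint $y_\emptyset=1$ as $\sum_{E\in\Zz}(-1)^{|E|}u_E=1$. Cauchy--Schwarz then gives $1\leq\bigl(\sum_{E\in\Zz}t_E\bigr)\bigl(\sum_{E\in\Zz}t_E^{-1}u_E^2\bigr)=H\cdot Q(y)$, i.e. $Q(y)\geq 1/H$, with equality exactly when $u_E=(-1)^{|E|}t_E/H$. I would then \emph{define} $y$ to be this optimum via the inversion formula $y_D=\bigl(\prod_{\lambda\in D}\omega_\lambda\bigr)^{-1}\sum_{E\in\Zz,\,E\supseteq D}(-1)^{|E|-|D|}u_E$, so that automatically $y_\emptyset=1$ and $Q(y)=1/H$.

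The one genuinely fiddly step — and the main obstacle — is checking that this $y$ satisfies $|y_D|\leq 1$, which is exactly what is needed to control the remainder. The plan here is: reindex each $E\supseteq D$ in $\Zz$ as $E=D\sqcup F$ with $F\cap D=\emptyset$, use $(-1)^{|E|-|D|}(-1)^{|E|}=(-1)^{|D|}$, $t_E=t_Dt_F$, and $t_D/\prod_{\lambda\in D}\omega_\lambda=\prod_{\lambda\in D}(1-\omega_\lambda)^{-1}$ to obtain $y_D=(-1)^{|D|}\bigl(H\prod_{\lambda\in D}(1-\omega_\lambda)\bigr)^{-1}\sum_{F\cap D=\emptyset,\,D\sqcup F\in\Zz}t_F$. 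Separately, group the terms of $H=\sum_{E\in\Zz}t_E$ according to $G:=E\cap D$; since $D\sqcup F\in\Zz$ forces $G\sqcup F\in\Zz$ for every $G\subseteq D$ (downward-closedness) and all $t_F>0$, this yields $H\geq\bigl(\sum_{G\subseteq D}t_G\bigr)\sum_{F\cap D=\emptyset,\,D\sqcup F\in\Zz}t_F=\prod_{\lambda\in D}(1-\omega_\lambda)^{-1}\cdot\sum_{F\cap D=\emptyset,\,D\sqcup F\in\Zz}t_F$, using $\sum_{G\subseteq D}t_G=\prod_{\lambda\in D}(1+t_\lambda)=\prod_{\lambda\in D}(1-\omega_\lambda)^{-1}$. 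Comparing the two displays gives $|y_D|\leq 1$. Feeding this $y$ back into the reduction of the first paragraph produces exactly (\ref{E:main bound}). Everything outside the $|y_D|\leq 1$ verification is just Cauchy--Schwarz, Möbius inversion, and the single multiplicative identity for $\prod_{\lambda\in D\cup D'}\omega_\lambda$.
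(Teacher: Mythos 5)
Your proposal is correct and follows essentially the same route as the paper's proof: the Selberg $\Lambda^2$ device with weights supported on $\Zz$, the diagonalization of the quadratic form via $\omega_{D\cap D'}^{-1}=\sum_{E\subseteq D\cap D'}\prod_{\lambda\in E}\frac{1-\omega_\lambda}{\omega_\lambda}$, M\"obius inversion to encode the constraint, optimization giving the value $1/H$, and the same grouping argument to verify $|y_D|\leq 1$ (your lower bound for $H$ is just the paper's inequality $(-1)^{|D|}\lambda_D\leq 1$ rearranged). The only differences are cosmetic: you make the Cauchy--Schwarz minimization explicit where the paper says ``it is not hard to show,'' and you handle the degenerate case $\Zz=\emptyset$ up front.
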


Before proceeding, let us first give some context. After normalizing the measure, we may assume that $(A,\mu)$ is a probability space and hence use the language of probability.   For each $\lambda\in \Lambda$, we have fixed an event $A_\lambda$.  So $S$ is the set of outcomes that do not belong to any of the elements $A_\lambda$.

Consider the special case where the events $\{A_\lambda\}_{\lambda\in \Lambda}$ are independent.   We have $\mu(S)=\prod_{\lambda\in \Lambda} (1-\omega_\lambda)$.  Set $\omega_\lambda=\mu(A_\lambda)$ and $X=1$.   In (\ref{E:remainder}), we take $r_D=0$ for $D\subseteq \Lambda$.  With $\Zz$ the power set of $\Lambda$, we have $H=\prod_{\lambda\in \Lambda}(1+\omega_\lambda/(1-\omega_\lambda))=\prod_{\lambda\in \Lambda}(1-\omega_\lambda)^{-1}$ and hence our sieve gives the optimal bound $\mu(S)\leq \prod_{\lambda\in \Lambda} (1-\omega_\lambda)$.

In the general setting, we think of the sets $A_\lambda$ as being ``almost independent'' and hence the number $r_D$ should be relatively small (at least for some $D$ of small cardinality).  Inclusion-exclusion gives
\[
\mu(S)= \sum_{D\subseteq\Lambda} (-1)^{|D|} \mu(A_D) = \sum_{D\subseteq\Lambda} (-1)^{|D|} \Big(\prod_{\lambda\in D} \omega_\lambda \Big) X  + R = \prod_{\lambda\in \Lambda} (1-\omega_\lambda)\cdot X + R
\]
with $R:=\sum_{D\subseteq\Lambda} (-1)^{|D|} r_D$.     In practice, the ``error term'' $R$ can be difficult to control and may in fact exceed the ``main term''.  To find upper bounds for $\mu(S)$ using our sieve, one need to prudently select the \defi{sieve support} $\Zz$ so that ``error term'' in (\ref{E:main bound}) is not too large.

\subsection{Proof of Theorem \ref{T:Selberg sieve}}
For $D\subseteq \Lambda$, define $\omega_D=\prod_{\lambda\in D} \omega_\lambda$.  For each non-empty $D\in\Zz$, we fix a real number $\lambda_D$ that will be chosen later. Set $\lambda_\emptyset=1$.  For any $U\subseteq A$, let $\chi_U\colon A\to\{0,1\}$ be the characteristic function of $U$, i.e., $\chi_U(a)=1$ if and only if $a\in U$.  The set $U$ is measurable if and only if $\chi_U\colon A\to \{0,1\}$ is measurable.  For each $a\in A$, we claim that
\[
\chi_S(a) \leq \Big(\sum_{D\in\Zz}   \chi_{A_D}(a) \lambda_D \Big)^2.
\]
If $a\notin S$, then $\chi_S(a)=0$ and the above inequality is immediate since the square of a real number is non-negative.  If $a\in S$, then $\sum_{D\in\Zz}  \chi_{A_D}(a) \lambda_D = \lambda_\emptyset =1$.  Therefore,
\begin{align*} 
\mu(S) &= \int_A \chi_S(a) d\mu(a) \leq   \int_A \Bigl( \sum_{D\in \Zz}  \chi_{A_D}(a) \lambda_D \Bigr)^2 d\mu(a)= \sum_{D,D'\in\Zz} \Bigl( \int_A \chi_{A_D}(a) \chi_{A_{D'}}(a) d\mu(a) \Bigr) \lambda_D \lambda_{D'}
\end{align*}
and thus $\mu(S)\leq \sum_{D,D'\in\Zz} \mu(A_{D\cup D'}) \lambda_D \lambda_{D'}$. Using (\ref{E:remainder}), this inequality becomes
\[
\mu(S) \leq \Delta \cdot X+ R
\]
where 
\[
\Delta= \sum_{D,D'\in\Zz} \omega_{D\cup D'} \lambda_D\lambda_{D'} \text{\quad and \quad} R=\sum_{D,D'\in\Zz} r_{D\cup D'} \lambda_D \lambda_{D'}.
\]

We first study $\Delta$.  By the multiplicative definition of $\omega_D$, we have
\[
\Delta = \sum_{D,D'\in\Zz} \frac{\omega_D\omega_{D'}}{\omega_{D\cap D'}} \lambda_D \lambda_{D'}.
\]
For $D,D'\in \Zz$, we have
\[
\frac{1}{\omega_{D\cap D'}}=\prod_{\lambda\in D\cap D'} \Big(1 +  \frac{1-\omega_\lambda}{\omega_\lambda} \Big) = \sum_{E \subseteq D\cap D'} \prod_{\lambda\in E} \frac{1-\omega_\lambda}{\omega_\lambda}
\]
and thus
\begin{align*} 
\Delta &= \sum_{D,D'\in\Zz} \omega_D \omega_{D'} \Bigl(\sum_{E \subseteq D\cap D'} \prod_{\lambda\in E} \frac{1-\omega_\lambda}{\omega_\lambda} \Bigr) \lambda_D \lambda_{D'} = \sum_{E\in\Zz} \Bigl(\prod_{\lambda\in E} \frac{1-\omega_\lambda}{\omega_\lambda} \Bigr) \sum_{\substack{D,D' \in \Zz\\E\subseteq D, E\subseteq D'}} \omega_D \omega_{D'} \lambda_D \lambda_{D'}. 
\end{align*}
So
\begin{equation} \label{E:quadratic}
\Delta = \sum_{E\in\Zz} \Bigl(\prod_{\lambda\in E} \frac{1-\omega_\lambda}{\omega_\lambda}\Bigr) \xi_E^2
\end{equation}
where $\displaystyle\xi_E := (-1)^{|E|} \sum_{E\subseteq D \in \Zz} \omega_D \lambda_D$ for $E\in \Zz$.   By M\"obius inversion, for $D\in \Zz$ we have
\begin{equation} \label{E:lambda relation}
\omega_D \lambda_D = \sum_{D\subseteq E \in \Zz} (-1)^{|E|-|D|} \cdot (-1)^{|E|}\xi_E = (-1)^{|D|} \sum_{D\subseteq E \in \Zz} \xi_E
\end{equation}
and in particular, $\sum_{E\in \Zz} \xi_E = \lambda_\emptyset=1$.

Since $\Delta$ shows up in our upper bound for $\mu(S)$, we now minimize its value.   With (\ref{E:quadratic}) we view $\Delta$ as a quadratic form in the variables $(\xi_E)_{E\in\Zz}$ subject to the constraint $\sum_{E\in \Zz} \xi_E =1$; it is not hard to show that $\Delta$ obtains its minimum value of $H^{-1}=\big(\sum_{D\in\Zz} \prod_{\lambda\in D} \frac{\omega_\lambda}{1-\omega_\lambda}\big)^{-1}$ when
\[
\xi_E = \frac{1}{H}\prod_{\lambda \in E} \frac{\omega_\lambda}{1-\omega_\lambda} 
\]
for $E\in\Zz$.  With these optimized values of $\xi_E$ and (\ref{E:lambda relation}), we now \emph{define} 
\begin{equation} \label{E:lambda definition}
\lambda_D := \frac{1}{H}\frac{(-1)^{|D|}}{\omega_D} \sum_{ D\subseteq E\in\Zz } \prod_{\lambda \in E} \frac{\omega_\lambda}{1-\omega_\lambda} 
\end{equation}
for each $D\in\Zz$.
By our choice, we have $\Delta = H^{-1}$ and hence $\mu(S)\leq X/H +R$.   It remains to bound $R$.  For each $D\in \Zz$,
\begin{align*} 
0 \leq (-1)^{|D|} \lambda_D & = \frac{1}{H}\prod_{\lambda\in D}\Bigl(1 + \frac{\omega_\lambda}{1-\omega_\lambda}\Bigr) \sum_{ D\subseteq E\in\Zz } \prod_{\lambda \in E-D} \frac{\omega_\lambda}{1-\omega_\lambda} \leq  \frac{1}{H} \sum_{ E\in\Zz } \prod_{\lambda \in E} \frac{\omega_\lambda}{1-\omega_\lambda} = 1.
\end{align*}
Therefore, 
\[
R \leq \sum_{D,D'\in \Zz} |r_{D\cup D'}| |\lambda_D||\lambda_{D'}| \leq \sum_{D,D'\in \Zz} |r_{D\cup D'}|.\qedhere
\]

\section{Equidistribution} \label{S:B}

Let $U$ be an affine variety of dimension $d\geq 1$ over a finite field $\FF_q$ that is geometrically smooth and irreducible.   Let $\rho\colon \pi_1(U) \to G$ be a surjective and continuous homomorphism, where $\pi_1(U)$ is the \'etale fundamental group and $G$ is a finite group.   Let $G^g$ be the image of $\pi_1(U_{\Fbar_q})$ under $\rho$ and define $m=[G:G^g]$.  We have an exact sequence of groups
\[
1\to G^g \hookrightarrow G \xrightarrow{\varphi} \ZZ/m\ZZ \to 1
\]
such that $\varphi(\Frob_u)\equiv n \pmod{m}$ for all $u\in U(\FF_{q^n})$.

\begin{theorem}
\label{T:equidistribution}
Fix an integer $n\geq 1$.   Let $C$ be a subset of $G$ that is stable under conjugation and satisfies $\varphi(C)=\{n \bmod{m}\}$.  
\begin{romanenum}
\item  \label{T:equidistribution i}
Then
\[
\frac{|\{u \in U(\F_{q^n}): \rho(\Frob_u)\subseteq C \}|}{|U(\F_{q^n})|} =  \frac{|C|}{|G^g|} + O(q^{-n/2}),
\]
where the implicit constant does not depend on $n$.

\item \label{T:equidistribution ii}
Assume further that $U$ is of dimension 1 and $\rho$ is tamely ramified.  Let $X/\FF_q$ be the smooth projective curve obtained by completing $U$.   Let $g$ be the genus of $X$ and define $b= |X(\FFbar_q)- U(\FFbar_q)|$.   Suppose that $H\subseteq G^g$ is a normal subgroup of $G$ that satisfies $C\cdot H\subseteq C$.  Then
\[
\Bigl| |\{u \in U(\F_{q}): \rho(\Frob_u)\subseteq C \}| - \frac{|C|}{|G^g|}|U(\FF_q)| \Bigr| \leq \frac{|C|^{1/2}}{|H|^{1/2}} (1-|H|/|G^g|)^{1/2} (2g-2 + b) q^{1/2}.
\]
\end{romanenum}
\end{theorem}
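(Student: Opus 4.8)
The plan is to expand the indicator function $1_C$ in irreducible characters of $G$, realise each character as the trace function of a lisse $\Qbar_\ell$-sheaf on $U$ with finite monodromy, and then invoke the Grothendieck--Lefschetz trace formula together with Deligne's purity theorem. Concretely, to an irreducible $\Qbar_\ell$-representation $\pi$ of $G$ I would attach the lisse sheaf $\mathcal{F}_\pi$ corresponding to $\pi\circ\rho$; it is pointwise pure of weight $0$, and (after replacing $\pi$ by its contragredient if necessary, which is harmless) its trace function at $u\in U(\FF_{q^n})$ is $\chi_\pi(\rho(\Frob_u))$. Since $G^g$ is normal in $G$, extending $1_C$ by zero gives a class function on $G$, so $1_C=\sum_\pi\langle 1_C,\chi_\pi\rangle\chi_\pi$, and the Grothendieck--Lefschetz formula applied to $U_{\FF_{q^n}}$ yields
\[
\#\{u\in U(\FF_{q^n}):\rho(\Frob_u)\in C\}=\sum_\pi\langle 1_C,\chi_\pi\rangle\sum_i(-1)^i\tr\bigl(\Frob_{q^n}\mid H^i_c(U_{\FFbar_q},\mathcal{F}_\pi)\bigr).
\]

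For part (\ref{T:equidistribution i}) I would argue as follows. Since $U$ is affine of dimension $d$, the groups $H^i_c(U_{\FFbar_q},\mathcal{F}_\pi)$ vanish for $i<d$ and $i>2d$, and $H^{2d}_c(U_{\FFbar_q},\mathcal{F}_\pi)\cong(\pi)_{G^g}(-d)$, on which geometric Frobenius acts as $q^{nd}$ times a lift to $G$ of the class $n\bmod m$. A short orthogonality computation, using the hypothesis $\varphi(C)=\{n\bmod m\}$ together with $C\subseteq$ that coset, shows that the combined $H^{2d}_c$-contribution equals $q^{nd}\cdot|C|/|G^g|$. By Weil~II the groups $H^i_c$ with $d\le i\le 2d-1$ are mixed of weight $\le 2d-1$, hence contribute $O(q^{n(2d-1)/2})$ with implicit constant $\sum_{i,\pi}|\langle 1_C,\chi_\pi\rangle|\dim H^i_c(U_{\FFbar_q},\mathcal{F}_\pi)$, which is independent of $n$. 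Dividing by $|U(\FF_{q^n})|=q^{nd}+O(q^{n(2d-1)/2})$ then gives the stated estimate.

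For part (\ref{T:equidistribution ii}) I would first reduce to $H=1$: from $C\cdot H\subseteq C$, $1\in H$, and conjugation-invariance one gets $CH=HC=C$, so $C$ is the preimage of a conjugation-stable subset $\bar C$ of $\bar G:=G/H$, and $\rho(\Frob_u)\in C$ iff the induced (still surjective, still tame) representation $\bar\rho\colon\pi_1(U)\to\bar G$ has $\bar\rho(\Frob_u)\in\bar C$; since $|\bar C|=|C|/|H|$ and the geometric image has index $[G^g:H]=|G^g|/|H|$ in its arithmetic monodromy, this is exactly the claimed reduction. So assume $H=1$ and put $\mu:=|C|/|G^g|$. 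The crucial observation I would use is that the orthogonal projection of $1_C$ onto the span of the characters of $G$ trivial on $G^g$ is precisely $\mu\,\mathbf 1_{\sigma G^g}$, where $\sigma G^g$ is the coset with $\varphi(\sigma)\equiv 1$; since here $n=1$, $\rho(\Frob_u)\in\sigma G^g$ for every $u\in U(\FF_q)$, so $\sum_{u\in U(\FF_q)}\mathbf 1_{\sigma G^g}(\rho(\Frob_u))=|U(\FF_q)|$. Setting $\psi:=1_C-\mu\,\mathbf 1_{\sigma G^g}$ this gives $N_C-\mu|U(\FF_q)|=\sum_{u\in U(\FF_q)}\psi(\rho(\Frob_u))$, and $\psi$ is orthogonal to every character of $G$ factoring through $G/G^g$, so only $\pi$ with $\pi^{G^g}=0$ occur in $\psi=\sum_\pi\langle\psi,\chi_\pi\rangle\chi_\pi$. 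For such $\pi$ one has $H^0_c(U_{\FFbar_q},\mathcal{F}_\pi)=0$ and $H^2_c(U_{\FFbar_q},\mathcal{F}_\pi)\cong(\pi)_{G^g}(-1)=0$, whence the Euler--Poincaré / Grothendieck--Ogg--Shafarevich formula together with tameness of $\rho$ (so all Swan conductors vanish) gives $\dim H^1_c(U_{\FFbar_q},\mathcal{F}_\pi)=-\chi_c(U_{\FFbar_q},\mathcal{F}_\pi)=\dim(\pi)(2g-2+b)$. Then Grothendieck--Lefschetz, purity (each $\Frob_q$-eigenvalue on $H^1_c$ of a weight-$0$ sheaf has absolute value $q^{1/2}$), and Cauchy--Schwarz give
\[
\bigl|N_C-\mu|U(\FF_q)|\bigr|\le q^{1/2}(2g-2+b)\Bigl(\sum_{\pi:\pi^{G^g}=0}|\langle\psi,\chi_\pi\rangle|^2\Bigr)^{1/2}\Bigl(\sum_{\pi:\pi^{G^g}=0}\dim(\pi)^2\Bigr)^{1/2},
\]
and I would finish with the elementary identities $\sum_{\pi:\pi^{G^g}=0}|\langle\psi,\chi_\pi\rangle|^2=\|\psi\|_{L^2(G)}^2=\tfrac{|C|}{|G|}\bigl(1-\tfrac{|C|}{|G^g|}\bigr)$ and $\sum_{\pi:\pi^{G^g}=0}\dim(\pi)^2=|G|-m=|G|\bigl(1-\tfrac1{|G^g|}\bigr)$, multiplying and using $1\le|C|\le|G^g|$ so that $1-|C|/|G^g|\le 1-1/|G^g|$.

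The main obstacle I anticipate is extracting the \emph{precise} constant. A crude estimate bounding $|\tr(\Frob_q\mid H^1_c(\mathcal{F}_\pi))|$ by $\dim H^1_c(\mathcal{F}_\pi)\,q^{1/2}$ for all irreducible $\pi$ simultaneously would overshoot, because of the contributions of the representations trivial on $G^g$ and the gap between $q$ and $|U(\FF_q)|$; it is exactly the projection trick above that removes these, forces the $H^2_c$-terms to vanish, and isolates the ``generic'' $H^1_c$, after which Cauchy--Schwarz produces the factor $|C|^{1/2}(1-1/|G^g|)^{1/2}$. The genuinely analytic ingredients---the Grothendieck--Lefschetz trace formula, Deligne's purity theorem, and the Euler--Poincaré formula for tamely ramified lisse sheaves on curves---I would cite as black boxes.
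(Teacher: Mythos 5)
Your proof is correct and follows essentially the same route as the paper: the paper handles (\ref{T:equidistribution i}) by citing \S4 of Chavdarov and (\ref{T:equidistribution ii}) by the identical reduction modulo $H$ followed by a citation of the author's earlier Proposition~5.1, and those references rest on precisely the character-expansion / Grothendieck--Lefschetz / Weil~II / Euler--Poincar\'e argument you write out in full. Two harmless touch-ups: purity gives Frobenius eigenvalues on $H^1_c$ of absolute value \emph{at most} $q^{1/2}$ (mixed of weight $\leq 1$), not exactly $q^{1/2}$, and in part (\ref{T:equidistribution ii}) one should read the hypothesis with $n=1$ (as in the paper's application), which is exactly how you use it.
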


\begin{proof}
Both parts are applications of the machinery of Grothendieck and Deligne used to prove the Weil conjectures.  Part (i) is well known; a proof can be found in \S4 of \cite{MR1440067}.  For (ii), one can replace $\rho$ with the representation $\pi_1(U,\eta)\xrightarrow{\rho} G \to G/H$ and reduce to the case where $H=1$.  This case has already been dealt with by the author, cf.~\cite{1011.6465}*{Proposition~5.1}.
\end{proof}

\begin{bibdiv}
\begin{biblist}

\bib{Atlas}{book}{
      author={Conway, J.~H.},
      author={Curtis, R.~T.},
      author={Norton, S.~P.},
      author={Parker, R.~A.},
      author={Wilson, R.~A.},
       title={Atlas of finite groups},
   publisher={Oxford University Press},
     address={Eynsham},
        date={1985},
        ISBN={0-19-853199-0},
        note={Maximal subgroups and ordinary characters for simple groups, With
  computational assistance from J. G. Thackray},
      label={ATLAS},
}

\bib{MR2381481}{article}{
      author={Ahmadi, Omran},
      author={Vega, Gerardo},
       title={On the parity of the number of irreducible factors of
  self-reciprocal polynomials over finite fields},
        date={2008},
        ISSN={1071-5797},
     journal={Finite Fields Appl.},
      volume={14},
      number={1},
       pages={124\ndash 131},
}

\bib{MR2588789}{article}{
   author={Beauville, Arnaud},
   title={The primitive cohomology lattice of a complete intersection},
   language={English, with English and French summaries},
   journal={C. R. Math. Acad. Sci. Paris},
   volume={347},
   date={2009},
   number={23-24},
   pages={1399--1402},
   issn={1631-073X},
}

\bib{MR1440067}{article}{
      author={Chavdarov, Nick},
       title={The generic irreducibility of the numerator of the zeta function
  in a family of curves with large monodromy},
        date={1997},
        ISSN={0012-7094},
     journal={Duke Math. J.},
      volume={87},
      number={1},
       pages={151\ndash 180},
}

\bib{MR2200366}{book}{
      author={Cojocaru, Alina~Carmen},
      author={Murty, M.~Ram},
       title={An introduction to sieve methods and their applications},
      series={London Mathematical Society Student Texts},
   publisher={Cambridge University Press},
     address={Cambridge},
        date={2006},
      volume={66},
}

\bib{MR2183392}{article}{
   author={Conrad, B.},
   author={Conrad, K.},
   author={Helfgott, H.},
   title={Root numbers and ranks in positive characteristic},
   journal={Adv. Math.},
   volume={198},
   date={2005},
   number={2},
   pages={684--731},
   issn={0001-8708},
}

\bib{MR0332694}{incollection}{
      author={Gallagher, P.~X.},
       title={The large sieve and probabilistic {G}alois theory},
        date={1973},
   booktitle={Analytic number theory ({P}roc. {S}ympos. {P}ure {M}ath., {V}ol.
  {XXIV}, {S}t. {L}ouis {U}niv., {S}t. {L}ouis, {M}o., 1972)},
   publisher={Amer. Math. Soc.},
     address={Providence, R.I.},
       pages={91\ndash 101},
}

\bib{MR1947324}{article}{
      author={Gross, Benedict~H.},
      author={McMullen, Curtis~T.},
       title={Automorphisms of even unimodular lattices and unramified {S}alem
  numbers},
        date={2002},
        ISSN={0021-8693},
     journal={J. Algebra},
      volume={257},
      number={2},
       pages={265\ndash 290},
}

\bib{MR2372151}{article}{
      author={Hall, Chris},
       title={Big symplectic or orthogonal monodromy modulo {$\ell$}},
        date={2008},
        ISSN={0012-7094},
     journal={Duke Math. J.},
      volume={141},
      number={1},
       pages={179\ndash 203},
}

\bib{MR2061214}{book}{
      author={Iwaniec, Henryk},
      author={Kowalski, Emmanuel},
       title={Analytic number theory},
      series={American Mathematical Society Colloquium Publications},
   publisher={American Mathematical Society},
     address={Providence, RI},
        date={2004},
      volume={53},
}

\bib{MR2539184}{article}{
      author={Jouve, Florent},
       title={Maximal {G}alois group of {$L$}-functions of elliptic curves},
        date={2009},
        ISSN={1073-7928},
     journal={Int. Math. Res. Not. IMRN},
      number={19},
       pages={3557\ndash 3594},
}

\bib{MR1081536}{book}{
   author={Katz, Nicholas M.},
   title={Exponential sums and differential equations},
   series={Annals of Mathematics Studies},
   volume={124},
   publisher={Princeton University Press, Princeton, NJ},
   date={1990},
   pages={xii+430},
   isbn={0-691-08598-6},
   isbn={0-691-08599-4},
}

\bib{katzreport}{incollection}{
      author={Katz, Nicholas~M.},
       title={Report on the irreducibility of ${L}$-functions},
        date={2012},
   booktitle={Number theory, analysis and geometry},
      editor={Goldfeld, Dorian},
      editor={Jorgenson, Jay},
      editor={Jones, Peter},
      editor={Ramakrishnan, Dinakar},
      editor={Ribet, Kenneth A.~A.},
      editor={Tate, John},
   publisher={Springer New York},
       pages={321\ndash 353},
}

\bib{MR1659828}{book}{
   author={Katz, Nicholas M.},
   author={Sarnak, Peter},
   title={Random matrices, Frobenius eigenvalues, and monodromy},
   series={American Mathematical Society Colloquium Publications},
   volume={45},
   publisher={American Mathematical Society, Providence, RI},
   date={1999},
   pages={xii+419},
   isbn={0-8218-1017-0},
}

\bib{MR1370110}{article}{
   author={Larsen, M.},
   title={Maximality of Galois actions for compatible systems},
   journal={Duke Math. J.},
   volume={80},
   date={1995},
   number={3},
   pages={601--630},
   issn={0012-7094},
}

\bib{MR0022871}{article}{
      author={Selberg, Atle},
       title={On an elementary method in the theory of primes},
        date={1947},
     journal={Norske Vid. Selsk. Forh., Trondhjem},
      volume={19},
      number={18},
       pages={64\ndash 67},
}

\bib{MR2017446}{book}{
       title={Rev\^etements \'etales et groupe fondamental ({SGA} 1)},
      series={Documents Math\'ematiques (Paris) [Mathematical Documents
  (Paris)], 3},
   publisher={Soci\'et\'e Math\'ematique de France},
     address={Paris},
        date={2003},
        ISBN={2-85629-141-4},
        note={S{\'e}minaire de g{\'e}om{\'e}trie alg{\'e}brique du Bois Marie
  1960--61. [Algebraic Geometry Seminar of Bois Marie 1960-61], Directed by A.
  Grothendieck, With two papers by M. Raynaud, Updated and annotated reprint of
  the 1971 original [Lecture Notes in Math., 224, Springer, Berlin; MR0354651
  (50 \#7129)]},
label={SGA1},      
}

\bib{MR0354657}{book}{
   title={Groupes de monodromie en g\'eom\'etrie alg\'ebrique. II},
   language={French},
   series={Lecture Notes in Mathematics, Vol. 340},
   note={S\'eminaire de G\'eom\'etrie Alg\'ebrique du Bois-Marie 1967--1969 (SGA 7
   II);
   Dirig\'e par P. Deligne et N. Katz},
   publisher={Springer-Verlag, Berlin-New York},
   date={1973},
   pages={x+438},
   label={SGA7 II},
}

\bib{MR1967689}{book}{
   author={Voisin, Claire},
   title={Hodge theory and complex algebraic geometry. I},
   series={Cambridge Studies in Advanced Mathematics},
   volume={76},
   note={Translated from the French original by Leila Schneps},
   publisher={Cambridge University Press, Cambridge},
   date={2002},
   pages={x+322},
   isbn={0-521-80260-1},
}

\bib{MR2562037}{book}{
      author={Wilson, Robert~A.},
       title={The finite simple groups},
      series={Graduate Texts in Mathematics},
   publisher={Springer-Verlag London Ltd.},
     address={London},
        date={2009},
      volume={251},
        ISBN={978-1-84800-987-5},
}

\bib{MR0148760}{article}{
      author={Zassenhaus, Hans},
       title={On the spinor norm},
        date={1962},
        ISSN={0003-9268},
     journal={Arch. Math.},
      volume={13},
       pages={434\ndash 451},
}

\bib{Zywina-Maximal}{article}{
      author={Zywina, David},
       title={Elliptic curves with maximal {G}alois action on their torsion
  points},
        date={2010},
     journal={Bull. London Math. Soc.},
      volume={42},
      number={5},
       pages={811\ndash 826},
}

\bib{1011.6465}{article}{
      author={Zywina, David},
       title={Hilbert's irreducibility theorem and the larger sieve},
        date={2010},
      eprint={https://arxiv.org/abs/1011.6465},
        note={arXiv:1011.6465},
}

\bib{orthogonal}{article}{
      author={Zywina, David},
       title={The inverse Galois problem for orthogonal groups},
        date={2014},
      eprint={https://arxiv.org/abs/1409.1151},
        note={arXiv:1409.1151},
}

\end{biblist}
\end{bibdiv}

\end{document}